\numberwithin{equation}{subsection}
\theoremstyle{plain}
\newtheorem{theorem}[subsection]{Theorem}
\newtheorem{lemma}[subsection]{Lemma}
\newtheorem{prop}[subsection]{Proposition}
\theoremstyle{definition}
\newtheorem{construction}[subsection]{Construction}
\newtheorem{example}[subsection]{Example}
\newtheorem{notation}[subsection]{Notation}
\newtheorem{remark}[subsection]{Remark}
\newcommand{\ra}{\rightarrow}
\newcommand{\xra}{\xrightarrow}
\newcommand{\hra}{\hookrightarrow}
\def\AAA{\mathbb{A}}
\def\CC{\mathbb{C}}
\def\FF{\mathbb{F}}
\def\GG{\mathbb{G}}
\def\HH{\mathbb{H}}
\def\PP{\mathbb{P}}
\def\QQ{\mathbb{Q}}
\def\RR{\mathbb{R}}
\def\SSS{\mathbb{S}}
\def\ZZ{\mathbb{Z}}
\def\bfi{\mathbf{i}}
\def\bfA{\mathbf{A}}
\def\bfB{\mathbf{B}}
\def\calD{\mathcal{D}}
\def\calE{\mathcal{E}}
\def\calF{\mathcal{F}}
\def\calG{\mathcal{G}}
\def\calH{\mathcal{H}}
\def\calL{\mathcal{L}}
\def\calO{\mathcal{O}}
\def\calS{\mathcal{S}}
\def\calT{\mathcal{T}}
\def\calX{\mathcal{X}}
\def\gotha{\mathfrak{a}}
\def\gothb{\mathfrak{b}}
\def\gothc{\mathfrak{c}}
\def\gothe{\mathfrak{e}}
\def\gothh{\mathfrak{h}}
\def\gothp{\mathfrak{p}}
\def\gothq{\mathfrak{q}}
\def\gothB{\mathfrak{B}}
\def\gothF{\mathfrak{F}}
\def\gothG{\mathfrak{G}}
\def\gothH{\mathfrak{H}}
\def\gothS{\mathfrak{S}}
\def\gothV{\mathfrak{V}}
\def\scrA{\mathscr{A}}
\def\rmn{\mathrm{n}}
\def\rmM{\mathrm{M}}
\def\ttS{\mathtt{S}}
\def\ttT{\mathtt{T}}
\DeclareMathOperator{\Art}{Art}
\DeclareMathOperator{\Gal}{Gal}
\DeclareMathOperator{\Ker}{Ker}
\DeclareMathOperator{\Hom}{Hom}
\DeclareMathOperator{\id}{id}
\DeclareMathOperator{\Ind}{Ind}
\DeclareMathOperator{\Rec}{Rec}
\DeclareMathOperator{\Res}{Res}
\DeclareMathOperator{\Spec}{Spec}
\newcommand{\Q}{\mathbb{Q}}
\newcommand{\Z}{\mathbb{Z}}
\newcommand{\R}{\mathbb{R}}
\newcommand{\C}{\mathbb{C}}
\newcommand{\G}{\mathbb{G}}
\newcommand{\cO}{\mathcal{O}}
\newcommand{\kb}{\underline{k}}
\newcommand{\Fpb}{\overline{\FF}_p}
\newcommand{\univ}{\mathrm{univ}}
\newcommand{\Sym}{\mathrm{Sym}} 
\newcommand{\pr}{\mathrm{pr}}
\newcommand{\Tate}{\mathrm{Tate}} 
\newcommand{\Gys}{\mathrm{Gys}}
\newcommand{\es}{\mathrm{es}}
\newcommand{\cl}{\mathrm{cl}}
\newcommand{\bbalpha}{\boldsymbol{\alpha}}
\newcommand{\cusp}{\mathrm{cusp}}
\newcommand{\et}{\mathrm{et}}
\newcommand{\Fr}{\mathrm{Fr}}
\newcommand{\Frob}{\mathrm{Frob}}
\newcommand{\GL}{\mathrm{GL}}
\newcommand{\Iw}{\mathrm{Iw}}
\newcommand{\Qp}{\QQ_p}
\newcommand{\res}{\mathrm{res}}
\newcommand{\Sh}{\mathrm{Sh}}
\newcommand{\Tr}{\mathrm{Tr}}
\newcommand{\ur}{\mathrm{ur}}
\newcommand{\Qlb}{\overline{\Q}_{l}}
\newcommand{\Qpur}{\Q_{p}^{\mathrm{ur}}}
\newcommand{\tcD}{\tilde\calD}
\newcommand{\As}{\mathrm{As}}
\newcommand{\uk}{\underline k}
\newcommand{\longto}{\longrightarrow}
\begin{document}

\title{Tate cycles on some quaternionic Shimura varieties mod $p$}
\author{Yichao Tian}
\address{Yichao Tian,  Chinese Academy of Sciences,  55 Zhong Guan Cun East Road, Beijing, 100190, China.}

\address{
Mathematics Institute,
University of Bonn,
Endenicher Allee 60,
53115, Bonn, Germany
}
\email{yichaot@math.ac.cn, tian@math.u-bonn.de}
\author{Liang Xiao}
\address{Liang Xiao, Department of Mathematics, University of Connecticut, 341 Mansfield Road, Unit 1009, Storrs, CT 06269-1009, U.S.}
\email{liang.xiao@uconn.edu}
\date{\today}

\begin{abstract}
Let $F$ be a totally real field in which a prime number $p>2$ is inert.
We continue the study of the (generalized) Goren--Oort strata on quaternionic Shimura varieties over  finite extensions of $\FF_p$.
We prove that, when the dimension of the quaternionic Shimura variety is even, the Tate conjecture for the special fiber of the quaternionic Shimura variety holds for the cuspidal $\pi$-isotypical component, as long as the two unramified Satake parameters at $p$ are not differed by a root of unity. 
\end{abstract}

\subjclass[2010]{11G18 (primary), 14G35 14C25 (secondary).}
\keywords{special fiber of Hibert modular varieties, supersingular locus, Tate conjecture, Goren--Oort stratification}

\maketitle
\tableofcontents

\section{Introduction}
One of the most important conjectures in algebraic geometry is the Tate conjecture   on algebraic cycles \cite{Tate}.   The general case of this conjecture is far from being proved. In this paper, we will consider the Tate conjecture for Hilbert modular varieties modulo an inert  prime.  

Let $F$ be a totally real field of degree $g=[F:\QQ]$, and $p>2$ be a prime number inert in $F$.  Let  $\AAA_{F}$ be  the ad\`ele ring, $\AAA_F^{\infty}$ (resp. $\AAA_F^{\infty,p}$) the subring of  finite ad\`eles (resp. prime-to-$p$ finite ad\`eles) of $F$. 
Fix a sufficiently small open compact subgroup $K=K^pK_p\subseteq \GL_2(\AAA_F^{\infty})$, where  $K_p=\GL_2(\cO_{F_p})$ and $K^p\subseteq \GL_2(\AAA_F^{\infty,p})$.
Let $X$ be the  Hilbert modular scheme of level $K$. This is a  quasi-projective smooth scheme over $\ZZ_{(p)}$.    For a fixed prime $\ell$, the $\ell$-adic \'etale cohomology group 
$H^g_\et(X_{\overline \QQ}, \overline{\QQ}_{\ell})$
 is equipped with a commuting action of $\Gal_{\QQ}:=\Gal(\overline \QQ/\QQ)$ and the Hecke algebra $\calH_{K}:=\overline \QQ[K\backslash \GL_2(\AAA_F^{\infty})/K]$.
Let $\pi=\pi^{\infty} \otimes \pi_{\infty}$ be a  cuspidal automorphic representation of $\GL_2(\AAA_F)$ such that the archimedean component  $\pi_{\infty}$ is a  discrete series of parallel weight 2, and that the $K$-fixed vectors  $\pi^{\infty,K}\neq 0$. 
We put 
\[H^g_\et(X_{\overline \QQ}, \overline{\QQ}_{\ell})[\pi]
:  =\Hom_{\calH_K}\big(\pi^{\infty,K}, H^g_\et(X_{\overline \QQ}, \overline{\QQ}_{\ell})\big).
\]
Let $\rho_{\pi}: \Gal(\overline \QQ/F)\ra \GL_2(\overline\QQ_{\ell})$ denote the Galois representation attached to $\pi$ (see e.g. \cite{Taylor}). 
Then the main result of \cite{brylinski-labesse} essentially says that
the semi-simplification of the $\Gal_{\QQ}$-module $H^g_\et(X_{\overline \QQ}, \overline{\QQ}_{\ell})[\pi]$ is isomorphic to the Asai representation $\As(\rho_{\pi}):=\otimes\mathrm{Ind}_{\Gal_{F}}^{\Gal_{\QQ}}(\rho_{\pi})$, which is the tensor induction of $\rho_{\pi}$ from $\Gal_F$ to $\Gal_\QQ$.\footnote{Conjecturally, $H^g_\et(X_{\overline \QQ}, \overline{\QQ}_{\ell})[\pi]$ is semi-simple so that it is isomorphic to $\As(\rho_{\pi})$. This conjecture is true if  $\As(\rho_{\pi})$ is irreducible.} 
By our assumption on $p$, both  $\rho_{\pi}$ and $H^g_\et(X_{\overline \QQ}, \overline{\QQ}_{\ell})$ are unramified at $p$. It makes sense to view $\As(\rho_{\pi})$ and $H^g_\et(X_{\overline \QQ}, \overline{\QQ}_{\ell})[\pi]$ as a $\Gal_{\FF_p}$-module.

Assume that $g$ is even. For $q$ a power of $p$, we write $\Frob_q\in \Gal_{\FF_p}$ for the geometric $q$-Frobenius.  We put
\[\calT(\pi, \overline\FF_p)\colon  =\bigcup_{j\geq 1}\As(\pi)(g/2)^{ \Frob_{p^{j}}=1},\]
for the space of  Tate classes of $\As(\pi)(g/2)$ defined over a finite extension of $\FF_p$.
It is easy to see that  $\dim_{\overline \QQ_{\ell}}\calT(\pi, \overline\FF_p)\geq \binom{g}{g/2}$, and the equality holds if  the two  eigenvalues of $\rho_{\pi}(\Frob_{p^g})$ do not differ by any roots of unity. Therefore, the Tate conjecture suggests that there should exist $\binom{g}{g/2}$ series algebraic cycles on $X_{\overline \FF_p}$ that contributes to 
$ \calT(\pi, \overline\FF_p) $. \emph{This is expected to hold for every $\pi$}.


\medskip
In this paper, we take a purely characteristic $p$ approach to construct the desired algebraic cycles on $X_{\overline \FF_p}$, and show  that these cycles contribute to all the geometric Tate classes in $H^g_\et(X_{\overline \QQ}, \overline{\QQ}_{\ell}(g/2))[\pi]$ if the eigenvalues of  $\rho_{\pi}(\Frob_{p^g})$ are sufficiently general. Here are our main results.

\begin{theorem}\label{T:introduction Tate}
Assume that $g=[F:\QQ]$ is even. 
Let $B_\infty$ denote the  quaternion algebra over $F$ ramified exactly at all archimedean places, and let $\Sh_K(B_\infty): = B^\times_\infty \backslash B^\times_\infty(\AAA_f) / K$ be the associated Shimura set over $\overline \FF_p$. We fix an isomorphism $(B_{\infty}\otimes \AAA_{F}^{\infty})^{\times}\cong \GL_2(\AAA_F^{\infty})$ so that the Hecke algebra $\calH_K$ acts on $H^0(\Sh_{K}(B_{\infty}), \overline \QQ_{\ell})$. 
\begin{enumerate}
\item There exist naturally defined correspondences
$$
\Sh_K(B_\infty) \xleftarrow{p_i} X_i  \xrightarrow{q_i} X_{\overline \FF_p}, \qquad i = 1, \dots, \tbinom{g}{g/2},
$$
such that $p_i$ is an  iterated $\PP^1$-bundles (so each connected component of $X_i$ is isomorphic to an iterated $\PP^1$-bundle), $q_i$ is a locally closed immersion, and both $p_i$ and $\pi_i$ are equivariant for prime-to-$p$ Hecke correspondences.

\item 
Let $\pi$ be a cuspidal automorphic  representation of $\GL_2(\AAA_F)$ associated  to a holomorphic Hilbert cuspidal eigenform of parallel weight $2$, and let $\pi_B$ be the Jacquet--Langlands transfer of $\pi$ to an automorphic representation of $(B_{\infty}\otimes\AAA_F)^{\times}
$. 
Denote by $\alpha_\pi$ and $\beta_\pi$ the two eigenvalues of $\rho_{\pi}(\Frob_{p^g})$.
 We put similarly
 \[H^0(\Sh_{K}(B_{\infty}), \overline \QQ_{\ell})[\pi_B]:=\Hom_{\calH_K}\big(\pi_{B}^{\infty,K}, H^0(\Sh_{K}(B_{\infty}), \overline \QQ_{\ell})\big).\]
Then the natural map
\begin{align*}
\bigoplus_{1\leq i \leq \binom g{g/2}} H^0\big(\Sh_{K}(B_{\infty}), \overline \QQ_{\ell}\big)[\pi_B] &\xra{\oplus p_i^*}\bigoplus_{1\leq i \leq \binom g{g/2}}   H^0_\et(X_{i,\overline \FF_p}, \overline \QQ_{\ell})[\pi_B]
\\ & \xrightarrow{\textrm{Gysin}}\calT(\pi,\overline \FF_p) \subseteq H^g_\et\big(X_{\overline \FF_p}, \overline \QQ_{\ell}(g/2)\big)[\pi]
\end{align*}
is injective if $\alpha_\pi \neq \beta_\pi$, and is isomorphic to $ \calT(\pi, \overline\FF_p)$ if $\alpha_\pi / \beta_\pi$ is not a root of unity.
In particular, if $\alpha_\pi / \beta_\pi$ is not a root of unity,  the Tate conjecture is true for the $\pi$-isotypic component of $H^g_\et\big(X_{\overline \FF_p}, \overline \QQ_{\ell}(g/2)\big)$ over all finite extensions of $\FF_p$. 
\end{enumerate} 
\end{theorem}

In fact, we prove a result stronger than the one stated here. A full statement will be stated later in Theorem~\ref{T:introduction-weight cycles}.

\begin{remark}
\begin{enumerate}
\item These cycles $X_1, \dots, X_{\binom{g}{g/2}}$ realize the Jacquet--Langlands correspondence geometrically and at the same time, they give the Tate classes for the $\pi$-isotypical component when $\pi_p$ is sufficiently general. We expect the union of them to be the supersingular locus; this will be  proved in a later paper \cite{Liu-Tian} of Yifeng Liu and the first author.

The geometric realization of Jacquet--Langlands correspondence was first studied by Ribet \cite{ribet, ribet2} and Helm \cite{helm, helm-PEL}. They gave some examples of the cycles in the case of modular or Shimura curves and unitary Shimura varieties that realizes the Jacquet--Langlands correspondence geometrically.
The geometric aspect of this technique is further developed by the authors in \cite{tian-xiao1}.
From this point of view, the theorem above may be understood as:
geometric Jacquet-–Langlands correspondence can give ``generic" Tate classes in the special
fiber of the Hilbert modular varieties.

\item Our construction does not give sufficient algebraic  cycles on $X_{\overline \FF_p}$ when  $\alpha_\pi=\beta_\pi$. For instance, for $g=2$, it follows from the Hodge index theorem and our computation of the intersection matrix of $X_1$ and $X_2$ on $X_{\overline\FF_p}$ (see Example~\ref{Ex:intersection-matrix})  that   the contribution of $X_1$ and $X_2$ to $\calT(\pi, \overline \FF_p)$ is one-dimensional if $\alpha_\pi=\beta_\pi$. It is an interesting question to find extra ``exotic'' algebraic cycles that are not cohomologically equivalent to our cycles.

\item
If one instead considers the Tate conjecture of Hilbert modular varieties over the generic fiber, namely over $\QQ$, this topic has a long history dated back to 1980s. But the situation is very different from the discussion in this paper. For a general $\pi$ that is not CM or the base change from a smaller field, the space of Tate classes $\As(\rho_{\pi})^{\Gal_{\QQ}}$ is zero. In contrast, the Tate classes over the special fiber at an inert prime always have dimension at least $\binom{g}{g/2}$. \emph{So the Tate conjecture of $X$ over $\QQ$ is a very different question from the Tate conjecture of $X_{\FF_p}$ over $\FF_p$.} We list below some known results for the Tate conjecture of Hilbert modular varieties over $\QQ$.

\begin{itemize}
\item If $\pi$ is non-CM,
this conjecture was proved by  Harder, Langlands, and Rapoport in \cite{HLR} when $g=2$.
In fact, they show that $\As(\rho_\pi)(1)^{\Gal_\QQ}$ is non-zero only if $\pi$ is the base change of a cuspidal automorphic representation of $\GL_{2}(\AAA_{\QQ})$, in which case Hirzebruch--Zagier cycles are account for the Tate classes.
Similar but partial results are obtained by Ramakrishnan \cite{ram} and Getz--Hahn \cite{Getz-Hahn} in the higher dimensional cases.

\item 
When $g=2$ and $\pi$ is CM, more algebraic cycles are expected to contribute to $\As(\rho_\pi)(1)^{\Gal_{\QQ}}$, and it was solved  independently by  Murty--Ramakrishnan  \cite{murty-ram} and Klingenberg \cite{Klingenberg} by reducing to the Lefschetz (1,1)-theorem for Hodge classes.

\item When $g=2$ and $\pi$ is the base change of a cuspidal automorphic representation of $\GL_{2,\QQ}$, Langer \cite{Langer} constructed a variant of the Hirzebruch--Zagier cycle in characteristic $0$, and showed that its reduction modulo $p$ contributes to a one-dimensional subspace of $\Tate_p(\pi)$. His cycles  are strictly contained in the union of our cycles $X_1\cup X_2$. But due to the reason we just said earlier, his construction seems to be hard to generalize to general $\pi$. 
\end{itemize}

\item 
Despite the difference of the Tate conjecture over $\QQ$ and over finite fields, it is an interesting question to study the interrelation between the reduction of cycles in characteristic zero and cycles in characteristic $p$ that we constructed. Such study has interesting corollaries in arithmetics geometric applications, e.g. bounding Selmer groups. See a series of works of Yifeng Liu and the first author \cite{liu1, liu2, Liu-Tian}.

\item 
Analogues of this theorem for other Shimura varieties have already appeared in recent works, e.g. \cite{HTX, XZ}.
\end{enumerate}
\end{remark}

\subsection{Generalized Goren--Oort cycles} 
We now explain the construction of the cycles. We allow $g$ to be of arbitrary parity, and let $r$ be an integer with $1\leq r\leq \lfloor g/2\rfloor $.  In this paper,  we will  construct explicitly $\binom{g}{r}$ so-called \emph{generalized Goren--Oort cycles}  $X_1,\dots, X_{\binom{g}{r}}$  of codimension $r$ in $X_{\FF_{p^{g}}}$ such that each $X_{i}$ is isomorphic to a fibration of $r$-th iterated $\PP^1$-bundle over (the characteristic $p$ fiber of) some $(g-2r)$-dimensional quaternionic Shimura variety.  Moreover, the construction is compatible with prime-to-$p$ Hecke correspondences when the tame level $K^p$ changes.  
As pointed out by Xinwen Zhu,  the union of these  $X_i$'s  should be  the Zariski closure of the Newton stratum of $X_{\FF_{p^g}}$ with slope $\big(\frac rg, \dots, \frac rg, \frac{g-r}{g}, \dots, \frac{g-r}{g}\big)$, where both $\frac rg$ and $\frac{g-r}{g}$ appear with $g$ times. In particular, if $g$ is even and $r=\frac g2$, the union of $X_i$'s  should be exactly the supersingular locus of $X_{ \FF_{p^{g}}}$.

Fix an isomorphism $\iota_p: \overline\QQ_p\xra{\sim} \CC$. Composing with $\iota_p$ defines a bijection between the set of $p$-adic embeddings of $F$ with that of its archimedean places. 
 Since  $p$ is inert in $F$, the image of every $p$-adic embedding of $F$  lies in the maximal unramified extension of $\QQ_p$, hence the $p$-Frobenius $\sigma$ acts naturally on the set of $p$-adic embeddings.
  We label the $p$-adic embeddings of $F$ by $\{\tau_i: i\in \ZZ/g\ZZ\}$ such that $\tau_{i+1}=\sigma\circ\tau_i$, and  let   $\infty_i=\iota_p\circ \tau_i$ denote the corresponding  archimedean place  of $F$.
  For an even subset $\ttS$ of archimedean  places of $F$, we denote by $B_{\ttS}$ the quaternion algebra over $F$ which ramifies exactly at $\ttS$. 
  When ${\ttS}$ is the set of all archimedean places, we also write  $\ttS=\infty$.
 Fix an isomorphism $(B_{\ttS}\otimes_F \AAA_F^{\infty})^{\times}\cong\GL_2(\AAA_F^{\infty})$, so that $K$ can be viewed as an open compact subgroup of $(B_{\ttS}\otimes_F\AAA_F^{\infty})^{\times}$. 
For a subset $\ttT\subseteq \ttS$,  we will define in \S \ref{N:Shimura varieties}  a quaternionic Shimura variety $\Sh_K(B_{\ttS,\ttT})$ over $\FF_{p^{g}}$  attached to the reductive group $\Res_{F/\QQ}(B_{\ttS}^{\times})$ of level $K$. This is a      $g-\#\ttS$ dimensional smooth variety,  which is proper if $\ttS$ is non-empty. Here, the subset $\ttT$ means some modification on the usual Deligne homomorphism in the definition of $\Sh_{K}(G_{\ttS,\ttT})$ (see \S \ref{S:quaternoinic Shimura varieties}).  The Shimura varieties $\Sh_K(G_{\ttS,\ttT})$  with the same $\ttS$ but different choices of $\ttT$ will have  the same geometry, but the Galois actions on the geometric connected components of $\Sh_{K}(G_{\ttS,\ttT})$ will be different.

 The basic idea  of the constructions of the Goren--Oort cycles is as follows. Recall that there are exactly $g$ divisors, say $Y_1, \dots, Y_{g}$,  in the Goren--Oort stratification (or Ekedahl--Oort stratification) of $X_{ \FF_{p^{g}}}$.   The main result of \cite{tian-xiao1} shows that, if $g\geq 1$, \emph{each  $Y_{i}$ is isomorphic to a $\PP^1$-fibration over $\Sh_K(B_{\ttS_i, \ttT_i})$ with $\ttS_i=\{\infty_i,\infty_{i-1}\}$ and $\ttT_i=\{\infty_i\}$ (if $K^p$ is sufficiently small)}.  Actually, the results of \cite{tian-xiao1} apply to more general quaternionic Shimura varieties.
 For $r=1$, the generalized Goren--Oort cycles of codimension 1 are defined to be these Goren--Oort divisors $Y_i$'s.
If $r\geq 2$,  we  consider the $g-2$ Goren--Oort divisors $Z_{j}$ of $\Sh_K(B_{\ttS_i,\ttT_i})$ for $j\in \{i-2, \dots, i-{g}+1\}$ (See Proposition~\ref{P:GO-fibration}). 
Taking the inverse image of $Z_{j}$ in $Y_i$, we get a codimension 2 cycle $Y_{i,j}$ in $X_{ \FF_{p^{2g}}}$ which is isomorphic to a fibration of twice iteration of $\PP^1$-bundles over some quaternionic Shimura variety of dimension $g-4$. This gives the construction for $r=2$. In the general  case, the  codimension $r$ generalized Goren--Oort cycles on $X_{\overline \FF_p}$ are obtained  by repeating this process $r$ times.

 \begin{example}\label{Ex:introduction}

 (1) When $g=2$, there are  $2$ Goren--Oort divisors $X_1, X_2$ on $X_{ \FF_{p^2}}$, and each $X_{i}$ is isomorphic to a $\PP^1$-bundle over the discrete Shimura set $\Sh_K(B_{\infty, \ttT_{i}}^{\times})_{\overline \FF_p}$ with $\ttT_i=\{\infty_i\}$. We remark that the cycle constructed by Langer in \cite{Langer} is completely contained in (but not equal to) the union $X_1\cup X_2$. 
 
 (2) When $g=3$, there are $3$ Goren--Oort divisors on $X_{ \FF_{p^3}}$, say $Y_1, Y_2, Y_3$.     For $i\in \ZZ/3\ZZ$, each $Y_{i}$ is a $\PP^1$-fibration over $\Sh_{K}(B_{\ttS_{i}, \ttT_{i}})$ as discussed above. 
 
(3) When $g=4$, there are $6$  Goren--Oort cycles  of codimension 2 on $X_{\FF_{p^{4}}}$.
We start with the $4$ Goren--Oort divisors $Y_1,\dots, Y_{4}$  of $X_{ \FF_{p^4}}$. Then for each $i\in \ZZ/4\ZZ$, we have a $\PP^1$-fibration   $\pi_i: Y_i\to \Sh_{K}(B_{\ttS_i,\ttT_{i}})$. On each quaternionic Shimura surface $\Sh_{K}(B_{\ttS_i,\ttT_{i}})$,  there are 2 Goren--Oort divisors, say $Z_{i-2}$ and $Z_{i-3}$,   corresponding to  $\infty_{i-2}$ and $\infty_{i-3}$ respectively.  Then each of $Z_j$ with $j\in \{i-2, i-3\}$ is again  isomorphic to a $\PP^1$-fibration over  the zero-dimensional  Shimura variety $\Sh_K(B_{\infty, \ttT_i})$ with $\ttT_i=\{\infty_{i},\infty_{i-2}\}$. Put $X_{i,j}:= \pi_i^{-1}(Z_j)\subseteq Y_i$.  This is a codimension $2$ cycle on $X_{ \FF_{p^4}}$. In Theorem~\ref{T:GO description}, we will see that $X_{1,3}=X_{3,1}$ and $X_{2,4}=X_{4,2}$, so the 6 Goren--Oort cycles of codimension 2 are exactly $X_{1,3}, X_{2,4}, X_{1, 2}, X_{2, 3}, X_{3,4}, X_{4, 1}$. Note that the geometry of these 6 cycles are not the same:  each irreducible component of $X_{1,3}$ and $X_{2,4}$ are isomorphic to $(\PP^1)^2$, while that of other $4$ Goren--Oort cycles is isomorphic to the $\PP^1$-bundle $\PP(\cO_{\PP^1}(p)\oplus \cO_{\PP^1}(-p))$ over $\PP^1$ (see Example~\ref{Ex:GO cycles}).

After the paper is written, we were informed that when $g=4$, the geometry of these cycles were already known to Yu \cite{yu}, using a different method.
 \end{example}
The best way (so far) to parametrize the generalized Goren--Oort cycles is to use some combinatorial data, called \emph{ periodic semi-meanders} (mostly for the benefit of later computation of the Gysin-restriction matrix).   
A \emph{periodic semi-meander} of $g$ nodes is a graph where $g$ nodes are aligned equidistant on a section of a vertical cylinder, and are either connected pairwise by non-intersecting curves (called \emph{arcs}) drawn above the section, or connected by a straight line (called \emph{semi-lines}) to $+\infty$ at the top of the cylinder.  We use $r$ to denote the number of arcs.
For example, 
$\psset{unit=0.6}
\begin{pspicture}(-0.3,-0.1)(2.8,0.8)
\psset{linewidth=1pt}
\psset{linecolor=red}
\psarc(-0.25,0){0.25}{0}{90}
\psarc(2.75,0){0.25}{90}{180}
\psarc(1.25,0){0.25}{0}{180}
\psline(0.5,0)(0.5,0.75)
\psline(2,0)(2,0.75)
\psset{linecolor=black}
\psdots(0,0)(0.5,0)(1,0)(1.5,0)(2,0)(2.5,0)
\end{pspicture}$
 and
 $\psset{unit=0.6}
\begin{pspicture}(-0.3,-0.1)(2.8,0.8)
\psset{linewidth=1pt}
\psset{linecolor=red}
\psarc(0.75,0){0.25}{0}{180}
\psarc(1.75,0){0.25}{0}{180}
\psbezier{-}(0,0)(0,0.75)(2.5,0.75)(2.5,0)
\psset{linecolor=black}
\psdots(0,0)(0.5,0)(1,0)(1.5,0)(2,0)(2.5,0)
\end{pspicture}$ are both semi-meanders of $6$ points with $r=2$ and $3$ respectively.
An elementary computation shows that there are $\binom gr$ semi-meanders of $g$ nodes with $r$ arcs  for $r \leq \frac g2$. For the detailed discussion, see \ref{S:semi-meanders}.

To each periodic semi-meander $\gotha$ with $g$ nodes and $r$ arcs, one can associate a generalized Goren--Oort cycle $X_{\gotha}$ of codimension $r$ in $X_{ \FF_{p^{2g}}}$. We refer the reader to \S \ref{S:GO cycles} for the  precise definition.  The $g$ nodes  of a periodic semi-meander $\gotha$ correspond to the $g$ archimedean places $\infty_1,\dots, \infty_g$ from the left to the right.  
By construction, $X_{\gotha}$ is an $r$-th iterated $\PP^1$-bundle over the quaternionic Shimura variety $\Sh_K(B_{\ttS_{\gotha}, \ttT_{\gotha}})$, where $\ttS_{\gotha}$ consists of all archimedean places of $F$ corresponding to the end nodes of all $r$ arcs, and $\ttT_{\gotha}$ consists of those corresponding to the \emph{right} ends of the $r$ arcs.
We will denote by 
\[\pi_{\gotha}: X_{\gotha}\to \Sh_K(B_{\ttS_{\gotha}, \ttT_{\gotha}})\] the projection map. 
For instance, when $g=4$ and $r=2$, the cycles $X_{1,3}$ and $X_{2,4}$ in Example~\ref{Ex:introduction} correspond to the semi-meanders 
$\psset{unit=0.6}
\begin{pspicture}(-0.3,-0.1)(1.8,0.8)
\psset{linewidth=1pt}
\psset{linecolor=red}
\psarc(0.75,0){0.25}{0}{180}
\psarc(-0.25,0){0.25}{0}{90}
\psarc(1.75,0){0.25}{90}{180}
\psset{linecolor=black}
\psdots(0,0)(0.5,0)(1,0)(1.5,0)
\end{pspicture}$
 and 
$\psset{unit=0.6}
\begin{pspicture}(-0.3,-0.1)(1.8,0.8)
\psset{linewidth=1pt}
\psset{linecolor=red}
\psarc(0.25,0){0.25}{0}{180}
\psarc(1.25,0){0.25}{0}{180}
\psset{linecolor=black}
\psdots(0,0)(0.5,0)(1,0)(1.5,0)
\end{pspicture}$, and the other $4$ cycles $X_{1, 2}, X_{2, 3}, X_{3,4}, X_{4, 1}$ correspond respectively to the semi-meanders 
\[\psset{unit=0.6}
\begin{pspicture}(-0.3,-0.1)(1.8,0.8)
\psset{linewidth=1pt}
\psset{linecolor=red}
\psarc(-0.25,0){0.25}{0}{90}
\psbezier{-}(-1,0)(-1,0.75)(0.5,0.75)(0.5,0)
\psarc(1.75,0){0.25}{90}{180}
\psbezier{-}(1,0)(1,0.75)(2.5,0.75)(2.5,0)
\psset{linecolor=black}
\psdots(0,0)(0.5,0)(1,0)(1.5,0)
\end{pspicture},
\quad
\begin{pspicture}(-0.3,-0.1)(1.8,0.8)
\psset{linewidth=1pt}
\psset{linecolor=red}
\psarc(0.25,0){0.25}{0}{180}
\psbezier{-}(-0.5,0)(-0.5,0.75)(1,0.75)(1,0)
\psbezier{-}(1.5,0)(1.5,0.75)(3,0.75)(3,0)
\psset{linecolor=black}
\psdots(0,0)(0.5,0)(1,0)(1.5,0)
\end{pspicture},
\quad 
\begin{pspicture}(-0.3,-0.1)(1.8,0.8)
\psset{linewidth=1pt}
\psset{linecolor=red}
\psarc(0.75,0){0.25}{0}{180}
\psbezier{-}(0,0)(0,0.75)(1.5,0.75)(1.5,0)
\psset{linecolor=black}
\psdots(0,0)(0.5,0)(1,0)(1.5,0)
\end{pspicture},
\quad
\begin{pspicture}(-0.3,-0.1)(1.8,0.8)
\psset{linewidth=1pt}
\psset{linecolor=red}
\psarc(1.25,0){0.25}{0}{180}
\psbezier{-}(-1.5,0)(-1.5,0.75)(0,0.75)(0,0)
\psbezier{-}(0.5,0)(0.5,0.75)(2,0.75)(2,0)
\psset{linecolor=black}
\psdots(0,0)(0.5,0)(1,0)(1.5,0)
\end{pspicture},
\]
 
 \subsection{Main theorem revisited}
We now describe the main results of this paper. We consider a regular multiweight $(\underline k, w)\in \ZZ^{g+1}$ with $\underline k = (k_1, \dots, k_g)$, that is a collection of integers such that $k_i \geq 2$ and $k_i \equiv w \mod 2$. There is an automorphic \'etale  local system $\calL^{(\uk, w)}$ on $X$, which is a lisse $\overline \QQ_{\ell}$-sheaf of rank $\prod_{i=1}^{g}(k_i-1)$ pure of Deligne weight $g(w-1)$ (see \S \ref{S:automorphic sheaves}). 
We fix a cuspidal automorphic representation $\pi=\pi^{\infty} \otimes \pi_{\infty}$ of $\GL_2(\AAA_F)$ associated to a  holomorphic Hilbert modular forms of  weight $(\underline k, w)$ such that $\pi^{\infty, K}\neq 0$. 
Let $\rho_{\pi}$ be the Galois representation attached to $\pi$, and let $\As(\rho_{\pi})=\otimes\Ind_{\Gal_{\QQ}}^{\Gal_F}(\rho_{\pi})$ be the Asai representation of $\rho_{\pi}$.  
Let $\calH_{K^p}:=\overline \QQ_{\ell}[K^p\backslash \GL_2(\AAA_{F}^{\infty,p})/K^p]$ denote the prime-to-$p$ Hecke algebra, and $\pi^{\infty,p}$ be the prime-to-$p$ part of $\pi$. We put
\[
H^g_\et\big(X_{\overline \FF_p}, \calL^{(\underline k, w)}\big)[\pi]: = \Hom_{\calH_{K^p}}\big((\pi^{\infty, p})^{K^p}, H^g_\et(X_{\overline \FF_p}, \calL^{(\uk,w)})\big).\footnote{Note that by strong multiplicity one theorem, one has an isomorphism $\Hom_{\calH_{K^p}}\big(\pi^{\infty, p, K^p}, H^g_\et(X_{\overline \FF_p}, \calL^{(\uk,w)})\big)\cong \Hom_{\calH_K}\big(\pi^{\infty, K}, H^g_\et(X_{\overline \FF_p}, \calL^{(\uk,w)})\big)$}
\]
According to \cite{brylinski-labesse},  the $\Gal_{\FF_p}$-module $ H^g_\et\big(X_{\overline \FF_p}, \calL^{(\uk,w)}\big)[\pi]$ has the same semi-simplification  as
 $$ \As(\rho_{\pi})|_{\Gal_{\FF_p}}=\otimes\Ind_{\Gal_{\FF_{p^g}}}^{\Gal_{\FF_{p}}}(\rho_{\pi}|_{\Gal_{\FF_p^{g}}}).
$$

 Fix an integer $r$ with $1\leq r\leq  g/2$. We denote by $\gothB^r_{\emptyset}$  the set of periodic semi-meanders of $g$ nodes and $r$ arcs. 
 As explained above, for each $\gotha\in \gothB^r_{\emptyset}$, one has a generalized Goren--Oort cycle $X_{\gotha}$ in $X_{ \FF_{p^{2g}}}$ of codimension $r$, which admits an $r$-th iterated $\PP^1$-bundle morphism $\pi_{\gotha}: X_{\gotha}\to \Sh_K(B_{\ttS_\gotha, \ttT_{\gotha}})$. 
 One can also define an automorphic \'etale local system $\calL^{(\uk,w)}_{\ttS_{\gotha}, \ttT_{\gotha}}$ on $\Sh_K(B_{\ttS_\gotha, \ttT_{\gotha}})$ (see \S \ref{S:automorphic sheaves}), which is compatible with the local system  $\calL^{(\uk,w)}$ on $X$ in the sense that $\pi_{\gotha}^*\calL^{(\uk,w)}_{\ttS_\gotha,\ttT_\gotha}\cong \calL^{(\uk,w)}|_{X_{\gotha}}$. When $(\uk,w)=(2,\dots, 2)$,  both $\calL_{\ttS_{\gotha}, \ttT_{\gotha}}^{(\uk, w)}$ and $\calL^{(\uk, w)}$ are the constant sheaf $\overline \QQ_{\ell}$.
 We consider  the  composite  map 
 \[
 \mathrm{Gys}_{\gotha}: H^{g-2r}_\et\big(\Sh_K(B_{\gotha})_{\overline \FF_p}, \calL^{(\uk,w)}_{(\ttS_{\gotha}, \ttT_{\gotha})}\big)\xra{\pi^*_{\gotha}} H^{g-2r}_\et\big(X_{\gotha,\overline \FF_p}, \calL^{(\uk,w)}|_{X_{\gotha}}\big)\xra{\mathrm{Gysin}} H^g_\et\big(X_{\overline \FF_p}, \calL^{(\uk,w)}(r)\big),
 \]
 where the second arrow is the Gysin map.
 Since the construction of $X_{\gotha}$ is compatible with prime-to-$p$ Hecke correspondence, $\Gys_{\gotha}$ is equivariant under the action by $\calH_{K^p}$.
 The main result of this paper is the following.
    \begin{theorem}[Theorem~\ref{T:Tate}]
\label{T:introduction-weight cycles}
Let $\alpha,\beta$ denote the two eigenvalues of $\rho_{\pi}(\Frob_{p^{g}})$.
Consider the map induced by the direct sum of Gysin maps
\begin{equation}
\label{E:Gysin map}
\Gys: \bigoplus_{\gotha\in \gothB^r_{\emptyset}} H^{g-2r}_\et\big(\Sh_K(B_{\ttS_\gotha, \ttT_{\gotha}})_{\overline \FF_p}, \calL^{(\uk,w)}_{\ttS_\gotha, \ttT_{\gotha}}\big)[\pi] \xra{\sum_{\gotha}\Gys_{\gotha}} H^{g}_\et\big(X_{\overline \FF_p}, \calL^{(\underline k, w)}(r)\big)[\pi]
\end{equation}
 on the $\pi$-isotypic components.
Then the following statements hold:
\begin{enumerate}
\item If $\alpha\neq\beta$, then the morphism $\Gys$ is injective. 
\item If $\alpha / \beta$ is not a $2n$-th root of unity for any $n \leq g$,\footnote{The reason that we have $2n$-th (as opposed to $n$-th) root of unity here is purely technical. See Remark~\ref{Remark after the theorems}(3).} then $\Gys$ induces an isomorphism when restricted to the generalized eigenspaces of $\Frob_{p^{2g}}$ on both source and target with eigenvalues $\alpha^{2r}\beta^{2(g-r)}/p^{2rg}$.
\end{enumerate}
\end{theorem}
This Theorem can be viewed as some sort of geometric Jacquet--Langlands transfer  from the quaternionic  Shimura varieties $\Sh_K(B_{\ttS_\gotha, \ttT_{\gotha}})$'s to $X$.

For the applications to the Tate conjecture, we assume that $g$ is even. Then all periodic semi-meander $\gotha$ with $g$ nodes and $\frac g2$ arcs, we have $\ttS_{\gotha}=\infty$, and the Goren--Oort cycle $X_{\gotha,\overline \FF_p}$ is a collection of $(g/2)$-th iterated $\PP^1$-bundles parametrized by the common discrete  Shimura set\footnote{Our previous notation for this Shimura set should be $\Sh_K(B_{\infty,\ttT_{\gotha}})_{\overline \FF_p}$. Since they are all canonically isomorphic for all $\gotha$, we omit $\ttT_\gotha$ from the notation.}  
  \begin{equation}
  \Sh_K(B_{\infty})_{\overline \FF_p}=  B^{\times}_{\infty}\backslash (B_{\infty}\otimes_F\AAA_{F}^{\infty} )^{\times}/K
\end{equation}
Applying Theorem~\ref{T:introduction-weight cycles} to the case $(\uk,w)=(2,\dots, 2)$ gives Theorem~\ref{T:introduction Tate}.

\subsection{Overview of the proof of Theorem~\ref{T:introduction-weight cycles}}
We consider the restriction map 
\[
\Res_{\gotha}: H^{g}_\et\big(X_{\overline \FF_p}, \calL^{(\underline k, w)}(r)\big)[\pi]\to H^{g}_\et\big(X_{\gotha, \overline \FF_p}, \calL^{(\underline k, w)}|_{X_\gotha}(r)\big)[\pi]\xra{\Tr_{\pi_{\gotha}},\cong} H^{g-2r}_\et\big(\Sh_K(B_{\ttS_\gotha, \ttT_\gotha})_{ \overline \FF_p}, \calL^{(\underline k, w)}_{\ttS_\gotha, \ttT_\gotha}\big)[\pi],
\]
where the second map is the trace isomorphism. We get thus a composite map 
\begin{equation}\label{E:intersection-matrix}
\xymatrix{
\bigoplus_{\gothb\in \gothB^r_{\emptyset}} H^{g-2r}_\et\big(\Sh_K(B_{\ttS_\gothb, \ttT_\gothb})_{\overline \FF_p}, \calL^{(\uk,w)}_{\ttS_\gothb,\ttT_\gothb}\big)[\pi] \ar[r]^-{\Gys} & H^{g}_\et\big(X_{\overline \FF_p}, \calL^{(\underline k, w)}(r)\big)[\pi]\ar[d]^{\Res:=\oplus_{\gotha}\Res_{\gotha}}\\
&\bigoplus_{\gotha\in \gothB^r_{\emptyset}}H_\et^{g-2r}\big(\Sh_K(B_{\ttS_\gotha, \ttT_\gotha})_{\overline \FF_p}, \calL^{(\uk,w)}_{\ttS_\gotha, \ttT_\gotha}\big)[\pi].}
\end{equation}
When $(\uk, w)$ is of parallel weight 2, this is essentially the intersection matrix of the cycles $X_{\gotha}$'s in $X_{\overline \FF_p}$. The upshot is  that each ``matrix  entry'' $\Res_{\gotha}\circ\Gys_{\gothb}$ can be read off from the periodic semi-meanders $\gotha$ and $\gothb$ (see Theorem~\ref{T:intersection combinatorics}), and the  determinant of the intersection matrix is closely  related to the determinant of the Gram matrix of the link representation of  periodic Templey--Lieb algebras, which has been computed in  \cite{xxz-model}. Using this result, one can compute explicitly the determinant of  $\Res\circ \Gys$, which does not vanish as long as $\alpha\neq \beta$.   Theorem~\ref{T:introduction-weight cycles}(1) follows immediately, and statement (2) is obtained from (1) plus a direct computation of the dimensions of the generalized eigenspaces of $\Frob_{p^{2g}}$ with the given eigenvalue. 

\begin{example}\label{Ex:intersection-matrix}
(1) If  $g=2$ and  $r=1$,   the intersection matrix $(\Res_{\gotha}\circ\Gys_{\gothb})_{\gotha,\gothb\in \gothB^1_{\emptyset}}$  (under certain basis)  writes as 
\[
\begin{pmatrix}
-2p & \alpha+\beta\\
p^2\frac{\alpha+\beta}{\alpha\beta} & -2p
\end{pmatrix},
\]
whose determinant is $p^2(\alpha-\beta)^2/(\alpha\beta)$. 

(2) Assume $g=3$ and $r=1$. Even though the Shimura varieties $\Sh_K(G_{\ttS_{\gotha}, \ttT_\gotha})$ for $\gotha\in \gothB_{\emptyset}^1$ are not exactly the same, but  we have an isomorphism (see Proposition~\ref{P:cohomology of sh appendix})  
\[
H^1_\et\big(\Sh_K(B_{\ttS_{\gotha}, \ttT_{\gotha}})_{\overline \FF_p}, \overline\QQ_{\ell}\big)[\pi]\cong [\rho_{\pi}\otimes\det(\rho_{\pi})(1)]|_{\Gal_{\FF_p}}
\]
 for  $\gotha\in \gothB^1_{\emptyset}$ as $\Gal_{\FF_{p^3}}$-modules if $\alpha\neq \beta$.  The intersection matrix  (under a suitable basis)  is  
 \[
 \begin{pmatrix}
 -2p & \eta^{-1} & \eta\\
 \eta &-2p &\eta^{-1}\\
 \eta^{-1} & \eta &-2p
 \end{pmatrix},
 \]
 where $\eta$ is some operator which acts as scalar multiplication by $(\alpha/\beta)^{1/3}$ (resp. by $(\beta/\alpha)^{1/3}$) on the  eigenspace of $\Frob_{p^3}=\alpha\beta^2/p^3$ (resp. $\Frob_{p^3}=\alpha^2\beta/p^3$) in $[\rho_{\pi}\otimes\det(\rho_{\pi})(1)]|_{\Gal_{\FF_p}}$. The determinant of the above matrix is $- p^3(\alpha-\beta)^2/(\alpha\beta)$. 
 \end{example}

\if false
\bigskip
This paper is the third in a series, in which we study the Goren--Oort stratification of quaternionic Shimura varieties  associated to a totally real field $F$ in  characteristic $p$ and its applications.
In the first paper \cite{tian-xiao1}, we gave a global description of each stratum, in terms of a $\PP^1$-product bundle over another quaternionic Shimura variety.
In  \cite{tian-xiao2}, we apply this result to prove the classicality of overconvergent Hilbert modular forms of small slopes.
The aim of this paper is to investigate the Goren--Oort strata by viewing them as special cycles on the special fiber of quaternionic Shimura varieties. 
 This leads to a construction of algebraic cycles of middle codimension on the special fiber of quaternionic Shimura varieties, which,  under genericity conditions, coincide with  the prediction by  the Tate conjecture over finite fields.  We start by explaining the underlying philosophy by some examples.

 

\subsection{Hilbert modular surface}
Let $F$ be a real quadratic field and $p>2$ be a prime number that is inert in $F/\QQ$. Let $\AAA^{\infty}_F$ be the ring of finite adeles of $F$, and $K\subset \GL_2(\AAA_F^\infty)$ be an open compact subgroup hyperspecial at $p$. We consider the Hilbert modular variety $\calX$ of level $K$; it admits a smooth integral model $\calX$ over $\ZZ_{(p)}$, with $X$ as its special fiber.
Fix a prime number $\ell \neq p$.
The main result of Brylinski and Labesse \cite{brylinski-labesse} says that, up to Frobenius semisimplification, the cuspidal part of the $\ell$-adic \'etale cohomology of $X$ is given as follows
\[
H^2_\et(X_{\overline \FF_p}, \overline \QQ_\ell)_\cusp \cong \bigoplus_\pi (\pi^\infty)^K \otimes \rho_\pi|_{\Gal(\overline \FF_p/ \FF_{p^2})}^{\otimes 2},
\]
where the direct sum is taken over all cuspidal automorphic representations whose archimedean components are discrete series  of parallel weight 2 and whose $p$-component is unramified, and $\rho_\pi$ is the Galois representation associated to $\pi$. In particular, the representation $\rho_\pi$ is unramified at $p$; so the tensorial induction is just simply a self tensor product.

We fix an automorphic cuspidal representation $\pi$ of $\GL_2(\AAA_F)$ as above. We write   $H^2_\et(X_{\overline \FF_p}, \overline \QQ_\ell)_\pi:=(\pi^{\infty})^K\otimes \rho_{\pi}|_{\Gal(\Fpb/\FF_{p^2})}^{\otimes 2}$ for the $\pi$-isotypical component.
Let $\Frob_{p^2}\in \Gal(\Fpb/\FF_{p^2})$ denote the geometric Frobenius element. 
We assume that
\begin{itemize}
\item
the two  eigenvalues $\alpha$ and $\beta$ of $\rho_\pi(\Frob_{p^2})$ are distinct,
\item
$(\pi^\infty)^K$ is one-dimensional so that $H^2_\et(X_{\overline \FF_p}, \overline \QQ_\ell)_\pi \cong \rho_{\pi}|_{\Gal(\Fpb/\FF_{p^2})}^{\otimes 2}$, and
\item
the central character of $\pi$ is trivial.
\end{itemize}
Here, the first condition is an essential hypothesis, whereas the last two conditions are made to simplify the discussion.
The action of $\Frob_{p^2}$ on $H^2_\et(X_{\overline \FF_p}, \overline \QQ_\ell)_\pi$ has (generalized) eigenvalues $\alpha^2, \beta^2$, and $\alpha\beta = p^2$ which has multiplicity two.  In other words, $H^2_\et(X_{\overline \FF_p}, \overline \QQ_\ell(1))_\pi$ has a two-dimensional subspace on which the $p^2$-Frobenius acts trivially (or more rigorously speaking, unipotently).  According to the prediction of the famous Tate Conjecture, this subspace should be generated by  cycle classes of $X$ defined over $\FF_{p^2}$.

The main theorem of this paper shows that the cycle classes of Goren--Oort strata (\cite{goren-oort}) of $X$ span this two dimensional subspace of $H^2_\et(X_{\overline \FF_p}, \overline \QQ_\ell(1))_\pi$, and hence we  verify the Tate Conjecture in this setting by explicitly exhibiting cycles in $X$.  We explain this with some details now.  Recall that the main result of the first paper in this series (\cite[Theorem~1.5.1]{tian-xiao1}) implies that there are two collections $X_1, X_2$ of $\PP^1$'s on $X$ parameterized by the discrete Shimura variety $\Sh_{\infty_1, \infty_2}$  associated to the quaternion algebra $B_{\infty_1, \infty_2}$ over $F$, which ramifies exactly at the two archimedean places of $F$,  of the same level $K$.\footnote{As pointed out by the referee, the existence of two families of $\PP^1$ on Hilbert modular surfaces was first observed by Pappas in his Ph.D. thesis.}
From this, we get a natural homomorphism
\begin{equation}
\label{E:cycle map HMsurface}
\bigoplus_{i=1}^2 H^0_\et(\Sh_{\infty_1, \infty_2, \overline \FF_p}, \overline \QQ_\ell) \xrightarrow{\cong}
\bigoplus_{i=1}^2 H^0_\et(X_{i, \overline \FF_p}, \overline \QQ_\ell) \xrightarrow{\mathrm{Gysin}}
H^2_\et(X_{\overline \FF_p}, \overline \QQ_\ell(1))^{\mathrm{Frob}_{p^2} =1},
\end{equation}
where the second morphism is the Gysin map associated to the closed immersion $X_{i,\overline \FF_p}\hra X_{\overline\FF_p} $ defined in \eqref{E:Gysin-general}.\footnote{If we pretend that $X$ is proper, then the Gysin map is dual to the restriction map $H^2_\et(X_{\overline \FF_p}, \overline \QQ_\ell) \to H^2_\et(X_{i, \overline \FF_p}, \overline \QQ_\ell)$ under the Poincar\'e dualities on $X$ and $X_i$.}
All homomorphisms are  equivariant for the prime-to-$p$ Hecke actions. 
 By Jacquet-Langlands correspondence, $(\pi^{\infty})^K$ appears on each term of the left hand side of \eqref{E:cycle map HMsurface} with multiplicity one.
Taking the $\pi$-isotypical parts (or more precisely the $(\pi^\infty)^K$-isotypical parts) of  \eqref{E:cycle map HMsurface} gives rise to a homomorphism
\[
\bigoplus_{i=1}^2 \overline \QQ_\ell \xrightarrow{\mathrm{Gysin}} H^2_\et(X_{\overline \FF_p}, \overline \QQ_\ell(1))_\pi^{\mathrm{Frob}_{p^2} =1}.
\]
Our main result of this paper says that this is an isomorphism.  To show this, we consider    the natural restriction map:
\[
H^2_\et(X_{\Fpb}, \Qlb(1))\xrightarrow{\mathrm{res}} \bigoplus_{i=1}^2 H^2_\et(X_{i,\Fpb},\Qlb(1))\cong\bigoplus_{i=1}^2 H^0_\et(\Sh_{\infty_1,\infty_2, \Fpb},\Qlb).
\] 
Its composition with \eqref{E:cycle map HMsurface} gives an endomorphism of $\bigoplus_{i=1}^2 H^0(\Sh_{\infty_1,\infty_2, \Fpb},\Qlb)$. 
 The key point is to show that the projection to the $\pi$-isotypical component of this endomorphism is given by 
\[
\begin{pmatrix}
-2p & \alpha + \beta \\ \alpha + \beta & -2p
\end{pmatrix},
\]
whose determinant is equal to $-(\alpha-\beta)^2$.  Here, the entries $\alpha + \beta$ come from the fact that the morphisms mapping the intersection $X_1 \cap X_2$ to $\Sh_{\infty_1, \infty_2}$ using the two $\PP^1$-parametrizations exactly give the Hecke correspondence $T_p$ of $\Sh_{\infty_1, \infty_2}$,\footnote{In general, we arrive at certain twist of the Hecke correspondence $T_p$. But the twist disappears under the assumption on the trivialness of the central character. } and hence we see the evaluation of $T_p$ at $\pi$, which is $\alpha + \beta$.   This matrix should be viewed as the $\pi$-projection of the intersection matrix of the Goren--Oort strata $X_1$ and $X_2$. 
 From this, we see that when $\alpha \neq \beta$, these Goren--Oort strata give rise to all Tate cycles in the $\pi$-isotypical component of the special fiber of the Hilbert modular surface.

In contrast, if  $\alpha = \beta$, $H^2_\et(X_{\overline \FF_p}, \overline \QQ_\ell(1))_\pi$ is expected to have four dimensional Tate classes.  However, the image of the classes of Goren--Oort strata in $H^2_\et(X_{\overline \FF_p}, \overline \QQ_\ell(1))_\pi$ probably only contributes to a one-dimensional subspace (see Example~\ref{Ex:alpha=beta} for the discussion).

Kartik Prasanna suggested to us that one might be able to obtain finer information when $\alpha = \beta\in \{p, -p\}$. In this case, if we consider the action of $\Frob_p$ on $H^2_\et(X_{\overline \FF_p}, \overline \QQ_\ell(1))_\pi$ (as opposed to the $\Frob_{p^2}$-action), the eigenvalues are $\alpha$ with multiplicity three and $-\alpha$ with multiplicity one.
One can show that the image of the Goren--Oort strata contains (and is expected to be equal to) the $(-\alpha)$-eigenspace.
It is the $\alpha$-eigenspace that is ``larger than usual", which causes the cycle map to be zero.\footnote{One should compare this for example with the Gross--Zagier formula in the case when the elliptic curve has rank $\geq 2$. In that case, the Mordell--Weil rank is ``larger than usual". So the Heegner point is ``unwilling to pick out a canonical rank one subgroup of the Mordell--Weil group", and hence has to be torsion.
For the same philosophy, in our case, the cycle class from the Goren--Oort strata is expected to be zero in the ``unusually large" $(-\alpha)$-eigenspace.}
See also Remark~\ref{Ex:alpha=beta}.

\subsection{Hilbert modular four-folds}
As one tries to generalize the result above to higher dimensional cases, one finds quickly that the Goren--Oort stratification does not provide enough interesting cycles.
Take a Hilbert modular four-fold $\calX$ as an example, where the totally real field $F$ defining it has degree $4$ and $p$ is inert in $F/\QQ$, and the level structure $K \subseteq \GL_2(\AAA_F^\infty)$ is hyperspecial at $p$.  Hence $\calX$ has an integral model over $\ZZ_{(p)}$, and let $X$ denote its special fiber.
Let $\pi$ be a cuspidal automorphic representation of $\GL_2(\AAA_F)$ with trivial central character whose archimedean components are discrete series of parallel weight $2$ and whose $p$-adic component is unramified.
 We assume again that $(\pi^\infty)^K$ is one-dimensional. 
Then the $\pi$-isotypical component $H^4_\et(X_{\overline \FF_p}, \overline \QQ_\ell)_\pi$ is isomorphic to $\rho_\pi|_{\Gal(\overline \FF_p / \FF_{p^4})}^{\otimes 4}$ up to Frobenius semisimplification.
Let $\alpha$ and $\beta$ be the (generalized) eigenvalues of $\rho_\pi(\Frob_{p^4})$ so that  $\alpha\beta=p^4$. The essential hypothesis in this case is that $\alpha/\beta \neq \pm 1$.
The same computation as before shows that the multiplicity of $\Frob_{p^4}$-eigenvalue $1$ in $H^4_\et(X_{\overline \FF_p}, \overline \QQ_\ell(2))_\pi$ is $\binom 42 =6$.
As a result, we should expect 6 collections of certain ($2$-dimensional) strata of $X$ which contribute to this $6$-dimensional subspace.

We recall   the following description of the Goren--Oort strata in  \cite[\S 1.5.3]{tian-xiao1}.  There are indeed 6 collections of two-dimensional Goren--Oort strata $X_{ij}$ for $\{i,j\} \subset \{0, \dots, 3\}$.  Unfortunately, only two of them $X_{02}$ and $X_{13}$ contribute to the correct Tate classes; they are $(\PP^1)^2$-bundles parametrized by the discrete Shimura varieties for the quaternion algebra $B_{\infty_0, \dots, \infty_3}$ over $F$ (which ramifies exactly at all archimedean places). 
Other strata are $\PP^1$-bundles over certain Shimura curves, and they  do not contribute to any Tate cycles in  $H^4_\et(X_{\overline \FF_p}, \overline \QQ_\ell(2))_\pi$, since Shimura curves do not have interesting $H^0$'s.

To get enough strata that contribute to the Tate cycles of $H^4_\et(X_{\overline \FF_p}, \overline \QQ_\ell(2))_\pi$, we need to look at all codimension one strata $X_0, \dots, X_3$ first.  Each $X_i$ is a $\PP^1$-bundle over the Shimura surface for the quaternion algebra $B_{\infty_i, \infty_{i-1}}$.  If we consider the Goren--Oort stratification of the Shimura surface for $B_{\infty_i, \infty_{i-1}}$ and take the corresponding $\PP^1$-bundle, we will get two $2$-dimensional subvarieties $X_{i,1}$ and $X_{i,2}$ of $X_i$.  In fact one of them is equal to a Goren--Oort stratum (either $X_{02}$ or $X_{13}$) and the other one is a completely new collection of subvarieties of $X$, which is a family of $\PP^1$-bundles over $\PP^1$ parametrized by the discrete Shimura variety for $B_{\infty_0, \dots, \infty_3}$. 
We call them \emph{(generalized) Goren--Oort cycles}.
In fact, each irreducible component is isomorphic to $\PP(\calO_{\PP^1}(p) \oplus \calO_{\PP^1}(-p))$ (see Example~\ref{Ex:GO cycles}). One can characterize these new subvarieties by looking at the $p^2$-torsion of the universal abelian variety.
To sum up, we obtain $4$ new two-dimensional subvarieties.  Our main theorem says that they together with the two Goren--Oort strata, span the $6$-dimensional Tate classes in  $H^4_\et(X_{\overline \FF_p}, \overline \QQ_\ell(1))_\pi$.\footnote{We recently learned that the existence of these $6$ collections of two-dimensional subvarieties of $X$ (which form the supersingular locus of $X$) were previously known by Chia-Fu Yu \cite{yu}.}

More generally, we prove the following.
\begin{theorem}[corollary of Theorem~\ref{T:Tate}]
\label{T:introduction-Tate}
Let $F$ be a totally real field of \emph{even} degree $g=[F:\Q]$,  and let $p$ be a rational prime inert in $F$. 
 Let $X$ denote the special fiber over $\FF_{p^g}$ of the Hilbert modular variety of some level $K\subseteq \GL_2(\AAA_F^\infty)$ hyperspecial at $p$. 
  Fix an automorphic representation $\pi$ of $\GL_2(\AAA_F)$ whose archimedean components are all discrete series of  weight $2$ (the lowest weight) with trivial central character. 
Suppose that $\pi_p$ is an unramified principal series whose two Satake parameters do not differ by an $n$-th root of unity for any $n \leq g/2$. 

Assume that $(\pi^\infty)^K$ is one-dimensional. 
  Then Tate Conjecture holds for the $\pi$-isotypical component $H^g_\et(X_{\overline \FF_p}, \overline \QQ_\ell(\frac g2))_\pi$. 
  More precisely,   the generalized invariant subspace of  $H^g_\et(X_{\overline \FF_p}, \overline \QQ_\ell(\frac g2))_\pi$ under the action of $\Gal(\Fpb/\FF_{p^g})$ has dimension  $\binom{g}{g/2}$, and it is generated by the cycle classes of   Goren--Oort cycles on $X$.
\end{theorem}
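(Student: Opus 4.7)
The plan is to generalize the argument sketched in the introduction for $g=2$ and $g=4$. First, I would verify the dimension count on the Galois side. Up to Frobenius semisimplification, the $\pi$-isotypical component $H^g_\et(X_{\overline{\FF}_p}, \overline{\QQ}_\ell)_\pi$ is isomorphic to $\rho_\pi|_{\Gal(\overline{\FF}_p/\FF_{p^g})}^{\otimes g}$ (this should follow from the Eichler--Shimura-type description used by Brylinski--Labesse in the surface case, extended to higher dimension). Writing $\alpha,\beta$ for the two eigenvalues of $\rho_\pi(\Frob_{p^g})$ with $\alpha\beta = p^g$, the eigenvalues of $\Frob_{p^g}$ on $H^g_\et(X_{\overline{\FF}_p}, \overline{\QQ}_\ell(g/2))_\pi$ are $(\alpha/\beta)^{i-g/2}$ with multiplicity $\binom{g}{i}$ for $i=0,\ldots,g$. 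The hypothesis that $\alpha/\beta$ is not an $n$-th root of unity for any $n \le g/2$ forces the generalized $1$-eigenspace to come only from $i = g/2$ and thus to have dimension exactly $\binom{g}{g/2}$.

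Next, I would construct $\binom{g}{g/2}$ generalized Goren--Oort cycles of codimension $g/2$ in $X$, by iterating the description of Goren--Oort strata from \cite[Theorem~1.5.1]{tian-xiao1}. That result says each codimension-one Goren--Oort stratum of $X$ is a $\PP^1$-bundle over a Shimura surface attached to a quaternion algebra ramified at two new archimedean places. Applying the construction again inside that Shimura surface and pulling back along the $\PP^1$-bundle produces codimension-two cycles in $X$; iterating $g/2$ times yields cycles that are $(\PP^1)^{g/2}$-bundles (more precisely, iterated $\PP^1$-bundles of the form $\PP(\calO(p)\oplus\calO(-p))$ at each stage) over the discrete Shimura variety $\Sh_B$ attached to the totally definite quaternion algebra $B_{\infty_0,\ldots,\infty_{g-1}}$. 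Indexing the iteration by the successive choices of consecutive pairs of embeddings, one gets exactly $\binom{g}{g/2}$ isomorphism classes of such cycles, matching the predicted dimension.

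The Gysin map from the direct sum of the $H^0_\et(\Sh_{B,\overline{\FF}_p}, \overline{\QQ}_\ell)$ (one summand per cycle) into $H^g_\et(X_{\overline{\FF}_p}, \overline{\QQ}_\ell(g/2))$ lands in the Tate part and is Hecke-equivariant. By Jacquet--Langlands the $(\pi^\infty)^K$-isotypical line appears once in each summand, so passing to the $\pi$-component gives a map $\overline{\QQ}_\ell^{\binom{g}{g/2}} \to H^g_\et(X_{\overline{\FF}_p}, \overline{\QQ}_\ell(g/2))_\pi^{\Frob_{p^g}=1}$ between spaces of the same dimension. To prove it is an isomorphism, I would, as in the surface case, compose with the restriction map back to the cohomology of the cycles to obtain a self-endomorphism of $\overline{\QQ}_\ell^{\binom{g}{g/2}}$ (the intersection matrix of the generalized Goren--Oort cycles, projected onto $\pi$) and show its determinant is nonzero.

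The main obstacle is this last step: computing the intersection matrix in the $\pi$-isotypical part and showing the determinant is nonzero under the Satake hypothesis. This requires (a) determining, for any two generalized Goren--Oort cycles, the scheme-theoretic intersection and identifying each resulting correspondence on $\Sh_B$ with an explicit (possibly twisted) Hecke operator at $p$, building on the fact that in the surface case the two $\PP^1$-parametrizations of $X_1\cap X_2$ realize $T_p$; (b) handling self-intersections, which for $\PP(\calO(p)\oplus\calO(-p))$-bundles contribute Euler-class terms of the form $-2p$ as in the surface computation, generalized by the projective bundle formula along the iterated construction; (c) diagonalizing or block-decomposing the resulting combinatorial matrix of Hecke operators acting on $(\pi^\infty)^K$ via the Satake parameters $\alpha,\beta$. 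I expect the determinant to factor as a product of terms of the form $\bigl((\alpha/\beta)^k - 1\bigr)$ for $1 \le k \le g/2$ (times a nonzero power of $p$), which is precisely what the hypothesis on the Satake parameters ensures to be nonzero. Surjectivity of the Gysin map onto the Tate classes, hence the Tate conjecture for this isotypical component, follows.
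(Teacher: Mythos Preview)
Your overall strategy matches the paper's: count dimensions via $\rho_\pi^{\otimes g}$, build $\binom{g}{g/2}$ iterated Goren--Oort cycles, and show the Gysin--restriction intersection matrix on the $\pi$-part is invertible. You also correctly flag the intersection matrix computation as the crux. However, there are two genuine gaps in your plan for that step.

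First, your expected shape of the determinant is wrong. In the Tate-cycle case $r=g/2$, the paper shows (Theorems~\ref{T:intersection combinatorics} and~\ref{T:determinant}) that after normalization the intersection matrix is the Gram matrix of \emph{periodic semi-meanders} with the loop variable $T$ specialized to $T_\gothp/p^{g/2}$, and its determinant is $\pm\bigl((\alpha+\beta)^2/(\alpha\beta)-4\bigr)^{t_{g,g/2}}=\pm\bigl((\alpha-\beta)^2/(\alpha\beta)\bigr)^{t_{g,g/2}}$. So nonvanishing requires only $\alpha\neq\beta$, not your conjectured product $\prod_{k\le g/2}\bigl((\alpha/\beta)^k-1\bigr)$. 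The root-of-unity hypothesis enters elsewhere: it guarantees the various Frobenius eigenvalues $\alpha^{2i}\beta^{2(g-i)}$ are distinct, so that the generalized $1$-eigenspace on the target really has dimension $\binom{g}{g/2}$ and not more (see the footnote after the computation of $\det B$ in the proof of Theorem~\ref{T:Tate}).

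Second, and more importantly, you are missing the organizing combinatorial device that makes the intersection matrix computable at all. The paper parametrizes the cycles by periodic semi-meanders on $g$ nodes with $g/2$ arcs (Subsection~\ref{S:semi-meanders}); this is what gives exactly $\binom{g}{g/2}$ cycles and, crucially, reduces each entry $\Res_\gotha\circ\Gys_\gothb$ to a product of $(-2)$-factors (one per contractible loop in the diagram $D(\gotha,\gothb)$), powers of $T_\gothp/p^{g/2}$ (one per non-contractible loop), and an explicit link morphism (Theorem~\ref{T:intersection combinatorics}). The resulting matrix is then recognized as a known Gram matrix from periodic Temperley--Lieb theory, whose determinant was computed in \cite{xxz-model}. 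Without this framework, your steps (a)--(c) amount to an ad hoc case analysis of pairwise intersections of iterated $\PP^1$-bundles that becomes intractable beyond $g=4$; the semi-meander bookkeeping is precisely what turns an impossible bare-hands computation into a closed formula.
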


The condition on the Satake parameters is equivalent to saying that all $\Frob_{p^g}$-eigenvalues on $\rho_\pi^{\otimes g}$ are distinct.  We emphasize once again that this condition is essential.  The contribution from the (generalized) Goren--Oort cycles can be degenerate if the two parameters are the same.  See Remark~\ref{Ex:alpha=beta}.

One might wonder whether this result adds new knowledge on the semi-simplicity of the Frobenius action on the cohomology groups of Hilbert modular varieties. 
Unfortunately, our theorem applies only when the two Frobenius eigenvalues $\alpha$ and $\beta$ are distinct. 
In this case, $\rho_\pi(\Frob_{p^g})$ is already semisimple.
If the tensorial induction of $\rho_\pi$ is an irreducible representation of $\Gal_{\QQ}$ (as opposed to $\Gal_{\FF_{p^g}}$), then the semisimplicity of $\Frob_{p^g}$ on the $\pi$-isotypical component is already known.
 Note also that Nekov\'a\v r \cite{nekovar} proved recently the semi-simplicity of Galois representation appearing in $H^g_\et(\calX_{\overline \QQ}, \overline \QQ_\ell)$, using Eichler--Shimura relations.

\subsection{Framework for Goren--Oort cycles}
The explicit verification of Tate Conjecture is merely an ``extremal" case of the following study of Goren--Oort cycles.
We still start with the Galois  side.
Let $F$ be a totally real field of degree $g$ over $\QQ$ and let $p$ be a prime inert in $F/\QQ$.  We fix a natural number $r \leq g/2$.
We consider a regular multiweight $(\underline k, w)$, that is a collection of integers, where $\underline k = (k_1, \dots, k_g)$ with $k_i \geq 2$ and $k_i \equiv w \mod 2$.

As before, we consider the special fiber $X$ over $\FF_p$ of the Hilbert modular variety, taking the limit over all prime-to-$p$ level structures, but fixing the open compact subgroup at $p$ to be hyperspecial. 
There is an automorphic local system $\calL^{(\underline k, w)}$ over $X$.
We fix a cuspidal automorphic representation $\pi$ of $\GL_2(\AAA_F)$ associated to holomorphic Hilbert modular forms of  weight $(\underline k, w)$.
We focus on the $\pi$-isotypical component  \[H^g_\et(X_{\overline \FF_p}, \calL^{(\underline k, w)})[\pi]: = \Hom_{\GL_2(\AAA_F^{\infty,p})}(\pi^{\infty,p}, H^g_\et(X_{\overline \FF_p}, \calL^{(\underline k, w)})),\] which is (up to Frobenius semisimplification) isomorphic to $\rho_\pi|_{\Gal(\overline \FF_p / \FF_{p^g})}^{\otimes g}$, where $\rho_\pi$ is the Galois representation associated to $\pi$.
Let $\alpha$ and $\beta$ denote the two eigenvalues of $\rho_\pi(\Frob_{p^g})$.  We assume that $\alpha / \beta$ is \emph{not a $2n$th root of unity for $n \leq g$}.\footnote{The reason that we have $2n$th (as opposed to $n$th) root of unity here is purely technical. See Remark~\ref{Remark after the theorems}(3).}

 We consider the $p^g$-Frobenius eigenvalue $\alpha^{g-r}\beta^r$ for some positive integer $r \leq \frac g2$.
The associated generalized eigenspace has dimension $\binom gr$. The case $r=\frac g2$ gives rise to Tate cycles discussed above. 
Our main result of this paper is the following.

\begin{theorem}[Theorem~\ref{T:Tate}]
\label{T:introduction-weight cycles}
Suppose that $\alpha / \beta$ is not a $2n$-th root of unity for $n \leq g$.
There exist $\binom gr$ subvarieties $X_1, \dots, X_{\binom gr}$ (explicitly defined later) of $X_{ \FF_{p^g}}$ with the following properties:
\begin{itemize}
\item[(1)] Each $X_i$ is an $r$-times iterated $\PP^1$-bundle over the special fiber of a Shimura variety\footnote{In fact, we need a slightly funny choice of Deligne homomorphism for these quaternionic Shimura varieties. We refer to the main context of the paper for details.} associated with  a quaternion algebra over $F$ which  ramifies exactly at $2r$ archimedean places.
\item[(2)] The direct sum of the Gysin maps
\begin{equation}
\label{E:Gysin map}
\bigoplus_{i=1}^{\binom gr} H^{g-2r}_\et(X_{i, \overline \FF_p}, \calL^{(\underline k, w)}|_{X_i})[\pi] \longrightarrow H^{g}_\et(X_{\overline \FF_p}, \calL^{(\underline k, w)}(r))[\pi]
\end{equation}
is an isomorphism on the generalized eigenspace for $\Frob_{p^g} = \alpha^{g-r} \beta^r / p^{gr}$.
\end{itemize}
\end{theorem}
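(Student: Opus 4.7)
I would follow the template established by the Hilbert modular surface and four-fold examples in the introduction. First, the $\binom{g}{r}$ subvarieties $X_1, \dots, X_{\binom{g}{r}}$ are constructed by iterating the Goren--Oort bundle description from \cite[Theorem~1.5.1]{tian-xiao1}: one passes to a codimension-one Goren--Oort stratum (a $\PP^1$-bundle over the special fiber of a quaternionic Shimura variety with two extra archimedean ramifications), then recursively takes a codimension-one Goren--Oort stratum of the base and lifts via the bundle. The different orderings of the $r$ chosen pairs of adjacent Frobenius-orbit embeddings produce $\binom{g}{r}$ cycles, each an $r$-times iterated $\PP^1$-bundle $\pi_i\colon X_i \to Y_i$, where $Y_i$ is the special fiber of a Shimura variety attached to a quaternion algebra ramified at $2r$ archimedean places. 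To prove the Gysin map \eqref{E:Gysin map} is an isomorphism onto the generalized eigenspace with eigenvalue $\alpha^{g-r}\beta^r / p^{gr}$, I would compose it with the restriction map
\[
H^g_\et(X_{\Fpb}, \calL^{(\underline k, w)}(r))[\pi] \xrightarrow{\mathrm{res}} \bigoplus_i H^g_\et(X_{i,\Fpb}, \calL^{(\underline k,w)}(r)|_{X_i})[\pi]
\]
and use the projective bundle formula for $\pi_i$ to rewrite the target as $\bigoplus_i H^{g-2r}_\et(Y_{i,\Fpb}, \calL_i)[\pi]$ (up to a twist). By Jacquet--Langlands and the multiplicity-one assumption, each summand is one-dimensional, and the composition $\mathrm{res} \circ \mathrm{Gysin}$ becomes an explicit $\binom{g}{r} \times \binom{g}{r}$ matrix; invertibility of this matrix will imply the theorem.

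\textbf{Computing the matrix and the Frobenius eigenvalue.}
The diagonal entries come from self-intersections $X_i \cdot X_i$ inside $X$. Using the normal bundle computation for the $\PP^1$-bundle $X_i$ (cf.\ Example~\ref{Ex:GO cycles}, where the typical fiber is $\PP(\calO_{\PP^1}(p)\oplus \calO_{\PP^1}(-p))$), these evaluate to explicit monomials in $\pm p$ acting on $H^{g-2r}_\et(Y_i, \calL_i)[\pi]$. Off-diagonal entries for $i \neq j$ arise from the intersection $X_i \cap X_j$; using the two $\PP^1$-bundle parametrizations down to a common quaternionic Shimura variety, this intersection defines a correspondence that, just as in the surface case discussed in the introduction, coincides (up to a central character twist) with the Hecke operator $T_p$ (or an iterated analogue) on the base. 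Its $\pi$-eigenvalue is $\alpha + \beta$ (or appropriate products of such). The Frobenius eigenvalue on the image of the Gysin map is identified in parallel: Frobenius acts on each $H^{g-2r}_\et(Y_i, \calL_i)[\pi]$ via the Galois representation attached to the Jacquet--Langlands transfer of $\pi$ to the $2r$-quaternion algebra, and combined with the Tate twist $(r)$ and the cohomological shift, this produces exactly the eigenvalue $\alpha^{g-r}\beta^r / p^{gr}$.

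\textbf{Invertibility and main obstacle.}
The crux is to show that the resulting $\binom{g}{r} \times \binom{g}{r}$ matrix is invertible under the hypothesis on $\alpha/\beta$. I expect, after indexing rows and columns by $r$-subsets of the $g$ embeddings of $F$ and a suitable change of basis, that the matrix factors as a tensor product (or block-triangular arrangement) of $2\times 2$ blocks of the shape
\[
\begin{pmatrix} -p^{a} & \alpha+\beta \\ \alpha+\beta & -p^{b}\end{pmatrix}
\]
with determinants that reduce to factors of the form $\alpha^i \pm \beta^i$ for $1 \leq i \leq g$; the assumption that $\alpha/\beta$ is not a $2n$-th root of unity for $n \leq g$ rules out vanishing of every such factor. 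The main obstacle is establishing this block-tensor decomposition: one has to track carefully which pairs $(X_i, X_j)$ share a common base variety, compare Jacquet--Langlands transfers across the different quaternion algebras involved, and handle the twists appearing in the Hecke correspondences when the weight is non-parallel. The determinant calculation itself should then be a combinatorial exercise, but its structural identification is where the essential work lies, and it is what dictates the precise root-of-unity condition on $\alpha/\beta$ (as flagged in Remark~\ref{Remark after the theorems}(3)).
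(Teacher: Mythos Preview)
Your overall strategy—compose the Gysin map with restriction and prove the resulting $\binom gr\times\binom gr$ matrix is invertible—is exactly the paper's. The gap is in how you model the matrix.

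You propose to index rows and columns by $r$-subsets of the $g$ embeddings and expect a tensor/block decomposition into $2\times 2$ pieces of the shape $\big(\begin{smallmatrix}-p^a & \alpha+\beta\\ \alpha+\beta & -p^b\end{smallmatrix}\big)$. This does not happen. Already in the four-fold example ($g=4$, $r=2$) the paper points out that four of the six codimension-two Goren--Oort strata $X_{ij}$ are $\PP^1$-bundles over Shimura \emph{curves} and contribute nothing to Tate classes; the six relevant cycles are not the six $2$-subsets but rather are parametrized by \emph{periodic semi-meanders} with $r$ arcs on $g$ nodes (Section~\ref{S:semi-meanders}). The off-diagonal entries are not iterates of $T_p$: for $r<g/2$ they are (after normalization) link morphisms $\eta^\star_{\ttS_\gotha,\ttS_\gothb}$ weighted by $(-2)^{m_0(\gotha,\gothb)}$, and many pairs $(\gotha,\gothb)$ give entries that vanish on the relevant eigenspace (Theorem~\ref{T:intersection combinatorics}(1)). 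No change of basis puts this into tensor-of-$2\times 2$ form.

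The paper's replacement for your block decomposition is the identification of the normalized intersection matrix with the \emph{Gram matrix} $\gothG_\ttS^r$ of the periodic Temperley--Lieb link module, whose determinant $\pm(v^g-v^{-g})^{2t_{d,r}}$ (or $\pm(T^2-4)^{t_{d,d/2}}$ when $r=d/2$) is borrowed from \cite{xxz-model}. The substitution $v^g\mapsto\eta_\univ^\star$ is justified not entry-by-entry but permutation-cycle-by-permutation-cycle in the Leibniz expansion of the determinant: along each cycle the composed link morphisms collapse to a power of the fundamental link $\eta_\ttS$, which Proposition~\ref{P:self composition of eta} ties to $(\alpha/\beta)$. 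This is the structural identification you flag as the main obstacle, and it is genuinely different from—and not accessible via—the block picture you sketch.
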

Theorem~\ref{T:introduction-Tate} is a special (and degenerate) case of this theorem (except for a less restricted condition on $\alpha/\beta$).
We in fact prove in Theorem~\ref{T:Tate} a stronger result than stated here for general quaternionic Shimura varieties.

\begin{remark}
\label{R:union of GO cycle is the Newton stratum}
The name Goren--Oort cycles is slightly misleading. In fact, when $r = g/2$, one can prove that the union of the $X_i$'s considered in Theorem~\ref{T:introduction-weight cycles} is the supersingular locus of $X$.
More generally, as Xinwen Zhu pointed out to us, the union of the $X_i$'s considered in 
Theorem~\ref{T:introduction-weight cycles} is the closure of the \emph{Newton} stratum of the Hilbert modular variety $X$, with slopes $(\frac rg, \dots, \frac rg, \frac{g-r}g, \dots, \frac{g-r}g)$.
\end{remark}
\begin{remark}
In the view of the previous remark, we point out the key philosophy suggested by Theorem~\ref{T:introduction-weight cycles}: \emph{the irreducible components of the Newton strata of the Hilbert modular varieties (generically) contribute to certain Frobenius eigenspace of the cohomology of the Hilbert modular varieties. In particular, the supersingular locus contributes to the Tate classes in the cohomology.
Moreover, the (generic) dimension of the Frobenius eigenspace determines the number of irreducible components in the corresponding Newton stratum.}

We expect this philosophy to continue to hold for more general Shimura varieties, as elaborated in the later joint work of the authors and David Helm \cite{HTX}, as well as the forthcoming joint work of Xinwen Zhu and the second author \cite{XZ}.
\end{remark}

\subsection{Intersection matrix}
Before giving the construction of the subvarieties $X_i$, let us first explain the essential idea in the proof of Theorem~\ref{T:introduction-weight cycles}(2). 
A straightforward computation shows that the generalized eigenspace for $\Frob_{p^g} = \alpha^{g-r} \beta^r / p^{gr}$ on both sides of \eqref{E:Gysin map} have the same dimension.   
 Hence, to prove our Theorem, it is enough to show that  the left vertical homomorphism  in the following diagram is an isomorphism when restricted to the  generalized eigenspace for $\Frob_{p^g} = \alpha^{g-r} \beta^r / p^{gr}$.
\[
\xymatrix@C=60pt{
\displaystyle\bigoplus_{i=1}^{\binom gr} H^{g-2r}_\et(X_{i, \overline \FF_p}, \calL^{(\underline k, w)}|_{X_i})[\pi] \ar[r]^-{\eqref{E:Gysin map}}_{\textrm{Gysin}}
\ar@{-->}[d]& H^{g}_\et(X_{\overline \FF_p}, \calL^{(\underline k, w)}(r))[\pi]
\ar[d]^-{\mathrm{restriction}} \\
\displaystyle\bigoplus_{i=1}^{\binom gr} H^{g-2r}_\et(X_{i, \overline \FF_p}, \calL^{(\underline k, w)}|_{X_i})[\pi]
\ar[r]^-{\cup (c_1(\PP^1))^r}_-{\cong}
&
\displaystyle\bigoplus_{i=1}^{\binom gr} H^{g}_\et(X_{i, \overline \FF_p}, \calL^{(\underline k, w)}|_{X_i}(r))[\pi],
}
\]
In other words, we are reduced to showing that a certain $\binom gr \times \binom gr$-matrix is invertible. When $g =2r$ and $(\underline k, w)$ is of parallel weight $2$, this is the intersection matrix of $X_i$'s.
When both $g$ and $r$ are relatively small, it is not extremely difficult to compute this matrix and its determinant. 
However,  it appears to be a non-trivial problem to prove the invertibility of such a matrix  for general $g$ and $r$.

In this paper, we will show that the matrix can be computed in a completely combinatorial way, and it is related to a version of Gram determinant for periodic semi-meanders, which has been well studied by mathematical physicists \cite{xxz-model, graham-lehrer}. 
 We confess that this potential link with mathematical physics is probably a coincidence.  It only suggests a strong relation to representation theory, to which the mathematics physics problem is also related.   See Remark~\ref{R:quantization}.

\subsection{Goren--Oort cycles: construction}\label{GO-cycles:introduction}
The best way (so far) to parametrize the Goren--Oort cycles is to use periodic semi-meanders (mostly for the benefit of later computation of the Gysin-restriction matrix).  As before, we take $F$ to be a totally real field of degree $g$ in which $p$ is inert.  

A \emph{periodic semi-meander} of $g$ nodes is a graph where $g$ nodes are aligned equidistant on a section of a vertical cylinder, and are either connected pairwise by non-intersecting curves (called \emph{arcs}) drawn above the section, or connected by a straight line (called \emph{semi-lines}) to $+\infty$ on the top of the cylinder.  We use $r$ to denote the number of arcs.
For example, 
$\psset{unit=0.6}
\begin{pspicture*}(-0.3,-0.1)(2.8,0.8)
\psset{linewidth=1pt}
\psset{linecolor=red}
\psarc(-0.25,0){0.25}{0}{90}
\psarc(2.75,0){0.25}{90}{180}
\psarc(1.25,0){0.25}{0}{180}
\psline(0.5,0)(0.5,0.75)
\psline(2,0)(2,0.75)
\psset{linecolor=black}
\psdots(0,0)(0.5,0)(1,0)(1.5,0)(2,0)(2.5,0)
\end{pspicture*}$ and $\psset{unit=0.6}
\begin{pspicture*}(-0.3,-0.1)(2.8,0.8)
\psset{linewidth=1pt}
\psset{linecolor=red}
\psarc(0.75,0){0.25}{0}{180}
\psarc(1.75,0){0.25}{0}{180}
\psbezier{-}(0,0)(0,0.75)(2.5,0.75)(2.5,0)
\psset{linecolor=black}
\psdots(0,0)(0.5,0)(1,0)(1.5,0)(2,0)(2.5,0)
\end{pspicture*}$ are both semi-meanders of $6$ points with $r=2$ and $3$ respectively.
An elementary computation shows that there are $\binom gr$ semi-meanders of $g$ nodes with $r$ arcs ($r \leq \frac g2$).

We label the set of $p$-adic embeddings $\calO_F \hookrightarrow W(\overline \FF_p) = \widehat \ZZ_p^\ur$ by $\tau_1, \dots, \tau_g$ so that $\sigma \circ \tau_i = \tau_{i+1}$, where $\sigma$ is the absolute Frobenius $W(\overline \FF_p) \to W(\overline \FF_p)$ and $\tau_i = \tau_{i \bmod g}$.
Let $X$ denote the special fiber of the Hilbert modular variety.
For each $\overline \FF_p$-point $x \in X$, we denote by  $A_x$  the fiber at $x$ of  the universal abelian scheme.
Let $\calD_x$ denote the covariant Dieudonn\'e module of the $p$-divisible group of $A_x$. It is a free $W(\overline \FF_p)$-module of rank $2g$.  The $\calO_F$-action on $A_x$ induces a 
natural direct sum decomposition
$\calD_x \cong \bigoplus_{i=1}^g \calD_{x, i}$, where $\calD_{x,i}$ is the direct summand of $\calD_x$ on which $\calO_F$ acts  through $\tau_i$.  Each $\calD_{x, i}$ is free of rank two over $W(\overline \FF_p)$.

The Verschiebung induces a $\sigma^{-1}$-semilinear map $V_i: \calD_{x, i+1} \to \calD_{x, i}$.  The image $V_i(\calD_{x, i+1}) / p\calD_{x, i}$ is canonically isomorphic to $\omega_{A_x^\vee, i}$, the $\tau_i$-component of the invariant $1$-differentials on $A_x^\vee$.  The latter is a one-dimensional $\overline \FF_p$-vector space.

We fix $r \leq g/2$.
To each semi-meander $\gotha$ considered above, we associate a subvariety $X_\gotha$ of $X$ whose $\overline \FF_p$-points $x$ are characterized as follows:
\begin{itemize}
\item
For each curve connecting $a$ and $a+d$ for some odd number $d < g$, we require that 
\[
V_a\circ V_{a+1} \circ \cdots \circ V_{a+d} (\calD_{x, a+d+1}) \subseteq p^{(d+1)/2}\calD_{x, a} \textrm{ or equivalently } p^{(d+1)/2}|V_a\circ V_{a+1} \circ \cdots \circ V_{a+d}.
\]
In fact, the inclusion forces an equality.
\end{itemize}
For example, the condition for the semi-meander $\psset{unit=0.6}
\begin{pspicture*}(-0.25,-0.1)(3.25,0.8)
\psset{linewidth=1pt}
\psset{linecolor=red}
\psarc{-}(0.25,0){0.25}{0}{180}
\psarc{-}(1.25,0){0.25}{0}{180}
\psline{-}(2.5,0)(2.5,0.75)
\psbezier{-}(-0.5,0)(-0.5,0.75)(2,0.75)(2,0)
\psbezier{-}(3,0)(3,0.75)(5.5,0.75)(5.5,0)
\psset{linecolor=black}
\psdots(0,0)(0.5,0)(1,0)(1.5,0)(2,0)(2.5,0)(3,0)
\end{pspicture*}$ (with $g =7$) is
\[
p\,|\,V_0V_1, \quad p\,|\, V_2 V_3, \quad\textrm{and } p \,|\,V_6 \frac{V_0V_1}{p} \frac{V_2V_3}{p}V_4.
\]

Iteratively applying (the divisor case of) the main theorem of \cite{tian-xiao1} (see \S 1.5 of {\it loc. cit.}),  it is not difficult to see that $X_\gotha$ is an $r$-times iterated $\PP^1$-bundle over the special fiber of quaternionic Shimura variety\footnote{More rigorously, we have a funny choice of Deligne homomorphism for this quaternionic Shimura variety.} for the quaternion algebra which ramifies at all archimedean places corresponding to those nodes which are linked to arcs.

The main computation of this paper is to show that the Gysin-restriction matrix is roughly given by the natural bilinear form on the vector space spanned by the periodic semi-meanders.
See Subsection~\ref{S:gram matrix} for the definition and Theorem~\ref{T:intersection combinatorics} for the precise statement.

Finally, we make a technical remark: our results involve transferring constructions from the unitary setup to the quaternion case. This is the origin of most notational complications.  Moreover, certain descriptions of the Goren--Oort cycles have ambiguity, but the ambiguity does not affect the proof of the main theorem.
\fi

\subsection*{Structure of the paper}
In Section 2, we recall necessary facts about Goren--Oort stratification from \cite{tian-xiao1}.
Some of the proofs are mostly book-keeping but technical. The readers may skip them for the first time reading.
In Section 3, we first recall the combinatorics about semi-meanders and then give the definition of the Goren--Oort cycles associated to  periodic semi-meanders.
In Section 4, we state our main  Theorem~\ref{T:Tate}  and prove it   assuming  Theorem~\ref{T:intersection combinatorics}, which says that the Gysin-restriction matrix for Goren--Oort cycles is roughly the same as the Gram matrix of the corresponding periodic semi-meanders.
Theorem~\ref{T:intersection combinatorics} is proved in Section 5.
The appendix includes a proof of the description of the cohomology of quaternionic Shimura varieties. This is well known to the experts but we include it there for completeness. 

\subsection*{Acknowledgements}
This paper is in debt to the help and the encouragement of many people: Ahmed Abbes, Jennifer Balakrishnan, Phillip Di Francesco, Matthew Emerton, Kiran  Kedlaya,  Zhibin Liang, Yifeng Liu, Kartik Prasanna, Richard Stanley, Chia-Fu Yu, Zhiwei Yun, Wei Zhang, and Xinwen Zhu.  
We especially thank Zhibin Liang, who helped us with a number of computations using computers to verify our conjectural formula in an early stage of this project.

We started working on this project when attending a workshop held at the Institute of Advance Study at Hong Kong University of Science and Technology (HKUST) in December 2011.
We thank the organizers Jianshu Li and Shou-wu Zhang, as well as the staff at IAS of HKUST.
We also met during various workshops held at University of Tokyo, Fields Institute at Toronto, and Morningside Center of Mathematics, to discuss this project. 
We especially thank the organizers as well as the staff members at these workshops.

The first author is partially supported by  the National Natural Science Foundation of China (No. 11321101).
The second author is partially supported by Simons collaboration grant \#278433 and NSF grant DMS-1502147. 

\subsection{Notation}
\label{N:totally real field} 
For a field $L$, we use $\Gal_L$ to denote its absolute Galois group. 
For a number field $L$, we write $\AAA_L$ (resp. $\AAA_L^\infty$, $\AAA_L^{\infty, p}$) for its ring of adeles (resp. finite adeles, finite adeles away from a rational prime $p$).
When $L=\QQ$, we suppress the subscript $L$, e.g. by writing $\AAA^\infty$.
Let $\underline p_L$ denote the idele of $\AAA_L^\infty$ which is $p$ at all $p$-adic places and trivial elsewhere. We also normalize the Artin reciprocity map $\Art: \AAA_{L}^{\times}/L^{\times}\ra \Gal_{L}^{\mathrm{ab}}$ so that a local uniformizer at a finite place $v$ corresponds to a \emph{geometric} Frobenius element at $v$.

Throughout this paper, we fix $F$ a totally real field of degree $g>1$ over $\QQ$.
Let $\Sigma$ denote the set of places of $F$, and $\Sigma_\infty$  the subset of all real places.
We fix a prime number $p>2$ \emph{inert} in the extension $F/\QQ$.\footnote{Although most of our argument works equally well when $p$ is only assumed to be unramified, we insist to assume that $p$ is inert which largely simplifies the notation so that the proof of the main result is more accessible.
But see Remark \ref{Remark after the theorems}(1).}
We put $\gothp = p\calO_F$, $F_\gothp$ the completion of $F$ at $\gothp$, $\calO_\gothp$ the valuation ring, and $k_\gothp$ the residue field.

We fix an isomorphism $\iota_p: \CC \xra{\sim} \overline \QQ_p$.
Let $\QQ_{p^g}$ denote the unramified extension of $\QQ_p$ of degree $g$ in $\overline \QQ_p$; let $\ZZ_{p^g}$ be its valuation ring.
Post-composition with $\iota_p$ induces an bijection between the set of archimedean places and 
$
\Sigma_\infty = \Hom(F, \RR) $ and  the set of $p$-adic embeddings  $\Hom(F, \QQ_{p^g}) \cong \Hom(\calO_F,  \FF_{p^g}).
$  In particular, the absolute Frobenius $\sigma$ acts on $\Sigma_\infty$ by sending $\tau \in \Sigma_\infty$ to $\sigma\tau:=\sigma \circ \tau$; this makes $\Sigma_\infty$ into one cycle. 
Let $\Qpur$ denote the maximal unramified extension of $\Q_p$, and $\Z_p^{\mathrm{ur}}$ denote its valuation ring.

For a finite field $\FF_{q}$, we denote by $\Frob_{q}\in \Gal_{\FF_{q}}$ the \emph{geometric} Frobenius element.

\section{Goren--Oort stratification}
\label{Sec:GO stratification}
We first recall the Goren--Oort stratification of the special fiber of quaternionic Shimura varieties and their descriptions, following \cite{tian-xiao1}.  We tailor our discussion to later application and hence we will focus on certain special cases discussed in {\it loc. cit.}

\subsection{Quaternionic Shimura varieties}
\label{S:quaternoinic Shimura varieties}
Let $\ttS$ be a set of places of $F$ of even cardinality such that   $\gothp\notin \ttS$.
  Put $\ttS_\infty = \ttS \cap \Sigma_\infty$ and $\ttS_\infty^{c} = \Sigma_\infty - \ttS_\infty$,\footnote{Note that the upper script $c$ was used to denote complex conjugation in \cite{tian-xiao1}. In this paper, we however use it to mean taking the set theoretic complement.} and $d=\#\ttS_{\infty}^c$. 
We also fix a subset $\ttT$ of $\ttS_\infty$.
We denote by $B_\ttS$ the quaternion algebra over $F$ ramified exactly at $\ttS$.
Let $G_{\ttS,\ttT} = \Res_{F/\QQ}(B_\ttS^\times)$ be the associated $\QQ$-algebraic group.  Here we inserted the subscript $\ttT$ because we use  the following   \emph{Deligne homomorphism} 
\[
\xymatrix@R=0pt{
h_{\ttS, \ttT}:\ \SSS(\RR) = \CC^\times \ar[rr] &&
G_{\ttS,\ttT}(\RR) \cong (\HH^\times)^{\ttS_\infty - \ttT} \times (\HH^\times)^{\ttT} \times \GL_2(\RR)^{\ttS_\infty^{c}}
\\
x+y \bfi \ar@{|->}[rr] &&
\bigg((\ 1, \dots, 1\ ), (\ x^2+y^2, \dots, x^2+y^2\ ), (\ \big({
\begin{smallmatrix}
x&y\\-y&x
\end{smallmatrix}\big), 
 \dots,  \big(
\begin{smallmatrix}
x&y\\-y&x
\end{smallmatrix}\big)}
\ 
)\bigg).
}
\]
When $\ttT=\emptyset$, the Deligne homomorphism $h_{\ttS,\emptyset}$ is the same as $h_{\ttS}$ considered in \cite[\S 3.1]{tian-xiao1}. 
The $G_{\ttS,\ttT}(\R)$-conjugacy class of $h_{\ttS,\ttT}$ is   independent of $\ttT$ and  is isomorphic to $\gothH_{\ttS}:=(\gothh^{\pm} )^{\ttS^{c}_{\infty}}$, where  $\gothh^\pm=\PP^1(\C)-\PP^1(\RR)$. 
Consider the Hodge cocharacter 
\[
\mu_{\ttS, \ttT}: \GG_{m,\C}\xra{z\mapsto (z,1)} \SSS_{\C}\cong \G_{m,\C}\times \G_{m,\C}\xra{h_{\ttS,\ttT}} G_{\ttS,\ttT,\C}.
\]
Here, the composition of the natural inclusion $\C^{\times}=\SSS(\R)\hra \SSS(\C)$ with the first (resp. second) projection $\SSS(\C)\ra\C^{\times}$ is the identity map (resp. the complex conjugation).

The reflex field $F_{\ttS, \ttT}$, i.e. the field of definition of the conjugacy class of $\mu_{\ttS,\ttT}$, is a finite extension of $\QQ$ sitting inside $\CC$ and hence inside $\overline \QQ_p$ via $\iota_p$.
It is clear that the $p$-adic closure of $F_{\ttS, \ttT}$ in $\overline \QQ_p$ is contained in  $\QQ_{p^g}$, the unramified extension of $\Qp$ of degree $g$ in $\overline \QQ_p$.
Instead of working with occasional smaller reflex field, we are content with working with Shimura varieties over $\QQ_{p^g}$. 

We fix an isomorphism $G_{\ttS,\ttT}(\Qp) \simeq \GL_2(F_\gothp)$ and put $K_p = \GL_2(\calO_\gothp)$. We will only consider open compact subgroups $K \subseteq G_{\ttS,\ttT}(\AAA^\infty)$\footnote{In earlier papers of this series, the open compact subgroup $K$ was denoted by $K_\ttS$. We choose to drop the subscript because for all $\ttS$ we encounter later, the group $G_\ttS(\AAA^\infty)$ are isomorphic, and hence we can naturally identify the $K_\ttS$'s for different $\ttS$'s.}
 of the form $K = K_pK^p$ with $K^p$ an open compact subgroup of $G_{\ttS,\ttT}(\AAA^{\infty, p})$, or occasionally $K=\Iw_pK^p$ with $\Iw_p:  = \big( \begin{smallmatrix} \calO_\gothp^\times & \calO_\gothp \\ p\calO_\gothp & \calO_\gothp^\times \end{smallmatrix} \big)$ when $\ttS_\infty^c = \emptyset$. 
 For such a $K$, we have a Shimura variety  $\calS h_{K}(G_{\ttS,\ttT})$ defined over $\QQ_{p^g}$, whose $\C$-points (via $\iota_p$) are given by
\[
\calS h_{K}(G_{\ttS,\ttT})(\C)=G_{\ttS,\ttT}(\Q)\backslash \gothH_{\ttS}\times G_{\ttS,\ttT}(\AAA^{\infty})/K.
\] 
We put $\calS h_{K_p}(G_{\ttS,\ttT}):=\varprojlim_{K^p}\calS h_{K^pK_p}(G_{\ttS,\ttT})$. This Shimura variety has dimension $d = \#\ttS_\infty^c$.
There is a natural morphism of geometric connected components
\begin{equation}\label{E:connected-components}
\pi_0(\calS h_{K_p}(G_{\ttS,\ttT})_{\overline \Q_p} )\longrightarrow F^{\times, \mathrm{cl}}_{+} \backslash \AAA_F^{\infty,\times} / \calO_\gothp^\times ,
\end{equation}
where $F_{+}^{\times}$ is the subgroup of totally positive elements of $F^{\times}$, and the superscript cl stands for taking closure in the corresponding topological space.
The morphism \eqref{E:connected-components} is an isomorphism if $\ttS_\infty^c \neq \emptyset$ by  \cite[Th\'eor\`eme 2.4]{deligne1}.
Following the convention in \cite[\S 2.11]{tian-xiao1},
\emph{we shall call the preimage of an element $\boldsymbol x \in F^{\times, \mathrm{cl}}_{+} \backslash \AAA_F^{\infty,\times} / \calO_\gothp^\times$ under the map~\eqref{E:connected-components} a geometric connected component, 
although it is not geometrically connected when $\ttS_\infty^c = \emptyset$.
The preimage of $\boldsymbol 1$ is called the neutral geometric connected component, which we denote by $\calS h_{K_p}(G_{\ttS, \ttT})_{\overline \QQ_p}^\circ$.}

Note that, for different choices of $\ttT$, the Shimura varieties $\calS h_{K}(G_{\ttS,\ttT})$ are isomorphic over $\overline\QQ_p$ (in fact over $\overline \QQ$ if we have not $p$-adically completed the reflex field), but the actions of $\Gal(\overline\QQ_p/\QQ_{p^g})$ depend on $\ttT$.
By Shimura reciprocity law (cf. \cite{deligne1} or \cite[\S 2.7]{tian-xiao1}), the action of $\Gal(\overline \Q_p/\Q_{p^g})$ on $\pi_0(\calS h_{K_p}(G_{\ttS,\ttT})_{\overline \Q_p} )$  factors through  
$\Gal_{\FF_{p^g}} \cong \Gal( \ZZ_p^\ur / \ZZ_{p^g})$, so that the  connected components of  $\calS h_{K_p}(G_{\ttS,\ttT})_{\overline \Q_p}$ are actually defined over $\Q_{p}^{\ur}$, the maximal unramified extension of $\QQ_p$. 
More precisely, the action of the geometric Frobenius of $\FF_{p^{g}}$ on $F^{\times, \mathrm{cl}}_{+} \backslash \AAA_F^{\infty,\times} / \calO_\gothp^\times$, induced through the homomorphism \eqref{E:connected-components}, is given by multiplication by the finite idele
\begin{equation}
\label{E:Shimura reciprocity for ShGST}
(\underline p_F)^{(2\#\ttT + \#\ttS_\infty^c)} \in F^{\times, \mathrm{cl}}_{+} \backslash \AAA_F^{\infty,\times} / \calO_\gothp^\times.\footnote{When $\ttS_\infty^c = \emptyset$ or equivalently when $\calS h_{K_p}(G_{\ttS, \ttT})$ is a zero-dimensional Shimura variety, the action of $\Frob_{p^g}$ is given by multiplication by the finite idele $(\underline p_F)^{\#\ttT}$ in the center $\Res_{F/\QQ}\GG_m$ of $G_{\ttS, \ttT}$.  This gives the canonical model for the discrete Shimura variety in the sense of \cite[2.8]{tian-xiao1}.}
\end{equation}
This determines a reciprocity map:
\begin{equation*}
\Rec_p\colon  \Gal_{\FF_{p^g}} \longrightarrow F^{\times, \mathrm{cl}}_{+} \backslash \AAA_F^{\infty,\times} / \calO_\gothp^\times.
\end{equation*}
Write $\nu: G_{\ttS, \ttT} \to \Res_{F/\QQ}(\GG_m)$ for the reduced norm homomorphism.
Following Deligne's recipe \cite{deligne2} of connected Shimura varieties, we put 
\begin{equation}\label{E:Deligne-group}
\calG_{\ttS, \ttT, p}: = \big(G_{\ttS,\ttT}(\AAA^{\infty, p}) / \calO_{F, (p)}^{\times, \mathrm{cl}} \big) \times \Gal_{\FF_{p^g}} \ \footnote{Comparing with \cite[(2.11.3)]{tian-xiao1}, we dropped the star extension because the center of $G_{\ttS, \ttT}$ is $\Res_{F/\QQ}\GG_m$, which  has trivial first cohomology. 
We also include the Galois part into the definition of $\calG$ to simplify notation here.}
\end{equation}
and define   $\calE_{G_{\ttS, \ttT}}$  to be the  subgroup  of $\calG_{\ttS,\ttT,p}$
consisting of pairs $(x, \sigma)$ such that $\nu(x)$
is equal to $\Rec_p(\sigma)^{-1}$. Here, $\cO_{F,(p)}^{\times, \mathrm{cl}}$ denotes the closure of $\cO_{F,(p)}^{\times}$ in $G_{\ttS,\ttT}(\AAA^{\infty,p})$.

The limit $\calS h_{K_p}(G_{\ttS,\ttT})_{ \QQ^{\ur}_p}$
carries an action by $\calG_{\ttS,\ttT,p}$,  and $\calE_{G_{\ttS,\ttT}}$ is the stablizer of each geometric connected component.   
Conversely, if $\calS h_{K_p}(G_{\ttS,\ttT})^{\bullet}_{ \QQ^{\ur}_p}$ is a geometric connected component, one can recover $\calS h_{K_p}(G_{\ttS, \ttT})$ from $\calS h_{K_p}(G_{\ttS, \ttT})^\bullet_{\QQ^{\ur}_p}$ by first forming the product
\[
\calS h_{K_p}(G_{\ttS,\ttT})^\bullet_{\QQ_p^\ur} \times_{\calE_{G_{\ttS, \ttT}}} \calG_{\ttS, \ttT, p}
\]
and then take the Galois descend to $\QQ_{p^g}$. 


\begin{notation}
\label{N:G(A infty)}
Note that $G_{\ttS,\ttT}(\AAA^{\infty})$ depends only on the finite places contained in $\ttS$.
In later applications, we will consider only pairs of subsets $(\ttS',\ttT')$ such that  $\ttS'$ contains the same finite places as $\ttS$. In that case, we will fix an isomorphism $G_{\ttS',\ttT'}(\AAA^{\infty})\cong G_{\ttS,\ttT}(\AAA^{\infty})$, and denote them uniformly by $G(\AAA^\infty)$ when no confusions arise. 
Similarly,  we have its subgroup $G(\AAA^{\infty, p})\subseteq G(\AAA^{\infty})$ consisting of elements whose $p$-component is trivial.
Thus, we may view $K$ (resp. $K^p$) as an open compact subgroup of $G(\AAA^{\infty})$ (resp. $G(\AAA^{\infty, p})$).

Under this identification, the group $\calG_{\ttS, \ttT, p}$ is independent of $\ttS, \ttT$, and we henceforth write $\calG_{p}$ for it.
Its subgroup $\calE_{G_{\ttS, \ttT}}$ in general depends on the choice of $\ttS$, and $\ttT$.  
However, the key  point is that, if $\ttS'$ and $\ttT'$ is another pair of subsets satisfying similar conditions and $\#\ttS_\infty - 2\#\ttT = \#\ttS'_\infty - 2\#\ttT'$ (which will be the case we consider later in this paper), then the subgroup $\calE_{G_{\ttS, \ttT}}$ is the same as $\calE_{G_{\ttS', \ttT'}}$.
\end{notation}

\begin{remark}
\label{R:ST compatible with GO cycles}
Using Proposition~\ref{P:integral model of Sh} and Construction~\ref{S:from unitary to quaternionic} later, we have access to most of the statements in \cite{tian-xiao1} which were initially proved for unitary groups and interpreted using connected Shimura varieties.
The key point mentioned in Notation~\ref{N:G(A infty)} has the additional benefit that the description of the Goren--Oort strata actually descends to quaternionic Shimura varieties because now the subgroups $\calE_{G_{\ttS, \ttT}}$'s are compatible for different $\ttS$ and $\ttT$'s.
\end{remark}

\subsection{An auxiliary CM field}\label{A:CM}
To use the results in \cite{tian-xiao1}, we fix a CM extension $E/F$ such that
\begin{itemize}
\item every place in $\ttS$ is inert in $E/F$, and
\item the place $\gothp$ splits as $\gothq \bar\gothq$ in $E/F$ if $\#\ttS_\infty^c$ is even, and it is inert in $E/F$ if $\#\ttS_\infty^c$ is odd.
\end{itemize}
These conditions  imply that $B_{\ttS}$ splits over $E$. 
In later applications, we will need to consider several  subsets $\ttS$ at the same time.  We remark that,  for all subsets $\ttS$ involved later,  the finite places contained in $\ttS$ are the same,   and  $\#\ttS_\infty^c$ will have the same parity. In particular, this means that we can fix for the rest of this paper one CM field $E$ that satisfies the above conditions (for the initial $B_\ttS$). 

We shall frequently use the following two finite idele elements:
\begin{enumerate}
\item
$\underline p_F$ denotes the finite idele in $\AAA_F^\infty$ which is $p$ at $\gothp$ and is $1$ elsewhere (which we have already introduced in \ref{N:totally real field});
\item
when $\gothp$ splits into $\gothq \bar \gothq$ in $E$, 
$\underline \gothq$ denotes the finite idele in $\AAA_E^\infty$ which is $p$ at $\gothq$, $p^{-1}$ at $\bar \gothq$, and $1$ elsewhere.
\end{enumerate}

Let $\Sigma_{E,\infty}$ denote the set of complex embeddings of $E$. We fix a choice of subset  $\tilde \ttS_{\infty}\subseteq \Sigma_{E,\infty}$ such that
the natural restriction map $\Sigma_{E,\infty}\ra \Sigma_{\infty}$ induces an isomorphism $\tilde\ttS_{\infty}\xra{\sim} \ttS_{\infty}$.
When $\gothp$ splits into $\gothq \bar\gothq$, we use $ \tilde \ttS_{\infty/\gothq}$  (resp. $ \tilde \ttS_{\infty/\bar\gothq}$) to denote the subset of places in $\tilde \ttS_\infty$ inducing $\gothq$ (resp. $\bar \gothq$) through the isomorphism $\iota_p$. 
We put 
\begin{equation}
\label{E:definition of Delta S}
 \Delta_{\tilde \ttS_{\infty}} := \#\tilde \ttS_{\infty/\bar \gothq} -\# \tilde \ttS_{\infty/ \gothq}.
\end{equation} 
We remark that all the $\tilde \ttS_{\infty}$'s we encounter later in this paper will all have the same $\Delta_{\tilde \ttS_{\infty}}$.

We write $E_\gothp$ for $F_\gothp  \otimes_F E$. It is the quadratic unramified extension of $F_\gothp$ if $\gothp$ is inert and it is $E_\gothq \times E_{\bar \gothq}$ if $\gothp$ splits.  We set $\calO_{E_{\gothp}}: = \calO_\gothp \otimes_{\calO_F} \calO_E$.

We put $\tilde \ttS = (\ttS, \tilde \ttS_\infty)$.
Put $T_{E,\tilde\ttS,\ttT}=T_{E}=\Res_{E/\Q}(\G_m)$, where the subscript $(\tilde\ttS,\ttT)$ means that we take the following Deligne homomorphism
\[
\xymatrix@R=0pt{
h_{E,\tilde\ttS, \ttT}\colon  \SSS(\RR) = \CC^\times \ar[r] &
T_{E,\tilde\ttS,\ttT}(\RR) = \bigoplus_{\tau\in \Sigma_{\infty}}(E\otimes_{F,\tau}\R)\cong (\C^{\times})^{\ttS_{\infty}-\ttT}\times (\C^{\times})^{\ttT}\times (\C^{\times})^{\ttS_{\infty}^c}
\\
z=x+y \bfi \ar@{|->}[r] & \bigg( (\bar z,\dots, \bar z),({z}^{-1},\dots, {z}^{-1}), (1,\dots, 1)\bigg).
}
\]
Here, the isomorphism $E\otimes_{F,\tau}\R\simeq \C$ for $\tau\in \ttS_{\infty}$ is given by the chosen embedding $\tilde\tau\in \tilde\ttS_{\infty}$ lifting $\tau$. 
One has the system of zero-dimensional Shimura varieties $\calS h_{K_{E}}(T_{E,\tilde\ttS,\ttT})$ with $\C$-points given by:
\[
\calS h_{K_{E}}(T_{E,\tilde\ttS,\ttT})(\C)=E^{\times,\cl}\backslash  T_{E,\tilde\ttS,\ttT}(\AAA^{\infty})/K_{E},
\] 
for any open compact subgroup $K_{E}\subseteq T_{E,\tilde\ttS,\ttT}(\AAA^{\infty})\cong \AAA_{E}^{\infty,\times}$. 
We put $K_{E,p}=\cO_{E, \gothp}^{\times}\subseteq T_{E,\tilde\ttS,\ttT}(\Q_p)$, and write 
$\calS h_{K_{E,p}}(T_{E,\tilde\ttS,\ttT})=\varprojlim_{K_{E}^p} \calS h_{K^p_{E}K_{E,p}}(T_{E,\tilde\ttS,\ttT})$ as the inverse limit over all open compact subgroups $K_E^p \subseteq T_{E, \tilde \ttS, \ttT}(\AAA^{\infty,p})$. 

As in Notation~\ref{N:G(A infty)}, we identify $T_{E, \tilde \ttS, \ttT}(\AAA^\infty)$ for all $\tilde \ttS$ and $ \ttT$, and write $T_E(\AAA^\infty)$ for it; so $K_E$ is naturally its subgroup.

Under the isomorphism $\iota_p:\CC\cong \overline \Q_p$, the image of the reflex field of  $\calS h_{K_{E}}(T_{E,\tilde\ttS,\ttT})$  is contained in $\Q_{p^{2g}}$. 
It makes sense to talk about $\calS h_{K_{E}}(T_{E,\tilde\ttS,\ttT})_{\Q_{p^{2g}}}$.
As $K_{E,p}$ is hyperspecial,  the action of $\Gal_{\Q_{p^{2g}}}$ on $\calS h_{K_{E}}(T_{E,\tilde\ttS,\ttT})(\overline\Q_p)$ is unramified. So $\calS h_{K_{E}}(T_{E,\tilde\ttS,\ttT})_{\Q_{p^{2g}}}$ is the disjoint union of the spectra of some finite unramified extension of $\Q_{p^{2g}}$, and it has an  integral canonical model over $\Z_{p^{2g}}$ by taking the spectra of the corresponding rings of integers. 
   Denote by $\Sh_{K_{E}}(T_{E,\tilde\ttS,\ttT})$ its special fiber. By Shimura's reciprocity law, the action of the  geometric Frobenius $\Frob_{p^{2g}}$ of $\FF_{p^{2g}}$ on $\Sh_{K_{E}}(T_{E,\tilde\ttS,\ttT})(\overline\FF_p)$ is given by
\begin{itemize}
\item[(i)] when  $\gothp$ is inert in $E/F$,
multiplication by $(\underline p_F)^{\#(\ttS_{\infty}-\#\ttT) - \#\ttT} = (\underline p_F)^{\#\ttS_{\infty}-2\#\ttT}$
 and
\item[(ii)] when $\gothp$ splits into $\gothq\bar \gothq$,
multiplication by 
\[
\underline \varpi_\gothq^{2(\# \tilde \ttS_{\infty /\bar  \gothq} - \#\ttT)} \underline \varpi_{\bar \gothq}^{2 (\# \tilde \ttS_{\infty / \gothq} - \#\ttT)} =
(\underline p_F)^{\#\ttS_{\infty}-2\#\ttT} (\underline \gothq)^{\Delta_{\tilde \ttS_{\infty}}},
\]
where $\underline \varpi_\gothq$ (resp. $\underline \varpi_{\bar \gothq}$) is the finite idele in $\AAA_E^\infty$ which is $p$ at the place $\gothq$ (resp. $\bar \gothq$) and is $1$ elsewhere, $\underline \gothq$ is the idele defined in Subsection \ref{A:CM}(2) above, and $\Delta_{\tilde \ttS_{\infty}}$ is defined in \eqref{E:definition of Delta S}.
\end{itemize} 
In particular, if $(\tilde\ttS',\ttT')$ is another  pair above such that $\#\ttS'_{\infty}-2\#\ttT'=\#\ttS_{\infty}-2\#\ttT$ and $\Delta_{\tilde \ttS_{\infty}} = \Delta_{\tilde \ttS'_{\infty}}$ if $\gothp$ splits, then there exists an isomorphism  of Shimura varieties over $\FF_{p^{2g}}$:
    \begin{equation}\label{E:isom-Shimura-var-E}
   \Sh_{K_E}(T_{E,\tilde\ttS,\ttT})\xra{\cong}\Sh_{K_E}(T_{E,\tilde\ttS',\ttT'})
    \end{equation}
    compatible with the Hecke action of $T_{E}(\AAA^{\infty,p})$ on both sides as $K_{E}^p$ varies.

\subsection{A unitary Shimura variety}
\label{S:unitary Shimura variety}
Let $Z=\Res_{F/\Q}(\G_m)$ be the center of $G_{\ttS,\ttT}$. Put $G''_{\tilde\ttS}=G_{\ttS,\ttT}\times_{Z} T_{E,\tilde\ttS,\ttT}$, which is the quotient of $G_{\ttS,\ttT}\times T_{E,\tilde\ttS,\ttT}$ by $Z$ embedded anti-diagonally as $z\mapsto (z,z^{-1})$.
Consider the product Deligne homomorphism 
$$
h_{\ttS,\ttT}\times h_{E,\tilde\ttS,\ttT}\colon \SSS(\RR)=\C^{\times}\ra (G_{\ttS,\ttT}\times T_{E,\tilde\ttS,\ttT})(\R),
$$
which can be further composed with the quotient map to $G''_{\tilde \ttS}$ to get
\[
h''_{\tilde \ttS}\colon  \SSS(\RR)=\C^{\times}\ra (G_{\ttS,\ttT}\times_Z T_{E,\tilde\ttS,\ttT})(\R) \cong G''_{\tilde \ttS}(\R).
\]
Note that $h''_{\tilde\ttS}$ does not depend on the choice of $\ttT\subseteq \ttS_{\infty}$ (hence the notation), and its
 conjugacy class is identified with $\gothH_\ttS=(\gothh^{\pm})^{\ttS_{\infty}^c}$.
Let $K''_p$ denote the (maximal) open compact subgroup $\GL_2(\calO_\gothp) \times_{\calO_\gothp^\times} \calO_{E, \gothp}^\times$ of $G''_{\tilde \ttS}(\Qp)$.
We will consider open compact subgroups of the form $K'' = K''_p K''^p \subset G''_{\tilde \ttS}(\AAA^{\infty})$ with $K''^p\subset G''_{\tilde \ttS}(\AAA^{\infty,p})$.
These data give rise to a Shimura variety $\calS h_{K''}(G''_{\tilde \ttS})$ (defined over $\QQ_{p^{2g}}$), whose $\CC$-points (via $\iota_p$) are given by
\[
\calS h_{K''}(G''_{\tilde \ttS})(\CC) = G''_{\tilde \ttS}(\QQ) \backslash (\gothH_\ttS \times G''_{\tilde \ttS}(\AAA^\infty)) / K''.
\]
We put $\calS h_{K''_p}(G''_{\tilde \ttS}): = \varprojlim_{K''^p}\calS h_{K''}(G''_{\tilde \ttS})$. 
Its geometric connected components admit a natural map
\begin{equation}\label{E:geometric-connected-unitary}
\pi_0\big(\calS h_{K''_p}(G''_{\tilde \ttS})_{\overline \QQ_p} \big) \longrightarrow 
\big( F_+^{\times, \mathrm{cl}} \backslash \AAA_F^{\infty, \times} / \calO_\gothp^\times \big) \times 
\big( 
E^{1} \backslash \AAA_{E}^{1} / \calO_{E_{\gothp}}^{N_{E/F} = 1}
\big),
\end{equation}
where $N_{E/F}$  is the norm from $E$ to $F$, and $E^1$ (resp. $\AAA_E^1$) is the subgroup of $E^{\times}$ (resp. $\AAA_E^{\times}$) with norm $1$ in $F^{\times}$ (resp. $\AAA_F^{\times}$).  
As in the quaternionic case, this is an isomorphism if $\ttS_{\infty}^c\neq \emptyset$.
 
We write 
 $\calS h_{K''_p}(G''_{\tilde \ttS})_{\overline \QQ_p}^\circ$ for the preimage of $\boldsymbol 1 \times \boldsymbol 1$, and 
call it  \emph{the neutral geometric connected component} of the unitary Shimura variety.

We can define the group $\calE_{G''_{\tilde \ttS}}$ and $\calG''_{\tilde \ttS, p}$ for the Shimura data $(G''_{\tilde \ttS}, h''_{\tilde \ttS})$ as in Subsection~\ref{S:quaternoinic Shimura varieties} (see e.g. \cite[\S 2.11]{tian-xiao1} for the recipe). First, we spell out the Shimura reciprocity map 
\begin{equation}\label{E:Shimura-rec-unitary}
\Rec''_p\colon \Gal_{\FF_{p^{2g}}} \longrightarrow \big( F_+^{\times, \mathrm{cl}} \backslash \AAA_F^{\infty, \times} / \calO_\gothp^\times \big) \times 
\big( 
E^{1} \backslash \AAA_{E}^{\infty, N_{E/F}= 1} / \calO_{E_{\gothp}}^{N_{E/F} = 1}.
\big),
\end{equation}
The Frobenius image  $\Rec''_p(\Frob_{p^{2g}})$ is given as follows
 \begin{itemize}
\item
when $\gothp$ is inert in $E/F$, $\Rec''_p(\Frob_{p^{2g}}) = (\underline{p}_F)^{2g} \times 1$, and
\item
when $\gothp$ splits in $E/F$, $\Rec''_p(\Frob_{p^{2g}}) = (\underline p_F)^{2g} \times (\underline \gothq)^{2\Delta_{\tilde \ttS_{\infty}}}$.
\end{itemize}
We put $\calG''_{\tilde \ttS, p}= \big( G''_{\tilde \ttS}(\AAA^{\infty, p}) / \calO_{E, (p)}^{\times, \mathrm{cl}} \big) \times \Gal_{\FF_{p^{2g}}}$\footnote{As in the footnote to \eqref{E:Deligne-group}, we omitted the star product in the definition of this group comparing to \cite[(2.11.3)]{tian-xiao1} because the center $\Res_{E/\QQ}(\GG_m)$ of $G''_{\tilde \ttS, \ttT}$ has trivial first cohomology.} and define $\calE_{G''_{\tilde \ttS}}$ to be its subgroup of pairs $(x, \sigma)$ such that $\nu''(x)$ is equal to $\Rec''_p(\sigma)^{-1}$, where
\[
\xymatrix@R=0pt@C=40pt{
\nu'': G''_{\tilde \ttS} = G_{\ttS, \ttT} \times_Z T_{E, \tilde \ttS, \ttT} \ar[r]
& \Res_{F/\QQ}(\GG_m) \times \Res_{E/\QQ}(\GG_m)^{N_{E/F} = 1}
\\
(g,t) \ar@{|->}[r] & \big(\nu(g) N_{E/F}(t), \, t/\bar t\big)
}
\]
 is the natural morphism from $G''_{\tilde \ttS}$ to its maximal abelian quotient.


\begin{remark}
\label{R:E and G'' are independent of S }
Similar to  Notation~\ref{N:G(A infty)}, if $\ttS'$ is another subset of places of $F$ containing the same finite places as $\ttS$ (together with a choice of $\tilde\ttS'_{\infty}$), then  $G''_{\tilde\ttS'}(\AAA^{\infty})$ is isomorphic to $ G''_{\tilde\ttS}(\AAA^{\infty})$. We fix such an isomorphism, and denote them uniformly as $G''(\AAA^\infty)$.
Hence we naturally identify groups $\calG''_{\tilde \ttS,p}$ for different $\tilde \ttS$'s.

When $\#\ttS_\infty = \# \ttS'_\infty$ and $\Delta_{\tilde \ttS_{\infty}} = \Delta_{\tilde \ttS'_{\infty}}$ if $\gothp$ splits in $E/F$, the subgroup $\calE_{G''_{\tilde \ttS'}} \subset \calG''_{\tilde \ttS',p}$ can be also identified with $\calE_{G''_{\tilde \ttS}} \subset \calG''_{\tilde \ttS,p}$.  Indeed, in this case the reciprocity map $\Rec''_p$ for $\tilde \ttS$ and $\tilde \ttS'$ are the same.
\end{remark}

\begin{prop}
\label{P:integral model of Sh}

\emph{(1)}
We have a canonical isomorphism $\calE_{G_{\ttS, \ttT}} \cong \calE_{G''_{\tilde \ttS}}$, and   that $\calS h_{K_p}(G_{\ttS,\ttT})^\circ_{\Q_p^\ur}$ together with the action of $\calE_{G_{\ttS, \ttT}}$ is isomorphic to 
$\calS h_{K''_p}(G''_{\tilde \ttS})^\circ_{ \Qpur}$ together with the action of $\calE_{G''_{\tilde \ttS}}$.

\emph{(2)} The  Shimura varieties $\calS h_K(G_{\ttS, \ttT})$ (resp.  $\calS h_{K''}(G''_{\tilde\ttS})$) admit integral canonical models over $\Z_{p^g}$ (resp. over $\Z_{p^{2g}}$), and the connected Shimura variety $\calS h_{K_p}(G_{\ttS,\ttT})^\circ_{ \Qpur}\cong\calS h_{K''_p}(G''_{\tilde\ttS})^\circ_{ \Qpur}$ admits a canonical integral model over $\ZZ_p^{\ur}$.
\end{prop}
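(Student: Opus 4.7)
The plan is to first prove part (1) via Deligne's theory of connected Shimura varieties combined with a direct comparison of reciprocity laws, and then to derive part (2) by transporting the canonical integral model from the PEL-type unitary Shimura variety to the quaternionic one, using Deligne's recipe for recovering a Shimura variety from its neutral connected component.

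For part (1), the starting observation is that $G''_{\tilde \ttS} = G_{\ttS,\ttT} \times_Z T_{E,\tilde\ttS,\ttT}$ has the same derived group as $G_{\ttS,\ttT}$, namely $\Res_{F/\QQ}(B_\ttS^{\times, \Nm=1})$; indeed, the center $Z = \Res_{F/\QQ}(\GG_m)$ is divided out anti-diagonally, so $G_{\ttS,\ttT}^{\der} \cap Z = \{1\}$ and no collapse occurs on the derived factor. The induced Deligne homomorphisms into the common adjoint group agree, and in both cases the associated Hermitian symmetric domain is $\gothH_\ttS$. By Deligne's recipe for connected Shimura varieties \cite{deligne2}, this produces a canonical isomorphism $\calS h_{K_p}(G_{\ttS,\ttT})^\circ_{\overline\QQ_p} \cong \calS h_{K''_p}(G''_{\tilde\ttS})^\circ_{\overline\QQ_p}$. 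To upgrade this to an isomorphism over $\QQ_p^{\ur}$ compatible with the $\calE$-actions, I would verify directly from the explicit formulas in \eqref{E:Shimura reciprocity for ShGST} and those given right after \eqref{E:Shimura-rec-unitary} that $\Rec''_p$, composed with projection onto the factor $F_+^{\times,\cl} \backslash \AAA_F^{\infty,\times}/\calO_\gothp^\times$, agrees with the square of $\Rec_p$ (the square accounting for the Galois degree $[\QQ_{p^{2g}}:\QQ_{p^g}] = 2$). This compatibility forces the constraint $\nu(x) = \Rec_p(\sigma)^{-1}$ to be equivalent to $\nu''(x) = \Rec''_p(\sigma)^{-1}$ under the natural map on abelianizations, thereby identifying $\calE_{G_{\ttS,\ttT}}$ with $\calE_{G''_{\tilde\ttS}}$ and matching the two actions on the common neutral connected component.

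For part (2), the key input is that the unitary Shimura variety $\calS h_{K''}(G''_{\tilde\ttS})$ is of PEL type with hyperspecial level at $p$. By a standard Kottwitz-type construction, specialized to this setting in \cite{tian-xiao1}, it admits a smooth canonical integral model over $\ZZ_{p^{2g}}$ as a moduli space of abelian varieties equipped with an $\calO_E$-action, a suitable polarization, and prime-to-$p$ level structure. Passing to the neutral geometric connected component of its base change to $\ZZ_p^{\ur}$ and invoking part (1) produces an integral model of $\calS h_{K_p}(G_{\ttS,\ttT})^\circ$ over $\ZZ_p^{\ur}$ carrying a canonical $\calE_{G_{\ttS,\ttT}}$-action. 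I would then recover the integral canonical model of $\calS h_K(G_{\ttS,\ttT})$ over $\ZZ_{p^g}$ by forming the twisted product $\calS h_{K_p}(G_{\ttS,\ttT})^\circ \times_{\calE_{G_{\ttS,\ttT}}} \calG_{\ttS,\ttT,p}$ over $\ZZ_p^{\ur}$ and Galois-descending to $\ZZ_{p^g}$.

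The step I expect to be the main obstacle is ensuring that Deligne's recipe for recovering a Shimura variety from its neutral connected component, originally formulated on generic fibers, extends cleanly to the integral level: this requires the $\calE$-action on the neutral integral component to act by automorphisms of smooth $\ZZ_p^{\ur}$-schemes, so that both the twisted product and the descent remain in the smooth category. This reduces to the integrality of the prime-to-$p$ Hecke action and the unramified Galois action on the PEL moduli space, both of which are standard consequences of the hyperspecial level hypothesis at the inert prime $\gothp$, with the detailed verifications provided by the moduli interpretation in \cite{tian-xiao1}.
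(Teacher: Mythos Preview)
Your overall strategy matches the paper's: identify the neutral connected components via Deligne's formalism, then transport the PEL integral model from the unitary side and recover the quaternionic Shimura variety by the twisted product $\times_{\calE_{G_{\ttS,\ttT}}}\calG_{\ttS,\ttT,p}$ followed by Galois descent. For part~(2) your plan and the paper's are essentially identical.

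For part~(1) there is a difference worth flagging. The paper does not compare $\calE_{G_{\ttS,\ttT}}$ and $\calE_{G''_{\tilde\ttS}}$ by a direct reciprocity computation; instead it passes through the product group via the span
\[
G''_{\tilde\ttS} \;\xleftarrow{\;\alpha_\ttT\;}\; G_{\ttS,\ttT}\times T_{E,\tilde\ttS,\ttT} \;\xrightarrow{\;\pr_1\;}\; G_{\ttS,\ttT},
\]
both arrows compatible with the Deligne homomorphisms, and invokes \cite[Corollary~2.17]{tian-xiao1} to conclude that the induced maps on neutral connected components are isomorphisms. This bridge automatically carries the $\calE$-actions along. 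Your direct route is reasonable in spirit, but the specific claim you make is not correct: the first projection of $\Rec''_p(\Frob_{p^{2g}})$ is $(\underline p_F)^{2g}$, whereas the square of $\Rec_p(\Frob_{p^g})$ is $(\underline p_F)^{2(2\#\ttT+\#\ttS_\infty^c)}$; the discrepancy is exactly the norm contribution from the $T_E$-factor (see the sentence preceding the bullets after \eqref{E:Shimura-rec-unitary}). More seriously, the ambient groups $\calG_{\ttS,\ttT,p}$ and $\calG''_{\tilde\ttS,p}$ are built from $G(\AAA^{\infty,p})/\calO_{F,(p)}^{\times,\cl}\times\Gal_{\FF_{p^g}}$ and $G''(\AAA^{\infty,p})/\calO_{E,(p)}^{\times,\cl}\times\Gal_{\FF_{p^{2g}}}$ respectively, so identifying their subgroups $\calE$ directly requires more than matching a single projection of the reciprocity maps. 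The product-group bridge is precisely what makes this identification transparent, which is why the paper (and \cite{tian-xiao1}) proceeds that way.
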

\begin{proof}

For (1), the case when $\ttT = \emptyset$ is treated in \cite{tian-xiao1}.  In general, 
note that the sequence of morphisms
\[
G''_{\tilde \ttS}  \leftarrow G_{\ttS, \ttT} \times T_{E, \tilde \ttS, \ttT} \ra G_{\ttS, \ttT}
\]
is compatible with the associated Deligne homomorphism, and the conjugacy classes of Deligne homomorphisms into various algebraic groups defined above are canonically identified.
Standard facts (e.g. \cite[Corollary~2.17]{tian-xiao1}) about Shimura varieties imply that  the series of morphisms of Shimura varieties
\[
\calS h_{K''_p}(G''_{\tilde \ttS}) 
\leftarrow \calS h_{K_p}(G_{\ttS, \ttT}) \times_{\ZZ_{p^g}} \calS h_{K_{E,p}} ( T_{E, \tilde \ttS, \ttT}) \ra \calS h_{K_p}(G_{\ttS,\ttT})
\]
induce isomorphisms on the neutral connected components. Hence, by \cite[Theorem~3.14]{tian-xiao1}, there exists an integral canonical model for $\calS h_{K''_p}(G''_{\tilde \ttS}) $ over $\ZZ_{p^{2g}}$, and thus the neutral connected component $\calS h_{K''_p}(G''_{\tilde \ttS})^{\circ}\cong \calS h_{K_p}(G_{\ttS,\ttT})^{\circ}$ admits an integral canonical model over $\ZZ_{p}^{\ur}$. 
Applying $\times_{\calE_{G_{\ttS,\ttT}}} \calG_{\ttS,\ttT,p}$, the latter induces an integral canonical model of $\calS h_{K_p}(G_{\ttS,\ttT})$ over $\ZZ_{p}^{\ur}$, which descends to $\ZZ_{p^g}$ (cf. \cite[Corollary~2.17]{tian-xiao1}).
\end{proof}
\begin{remark}
Statement (2) of Proposition~\ref{P:integral model of Sh} is a consequence of the much more general theory of Kisin \cite{kisin}. 
However, in this paper, we will need essentially this explicit relationship between the integral models of $\calS h_{K}(G_{\ttS,\ttT})$  and those of $\calS h_{K''}(G''_{\tilde\ttS})$.
\end{remark}

\begin{notation}\label{N:Shimura varieties}
We use $\calS h_{K_p}(G_{\ttS,\ttT})$, $\calS h_{K_{E,p}}(T_{E,\tilde \ttS ,\ttT})$,  $\calS h_{K''_p}(G''_{\tilde\ttS}), ...$ to denote the integral model over $\Z_{p^g}$ or $\Z_{p^{2g}}$ of the corresponding Shimura variety, and 
use systematically Roman letters to denote the special fibers of Shimura varieties:
\[
\Sh_{?}(G_{\ttS, \ttT})^\star_{\overline \FF_p}: = \calS h_{?}(G_{\ttS, \ttT})^\star_{ \ZZ_p^\ur}\otimes_{\ZZ_p^\ur} \overline \FF_p, \quad \textrm{and} \quad \Sh_?(G_{\ttS, \ttT}): = \calS h_?(G_{\ttS, \ttT}) \times_{\ZZ_{p^g}} \FF_{p^g}.
\]
for $? = K$ or $K_p$, and $\star = \circ$ or $\emptyset$, and 
\[
\Sh_{K_{E,p}}(T_{E,\tilde \ttS ,\ttT})_{\FF_{p^{2g}}} := \calS h_{K_{E,p}}(T_{E,\tilde \ttS ,\ttT}) \otimes_{\ZZ_{p^{2g}}} \FF_{p^{2g}}
, \quad \Sh_{K''_p}(G''_{\tilde \ttS})_{\FF_{p^{2g}}}: = \calS h_{K''_p}(G''_{\tilde \ttS }) \otimes_{\ZZ_{p^{2g}}} \FF_{p^{2g}}
\]
and similarly with open compact subgroups $K_E = K_{E, p} K_E^p \subset T_E(\AAA^{\infty})$ and $K'' = K''_p K''^p \subset G''(\AAA^{\infty})$.  We put $\Sh_{K_p}(G''_{\tilde \ttS})_{\overline \FF_p}^\circ = \calS h_{K_p}(G''_{\tilde \ttS})_{\ZZ_p^\ur}^\circ \otimes_{\ZZ_p^\ur} \overline \FF_p$.
\end{notation}

\subsection{Automorphic local systems}\label{S:automorphic sheaves}
We now study the automorphic sheaves on these Shimura varieties.
Fix a prime $\ell\neq p$ and an isomorphism $\iota_\ell: \CC\simeq \overline{\Q}_\ell$.
Let $(\underline k, w)$ be   a \emph{regular multiweight}, which means a tuple $(\underline k, w) \in \ZZ^{\Sigma_\infty} \times \ZZ$ such that $k_\tau \equiv w \pmod 2$ and $k_\tau \geq 2$ for all $\tau\in \Sigma_\infty$.
Consider the algebraic representation 
\[
\rho^{(\underline k, w)}_{\ttS, \ttT} = \boxtimes_{\tau \in \Sigma_\infty} \big( \Sym^{k_\tau-2}(\mathrm{std}^\vee) \otimes \det{}^{\frac{k_\tau - w}{2}}\big)
\]
of $G_{\ttS, \ttT} \otimes_{\Q} \C \cong \prod_{\tau \in \Sigma_\infty} \GL_2(\CC)$, where std is the standard representation of $\GL_2(\CC)$. 
As explained in \cite{milne}, we have an automorphic $\overline{\Q}_\ell$-lisse sheaf $\calL_{\ttS, \ttT}^{(\underline k, w)}$ on $\calS h_{K_p}(G_{\ttS, \ttT})$ associated to $\rho^{(\kb, w)}_{\ttS, \ttT}$.
Note that  $\calL_{\ttS,\ttT}^{(\kb,w)}$ is pure of weight $(w-2)(g-\#\ttS_{\infty}+2\#\ttT)$.

We fix a section $\tilde \Sigma  \subset\Sigma_{E,\infty}$ of the natural restriction map $\Sigma_{E, \infty} \to \Sigma_\infty$ (which is independent of the choices $\tilde \ttS_\infty$).
 Consider the following one-dimensional representation of $T_{E, \tilde \ttS, \ttT} \otimes_\Q \CC \cong \prod_{\tilde \tau \in \tilde \Sigma}\GG_{m, \tilde \tau} \times \GG_{m, \overline{\tilde \tau}}$:
\[
\rho_{E, \tilde \Sigma}^w = \otimes_{\tilde \tau \in \tilde \Sigma}\; x^{2-w} \circ \pr_{E, \tilde \tau},
\]
where $\overline{\tilde \tau}$ is the complex conjugate embedding of $\tilde \tau$, $\pr_{E, \tilde \tau}$ is the projection to the $\tilde \tau$-component, and $x^{2-w}$ is the character of $\CC^\times$ given by raising to the $(2-w)$th power.
This representation gives rise to a lisse $\overline \QQ_\ell$-\'etale sheaf $\calL^{w}_{E, \tilde \ttS, \ttT, \tilde \Sigma}$ pure of weight $(w-2)(\#\ttS_{\infty}-2\#\ttT)$ on $\calS h_{K_{E,p}}(T_{E, \tilde \ttS, \ttT})$. 
If $\Sh_{K_E}(T_{E,\tilde\ttS',\ttT'})$ is another Shimura variety with $\#\ttS'_{\infty}-2\#\ttT'=\#\ttS_{\infty}-2\#\ttT$ and $\gamma: \Sh_{K_{E}}(T_{E,\tilde\ttS,\ttT})\cong \Sh_{K_E}(T_{E,\tilde\ttS',\ttT'})$ is the isomorphism \eqref{E:isom-Shimura-var-E}, then  we have a natural isomorphism 
\begin{equation}
\label{E:gamma identifies sheaves}
\gamma^*(\calL^w_{E,\tilde\ttS',\ttT',\tilde \Sigma})\xra{\sim} \calL^w_{E,\tilde\ttS,\ttT,\tilde \Sigma}.
\end{equation}

Let $\alpha_{\ttT}: G_{\ttS,\ttT} \times T_{E, \tilde \ttS, \ttT} \to G''_{\tilde \ttS}$ denote the natural quotient morphism. We have the following diagram.
\begin{equation}\label{E:diag-Sh-var}
\xymatrix{
\calS h_{K_p}(G_{\ttS,\ttT}) &
  \calS h_{K_p}(G_{\ttS,\ttT}) \times_{\ZZ_{p^g}} \calS h_{ K_{E,p}}( T_{E,\tilde\ttS,\ttT})\ar[l]_-{\pr_1}\ar[d]^-{\pr_2} \ar[r]^-{\boldsymbol\alpha_{\ttT}} & \calS h_{K''_p}(G''_{\tilde\ttS})\\
&\calS h_{K_{E,p}}(T_{E,\tilde\ttS,\ttT}).
}
\end{equation}

By our definition, the tensor product representation $\rho^{(\underline k,w)}_{\ttS, \ttT} \otimes \rho_{E,  \tilde \Sigma}^w$ of $G_{\ttS, \ttT} \times T_{E, \tilde \ttS, \ttT}$ factors through $G''_{\tilde \ttS}$.
This  defines a $\overline \QQ_\ell$-lisse sheaf   $\calL''^{(\underline k, w)}_{\tilde \ttS, \tilde \Sigma}$ on $\calS h_{K''_p}(G''_{\tilde \ttS})$ such that we have a canonical isomorphism
\begin{equation}\label{E:pull back along alpha = tensor}
\boldsymbol \alpha^*_{\ttT}(\calL''^{(\underline k, w)}_{\tilde \ttS, \tilde \Sigma}) \cong \pr_1^*(\calL_{\ttS, \ttT}^{(\underline k, w)}) \otimes \pr_2^*(\calL_{E, \tilde \ttS, \ttT, \tilde \Sigma}^{ w}).
\end{equation}

 Put $D=B_{\ttS}\otimes_{F}E$.
  Then our choice of $E/F$ in \ref{A:CM} implies that $D\cong \rmM_{2\times 2}(E)$, which explains the  omission  of $\ttS$ in our notation.
  We fix such an isomorphism, and take a maximal order $\cO_{D}\cong \rmM_{2\times 2}(\cO_E)$. 
Recall that  there exists a versal family of abelian varieties of dimension $4g$ $a:\bfA''_{\tilde\ttS, K''_p}\ra \calS h_{K_{p}''}(G''_{\tilde\ttS})$ \cite[3.20]{tian-xiao1} equipped with a natural action by $\cO_{D}$.
 Here, ``versal'' means that the Kodaira--Spencer map for the family $\bfA''_{\tilde\ttS}$  is an isomorphism.
 Using $\bfA''_{\tilde\ttS}$, $\calL''^{(\kb,w)}_{\tilde\ttS,\tilde\Sigma}$ can be reinterpreted as follows. 
  Put $H_\ell(\bfA''_{\tilde\ttS})=R^1a_{*}(\overline\QQ_{\ell})$, which is an $\ell$-adic local system on $\calS h_{K''_p}(G''_{\tilde\ttS})$ equipped with an induced action by $\rmM_{2\times 2}(E)$. 
  For each $\tilde\tau\in \Sigma_{E,\infty}$, let $H_\ell(\bfA''_{\tilde\ttS})_{\tilde\tau}$ denote the direct summand of $H_\ell(\bfA''_{\tilde\ttS})$ on which $E$ acts via  $E\xra{\tilde\tau} \CC\xra{\iota_\ell} \overline\QQ_\ell$.
  Consider the idempotent $\gothe=\big(\begin{smallmatrix}1 &0\\ 0&0\end{smallmatrix}\big)\in \rmM_{2\times 2}(E)$. We put $H_\ell(\bfA''_{\tilde\ttS})_{\tilde\tau}^{\circ}=\gothe \cdot H_\ell(\bfA''_{\tilde\ttS})_{\tilde\tau}$, which is an $\ell$-adic local system on $\calS h_{K''_p}(G''_{\tilde\ttS})$ of rank $2$.
   We have a canonical decomposition
 \[
 H_\ell(\bfA''_{\tilde \ttS})=\bigoplus_{\tilde\tau\in \tilde\Sigma}\big(H_\ell(\bfA''_{\tilde\ttS})_{\tilde\tau}\oplus H_\ell(\bfA''_{\tilde\ttS})_{\overline{\tilde \tau}}\big)=\bigoplus_{\tilde\tau\in \tilde\Sigma} \big(H_\ell(\bfA''_{\tilde\ttS})_{\tilde\tau}^{\circ,\oplus2 }\oplus H_{\ell}(\bfA''_{\tilde\ttS})_{\overline{\tilde \tau}}^{\circ,\oplus 2}\big).\]
 Then one has
 \begin{equation}\label{E:etale-sheaf-AV}
 \calL''^{(\underline k, w)}_{\tilde\ttS,\tilde\Sigma}=\bigotimes_{\tilde \tau\in \tilde \Sigma} \bigg(\Sym^{k_{\tau}-2}H_\ell(\bfA''_{\tilde\ttS})^{\circ}_{\tilde\tau}\otimes (\wedge^2 H_{\ell}(\bfA''_{\tilde\ttS})^{\circ}_{\tilde\tau})^{\frac{w-k_{\tau}}{2}}\bigg).
 \end{equation}

\begin{remark}
We shall introduce a general construction below to relate the unitary Shimura varieties and the quaternionic Shimura varieties. 
 We point out beforehand that the entire construction is modeled on the following question:
By Hilbert 90 theorem, we have an exact sequence:
\[
1 \to F^{\times,\cl}\backslash \AAA^{\infty,\times}_{F}/ \calO_\gothp^\times \to E^{\times,\cl}\backslash \AAA_{E}^{\infty,\times}/\cO_{E_{\gothp}}^{\times}\xrightarrow{z \mapsto z/\bar z}E^{1} \backslash \AAA_{E}^{ \infty, 1} / \calO_{E_{\gothp}}^{N_{E/F} = 1} \to 1.
\]
The construction involves picking a preimage of some element in the target of the surjective map above.  In general, there is no canonical choice of this preimage, and all choices form a torsor under the group $F^{\times,\cl}\backslash \AAA^{\infty,\times}_{F}/ \calO_\gothp^\times$.  In very special case when the element in the target of the surjective map is trivial, one can have a canonical choice of its preimage, namely the identity element $1$.
\end{remark}
 
\begin{construction}
\label{S:from unitary to quaternionic}
We now discuss a very important process that allows us to transfer certain correspondences on the unitary Shimura varieties $\Sh_{K''_p}(G''_{\tilde \ttS})$ to the quaternionic Shimura varieties $\Sh_{K_p}(G_{\ttS, \ttT})$.
Throughout this subsection, we assume that we are given two sets of data: $\tilde \ttS,\ttT$, and $\tilde \ttS',\ttT'$ as above, and that they satisfy the following conditions:
\begin{equation}
\label{E:transfer condition}
\#\ttS_\infty - 2\#\ttT = \#\ttS'_\infty - 2\#\ttT', \quad \quad \Delta_{\tilde \ttS_{\infty}} = \Delta_{\tilde \ttS'_{\infty}} \textrm{ if }\gothp\textrm{ splits in }E/F,
\end{equation}
and the finite places contained in $\ttS$ and  those in $\ttS'$ are  the same. By \eqref{E:isom-Shimura-var-E}, this implies that the Shimura varieties $\Sh_{K_{E,p}}(T_{E, \tilde \ttS, \ttT}) $ and $\Sh_{K_{E,p}}(T_{E, \tilde \ttS', \ttT'}) $ are isomorphic.

Suppose that we are given a correspondence between the two unitary Shimura varieties
\begin{equation}\label{E:correspondence-Sh}
\Sh_{K''_p}(G''_{\tilde \ttS}) \xleftarrow{\ \pi''\ } X \xrightarrow{\ \eta''\ } \Sh_{K''_p}(G''_{\tilde \ttS'}),
\end{equation}
where the group $\calG''_{\tilde \ttS,p} \cong \calG''_{\tilde \ttS',p}$ acts on all three spaces and the morphisms are equivariant for the actions.
We further assume that the fibers of  $\pi''$  are  geometrically connected.

{\bf Step I:} We will complete the correspondence above into the following commutative diagram
\begin{equation}
\label{E:lifting process}
\xymatrix{
\Sh_{K_p}(G_{\ttS, \ttT}) \times_{\Spec(\FF_{p^{d}})} \Sh_{K_{E,p}}(T_{E, \tilde \ttS, \ttT}) \ar[d]^{\boldsymbol \alpha_\ttT} & \ar[l]_-{\pi^\times} Y \ar[r]^-{\eta^\times} \ar[d]^{\boldsymbol \alpha''_\ttT} & \Sh_{K_p}(G_{\ttS', \ttT'}) \times_{\Spec(\FF_{p^{d}})} \Sh_{K_{E,p}}(T_{E, \tilde \ttS', \ttT'})\ar[d]^{\boldsymbol \alpha'_{\ttT'}}
\\
\Sh_{K''_p}(G''_{\tilde \ttS}) & \ar[l]_-{\pi''} X  \ar[r]^-{\eta''} & \Sh_{K''_p}(G''_{\tilde \ttS'}),
}
\end{equation}
so that $Y$ is defined as the Cartesian product of the left square, and the top line is equivariant for the actions of $\calG_{\ttS,\ttT,p} \times \AAA^{\infty, \times}_E \cong \calG_{\ttS',\ttT',p} \times \AAA^{\infty, \times}_E$.  For this, it suffices to lift the morphism $\eta''$ to $\eta^\times$.
We point out that both $\boldsymbol \alpha_{\ttT}$ and $\boldsymbol \alpha'_{\ttT'}$ map every geometric connected component isomorphically to another geometric connected component.

We now separate the discussion (but not in an essential way) depending on whether $\ttS_\infty^c$ is empty.
\begin{itemize}

\item
When $\ttS_\infty^c \neq \emptyset$,
let  $Y^\circ$ denote the preimage $(\pi^{\times})^{-1}(\Sh_{K_p}(G_{\ttS, \ttT})^\circ_{\overline \FF_p} \times \{ \boldsymbol 1\})$, where $\boldsymbol 1$ denotes the neutral point, namely the image of $1\in \AAA_{E}^{\infty,\times}$ in $\Sh_{K_{E,p}}(T_{E, \tilde \ttS, \ttT})_{\overline \FF_p}$. 
By our assumption on $\pi''$, $Y^\circ$ is a geometric connected component of $Y$.
Its image under $\eta'' \circ \bbalpha''_\ttT$ lies in a geometric connected component of $\Sh_{K''_p} (G''_{\tilde \ttS'})$, say $\Sh_{K''_p} (G''_{\tilde \ttS'})^\bullet_{\overline \FF_p}$, corresponding to some  $(\boldsymbol x, \boldsymbol s)\in \big( F_+^{\times, \mathrm{cl}} \backslash \AAA_F^{\infty, \times} / \calO_\gothp^\times \big) \times 
\big( 
E^{1} \backslash \AAA_{E}^{\infty, 1} / \calO_{E_{\gothp}}^{N_{E/F} = 1}
\big)$ via the map \eqref{E:geometric-connected-unitary}. By Hilbert's 90th Theorem, 
there exists $\boldsymbol t\in E^{\times,\cl} \backslash \AAA_{E}^{\infty, \times} / \calO_{E_{\gothp}}^{\times}$ with $\boldsymbol t/\bar {\boldsymbol t}=\boldsymbol s$, and the choice of $\boldsymbol t$ is unique up to $F^{\times,\cl} \backslash \AAA_{F}^{\infty, \times} / \calO_{{\gothp}}^{\times}$.
We claim that giving a $(\calG_{\ttS,\ttT,p} \times \AAA^{\infty, \times}_E)$-equivariant map $\eta^\times$  as above is \emph{equivalent} to choosing such a $\boldsymbol t$.

Indeed, let $\Sh_{K_p}(G_{\ttS',\ttT'})^{\bullet}_{\overline \FF_p}$ be the connected component of $\Sh_{K_p}(G_{\ttS',\ttT'})_{\overline \FF_p}$ corresponding to $\boldsymbol y=\boldsymbol x N_{E/F}(\boldsymbol t)^{-1}$ via the map \eqref{E:connected-components}.
 Then $\boldsymbol\alpha'_{\ttT'}$ sends $\Sh_{K_p}(G_{\ttS',\ttT'})^{\bullet}_{\overline \FF_p}\times \{\boldsymbol t\}$ isomorphically to $\Sh_{K''_p}(G''_{\tilde \ttS'})^\bullet_{\overline \FF_p}$.
Note that $Y$ (resp. $\Sh_{K_p}(G_{\ttS', \ttT'}) \times_{\Spec(\FF_{p^{d}})} \Sh_{K_{E,p}}(T_{E, \tilde \ttS', \ttT'})$) can be recovered from $Y^{\circ}$ (resp. $\Sh_{K_p}(G_{\ttS', \ttT'})^\bullet_{\overline \FF_p} \times \{ \boldsymbol t\}$ for any $\boldsymbol t\in E^{\times, \cl}\backslash \AAA_E^{\infty, \times} / \calO^\times_{E_\gothp} $ \footnote{We point out that $E^{\times, \cl}\backslash \AAA_E^{\infty, \times} / \calO^\times_{E_{\gothp}}$ is canonically isomorphic to  $\calO_{E, (p)}^{\times, \cl} \backslash \AAA_E^{\infty,p, \times}$.}) by applying  $-\times_{\calE_{G_{ \ttS,\ttT}}} \big(\calG_{\ttS, \ttT, p} \times E^{\times, \cl}\backslash \AAA_E^{\infty, \times} / \calO^\times_{E_{\gothp}} \big)$. 
Here, recall that $\calE_{G_{\ttS,\ttT}}\cong \calE_{G''_{\tilde \ttS}}$ by Proposition~\ref{P:integral model of Sh}(1), and it embeds into the product $\calG_{\ttS, \ttT, p} \times E^{\times, \cl}\backslash \AAA_E^{\infty, \times} / \calO^\times_{E_{\gothp}}$ as follows: 
the morphism from  $ \calE_{G_{\ttS,\ttT}}$ to  $\calG_{\ttS, \ttT, p} $ is  the natural embedding and that to $ E^{\times, \cl}\backslash \AAA_E^{\infty, \times} / \calO^\times_{E_{\gothp}}$ is given by first projecting to the Galois factor and then applying the Shimura reciprocity map as specified in Subsection~\ref{A:CM}(i) and (ii). 
Therefore,  once such a $\boldsymbol t$  is chosen, we can define $\eta^\times$ as the morphism obtained by applying $-\times_{\calE_{G_{\ttS,\ttT}}} \big(\calG_{\ttS, \ttT, p} \times E^{\times, \cl}\backslash \AAA_E^{\infty, \times} / \calO^\times_{E_{\gothp}} \big)$ to the composite map 
\[
 Y^{\circ}\xra{\eta''\circ \boldsymbol \alpha_\ttT''} \Sh_{K''_p}(G''_{\tilde\ttS'})_{\overline \FF_p}^{\bullet}\xra{\sim} \Sh_{K_p}(G_{\ttS', \ttT'})^\bullet_{\overline \FF_p} \times \{ \boldsymbol t\},
\]
where the last isomorphism is the inverse of the restriction of  $\boldsymbol\alpha'_{\ttT'}$ to $\Sh_{K_p}(G_{\ttS', \ttT'})^\bullet_{\overline \FF_p} \times \{ \boldsymbol t\}$. Conversely, it is also clear that such a $\boldsymbol t$ is determined by $\eta^{\times}$.

\item
When $\ttS_\infty^c =\emptyset$,
a slight rewording is needed. 
Let $X^\circ$ denote the preimage under $\pi''$ of the $\overline \FF_p$-point $\boldsymbol 1 \in \Sh_{K''_p}(G''_{\tilde \ttS})_{\overline \FF_p}$.  So it is mapped under $\eta''$ to a point $\boldsymbol g'' \in \Sh_{K''_p}(G''_{\tilde \ttS})_{\overline \FF_p}$. Let $Y^\circ$ denote the preimage under $\pi^\times$ of the $\overline \FF_p$-point $(\boldsymbol 1 , \boldsymbol 1)  \in \Sh_{K_p}(G_{\ttS, \ttT})_{\overline \FF_p} \times \Sh_{K_{E,p}} (T_{E, \tilde \ttS, \ttT})_{\overline \FF_p}$.
Then the map $\eta^\times$ must take $Y^\circ$  to an $\overline \FF_p$-point $(\boldsymbol x, \boldsymbol t)$ in $\boldsymbol \alpha'^{-1}_{\ttT'}(\boldsymbol g'')$, and conversely, $\eta^\times$ is determined by this choice of such a  point by the same argument as above using the fact that $\eta^\times$ is equivariant for the $(\calG_{\ttS,\ttT,p} \times \AAA^{\infty, \times}_E)$-action.\end{itemize}

In summary, one can always define such a lift $\eta^\times$, depending on a choice of a certain element $\boldsymbol t\in E^{\times,\cl} \backslash \AAA_{E}^{\infty, \times} / \calO_{E_{\gothp}}^{\times} $ which is  unique up to multiplication by an element of $F^{\times, \cl}\backslash \AAA_F^{\infty, \times} / \calO^\times_{\gothp}$. 
In this case, we say that $\eta^{\times}$ is constructed with \emph{shift $\boldsymbol t$}. In general, we do not have a canonical choice for $\boldsymbol t$, and hence neither for $\eta^\times$.  However, in the special case when  
$\Sh_{K''_p}(G''_{\tilde \ttS'})^\bullet_{\overline \FF_p}$
 is the neutral connected component $\Sh_{K''_p}(G''_{\tilde \ttS'})^\circ_{\overline \FF_p}$ in the former case and $\boldsymbol g'' = \boldsymbol 1$ in the later case,  there is a canonical choice of such lift, namely, the neutral connected component $\Sh_{K_p}(G_{\ttS', \ttT'})^\circ_{\overline \FF_p} \times \{ \boldsymbol 1\}$ in the former case and $(\boldsymbol 1, \boldsymbol 1)$ in the latter case.  So under this additional hypothesis, we do have a canonical map $\eta^\times$.\\

{\bf Step II:} Suppose that we have  constructed the diagram~\eqref{E:lifting process} with shift $\boldsymbol t$ (which is canonical up to an element of $F^{\times, \cl}\backslash \AAA_F^{\infty, \times} / \calO^\times_{\gothp}$), we want to obtain a correspondence 
\begin{equation}
\label{E:correspondence on quaternionic Sh var}
\Sh_{K_p}(G_{\ttS, \ttT})_{\FF_{p^{2g}}} \xleftarrow{\ \pi\ } Z \xrightarrow{\ \eta\ } \Sh_{K_p}(G_{\ttS', \ttT'})_{\FF_{p^{2g}}}.
\end{equation}
For this, it suffices to construct \eqref{E:correspondence on quaternionic Sh var} over $\overline \FF_p$ which carries equivariant action of $\Gal_{\FF_{p^{2g}}}$.
Starting with the top row of \eqref{E:lifting process}, composing $\eta^\times$ with multiplication by $\boldsymbol t^{-1}$ (note that $\Sh_{K_{E,p}}(T_{E, \tilde \ttS', \ttT'})$ is in fact a group scheme), we get a correspondence\footnote{Once again, this correspondence depend on the choice of $\boldsymbol t$, which is  unique up to multiplication by an element of $\calO_{F,(p)}^{\times, \cl} \backslash\AAA_F^{\times, \infty, p}$.}
\begin{equation}
\label{E:after times x-1}
\Sh_{K_p}(G_{\ttS, \ttT})_{\overline \FF_p} \times \Sh_{K_{E,p}}(T_{E, \tilde \ttS, \ttT})_{\overline \FF_p} \xleftarrow{\ \pi^\times\ } Y \xrightarrow{\ \boldsymbol t^{-1} \circ \eta^\times\ } \Sh_{K_p}(G_{\ttS', \ttT'})_{\overline \FF_p} \times \Sh_{K_{E,p}}(T_{E, \tilde \ttS', \ttT'})_{\overline \FF_p},
\end{equation}
which respects the projection to $\Sh_{K_{E,p}}(T_{E, \tilde \ttS, \ttT})_{\overline \FF_p} \stackrel \gamma \cong \Sh_{K_{E,p}}(T_{E, \tilde \ttS', \ttT'})_{\overline \FF_p}$.
Taking the fiber of \eqref{E:after times x-1} over $\boldsymbol 1$ of $\Sh_{K_{E,p}}(T_{E, \tilde \ttS, \ttT})_{\overline \FF_p}$ would gives \eqref{E:correspondence on quaternionic Sh var} (based changed to $\overline \FF_p$), but to descend we need to modify the Galois action above (so that the Galois action preserves the fiber over $\boldsymbol 1$) as follows:
we change the action of $\Frob_{p^{2g}}$ on \eqref{E:after times x-1} by further composing with a Hecke action given by $1 \times (\underline p_F)^{2\#\ttT - \#\ttS_\infty}
\in G(\AAA^\infty) \times \AAA_E^{\infty, \times}$ if $\gothp$ is inert in $E/F$, and $1 \times (\underline p_F)^{2\#\ttT - \#\ttS_\infty } (\underline \gothq)^{-\Delta_{\tilde \ttS_{\infty}}}$ if $\gothp$ splits in $E/F$.
This way, we have constructed a new Galois action on the factor $\Sh_{K_{E,p}}(T_{E,\tilde \ttS, \ttT})_{\overline \FF_p}$. By usual Galois descent, we get \eqref{E:correspondence on quaternionic Sh var}.

{\bf Step III:}  we will obtain a sheaf version of the construction above, namely, if in addition, we are given an isomorphism of sheaves
\begin{equation}
\label{E:sheaf morphism for ''}
\eta''^\sharp: \pi''^*(\calL''^{(\underline k, w)}_{\tilde \ttS, \tilde \Sigma}) \xrightarrow{\ \cong\ } \eta''^*(\calL''^{(\underline k, w)}_{\tilde \ttS', \tilde \Sigma}),
\end{equation}
then we will construct an isomorphism of sheaves 
\begin{equation}
\label{E:sheaf morphism on quaternionic}
\eta^\sharp: \pi^*(\calL^{(\underline k, w)}_{\ttS, \ttT})\xrightarrow{\ \cong\ } \eta^*(\calL^{(\underline k, w)}_{\ttS', \ttT'}),
\end{equation}
which again depends on the choice of $\boldsymbol t$ in Step I.
First, pulling back \eqref{E:sheaf morphism for ''} along $\boldsymbol \alpha''_{\ttT}$ in the commutative diagram \eqref{E:lifting process}, we get
\[
\boldsymbol \alpha''^*_\ttT(\eta''^\sharp): (\pi^\times)^* \big( \boldsymbol \alpha_\ttT^*(\calL''^{(\underline k, w)}_{\tilde \ttS, \tilde \Sigma}) \big) \xrightarrow{\ \cong\ } (\eta^\times)^*\big( \boldsymbol \alpha'^*_{\ttT'}(\calL''^{(\underline k, w)}_{\tilde \ttS', \tilde \Sigma}) \big)
\]
Taking into account the isomorphism \eqref{E:pull back along alpha = tensor}, we have
\[
\boldsymbol \alpha''^*_\ttT(\eta''^\sharp): (\pi^\times)^* \big( \pr_1^*(\calL_{\ttS, \ttT}^{(\underline k, w)}) \otimes \pr_2^*(\calL_{E, \tilde \ttS, \ttT, \tilde \Sigma}^{ w} )\big) \xrightarrow{\ \cong\ } (\eta^\times)^*\big( \pr'^*_1(\calL_{\ttS', \ttT'}^{(\underline k, w)}) \otimes \pr'^*_2(\calL_{E, \tilde \ttS', \ttT', \tilde \Sigma}^{ w}) \big).
\]
Composing this with the action of $\boldsymbol t^{-1}$, we get an isomorphism
\[
(\pi^\times)^*\big( \pr_1^*(\calL_{\ttS, \ttT}^{(\underline k, w)}) \otimes \pr_2^*(\calL_{E, \tilde \ttS, \ttT, \tilde \Sigma}^{ w} )\big) \xrightarrow{\ \cong\ }
(\boldsymbol t^{-1} \circ \eta^\times)^*\big( \pr'^*_1(\calL_{\ttS', \ttT'}^{(\underline k, w)}) \otimes \pr'^*_2(\calL_{E, \tilde \ttS', \ttT', \tilde \Sigma}^{ w}) \big).
\]
Since we may also identify the sheaves $\calL_{E, \tilde \ttS, \ttT, \tilde \Sigma}^{ w}$ with $\calL_{E, \tilde \ttS', \ttT', \tilde \Sigma}^{ w}$ using \eqref{E:gamma identifies sheaves}, we may restrict the morphism above to the fiber over the neutral point $\boldsymbol 1$ and get a morphism of sheaves \eqref{E:sheaf morphism on quaternionic} we want over $\overline \FF_p$. (Once again, this morphism is unique up to multiplication with an element of $F^{\times, \cl}\backslash \AAA_F^{\infty, \times} / \calO^\times_{\gothp}$.)
To descend it back down to $\FF_{p^{2g}}$, we modify the action of the  Frobenius by composing it with a central Hecke action as in Step II above.
This concludes the needed construction.

{\bf Step IV:}
Understand the ambiguity appeared in the construction.
\emph{We call $\eta$ the morphism associated to $\eta''$ with shift $\boldsymbol t$}, where $\boldsymbol t \in E^{\times, \cl}\backslash \AAA_E^{\infty, \times} / \calO^\times_{E,\gothp}$ is the element appeared in Step I, and is  determined only up to multiplication by an element of $F^{\times, \cl}\backslash \AAA_F^{\infty, \times} / \calO^\times_{\gothp}$.
When $\Sh_{K''_p}(G''_{\tilde \ttS})^\bullet_{\overline \FF_p} = \Sh_{K''_p}(G''_{\tilde \ttS})^\circ_{\overline \FF_p} $ in Step I, we can take $\boldsymbol t=\boldsymbol 1$ and we get a canonically determined $\eta$ with shift $\boldsymbol 1$.

Finally, let us mention where the choice made in Step I is specified later in this paper. 
In Subsection~\ref{S:GO cycles}, we invoke this construction to define the Goren--Oort cycles; this is where the choice will be fixed.  Moreover, this choice will retroactively determine the choice we make when applying this construction to define link morphisms in the earlier
 Subsection~\ref{S:link morphism}, whenever this subsection is quoted.  The shift will allow us to keep track of the choices we made.

\end{construction}


\begin{remark}
\label{R:composition of shifts}
Suppose that we are given two correspondences as in Construction~\ref{S:from unitary to quaternionic}. Namely, we have
\begin{itemize}
\item subsets of $\tilde \ttS_i$, $\ttT_i$ for $i = 1,2,3$ such that
$\#\ttS_{i, \infty} - 2 \#\ttT_i$, the subset of $\ttS_i$ of finite places, and $\Delta_{\tilde \ttS_{i,\infty}}$ are independent of $i$, and
\item
two $\calG''_{\tilde \ttS_i,p}$-equivariant correspondences between Shimura varieties
\[
\Sh_{K''_p}(G''_{\tilde \ttS_i}) \xleftarrow{\pi''_i} X_i \xrightarrow{\eta''_i}
\Sh_{K''_p}(G''_{\tilde \ttS_{i+1}})
\]
with $i=1,2$
 such that $\pi''_i$ is a fiber bundle with geometric connected fibers.
 \end{itemize}
Then we can compose these two correspondences to get a correspondence
\[
\Sh_{K''_p}(G''_{\tilde\ttS_1}) \xleftarrow{ \pi''_3} X_3: =X_1 \times_{\eta''_1, \Sh_{K''_p}(G''_{\tilde \ttS_2}), \pi''_2} X_2 \xrightarrow{\eta''_3} \Sh_{K''_p}(G''_{\tilde \ttS_{3}}).
\]
Thus we may apply Construction~\ref{S:from unitary to quaternionic} to get correspondences $(\pi_1, \eta_1)$ and $(\pi_2, \eta_2)$ on the quaternionic Shimura varieties:
\begin{equation}
\label{E:composition of correspondences}
\xymatrix@R=0pt{
&& X_3 \ar[dl] \ar@/_15pt/[ddll]_-{\pi_3} \ar[dr] \ar@/^15pt/[ddrr]^-{\eta_3} \\
&X_1 \ar[dl]_-{\pi_1} \ar[dr]^-{\eta_1} && X_2 \ar[dl]_-{\pi_2}\ar[dr]^-{\eta_2}
\\
\Sh_{K_p}(G_{\ttS_1,\ttT_1}) &&
\Sh_{K_p}(G_{\ttS_2,\ttT_2}) &&
\Sh_{K_p}(G_{\ttS_3,\ttT_3}),
}
\end{equation}
with shifts $\boldsymbol t_1, \boldsymbol t_2$. Then their composition $(\pi_3, \eta_3) =(\pi_2, \eta_2) \circ (\pi_1, \eta_1)$ is the correspondence of quaternionic Shimura varieties associated to $(\pi''_3, \eta''_3)$ with shift $\boldsymbol t_1 \boldsymbol t_2$.

Conversely, if we apply Construction~\ref{S:from unitary to quaternionic} to $(\pi''_i, \eta''_i)$ to get three correspondences $(\pi_i, \eta_i)$ for $i=1,2,3$ such that $(\pi_3, \eta_3) = (\pi_2, \eta_2)\circ (\pi_1, \eta_1)$, then their shifts satisfy the equality $\boldsymbol t_3 = \boldsymbol t_1 \boldsymbol t_2$.
\end{remark}

\medskip


\subsection{Hecke operators at $p$}\label{S:Hecke-at-p}

In this subsection, we consider the case $\ttS_\infty^c =\emptyset$, namely when the Shimura varieties are discrete. We want to relate the Hecke operators at $p$ for the unitary and quaternionic Shimura varieties in a manner similar as above.
We assume that $\gothp$ splits in $E/F$ which is the case we need encounter later.

Let $\Iw_{p}\subseteq \GL_2(\cO_{\gothp})$ denote the subgroup consisting of matrices which are upper triangular when modulo $\gothp$.  The discussion in this section is designed to cover this case and give an integral canonical model $\calS h_{\Iw_p}(G_{\ttS,\ttT})$ of the Shimura variety with Iwahori level structure.
We denote by $T_\gothp$ the Hecke correspondence given by the following diagram:
  \begin{equation}\label{E:Tp operator correspondence}
  \xymatrix{&\calS h_{\Iw_p}(G_{\ttS,\ttT})\ar[rd]^{\pi_2}\ar[ld]_{\pi_1}\\
  \calS h_{K_p}(G_{\ttS,\ttT}) && \calS h_{K_p}(G_{\ttS,\ttT}),}
  \end{equation}
 where $\pi_1$ is the natural projection, and $\pi_2$ sends the double coset of $x\in G(\AAA^{\infty})$ to that of $x\big(\begin{smallmatrix}\underline p_F^{-1} &0\\ 0&1\end{smallmatrix}\big)$.
 
For the unitary Shimura variety, we have $G''(\Q_p)\cong\GL_2(F_\gothp)\times_{F^{\times}_{\gothp}}(E_\gothq^\times \times E_{\bar \gothq}^\times)$ and we use $\Iw''_p$ to denote the subgroup $\Iw_p \times_{\calO_\gothp^\times}(\calO_{E_\gothq}^\times \times \calO_{E_{\bar \gothq}}^\times)$.  Similarly, we have an integral model $\calS h_{\Iw''_p}(G''_{\tilde \ttS})$ of the unitary Shimura variety with this Iwahori level structure.
The element $\gamma''_\gothq = \big( \big( \begin{smallmatrix} p^{-1} &0\\0&1
\end{smallmatrix} \big), (1, p)\big) \in G''(\QQ_p)$ gives rise to a Hecke operator $T_\gothq$ corresponding to the double coset $K''_p \gamma''_\gothq K''_p$.  Geometrically, it is given by the the following diagram
\begin{equation}
\label{E:Tq correspondence}
\xymatrix{&\calS h_{\Iw''_p}(G''_{\tilde \ttS})\ar[rd]^{\pi''_2}\ar[ld]_{\pi''_1}\\
  \calS h_{K''_p}(G''_{\tilde \ttS}) && \calS h_{K''_p}(G''_{\tilde \ttS}),}
\end{equation}
where $\pi''_1$ is the natural projection, and $\pi''_2$ sends the double coset of $x\in G''(\AAA^{\infty})$ to that of $x\gamma''_\gothq$.

In a language similar to the previous subsection (except that we cannot quote it directly because the morphism $\pi''$ therein would not have geometric connected fibers), we may phrase the relation between the Hecke correspondences $T_\gothp$ and $T_\gothq$ in terms of the following commutative diagram (with $T_\gothq$ vertical on the left and $T_\gothp$ vertical on the right)
\[
\xymatrix@C=30pt{
\Sh_{K''_p}(G''_{\tilde \ttS}) & \ar[l] _-{\alpha_\ttT}
\Sh_{K_p}(G_{\ttS, \ttT}) \times \Sh_{K_{E,p}}(T_{E, \tilde \ttS, \ttT}) && \ar[ll] _-{\textrm{filber over }\boldsymbol 1}
\Sh_{K_p}(G_{\ttS, \ttT})
\\
\Sh_{\Iw''_p}(G''_{\tilde \ttS})\ar[u]^{\pi''_1} \ar[d]_{\pi''_2} & \ar[u]_{\textrm{natural}} \ar[l] _-{\alpha_\ttT}
\Sh_{\Iw_p}(G_{\ttS, \ttT}) \times \Sh_{K_{E,p}}(T_{E, \tilde \ttS, \ttT}) \ar[d]^{x \mapsto x \big( \big( {\begin{smallmatrix} \underline p_F^{-1} &0\\0&1 \end{smallmatrix}} \big), \varpi_{\bar \gothq} \big)} && \ar[ll] _-{\textrm{filber over }\boldsymbol 1} \ar[u]_{\pi_1} \ar[dd]^{\pi_2}
\Sh_{\Iw_p}(G_{\ttS, \ttT})
\\
\Sh_{K''_p}(G''_{\tilde \ttS}) & \ar[l] _-{\alpha_\ttT} \ar[d]^{x \mapsto x \big(1, \varpi_{\bar \gothq}^{-1}\big)}
\Sh_{K_p}(G_{\ttS, \ttT}) \times \Sh_{K_{E,p}}(T_{E, \tilde \ttS, \ttT})
\\
&
\Sh_{K_p}(G_{\ttS, \ttT}) \times \Sh_{K_{E,p}}(T_{E, \tilde \ttS, \ttT}) && \ar[ll] _-{\textrm{filber over }\boldsymbol 1}
\Sh_{K_p}(G_{\ttS, \ttT}).
}
\]
So we may view $T_\gothp$ as the correspondence associated to $T_\gothq$ in a similar fashion to the previous subsection, with shift $\varpi_{\bar \gothq}^{-1} \in E^{\times, \cl}\backslash \AAA_E^{\infty, \times} / \calO^\times_{E_{\gothp}}$.

\subsection{Links}
\label{S:links}
We recall briefly the notion of links introduced in \cite[\S 7]{tian-xiao1}. 
We put $g=[F:\QQ]$ points aligned equi-distantly on a section of a vertical cylinder, one point corresponding to an archimedean place in  $\Sigma_\infty$ (also identified with a $p$-adic embedding of $F$ via $\iota_p: \CC\cong \overline\Q_p$) so that the Frobenius action is equivalent to shifting the points in one direction.
For a subset $\ttS$  of places of $F$ as above, we label places in $\ttS_\infty$ by a \emph{plus sign} and places in $\ttS_\infty^c$ by a \emph{node}. We call the entire picture a \emph{band} corresponding to $\ttS$.
We often draw the picture in the $2$-dimensional $xy$-plane by thinking of $x$-coordinate modulo $g$. 
 We present the points $\tau_0, \dots, \tau_{g-1}$ on the $x$-axis with coordinates $x=0, \dots, g-1$, such that the Frobenius shifts the points to the right by $1$, and shifts $\tau_{g-1}$ back to $\tau_0$ (by first shifting to $x=g$ and thinking of the $x$-coordinate modulo $g$).
  For example, if $F$ has degree $6$ over $\QQ$ and $\ttS_\infty = \{\tau_1, \tau_3,\tau_4\}$, then we draw the band as
$
\psset{unit=0.3}
\begin{pspicture*}(-.5,-0.3)(5.5,0.3)
\psset{linecolor=black}
\psdots(0,0)(2,0)(5,0)
\psdots[dotstyle=+](1,0)(3,0)(4,0)
\end{pspicture*}$.

Suppose that $\ttS'$ is another  set of places of $F$ with even cardinality  such that it contains exactly the same finite places of $F$ as $\ttS$ and satisfies $\#\ttS_{\infty}=\#\ttS'_{\infty}$.
We put  the band for $\ttS$ above  the band for $\ttS'$ on the same cylinder. 
In the 2-dimensional picture, we draw the band for $\ttS$ on the line $y=1$ and the band for $\ttS'$ on the line $y=-1$.  For each of the nodes of $\ttS$, we  draw a curve starting from it and go monotonically downwards linking to a node  of $\ttS'$ (and ignore the plus signs) such that all the curves do not intersect with each other.
Such a graph is called a \emph{link} $\eta: \ttS \to \ttS'$.
Two links are considered the same if the curves can be continuously deformed  to each other while keeping all curves from intersecting.
We say a curve is \emph{turning to the left (resp. right)} if it can be deformed into a smooth curve which has positive (resp. negative) tangent slopes in the $2$-dimensional picture. 
The \emph{displacement} of a curve in $\eta$ is the number of points it travels to the right, that is the difference between the $x$-coordinates of the ending and starting points of the curve (adding an appropriate multiple of $g$ according to the times the curve wraps around the cylinder). The displacement is negative if the curve turns to the left.
The \emph{total displacement} $v(\eta)$ is the sum of displacements of all curves. 
For example, if $g=5$,  $\ttS_\infty = \{ \tau_1, \tau_3\}$ and $\ttS'_\infty = \{ \tau_2, \tau_4\}$, the link given by
\begin{equation}
\label{E:left turn link}
\psset{unit=0.3} \eta = 
\begin{pspicture*}(-.5,-0.3)(5,2.3)
\psset{linecolor=red}
\psset{linewidth=1pt}
\psbezier(0,2)(1,1)(3,1)(3,0)
\psbezier(2,2)(2,1)(3.5,1.5)(4.5,0.5)
\psbezier(-0.5,1.3)(0.5,.3)(1,1)(1,0)
\psarc{-}(-0.5,0){0.5}{0}{90}
\psarc{-}(4.5,2){0.5}{180}{270}
\psset{linecolor=black}
\psdots(0,2)(2,2)(4,2)
\psdots(0,0)(1,0)(3,0)
\psdots[dotstyle=+](1,2)(3,2)
\psdots[dotstyle=+](2,0)(4,0)
\end{pspicture*}.
\end{equation}
has total displacement $v(\eta) = 3+3+2 = 8$.
For another example, the action of Frobenius $\sigma$ twists the band and gives rise to a link $\sigma: \ttS \to \sigma(\ttS)$, called the \emph{Frobenius link}, for which every curve is turning to the right with displacement $1$, where $\sigma(\ttS)$ is the set of places containing the same finite places as $\ttS$ but $\sigma(\ttS)_\infty = \sigma(\ttS_\infty)$.  Its total displacement is $v(\sigma) = d =\#\ttS_\infty^c$.

For a link $\eta: \ttS \to \ttS'$, we use $\eta^{-1}: \ttS' \to \ttS$ to denote the link obtained by flipping the picture about the equator of the cylinder.
For two links $\eta: \ttS \to \ttS'$ and $\eta': \ttS' \to \ttS''$, we have a natural \emph{composition} of links $\eta'\circ \eta: \ttS \to \ttS''$ given by putting the picture of $\eta$ on top of the picture of $\eta'$ and joint the nodes corresponding to $\ttS'$.
It is obvious that $v(\eta^{-1}) = -v(\eta)$ and $v(\eta'\circ \eta) = v(\eta') + v(\eta)$.

When discussing the relative positions of two nodes of the band associated to $\ttS$,  it is convenient to use the following
\begin{notation}\label{N:tau-plus}
For $\tau \in \ttS_\infty^c$,  let $n_\tau$ be the minimal positive integer such that $\sigma^{-n_\tau}\tau \in \ttS_\infty^c$.  We put $\tau^- := \sigma^{-n_\tau}\tau$.
We use $\tau^+$ to denote the place in $\ttS_\infty^c$ such that $(\tau^+)^- = \tau$.
\end{notation}
\begin{example}\label{Ex:fundamental-link}
A link from $\ttS$ to itself can only be an integer power of the \emph{fundamental link} $\eta_\ttS$, that is to link each $\tau$ to $\tau^+$ by shifting to the right with displacement $n_{\tau^+}$.  For example, 
$
\psset{unit=0.3}
\begin{pspicture*}(-.5,-0.3)(4.4,2.3)
\psset{linecolor=red}
\psset{linewidth=1pt}
\psbezier(0,2)(0,1)(2,1)(2,0)
\psbezier(2,2)(2,1)(3,1)(3,0)
\psbezier(-2,2)(-2, 1)(0,1)(0,0)
\psbezier(3,2)(3, 1)(5,1)(5,0)
\psset{linecolor=black}
\psdots(0,2)(2,2)(3,0)
\psdots(0,0)(2,0)(3,2)
\psdots[dotstyle=+](1,2)(4,0)
\psdots[dotstyle=+](1,0)(4,2)
\end{pspicture*}.
$
The total displacement of a fundamental link is exactly $v(\eta_\ttS) = g= [F:\QQ]$.
\end{example}


\subsection{Link morphisms I}\label{S:link morphism}
Let $\ttS$ and $\ttS'$ be two even subsets of places of $F$ consisting of the same finite places and $\#\ttS_\infty = \#\ttS'_\infty$. Let  $\eta: \ttS \to \ttS'$ be a link. We say that  $\eta$ is a \emph{right-turning link} if all its curves (if there are any) are turning to the right.
We allow the case $\ttS_{\infty}=\Sigma_{\infty}$ (so that there are no curves in the link $\eta$ at all). In that case we say $\eta$ is the \emph{trivial link}.
In this section, we suppose $\eta$ is right-turning. 
 For each node $\tau \in \ttS^c_\infty$, we use $m(\tau)$ to denote the displacement of the curve connected to $\tau$ in $\eta$.
Let   $\tilde \ttS_\infty$ and $\tilde \ttS'_\infty$ be (any) lifts of $\ttS_{\infty}$ and $\ttS'_{\infty}$ as in Subsection~\ref{A:CM}. 
We have two unitary  Shimura varieties $\Sh_{K''_p}(G''_{\tilde\ttS})$ and $\Sh_{K''_p}(G''_{\tilde\ttS'})$ as defined in \ref{S:unitary Shimura variety}.

We now recall the definition of the link morphism on  $\Sh_{K''_p}(G''_{\tilde\ttS})$  associated to the right-turning link $\eta$ as  in \cite[Definition 7.5]{tian-xiao1}.
Let $n$ be an integer, which is always taken to be $0$ if $\gothp$ is inert in $E$.
A \emph{link morphism} of \emph{indentation degree} $n$ associated to $\eta$ on $\Sh_{K''_p}(G''_{\tilde\ttS})$ is a pair $(\eta''_{(n),\sharp}, \eta''^{\sharp}_{(n)})$, where

\begin{enumerate}
\item 
$\eta''_{(n), \sharp}: \Sh_{K''_p}(G''_{\tilde \ttS}) \to \Sh_{K''_p}(G''_{\tilde \ttS'})$ is a morphism of Shimura varieties that induces a bijection on geometric points;
\item $\eta''^\sharp_{(n)}: \bfA''_{\tilde \ttS} \to \eta''^*_{(n), \sharp} (\bfA''_{\tilde \ttS'})$ is a $p$-quasi-isogeny of abelian varieties compatible with the $\cO_D$-actions, the polarizations, and the tame level structures;

\item[(3)] for each geometric point $x$ of $\Sh_{K''_p}(G''_{\tilde \ttS})$ with image $x' = \eta''_{(n), \sharp}(x)$, if we write $\tilde \calD(\bfA''_{\tilde \ttS, x})_{\tilde\tau}$ for the $\tilde\tau$-component of the \emph{covariant} Dieudonn\'e module of $\bfA''_{\tilde\ttS,x}$ for each $\tilde\tau \in \Sigma_{E,\infty}$, 
then there exists, for  each $\tilde\tau\in \ttS^c_{E,\infty}$, some $t_{\tilde\tau}\in \Z$ \emph{independent of the point $x$} such that 
   \[
  \eta'^{\sharp}_{(n), *}\big(F_{\es,\bfA''_{\tilde\ttS,x}}^{m(\tau)}(\tcD(\bfA''_{\tilde \ttS,x})_{\tilde\tau})\big)=p^{t_{\tilde\tau}}\tcD(\bfA''_{\tilde\ttS',x'})_{\sigma^{m(\tau)}\tilde\tau},
   \]
   where $F_{\es,\bfA''_{\tilde\ttS,x}}^{m(\tau)}: \tilde \calD(\bfA''_{\tilde\ttS,x})_{\tilde\tau}\ra \tcD(\bfA''_{\tilde\ttS,x})_{\sigma^{m(\tau)}\tilde\tau}$ is $m(\tau)$-th iteration of the essential Frobenius for $\bfA''_{\tilde\ttS,x}$  defined in \cite[\S 4.2]{tian-xiao1}; and

\item[(4)] when $\gothp$ splits as $\gothq\bar\gothq$ in $E$, the degree of the quasi-isogeny
\[
\eta''^\sharp_{(n),\gothq}: \bfA''_{\tilde \ttS}[\gothq^\infty] \to \eta''^*_{(n), \sharp}(\bfA''_{\tilde \ttS'}[\gothq^\infty])
\] of the $\gothq$-divisible groups
is $p^{2n}$. 
\end{enumerate}
When the indentation degree $n$ is clear by the context, we  write simply $(\eta_{\sharp},\eta^{\sharp})$ for $(\eta_{(n),\sharp},\eta^{\sharp}_{(n)})$.

For our purpose, the most important property  we need is the following
\begin{lemma}[\cite{tian-xiao1}, Proposition~7.8]\label{L:uniqueness-link}
Let $\eta: \ttS\ra \ttS'$ be a link as above. 
Then there exists at most one link morphism $(\eta''_{(n),\sharp}, \eta''^{\sharp}_{(n)})$ with indentation degree $n$ from $\Sh_{K''}(G''_{\tilde\ttS})$ to $\Sh_{K''}(G''_{\tilde\ttS'})$.
\end{lemma}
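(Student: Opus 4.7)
The plan is to show that the data of a link morphism $(\eta''_{(n),\sharp}, \eta''^{\sharp}_{(n)})$ is rigidified pointwise by its effect on Dieudonn\'e modules, and that this in turn is determined by the combinatorial link $\eta$ together with the indentation degree $n$. Suppose $(\eta''_{(n),\sharp,1}, \eta''^{\sharp}_{(n),1})$ and $(\eta''_{(n),\sharp,2}, \eta''^{\sharp}_{(n),2})$ are two such link morphisms. By Serre--Tate and the rigidity of polarized abelian schemes with sufficiently small tame level structure $K''^p$, it suffices to show that at every geometric point $x$ of $\Sh_{K''_p}(G''_{\tilde\ttS})_{\overline \FF_p}$ the two targets $x'_i = \eta''_{(n),\sharp,i}(x)$ coincide and the induced quasi-isogenies on $p$-divisible groups agree. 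Passing to Dieudonn\'e modules, both $\eta^\sharp_1$ and $\eta^\sharp_2$ produce a $W(\overline \FF_p)$-linear inclusion of $\tcD(\bfA''_{\tilde\ttS,x})$ into $\tcD(\bfA''_{\tilde\ttS',x'_i})[1/p]$ compatible with the $\mathcal O_D$-action and polarization.

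Next I would analyze the conditions $\tilde\tau$-componentwise. On the components $\tilde\tau \in \tilde\ttS_\infty$ (places labeled by plus signs, carrying no arc), condition (3) is vacuous, but $\mathcal{O}_D$-equivariance together with compatibility of polarization pairs the $\tilde\tau$-component to the $\bar{\tilde\tau}$-component that lies in $\ttS_\infty^c$, so these components are determined once the $\ttS_\infty^c$-components are. For $\tilde\tau \in \ttS^c_{E,\infty}$, the condition $\eta^{\sharp}_{(n),*}\big(F_\es^{m(\tau)}\tcD_{\tilde\tau}\big) = p^{t_{\tilde\tau}}\tcD_{\sigma^{m(\tau)}\tilde\tau}$ with $t_{\tilde\tau}$ \emph{independent of $x$} pins down the image of the rank-two $W(\overline\FF_p)$-module $\tcD_{\tilde\tau}$ up to an integer ambiguity $t_{\tilde\tau}$; because the curves in $\eta$ turn to the right and are non-crossing, the values of $t_{\tilde\tau}$ are forced by compatibility with the polarization (which pairs $\tilde\tau$ and $\bar{\tilde\tau}$) and by the total degree constraint (4) at $\gothq$ (when $\gothp$ splits) or by the analogous statement when $\gothp$ is inert. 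Hence the lattice $\eta^{\sharp}_*\tcD(\bfA''_{\tilde\ttS,x})$ inside $\tcD(\bfA''_{\tilde\ttS',x'_i})[1/p]$ is uniquely specified, so in particular $x'_1 = x'_2$ and the two $\eta^\sharp$ differ by an element of $\End^0(\bfA''_{\tilde\ttS',x'})$ that preserves the $\cO_D$-action, polarization, tame level structure, and each Dieudonn\'e summand $\tcD_{\tilde\tau}$.

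Finally I would argue that such an automorphism must be trivial. The centralizer of $\cO_D = \mathrm{M}_{2\times 2}(\cO_E)$ in $\End^0(\bfA''_{\tilde\ttS',x'})$ lies in $E$, and the compatibility with the Rosati involution forces it to lie in the subgroup of units of norm $1$; since it preserves every $\tcD_{\tilde\tau}$, it lies in $\calO_{E_\gothp}^\times$, and neatness of $K''^p$ together with compatibility with the tame level structure forces it to be $1$. This yields $\eta^{\sharp}_1 = \eta^{\sharp}_2$ as quasi-isogenies, and by Serre--Tate deformation theory the map $\eta''_{(n),\sharp,1} = \eta''_{(n),\sharp,2}$ on the Shimura variety follows. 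The main obstacle I expect is the careful bookkeeping at step two: one has to check that the right-turning and non-crossing hypotheses on $\eta$, together with compatibility with polarization (which swaps $\tilde\tau$ and $\bar{\tilde\tau}$) and condition (4), really do leave no freedom in the integers $t_{\tilde\tau}$, rather than merely reducing the freedom to a finite set.
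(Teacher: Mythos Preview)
The paper does not prove this lemma; it simply cites \cite[Proposition~7.8]{tian-xiao1}. So there is no ``paper's own proof'' to compare against here, and your sketch is really a proposed proof of the cited result.

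Your overall strategy is the natural one and is essentially what the cited proof does: compare two putative link morphisms by analyzing the induced lattices in the rational Dieudonn\'e module of the source, and use rigidity (neatness of tame level) to kill any residual automorphism. However, one step in your sketch is incorrect as written. You say that for $\tilde\tau$ lying over $\tau\in\ttS_\infty$, polarization pairs the $\tilde\tau$-component with ``the $\bar{\tilde\tau}$-component that lies in $\ttS_\infty^c$''. This is false: both $\tilde\tau$ and $\bar{\tilde\tau}$ lie over the \emph{same} archimedean place $\tau$, so if $\tau\in\ttS_\infty$ then both lifts are over $\ttS_\infty$. Polarization alone does not transport you to $\ttS_\infty^c$. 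The correct mechanism is that at places $\tau'\in\ttS'_\infty$ the signature is $(0,2)$ or $(2,0)$, so the Frobenius (or Verschiebung) on the target Dieudonn\'e module is bijective on those components; hence $\tcD'_{\tilde\tau'}$ for $\tau'\in\ttS'_\infty$ is determined by its neighbours via $F$ and $V$, ultimately by the $\ttS'^c_\infty$-components.

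The other point you flag yourself---that the integers $t_{\tilde\tau}$ are pinned down by polarization together with the degree condition (4)---is indeed the crux, and your sketch does not actually carry it out. What one needs is roughly: the two candidate lattices $(\tcD'_i)_{\sigma^{m(\tau)}\tilde\tau}$ differ by $p^{t_{\tilde\tau,1}-t_{\tilde\tau,2}}$ on each $\ttS'^c_\infty$-component; the $F,V$-stability of both lattices and the Kottwitz signature condition on the target force these differences to be constant along the Frobenius cycle; polarization forces the $\tilde\tau$- and $\bar{\tilde\tau}$-differences to be negatives of each other; and then the indentation-degree condition (4) (or, in the inert case, compatibility of polarization degrees) fixes the remaining constant. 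Filling this in is straightforward but not automatic, and is where the right-turning, non-crossing hypothesis on $\eta$ is actually used.
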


\begin{example}
\label{Ex:twisted Frobenius}
Let $\ttS$ and $\tilde\ttS$ be as in the previous subsection. 
Let  $\sigma^2: \ttS\ra \sigma^2(\ttS)$ be the second iteration of the Frobenius link on $\ttS$. 
Put $\sigma^{2}\tilde\ttS=(\sigma^2(\ttS), \sigma^2(\tilde\ttS_{\infty}))$.
 In  \cite[\S 3.22]{tian-xiao1}, we introduced natural morphisms called the \emph{twisted (partial) Frobenius}
\[
\gothF''_{\gothp^2}: \Sh_{K''_p}(G''_{\tilde \ttS}) \to \Sh_{K''_p} (G''_{\sigma^2\tilde \ttS})
\]
together with a quasi-isogeny of abelian varieties
\[
 \eta''_{\gothp^2}: \bfA''_{\tilde \ttS} \to (\gothF''_{\gothp^2})^* \bfA''_{\sigma^2\tilde \ttS}.
\]
Such a morphism is characterized by the fact that the morphism $p\eta''_{\gothp^2}$ is given by the $p^2$-relative Frobenius.
Then in the language of link morphism introduced above, $(\gothF''_{\gothp^2},\eta''_{\gothp^2})$ is the link morphism on $\Sh_{K''_p}(G''_{\tilde \ttS}) $ associated to the link $\eta = \sigma^2$ of indentation degree $0$ if $\gothp$ is inert in $E/F$ and of indentation degree $2\Delta_{\tilde \ttS}$ if $\gothp$ splits in $E/F$ (\cite[Example 7.7(1)]{tian-xiao1}).
\end{example}

\begin{example}
\label{Ex:Sq}
When $\gothp$ splits into $\gothq\bar \gothq$ in $E/F$, consider the Hecke operator $S_\gothq$ given by multiplication by $1\times\underline \gothq^{-1}\in G''(\AAA^{\infty})=G(\AAA^{\infty})\times_{\AAA_F^{\infty,\times}}\AAA_{E}^{\infty,\times}$  on the unitary Shimura variety. We start with the versal family of  abelian varieties $\bfA''_{\tilde \ttS}$ on $\Sh_{K''_p}(G''_{\tilde \ttS})$, and put $\bfB\colon  = \bfA''_{\tilde \ttS} \otimes_{\calO_E} \bar \gothq \cdot  \gothq^{-1}$ equipped with the induced action by  $\cO_D$.
Let $\phi_{\gothq}\colon \bfA''_{\tilde\ttS}\ra \bfB$ denote the natural $p$-quasi-isogeny induced by $\cO_E\ra \bar\gothq\gothq^{-1}$. 
We equip  $\bfB$ with the natural prime-to-$p$ level structure compatible with $\phi_{\gothq}$. 
The  polarization $\lambda_{\bfA''_{\tilde\ttS}}$ on $\bfA''_{\tilde \ttS}$ naturally induces a polarization $\lambda_{\bfB}$ on $\bfB$ such that $\lambda_{\bfA''_{\tilde\ttS}}=\phi_{\gothq}^{\vee}\circ\lambda_{\bfB}\circ\phi_{\gothq}$.  
 There is a unique morphism
\[
S_\gothq: \Sh_{K''_p}(G''_{\tilde \ttS}) \to \Sh_{K''_p}(G''_{\tilde \ttS}),
\]
which, together with $\phi_{\gothq}$, 
gives a link morphism for the trivial link $\id: \ttS \to \ttS$ of indentation degree $2g$. 

If we apply Construction~\ref{S:from unitary to quaternionic} to the morphism $S_\gothq$ (with the morphism $\pi$ there being trivial), we can lift it to a endomorphism
of $
\Sh_{K_p}(G_{\ttS, \ttT}) \times \Sh_{K_{E,p}}(T_{\tilde \ttS, \ttT}) 
$
given by multiplication by $((\underline p_F)^{-1}, \varpi_{\bar \gothq}^{-2}) \in G(\AAA^{\infty}) \times_{\AAA_F^{\infty, \times}} \AAA_E^{\infty, \times}$.
So the endomorphism $S_\gothp$ given by multiplication by central element $(\underline p_F)^{-1}$ may be viewed as the morphism on the quaternionic Shimura variety obtained by applying Construction~\ref{S:from unitary to quaternionic} to the morphism  $\eta''=S_\gothq$ with shift $\varpi_{\bar \gothq}^2$.
\end{example}

\subsection{Normalizations of link morphisms}
\label{S:normalization of link morphisms}
Keep the notation of Subsection~\ref{S:link morphism} and assume moreover that 
\begin{itemize}
\item
the link morphism $(\eta''_{(n),\sharp}, \eta''^{\sharp}_{(n)})$  on $\Sh_{K''_p}(G''_{\tilde\ttS})$ exists,
\item
$\Delta_{\tilde \ttS_\infty} = \Delta_{\tilde \ttS'_{\infty}}$ if $\gothp$ splits in $E/F$, and
\item
we are given two subsets $\ttT \subseteq \ttS_\infty$ and $\ttT' \subseteq \ttS'_\infty$ such that $\#\ttT = \#\ttT' $.
\end{itemize}
Let $(\underline k, w)\in \ZZ^{\Sigma_\infty}\times \ZZ$ be a regular multi-weight as in Subsection~\ref{S:automorphic sheaves}, and  let $\calL''^{(\underline k, w)}_{\tilde \ttS, \tilde \Sigma}$ and $\calL''^{(\underline k, w)}_{\tilde \ttS', \tilde \Sigma}$ be the corresponding  $\ell$-adic \'etale local systems on $\Sh_{K''_p}(G''_{\tilde\ttS})$ and $\Sh_{K''_p}(G''_{\tilde\ttS'})$ respectively. Then the $p$-quasi-isogeny $\eta''^{\sharp}_{(n)}$ induces an  isomorphism of \'etale local systems
\[\calL''^{(\underline k, w)}_{\tilde \ttS, \tilde \Sigma}\xra{\cong}\eta''^*_{(n),\sharp}\calL''^{(\underline k, w)}_{\tilde \ttS', \tilde \Sigma}.\]
Applying Construction~\ref{S:from unitary to quaternionic} to the link morphism $(\eta''_{(n),\sharp}, \eta''^{\sharp}_{(n)})$ (with $X= \Sh_{K''_p}(G''_{\tilde\ttS})$  and $\pi''$ in \eqref{E:correspondence-Sh} equal to the identity), we get   a pair of morphisms
\[
\eta_{(n), \sharp}: \Sh_{K_p}(G_{\ttS,\ttT}) \to \Sh_{K_p}(G_{\ttS',\ttT'})\quad \textrm{and} \quad \eta_{(n)}^\sharp: \calL_{\ttS, \ttT}^{(\underline k, w)} \xrightarrow{\cong} \eta_{(n),\sharp}^* (\calL_{\ttS', \ttT'}^{(\underline k, w)}),
\]
depending on  some  $\boldsymbol t \in E^{\times, \cl} \backslash \AAA_{E}^{\infty, \times} / \calO_{E,\gothp}^\times$ (See Construction~\ref{S:from unitary to quaternionic} Step IV).
In the sequel, we call $(\eta_{(n),\sharp},\eta^{\sharp}_{(n)})$ (or simply $\eta_{(n),\sharp}$ for short) \emph{the link morphism with indentation degree $n$ on the quaternionic Shimura variety $\Sh_{K_p}(G_{\ttS,\ttT})$ with shift $\boldsymbol t $.} 
Note that by  Lemma~\ref{L:uniqueness-link} and Construction~\ref{S:from unitary to quaternionic}, for a fixed lifting $\tilde\ttS$ of $\ttS$, indentation degree $n$ and shift $\boldsymbol t$,  there exists at most one link morphism $(\eta_{(n),\sharp},\eta_{(n)}^{\sharp})$ on $\Sh_{K}(G_{\ttS,\ttT})$.

The link morphism $(\eta_{(n),\sharp}, \eta_{(n)}^\sharp)$ induces a homomorphism of the cohomology groups:
\begin{align*}
\tilde \eta_{(n)}^{\star}: H^\star_{\et}(\Sh_{K_{p}}(G_{\ttS', \ttT'})_{\overline \FF_p}, \calL_{\ttS',\ttT'}^{(\underline k, w)}) &\longrightarrow H^\star_{\et}(\Sh_{K_{p}}(G_{\ttS, \ttT})_{\overline \FF_p}, \eta_{(n),\sharp}^*(\calL_{\ttS',\ttT'}^{(\underline k, w)}))
\\
& \xrightarrow{(\eta_{(n)}^\sharp)^{-1}}  H^\star_{\et}(\Sh_{K_{p}}(G_{\ttS, \ttT})_{\overline \FF_p}, \calL_{\ttS,\ttT}^{(\underline k, w)})
\end{align*}
which is equivariant under the Hecke action by $G(\AAA^{\infty})$ \footnote{Here, $G(\AAA^{\infty})$ denotes the common finite adelic points of  $G_{\ttS,\ttT}$ and $ G_{\ttS',\ttT'}$ according to Notation~\ref{N:G(A infty)}.} and the Galois action by $\Gal_{\FF_{p^{2g}}}$. 
We fix a square root $p^{1/2}\in \overline \Q_{\ell}$ of $p$. We put 
   \begin{equation}\label{E:normalized-link}
   \eta_{(n)}^{\star}=\frac{1}{p^{v(\eta)/2}}\tilde \eta_{(n)}^{\star},
   \end{equation} 
   and call it the \emph{normalized link morphism} on the cohomology groups of quaternionic Shimura varieties associated to $\eta$ with indentation degree $n$ and shift $\boldsymbol t$. This normalization will be justified in Lemma~\ref{L:inverse of link morphism}(2). 
When the link morphism $\eta''_{(n),\sharp}: \Sh_{K''_p}(G''_{\tilde\ttS})\ra \Sh_{K''_p}(G''_{\tilde\ttS'})$ preserves the neutral connected components, $\boldsymbol t=\boldsymbol 1$ is a canonical choice, and in that case, $\eta^{\star}_{(n)}$ is canonically defined. 
 
Let $\eta_1: \ttS_1\ra \ttS_2$ and $\eta_2: \ttS_2\ra \ttS_3$ be two links with all curves turning to the right, satisfying the conditions above, i.e. all $\ttS_i$ have the same set of finite places, $\#\ttS_{1, \infty} = \#\ttS_{2, \infty}=\#\ttS_{3, \infty}$,  $\#\ttT_1 = \#\ttT_2 = \#\ttT_3$, and $\Delta_{\tilde \ttS_{1,\infty}} = \Delta_{\tilde \ttS_{2,\infty}} = \Delta_{\tilde \ttS_{3,\infty}}$ if $\gothp$ splits in $E/F$.
Suppose that there are link morphisms $(\eta''_{i,(n_i),\sharp}, \eta''^{\sharp}_{i,(n_i)})$ for $i=1,2$ on unitary Shimura varieties with indentation degree $n_i$. 
 Then the composed morphism 
 \[
 \eta''_{12,(n_{12}),\sharp}: \Sh_{K''_p}(G''_{\tilde\ttS_1})\xra{\eta''_{1,(n_1),\sharp}} \Sh_{K''_p}(G''_{\tilde\ttS_2})\xra{\eta''_{2,(n_2),\sharp}} \Sh_{K''_p}(G''_{\tilde\ttS_3})
 \]
 together with the composed quasi-isogeny 
 \[
 \eta''^{\sharp}_{12, (n_{12})}: \bfA''_{\tilde\ttS_1}\xra{\eta''^{\sharp}_{1,(n_1)}}\eta''^*_{1,(n_1),\sharp}(\bfA''_{\tilde\ttS_2})\xra{\eta''^*_{1,(n_1),\sharp}(\eta''^{\sharp}_{2, (n_2)})}\eta''^*_{1, (n_1),\sharp}\eta''^{*}_{2, (n_2),\sharp}(\bfA''_{\tilde\ttS_3})
 \]
 gives the (unique) link morphism on the unitary Shimura varieties with indentation degree $n_{12}: = n_1+n_2$ associated to the composed link $\eta''_{12, (n_{12})}:=\eta''_{2, (n_2)}\circ\eta''_{1,(n_1)}$. 
From this, we get a link morphism of quaternionic Shimura varieties of indentation degree $n_{12}$:
\[
\eta_{12, (n_{12}), \sharp}: \Sh_{K_p}(G_{\ttS_1, \ttT_1}) \xrightarrow{\eta_{1, (n_1), \sharp}}
 \Sh_{K_p}(G_{\ttS_2, \ttT_2}) \xrightarrow{\eta_{2, (n_2),\sharp}}
  \Sh_{K_p}(G_{\ttS_3, \ttT_3}),
\]
such that the shift of $\eta_{12, (n_{12}), \sharp}$ is the product of the shifts of $\eta_{1, (n_1), \sharp}$ and $\eta_{2, (n_2), \sharp}$.
Moreover, we have $\eta_{12,(n_{12})}^\star=\eta_{1,(n_1)}^\star\circ\eta_{2,(n_2)}^\star$  on the cohomology groups of quaternionic Shimura varieties. 

\subsection{Automorphic representations}
\label{S:automorphic-representations}
Following \cite[\S 5.10]{tian-xiao2}, for a regular multi-weight $(\underline k, w)$, we use $\scrA_{(\underline k, w)}$
to denote the set of irreducible  automorphic cuspidal representations 
$\pi$ of $\GL_2(\AAA_F)$ such that 
\begin{itemize}
\item its archimedean component $\pi_\tau$ for each $\tau \in \Sigma_\infty$ is a discrete series of weight $k_\tau -2$ with central character $x\mapsto x^{w-2}$, 
\item and  its $\gothp$-component $\pi_\gothp$ is \emph{spherical}.
\end{itemize}
We write $\pi^{\infty, \gothp}$ to denote the prime-to-$\gothp$ finite part of $\pi$. 

Let $\pi\in \scrA_{(\underline k, w)}$. We denote by  $\rho_\pi: \Gal_F \to \GL_2(\overline \QQ_\ell)$ the Galois representation attached to $\pi$ normalized so that if $v$ is  a finite place of $F$ at which $\pi$ is spherical, the action of  a \emph{geometric} Frobenius at $v$ has trace equal to the eigenvalue of usual Hecke operator $T_{v}$ on $\pi_v^{\GL(\cO_{F_v})}$, where $\pi_v$ denotes the local component at $v$ of $\pi$. Let  $\rho_{\pi, \gothp}$ be the restriction of $\rho_\pi$ to $\Gal_{\FF_{p^g}}$ (note that $\rho_\pi$ is unramified at $\gothp$ since $\pi_{\gothp}$ is spherical).

Let  $T_\gothp$ and $S_\gothp$ be the Hecke operators  given by the action on $\pi_\gothp^{K_p}$ of the double cosets $K_p \big(
\begin{smallmatrix} p^{-1} &0\\0 &1 \end{smallmatrix}  \big)K_p$ and $K_p p^{-1} K_p$, respectively. 
Then, for $\pi\in \scrA_{(\kb,w)}$,  the characteristic polynomial of $\rho_{\pi, \gothp}(\Frob_{p^{g}})$ is given by 
\begin{equation}\label{E:Eichler-Shimura}
X^2-T_\gothp(\pi) X+S_\gothp(\pi) p^g,
\end{equation}
where $T_\gothp(\pi)$ and $S_\gothp(\pi)$ denote respectively the eigenvalues of $T_\gothp$ and $S_\gothp$ on $\pi_\gothp^{K_p}$.

We say that an automorphic representation $\pi \in\scrA_{(\underline k, w)}$ \emph{appears} in the cohomology of the Shimura variety $\Sh_K(G_{\ttS, \ttT})$ if for each finite place $v$ of $\ttS$, the local component $\pi_v$ of $\pi$ is square integrable so that $\pi$ is the image, under  the  Jacquet--Langlands correspondence, of a unique automorphic representation $\pi_{B_\ttS}$ of $G_{\ttS,\ttT}(\AAA)=(B_{\ttS}\otimes_{\Q}\AAA)^{\times}$, and $(\pi^\infty_{B_\ttS})^K$ is nonzero.


\begin{notation}
For $\pi\in \scrA_{(\uk, w)}$ and a $\overline \QQ_\ell[G(\AAA^{\infty,p})]$-module $M$, we write 
\[
M[\pi] : = \Hom_{\overline \QQ_\ell[G(\AAA^{\infty,p})]}(\pi_{B_{\ttS}}^{\infty, \gothp}, M)
\]
for its \emph{$\pi$-isotypical component}.
By the strong multiplicity one theorem for quaternion algebras, $\pi_{B_\ttS}$ is determined by $\pi_{B_\ttS}^{\infty,\gothp}$; this justifies the notation for $M[\pi]$.
There is also a finite version: let $K^p \subset G(\AAA^{\infty, p})$ be an open compact subgroup so that $(\pi_{B_\ttS}^{\infty, \gothp})^{K^p}$ is an irreducible module over the prime-to-$p$ Hecke  algebra $\calH_{K^p} :  = \overline \QQ_\ell [ K^p \backslash G(\AAA^{\infty, p}) / K^p]$.  Then $\pi_{B_\ttS}$ is determined by the  $\calH_{K^p}$-module $(\pi^{\infty,\gothp}_{B_\ttS})^{K^p}$.
For an $\calH_{K^p}$-module $M$, we put 
\[
M[\pi]:  = \Hom_{\calH_{K^p}} ((\pi_{B_\ttS}^{\infty, \gothp})^{K^p}, M).
\]
\end{notation}
 
\begin{prop}
\label{P:cohomology of sh}
Let $\pi \in\scrA_{(\underline k, w)}$ be an automorphic representation appearing in the cohomology of the Shimura variety $\calS h_K(G_{\ttS, \ttT})$.
Then we have
\begin{equation}
\label{E:description of quaternioinic cohomology}
H^{i}_\et(\Sh_K(G_{\ttS, \ttT})_{\overline \FF_p}, \calL_{\ttS, \ttT}^{(\underline k, w)})[\pi] =
\begin{cases}
 \rho_{\pi, \gothp}^{\otimes d} \otimes [\det(\rho_{\pi,\gothp})(1)]^{\otimes \#\ttT} & \textrm{if }i =d,\\
 0 & \textrm{if }i \neq d;
 \end{cases}
\end{equation}
it is compatible, up to semisimplification,  with the action of the geometric Frobenius $\Frob_{p^{2g}}$.
Explicitly, if $\alpha_\pi, \beta_\pi$ are the two  eigenvalues of $\rho_{\pi, \gothp}(\Frob_{p^g})$, then the (generalized) eigenvalues of the action of $\Frob_{p^{2g}}$ on \eqref{E:description of quaternioinic cohomology} are $p^{-2g\#\ttT}\alpha_\pi^{2(i+\#\ttT)}\beta_\pi^{2(d-i+\#\ttT)}$ with multiplicity $\binom di$ for $0 \leq i\leq d$.
\end{prop}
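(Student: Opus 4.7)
The plan is to establish Proposition~\ref{P:cohomology of sh} by combining three classical inputs: (a) a Matsushima-type description of the cuspidal cohomology via the Betti–\'etale comparison, (b) a $(\mathfrak{g}, K_\infty)$-cohomology computation that yields vanishing outside the middle degree, and (c) a comparison with the PEL-type unitary Shimura variety $\Sh_{K''}(G''_{\tilde \ttS})$ in order to identify the Galois action using the Eichler–Shimura relation~\eqref{E:Eichler-Shimura}.

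For (a), the plan is to apply the comparison between singular and \'etale cohomology (proper case) or use an extension to the Baily–Borel compactification (non-proper case), combined with Matsushima's formula, to decompose the cuspidal part of $H^\bullet_\et(\Sh_K(G_{\ttS, \ttT})_{\overline \QQ_p}, \calL_{\ttS, \ttT}^{(\underline k, w)})$ as
\[
\bigoplus_{\pi_{B_\ttS}} \pi_{B_\ttS}^{\infty, K} \otimes H^\bullet\bigl((\mathrm{Lie}\, G_{\ttS, \ttT} \otimes \CC, K_\infty);\ \pi_{B_\ttS, \infty} \otimes \rho^{(\underline k, w)}_{\ttS, \ttT}\bigr),
\]
where $\pi_{B_\ttS}$ ranges over irreducible cuspidal automorphic representations of $G_{\ttS, \ttT}(\AAA)$. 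By the Jacquet–Langlands correspondence and strong multiplicity one, the $\pi$-isotypical component for $\pi \in \scrA_{(\underline k, w)}$ corresponds to a unique $\pi_{B_\ttS}$, and the finite part reduces to $(\pi_{B_\ttS}^{\infty, \gothp})^{K^p}$ tensored with the hyperspecial vector at $\gothp$.

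For (b), the plan is to factor $\pi_{B_\ttS, \infty} = \bigotimes_{\tau \in \Sigma_\infty} \pi_{B_\ttS, \tau}$ and compute the $(\mathfrak{g}_\tau, K_{\tau, \infty})$-cohomology locally. For $\tau \in \ttS_\infty$ the group is compact modulo center, $\pi_{B_\ttS, \tau}$ is finite dimensional, and its cohomology with the appropriate twist is one-dimensional concentrated in degree $0$; for $\tau \in \ttS_\infty^c$, $\pi_{B_\ttS, \tau}$ is a discrete series whose cohomology is two-dimensional concentrated in degree $1$. A K\"unneth argument then yields the vanishing for $i \neq d$ and identifies the middle-degree $\pi$-isotypical space as $2^d$-dimensional, matching the rank of $\rho_{\pi,\gothp}^{\otimes d}$.

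For (c), the plan is to transfer the Galois computation to the unitary side. Via the diagram~\eqref{E:diag-Sh-var} and the identification~\eqref{E:pull back along alpha = tensor}, the $\pi$-isotypical cohomology of $\Sh_K(G_{\ttS, \ttT})$ with coefficients in $\calL^{(\underline k, w)}_{\ttS, \ttT}$ is realized as a $\Gal_{\FF_{p^{2g}}}$-equivariant factor of the corresponding cohomology of $\Sh_{K''}(G''_{\tilde \ttS})$ with coefficients in $\calL''^{(\underline k, w)}_{\tilde \ttS, \tilde \Sigma}$, twisted by the character of $\Gal_{\FF_{p^{2g}}}$ acting on the cohomology of $\Sh_{K_E}(T_{E, \tilde \ttS, \ttT})$ (where this character is explicit via Shimura reciprocity as described in Subsection~\ref{A:CM}). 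Since $\Sh_{K''}(G''_{\tilde \ttS})$ is of PEL type and $K''_p$ is hyperspecial, the Eichler–Shimura relation at $\gothp$ (reflected in~\eqref{E:Eichler-Shimura}) determines the action of $\Frob_{p^g}$ on the $\pi$-isotypical part as that of $\rho_{\pi, \gothp}^{\otimes d}$ tensored with certain characters arising from the description~\eqref{E:etale-sheaf-AV} of $\calL''^{(\underline k, w)}_{\tilde \ttS, \tilde \Sigma}$ in terms of $\bfA''_{\tilde \ttS}$.

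The main obstacle will be step (c), specifically tracking the Tate twist depending on $\#\ttT$. The change of Deligne homomorphism from $h_{\ttS, \emptyset}$ to $h_{\ttS, \ttT}$ alters the Shimura reciprocity factor by $(\underline p_F)^{2\#\ttT}$ in~\eqref{E:Shimura reciprocity for ShGST}, which under the central Hecke action on the $\pi$-isotypical component contributes precisely $(\det \rho_{\pi, \gothp})(1)^{\otimes \#\ttT}$ per unit of $\#\ttT$, because the central character of $\pi$ at $\gothp$ relates to $\det \rho_{\pi, \gothp}(\Frob_{p^g}) = S_\gothp(\pi) p^g = \alpha_\pi \beta_\pi$. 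Once this identification is made, a direct binomial expansion of the $\Frob_{p^{2g}}$-eigenvalues on $\rho_{\pi, \gothp}^{\otimes d} \otimes [\det(\rho_{\pi, \gothp})(1)]^{\otimes \#\ttT}$ yields $p^{-2g \#\ttT} \alpha_\pi^{2(i + \#\ttT)} \beta_\pi^{2(d - i + \#\ttT)}$ with multiplicity $\binom{d}{i}$, completing the proof.
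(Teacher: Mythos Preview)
Your plan is workable in principle but takes a more circuitous route than the paper's argument, and step~(c) is where the two diverge most.

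The paper does not go through the unitary Shimura variety $\Sh_{K''}(G''_{\tilde\ttS})$ at all for this proposition. Instead (see the Appendix, Proposition~\ref{P:cohomology of sh appendix}), it first quotes Brylinski--Labesse \cite{brylinski-labesse} for the case $\ttT=\emptyset$, so your steps~(a) and~(b) are replaced by a citation. Then for general $\ttT$, it introduces the discrete Shimura variety $\Sh_{K_{T,p}}(T_{F,\ttT})$ for $T_F=\Res_{F/\QQ}\GG_m$ with the Deligne homomorphism $h_\ttT$ sending $z\mapsto(|z|^2,\dots,|z|^2,1,\dots,1)$, and uses the product morphism
\[
\pr:\ G_{\ttS,\emptyset}\times T_F\longrightarrow G_{\ttS,\ttT},
\]
which satisfies $\pr\circ(h_{\ttS,\emptyset}\times h_\ttT)=h_{\ttS,\ttT}$ and pulls $\calL_{\ttS,\ttT}^{(\underline k,w)}$ back to $\calL_{\ttS,\emptyset}^{(\underline k,w)}\boxtimes\calL_{T,\ttT}^{w}$. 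Taking anti-diagonal $\AAA_F^{\infty,\times}$-invariants gives
\[
H^\star(\Sh_K(G_{\ttS,\ttT})_{\overline\FF_p},\calL_{\ttS,\ttT}^{(\underline k,w)})[\pi]\ \cong\ H^\star(\Sh_K(G_{\ttS,\emptyset})_{\overline\FF_p},\calL_{\ttS,\emptyset}^{(\underline k,w)})[\pi]\otimes H^0(\Sh_{K_{T,p}}(T_{F,\ttT})_{\overline\FF_p},\calL_{T,\ttT}^{w})[\omega_\pi],
\]
and the second factor is one-dimensional with $\Frob_{p^g}$ acting by $\omega_\pi(\underline p_F)^{-\#\ttT}=(\det\rho_{\pi,\gothp}(\Frob_{p^g})/p^g)^{\#\ttT}$, via the reciprocity map for $T_{F,\ttT}$. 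This yields the twist $[\det(\rho_{\pi,\gothp})(1)]^{\otimes\#\ttT}$ directly.

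Your route via $G''_{\tilde\ttS}$ and $T_{E,\tilde\ttS,\ttT}$ would in principle give the same answer, but it drags in the auxiliary CM field $E$, forces you to match automorphic representations of $G''$ with those of $G_\ttS$, and requires untangling the $E$-dependent Shimura reciprocity in Subsection~\ref{A:CM} only to cancel it out at the end. Your final paragraph, where you track the change in the reciprocity map~\eqref{E:Shimura reciprocity for ShGST} under $h_{\ttS,\emptyset}\rightsquigarrow h_{\ttS,\ttT}$, is exactly the right instinct; the cleanest way to realize it is via the product map with $T_F$ rather than the detour through the unitary group.
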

\begin{proof}
The first part of the proposition is well known to experts. We defer its proof to the Appendix (see Proposition~\ref{P:cohomology of sh appendix}).
The explicit description of the action of $\Frob_{p^{2g}}$ is straightforward.
\end{proof}

\begin{prop}
\label{P:self composition of eta}
Assume that $d = \#\ttS_\infty^c \neq 0$.

\begin{enumerate}
\item[(1)]
The $2g$-th iteration of the Frobenius link $\sigma^{2g}: \ttS \to \ttS$ coincides with the $2d$-fold self-composition of the fundamental link $\eta_\ttS$ \eqref{Ex:fundamental-link}.

\item[(2)]
The link morphism  on  $\Sh_{K''_p}(G''_{\tilde\ttS})$ with indentation degree $0$ associated to $\sigma^{2g} = \eta_\ttS^{2d}$ exists.  It is given by
\begin{enumerate}
\item[(a)]
$g$-fold self-composition $(\gothF''_{\gothp^2})^g$ if $\gothp $ is inert in $E/F$;
\item[(b)]
$(\gothF''_{\gothp^2})^g \cdot S_\gothq^{-\Delta_{\tilde \ttS_{\infty}}}$ if $\gothp$ splits in $E/F$, where $S_{\gothq}$ is defined in Example~\ref{Ex:Sq}.
\end{enumerate}
Moreover, this link morphism preserves the neutral geometric connected component $\Sh_{K''_p}(G''_{\tilde \ttS})^\circ_{\overline \FF_p}$ and hence induces a \emph{canonical} link morphism $(\eta^{2d}_{\ttS,(0),\sharp},\eta^{2d,\sharp}_{\ttS,(0)})$ on the quaternionic Shimura variety $\Sh_{K_p}(G_{\ttS, \ttT})$ with shift $\boldsymbol 1$ for any fixed subset $\ttT \subset \ttS_\infty$.  

\item[(3)] Let
 \[
(\eta^{2d}_\ttS)^{\star}_{(0)}\colon H_\et^d(\Sh_{K}(G_{\ttS, \ttT})_{\overline \FF_p}, \calL^{(\underline k,w)}_{\ttS, \ttT}) \to H_\et^d(\Sh_{K}(G_{\ttS, \ttT})_{\overline \FF_p}, \calL^{(\underline k,w)}_{\ttS, \ttT})
\]
be the normalized link morphism  \eqref{E:normalized-link} induced by $(\eta^{2d}_{\ttS,(0),\sharp},\eta^{2d,\sharp}_{\ttS,(0)})$. 
Then we  have an equality of operators on cohomology groups:
\begin{equation}
\label{E:etaS equality as operator}
(\eta^{2d}_{\ttS})^{\star}_{(0)}=p^{-dg}\cdot\Frob_{p^{2g}}\circ S_\gothp^{-d-2\#\ttT},
\end{equation}
  where $S_{\gothp}$ is the Hecke operator given by the central element $\underline p_F^{-1}\in G(\AAA^{\infty})$.
   In particular, for each  $\pi\in \scrA_{(\kb,w)}$ and  each integer $i$ with $0\leq i\leq d$,   the (generalized) eigenspace of $\Frob_{p^{2g}}$ on  $H_\et^d(\Sh_{K}(G_{\ttS, \ttT})_{\overline \FF_p}, \calL^{(\underline k,w)}_{\ttS, \ttT})[\pi] $ with eigenvalue $p^{-2g\#\ttT}\alpha_\pi^{2(i+\#\ttT)}\beta_\pi^{2(d-i+\#\ttT)}$ is the same as  the  (generalized) eigenspace of $(\eta^{2d}_\ttS)^{\star}_{(0)}$ with eigenvalue
$ (\alpha_\pi/\beta_\pi)^{2i-d}$.
\end{enumerate}
\end{prop}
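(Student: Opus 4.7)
For part (1), the identity of links follows by a short combinatorial check. Both links connect every node $\tau \in \ttS_\infty^c$ back to itself by a curve turning to the right, so it suffices to match displacements. The Frobenius link $\sigma$ has displacement $1$ per curve, so $\sigma^{2g}$ has displacement $2g$. For $\eta_\ttS^{2d}$: since $(-)^+$ is a cyclic permutation of $\ttS_\infty^c$ of order $d$, the link $\eta_\ttS^d$ sends each $\tau$ back to itself with total displacement $\sum_{\tau \in \ttS_\infty^c} n_\tau = g$, whence $\eta_\ttS^{2d}$ gives displacement $2g$ as well.

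For part (2), I would use Example~\ref{Ex:twisted Frobenius} to identify $(\gothF''_{\gothp^2},\eta''_{\gothp^2})$ as the link morphism for $\sigma^2$ on the unitary side, of indentation degree $0$ (inert case) or $2\Delta_{\tilde\ttS}$ (split case). Since indentation degrees add under composition (Subsection~\ref{S:normalization of link morphisms}), the $g$-fold iterate $(\gothF''_{\gothp^2})^g$ is a link morphism for $\sigma^{2g}=\eta_\ttS^{2d}$ of indentation $0$ or $2g\Delta_{\tilde\ttS}$. In the split case, post-composing with $S_\gothq^{-\Delta_{\tilde\ttS}}$, which by Example~\ref{Ex:Sq} is a link morphism for the trivial link of indentation $-2g\Delta_{\tilde\ttS}$, gives total indentation $0$; uniqueness (Lemma~\ref{L:uniqueness-link}) then identifies the result as the unique such link morphism. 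For neutral component preservation I would work directly on $\pi_0$ using the reciprocity map \eqref{E:Shimura-rec-unitary}: the action of $\gothF''_{\gothp^2}$ on $\pi_0$ is essentially that of $\Frob_{p^2}$ via $\Rec''_p$, and the $S_\gothq^{-\Delta_{\tilde\ttS}}$ twist exactly cancels its nontrivial contribution on the second factor of \eqref{E:geometric-connected-unitary}. Construction~\ref{S:from unitary to quaternionic} then produces a canonically determined link morphism $(\eta_{\ttS,(0),\sharp}^{2d},\eta_{\ttS,(0)}^{2d,\sharp})$ on $\Sh_{K_p}(G_{\ttS,\ttT})$ with shift $\boldsymbol 1$.

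The heart of (3) is computing the cohomological action. Starting from the defining relation $p\,\eta''_{\gothp^2}=F_{\mathrm{rel},p^2}$, one obtains $p^g(\eta''_{\gothp^2})^g=F_{\mathrm{rel},p^{2g}}$, so on cohomology the $g$-fold iterate is closely related to $\Frob_{p^{2g}}$ up to a power of $p$ determined by weights. The local system $\calL''^{(\underline k,w)}_{\tilde\ttS,\tilde\Sigma}$ has weight contribution $(w-2)$ at each of the $g$ embeddings $\tilde\tau\in\tilde\Sigma$, because $\Sym^{k_\tau-2}(H^\circ_{\tilde\tau})\otimes(\det H^\circ_{\tilde\tau})^{(w-k_\tau)/2}$ is scaled by $p^{(k_\tau-2)+(w-k_\tau)}=p^{w-2}$ under multiplication by $p$ on $H^\circ_{\tilde\tau}$. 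Combined with the $p^g$ factor above and the normalization $p^{-v(\sigma^{2g})/2}=p^{-dg}$, the powers of $p$ arrange so that the normalized unitary link morphism on cohomology is $\Frob_{p^{2g}}\circ S_\gothq^{-\Delta_{\tilde\ttS}}$. Descending to the quaternionic side via Construction~\ref{S:from unitary to quaternionic} introduces (from Step II) a central Hecke twist by $(\underline p_F)^{2\#\ttT-\#\ttS_\infty}$, and by Example~\ref{Ex:Sq} the unitary $S_\gothq$ descends to $S_\gothp$; using $\#\ttS_\infty=g-d$ all central factors collect into $S_\gothp^{-d-2\#\ttT}$, yielding the formula $(\eta_\ttS^{2d})^\star_{(0)}=p^{-dg}\Frob_{p^{2g}}\circ S_\gothp^{-d-2\#\ttT}$. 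The eigenvalue claim is then a short arithmetic check: combining this with the $\Frob_{p^{2g}}$-eigenvalues from Proposition~\ref{P:cohomology of sh} and the identity $S_\gothp(\pi)=\alpha_\pi\beta_\pi/p^g$ from Eichler--Shimura~\eqref{E:Eichler-Shimura}, the eigenvalue on the $\binom{d}{i}$-dimensional subspace of $\Frob_{p^{2g}}$-eigenvalue $p^{-2g\#\ttT}\alpha_\pi^{2(i+\#\ttT)}\beta_\pi^{2(d-i+\#\ttT)}$ is exactly $(\alpha_\pi/\beta_\pi)^{2i-d}$.

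The main obstacle is the bookkeeping of numerical factors in (3): the weight contribution of the local system, the $p^{-dg}$ normalization from $v(\sigma^{2g})=2dg$, the central Galois twist introduced at Step II of Construction~\ref{S:from unitary to quaternionic}, and the $S_\gothq$-to-$S_\gothp$ correspondence from Example~\ref{Ex:Sq} all interact nontrivially, and one must verify that they conspire to give the clean formula, with the power of $p$ matching precisely the normalization constant.
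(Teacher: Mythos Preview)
Parts (1) and (2) match the paper's approach.

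Part (3) has a genuine gap. Your appeal to Step~II of Construction~\ref{S:from unitary to quaternionic} is a misreading: the element $1 \times (\underline p_F)^{2\#\ttT - \#\ttS_\infty}$ there modifies the \emph{Galois} action on the $T_E$-factor to enable descent; it is not a Hecke operator on the $G$-factor and contributes no power of $S_\gothp$ to the morphism itself. Moreover, your weight computation concerns how multiplication by $p$ on $H_\ell(\bfA''_{\tilde\ttS})$ scales the local system fiberwise, but this does not yield the claimed operator identity on cohomology of the Shimura variety---the quasi-isogeny $\eta''_{\gothp^2}$ and the Shimura-variety morphism $\gothF''_{\gothp^2}$ are distinct objects, and it is the latter whose cohomological action you need to compute. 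Even granting your individual claims, the factors you assemble, $(\underline p_F)^{2\#\ttT-\#\ttS_\infty}$ and $S_\gothq^{-\Delta_{\tilde\ttS}}$, do not combine to $S_\gothp^{-d-2\#\ttT}$, since $\Delta_{\tilde\ttS}$ bears no a~priori relation to $d$ or $\#\ttT$.

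The paper proceeds differently. The key geometric fact, which you do not invoke, is that $(\gothF''_{\gothp^2})^g = \Phi^{2g} \circ S_p^{-g}$ on the unitary side, where $\Phi^{2g}$ is the relative $\FF_{p^{2g}}$-Frobenius and $S_p$ is the central Hecke operator by $(\underline p_F^{-1}, 1)\in G''(\AAA^\infty)$. One then lifts the unitary link morphism, following Step~I of Construction~\ref{S:from unitary to quaternionic}, to an endomorphism $\Phi_\times^{2g} \circ \boldsymbol x$ of the product $\Sh_{K_p}(G_{\ttS,\ttT}) \times \Sh_{K_{E,p}}(T_{E,\tilde\ttS,\ttT})$, where the central element $\boldsymbol x \in G(\AAA^\infty) \times \AAA_E^{\infty,\times}$ is pinned down by two conditions: its image in $G''(\AAA^\infty)$ must be $((\underline p_F)^g,1)$ (inert case) or $((\underline p_F)^g,(\underline\gothq)^{\Delta_{\tilde\ttS}})$ (split case), and the resulting endomorphism must preserve the neutral component. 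Solving these via the Shimura reciprocity maps of \S\ref{S:quaternoinic Shimura varieties} and \S\ref{A:CM} gives the first coordinate of $\boldsymbol x$ as $(\underline p_F)^{\#\ttS_\infty^c + 2\#\ttT}=(\underline p_F)^{d+2\#\ttT}$; taking the fiber over $\boldsymbol 1$, the quaternionic link morphism is $\Phi^{2g}_{G_{\ttS,\ttT}} \circ S_\gothp^{-d-2\#\ttT}$, and the normalization $p^{-dg}$ then yields \eqref{E:etaS equality as operator} directly. No weight bookkeeping is needed.
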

\begin{proof}
Statement (1) is evident. For (2), we first check   that the maps given by (a) and (b) are  link morphisms with indentation degree $0$ associated to the link $\eta_\ttS^{2d}$.
This follows easily  from Examples~\ref{Ex:twisted Frobenius} and \ref{Ex:Sq}. By the uniqueness of link morphisms (Lemma~\ref{L:uniqueness-link}), they are the link morphisms we sought for.

We next show that the link morphism in the unitary case preserves the neutral geometric connected component $\Sh_{K''_p}(G''_{\tilde \ttS})^\circ_{\overline \FF_p}$.
This is a direct computation using the Shimura reciprocity map (in Subsection~\ref{S:unitary Shimura variety}), which we spell out now. 
Denote by $\Phi^{2g}$  the Frobenius endomorphism of $\Sh_{K''_p}(G''_{\tilde\ttS})$ relative to $\FF_{p^{2g}}$.
 Then $(\gothF''_{\gothp^2})^g$ is  nothing but the composition of $\Phi^{2g}$ with  the Hecke operator $S_p^{-g}$, where $S_p$ is the Hecke correspondence given by  the central element $(\underline p^{-1}_F, 1)\in  G(\AAA^{\infty})\times_{\AAA_{F}^{\infty,\times}}\AAA_{E}^{\infty,\times}\cong G''(\AAA^{\infty})$.
Recall that the set of geometric connected components of $\Sh_{K''_p}(G''_{\tilde\ttS})$ is given by 
$$
\pi_0(\Sh_{K''_p}(G''_{\tilde\ttS})_{\overline \FF_p})\cong \big( F_+^{\times, \mathrm{cl}} \backslash \AAA_F^{\infty, \times} / \calO_\gothp^\times \big) \times 
\big( 
E^{\times, N_{E/F}=1,\cl} \backslash \AAA_{E}^{\infty, N_{E/F} = 1} / \calO_{E_{\gothp}}^{N_{E/F} = 1}
\big).
$$
The action of $\Phi^{2g}$ on $\pi_0(\Sh_{K''_p}(G''_{\tilde\ttS})_{\overline \FF_p})$ coincides with the arithmetic Frobenius  $\Frob_{p^{2g}}^{-1}\in \Gal_{\FF_{p^{2g}}}$, which is computed already by \eqref{E:Shimura-rec-unitary}.
We now list the actions of these operators on the geometric connected components.
\begin{center}
\begin{tabular}{|c|c|c|}
\hline
Operator & When $\gothp$ splits & When $\gothp$ is inert
\\
\hline
$\Phi^{2g}$ & $(\underline p_F)^{-2g} \times (\underline \gothq)^{-2\Delta_{\tilde \ttS_{\infty}}}$& $(\underline p_F)^{-2g}\times 1 $
\\
\hline
$S_p$ &$(\underline p_F)^{-2}\times 1$&$(\underline p_F)^{-2}\times 1$
\\
\hline
$S_\gothq$ &$1\times \underline \gothq^{-2}$ & N/A
\\
\hline
\end{tabular} 
\end{center}
It is now clear that the link morphisms given in (1) and (2) preserve the neutral geometric connected component. This verifies (2).

We now turn to the proof of (3).
It suffices to verify \eqref{E:etaS equality as operator} because $S_\gothp$ acts on the $\pi$-component by the scalar $\omega_\pi(\underline p^{-1}) = \alpha_\pi \beta_\pi / p^g$ according to \eqref{E:Eichler-Shimura},  and then statement (3) would follow immediately from the following easy computation:
\[  p^{-dg} \times
p^{-2g\#\ttT}\alpha_\pi^{2(i+\#\ttT)}\beta_\pi^{2(d-i+\#\ttT)} \times \big(\alpha_\pi \beta_\pi /p^g\big)^{-(d +2\#\ttT)} = (\alpha_\pi / \beta_\pi)^{2i-d}.
\]

To prove \eqref{E:eta a to b when r=1}, we first compute the canonical lift of the link morphism $((\eta^{2d}_{\ttS,(0)})''_\sharp, (\eta^{2d}_{\ttS,(0)})''^\sharp)$ to an endomorphism of $\Sh_{K_p}(G_{\ttS, \ttT}) \times \Sh_{K_{E, p}}(T_{E,\tilde \ttS, \ttT})$ appearing in Construction~\ref{S:from unitary to quaternionic} Step I (and the shift in Step II is trivial in our case).
This lift is clearly a composition of the Frobenius endomorphism relative to $\FF_{p^{2g}}$, which we denote by $\Phi_\times^{2g}$, and the action of a Hecke operator given by a central element $\boldsymbol x$ in $G(\AAA^\infty) \times \AAA_E^{\infty, \times}$.
This central element $\boldsymbol x$ is characterized by (and uniquely determined by) the following two conditions: 
\begin{itemize}
\item[(a)] the resulting link morphism on $\Sh_{K_p}(G_{\ttS,\ttT})\times \Sh_{K_{E,p}}(T_{E,\tilde\ttS,\ttT})$ preserves the neutral connected component, and
\item[(b)] under the natural projection $G(\AAA^\infty) \times \AAA_E^{\infty, \times}\ra G(\AAA^{\infty})\times_{\AAA_{F}^{\infty,\times}}\AAA^{\infty,\times}_E\cong G''(\AAA^{\infty})$,  $\boldsymbol x$  is mapped to the central element $((\underline p_F)^g, 1)$ if $\gothp$ is inert in $E/F$ and to $((\underline p_F)^{g},(\underline \gothq)^{ \Delta_{\tilde \ttS_{\infty}}})$ if $\gothp$ splits in $E/F$.
\end{itemize}
 
We claim that $\boldsymbol x = \big((\underline p_F)^{\#\ttS_\infty^c + 2 \#\ttT} ,\, (\underline p_F)^{\#\ttS_\infty- 2 \#\ttT}\big)$ if $\gothp$ is inert in $E/F$, and $\boldsymbol x = \big((\underline p_F)^{\#\ttS_\infty^c + 2 \#\ttT}$, $ (\underline p_F)^{\#\ttS_\infty- 2 \#\ttT}(\underline \gothq)^{ \Delta_{\tilde \ttS_{\infty}}}\big)$ if $\gothp$ splits in $E/F$.
Clearly, this element satisfies (b) above.  To see (a), we note that the action of $\Phi_\times^{2g}$ on the geometric connected component is the image of the \emph{arithmetic Frobenius} $\Frob_{p^{2g}}^{-1}$ under the Shimura reciprocity maps in Subsections~\ref{S:quaternoinic Shimura varieties} and \ref{A:CM}, namely
\[
\begin{cases}
\big((\underline p_F)^{-2\#\ttS_\infty^c - 4\#\ttT}, \  (\underline p_F)^{2\#\ttT - \#\ttS_\infty} \big)& \textrm{ if } \gothp\textrm{ is inert in }E/F,
\\
\big((\underline p_F)^{-2\#\ttS_\infty^c - 4\#\ttT} , \ (\underline p_F)^{2\#\ttT - \#\ttS_\infty} (\underline \gothq)^{-\Delta_{\tilde \ttS_{\infty}}}\big) & \textrm{ if } \gothp\textrm{ splits in }E/F.
\end{cases}
\]
But this element is exactly $(\nu\times \id)(\boldsymbol{x}^{-1})$.

Now, taking the fiber over $\boldsymbol{1} \in \Sh_{K_{E,p}}(T_{E, \tilde \ttS, \ttT})$ tells us that the (canonical) link morphism $(\eta^d_{\ttS})_{ (0),\sharp}$ is the Frobenius endomorphism $\Phi^{2g}_{G_{\ttS,\ttT}}$ on $\Sh_{K_p}(G_{\ttS, \ttT})$ relative to $\FF_{p^{2g}}$ composed with the Hecke operator given by multiplication by the first coordinate of $\boldsymbol x$, namely,  $S_\gothp^{-\#\ttS_\infty^c - 2\#\ttT}=S_\gothp^{-d - 2\#\ttT}$.
For the action of $(\eta^d_{\ttS})^{\star}_{(0)}$ on $H_\et^d(\Sh_{K}(G_{\ttS, \ttT})_{\overline \FF_p}, \calL^{(\underline k,w)}_{\ttS, \ttT})$, we note that the induced action of the Frobenius endomorphism $\Phi^{2g}_{G_{\ttS,\ttT}}$ on cohomology co\"incides with $\Frob_{p^{2g}}$ (as opposed to the arithmetic Frobenius). So we have $(\eta^{2d}_{\ttS})^{\star}_{(0)}=p^{-dg}\cdot\Frob_{p^{2g}}\circ S_\gothp^{-d-2\#\ttT}$, where  $p^{-dg}$ is the normalization factor in \eqref{E:normalized-link}. This proves \eqref{E:etaS equality as operator} and hence the Proposition.
\end{proof}

\subsection{Link morphisms II}
\label{S:link morphism II}
Let $\eta: \ttS\ra\ttS' $ be a general link. Then there exists an integer $N\geq 0$ such that the composition of the links $\xi: = \eta\circ \sigma^{2gN}=\eta\circ(\eta_{\ttS}^{2d})^N=(\eta_{\ttS'}^{2d})^{N}\circ \eta: \ttS \to \ttS'$ is right-turning, where $\eta_{\ttS}$ is the fundamental link for $\ttS$ \eqref{Ex:fundamental-link}.
Suppose that the link morphism on $\Sh_{K''_p}(G''_{\tilde\ttS})$ associated to $\xi$ with indentation degree $n$ exists.
 Then we put, for each $\pi\in \scrA^{(\kb,w)}$,
\begin{align*}
\eta^\star_{(n)}\colon H^d_\et(\Sh_{K_p}(G_{\ttS', \ttT'})_{\overline \FF_p}, \calL_{\ttS', \ttT'}^{(\underline k, w)} )[\pi] &\xrightarrow{((\eta_\ttS^{2d})^\star_{(0)})^{-N}}
H^d_\et(\Sh_{K_p}(G_{\ttS', \ttT'})_{\overline \FF_p}, \calL_{\ttS', \ttT'}^{(\underline k, w)} ) [\pi]\\&
\xrightarrow{\ \xi^\star_{(n)} \ }
H^d_\et(\Sh_{K_p}(G_{\ttS,\ttT})_{\overline \FF_p}, \calL^{(\underline k, w)}_{\ttS, \ttT})[\pi],
\end{align*}
and  call it the \emph{normalized link morphism} on the cohomology group of quaternionic Shimura varieties associated to $\eta$ with indentation degree $n$. 
Here the link morphism $(\eta_\ttS^{2d})^\star_{(0)}$ is taken to be the canonical one, so that it is invertible by  Proposition~\ref{P:self composition of eta}.
The shift of $\eta^\star_{(n)}$ is defined to be the same as that of $\xi^\star_{(n)}$ (as $(\eta_\ttS^{2d})^\star_{(0)}$ has shift $\boldsymbol 1$).
By Lemma~\ref{L:uniqueness-link} on the uniqueness of link morphisms, this definition does not depend on the choice of $N$ (but on the shift of $\xi^\star_{(n)}$) and is compatible with compositions.

\begin{lemma}
\label{L:inverse of link morphism}
\begin{enumerate}
\item
For any link $\eta: \ttS \to \ttS'$,  there exist an integer $N>0$ and another right-turning link $\xi: \ttS' \to \ttS$ such that $\xi \circ \eta: \ttS \to \ttS$ is the same as $\sigma^{2gN}:\ttS\ra \ttS$.

\item
If $\eta: \ttS \to \ttS'$ is a right-turning link, and the link morphism $(\eta''_{(n),\sharp},\eta''^{\sharp}_{(n)})$ on  $\Sh_{K''_p}(G''_{\tilde \ttS})$ with indentation degree $n$ associated to $\eta$ exists, then there exists $N >0$ such that the link morphism associated to $\eta^{-1} \circ (\eta_{\ttS'}^{2d})^N: \ttS' \to \ttS$ of indentation $-n$ exists.

\item
Let $\eta: \ttS\ra \ttS'$ be the link as in $(2)$, and $\eta_{(n), \sharp}: \Sh_{K_p}(G_{ \ttS, \ttT}) \to \Sh_{K_p}(G_{ \ttS', \ttT'})$  be the link morphism with some shift $\boldsymbol t$ obtained by applying Construction~\ref{S:from unitary to quaternionic} to $\eta''_{(n), \sharp}$.   
If $\eta^{-1}: \ttS' \to \ttS$ denotes the inverse link, then the morphism
\[
(\eta^{-1})^\star_{(-n)}:   
H^d_\et(\Sh_{K_p}(G_{\ttS,\ttT})_{\overline \FF_p}, \calL^{(\underline k, w)}_{\ttS, \ttT})
 \longto 
H^d_\et(\Sh_{K_p}(G_{\ttS',\ttT'})_{\overline \FF_p}, \calL^{(\underline k, w)}_{\ttS', \ttT'})
\]
with some shift $\boldsymbol t^{-1}$
is the same as the inverse of $\eta^\star_{(n)}$, which has shift $\boldsymbol t$.
Moreover, if  $\eta''_{(n),\sharp}$ (or equivalently $\eta_{(n),\sharp}$) is finite flat of degree $p^{v(\eta)}$, where $v(\eta)$ denotes the total displacement of $\eta$, we have also  $(\eta^{-1})^\star_{(-n)}=p^{-v(\eta)/2}\Tr_{\eta_{(n),\sharp}}$, where $\Tr_{\eta_{(n),\sharp}}$ is  the trace map on cohomology induced by the finite flat morphism $\eta_{(n),\sharp}$.
\end{enumerate}
\end{lemma}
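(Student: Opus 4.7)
My approach breaks into three steps mirroring the statement.

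For (1), I will construct $\xi$ by composing $\eta^{-1}$ with a large power of the Frobenius link. Concretely, set $\xi := \sigma^{2gN} \circ \eta^{-1}\colon \ttS' \to \ttS$. Each curve of $\xi$ is the concatenation of an initial segment coming from $\sigma^{2gN}$ (with displacement $2gN$, turning right) and a terminal segment from $\eta^{-1}$ (of displacement bounded by $|v(\eta)|$). For $N$ sufficiently large, the rightward Frobenius dominates and each curve may be drawn monotonically to the right, so $\xi$ is right-turning. Then $\xi\circ \eta = \sigma^{2gN}\circ(\eta^{-1}\circ\eta)$. By Example~\ref{Ex:fundamental-link}, every self-link of $\ttS$ is an integer power of $\eta_\ttS$, and its total displacement is a multiple of $g$; since $\eta^{-1}\circ\eta$ has total displacement $0$, it must equal $\id_\ttS$, giving $\xi\circ\eta=\sigma^{2gN}$.

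For (2), I apply (1) to $\eta$ itself to obtain $N$ such that $\xi := \eta^{-1}\circ \sigma^{2gN}$ is right-turning. The essential content is constructing the associated link morphism $(\xi''_{(-n),\sharp},\xi''^\sharp_{(-n)})$ on $\Sh_{K''_p}(G''_{\tilde \ttS'})$. By Proposition~\ref{P:self composition of eta}(2), the link morphism $(\eta_{\ttS'}^{2d})^N_{(0),\sharp}$ exists and equals (up to a central Hecke twist) the $gN$-fold iterate of the twisted partial Frobenius $\gothF''_{\gothp^2}$; it is a finite flat universal homeomorphism of $p$-power degree. The assumed $\eta''_{(n),\sharp}$ is likewise such (cf.\ \cite[\S 7]{tian-xiao1}, condition (1) of Subsection~\ref{S:link morphism}). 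For $N$ large, the high Frobenius power $(\eta_{\ttS'}^{2d})^N_{(0),\sharp}$ factors through $\eta''_{(n),\sharp}$, defining $\xi''_\sharp\colon \Sh_{K''_p}(G''_{\tilde \ttS'})\to \Sh_{K''_p}(G''_{\tilde \ttS})$. The quasi-isogeny $\xi''^\sharp$ is then obtained by composing $(\eta_{\ttS'}^{2d})^{N,\sharp}_{(0)}$ with the inverse quasi-isogeny $(\eta''^\sharp_{(n)})^{-1}$ pulled back along $\xi''_\sharp$. One verifies axioms (1)-(4) of Subsection~\ref{S:link morphism} for $(\xi''_\sharp,\xi''^\sharp)$ using the analogous properties of $\eta''^\sharp_{(n)}$ and the essential Frobenius; the $\gothq$-degree computation in axiom~(4) tracks an indentation of $-n$. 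This is the main technical obstacle. Descending via Construction~\ref{S:from unitary to quaternionic} produces the link morphism on quaternionic Shimura varieties with indentation $-n$.

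For the identification $(\eta^{-1})^\star_{(-n)} = (\eta^\star_{(n)})^{-1}$ in (3), I combine (2) with the defining formula of Subsection~\ref{S:link morphism II} and multiplicativity of normalized pullbacks for right-turning compositions (Subsection~\ref{S:normalization of link morphisms}). With $\xi = \eta^{-1}\circ \sigma^{2gN}$ as in (2),
\[
(\eta^{-1})^\star_{(-n)} \circ \eta^\star_{(n)} = ((\eta_{\ttS'}^{2d})^\star_{(0)})^{-N} \circ \xi^\star_{(-n)} \circ \eta^\star_{(n)} = ((\eta_{\ttS'}^{2d})^\star_{(0)})^{-N} \circ (\eta\circ\xi)^\star_{(0)} = \id,
\]
since $\eta\circ\xi=\sigma^{2gN}$ has normalized pullback $((\eta_{\ttS'}^{2d})^\star_{(0)})^N$. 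The symmetric computation gives the other identity. The shift assertion follows from Remark~\ref{R:composition of shifts}: the shift of the composition equals the product of the shifts, and the composition $\eta\circ\xi=\sigma^{2gN}$ has the canonical shift $\boldsymbol 1$, forcing the shift of $(\eta^{-1})^\star_{(-n)}$ to be $\boldsymbol t^{-1}$. Finally, if $\eta_{(n),\sharp}$ is finite flat of degree $p^{v(\eta)}$, the projection formula gives $\Tr_{\eta_{(n),\sharp}} \circ \tilde\eta^\star_{(n)} = p^{v(\eta)}\cdot\id$; recalling $\tilde\eta^\star_{(n)} = p^{v(\eta)/2}\eta^\star_{(n)}$ from \eqref{E:normalized-link} and inverting yields $(\eta^\star_{(n)})^{-1} = p^{-v(\eta)/2}\Tr_{\eta_{(n),\sharp}}$, which combined with the preceding paragraph gives the desired formula.
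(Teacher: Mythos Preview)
Your proposal is correct and follows essentially the same route as the paper. A few remarks:

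For (2), the factorization step you flag as ``the main technical obstacle'' is exactly what the paper isolates, and the paper dispatches it by citing \cite[Proposition~4.8]{helm}: since $\eta''_{(n),\sharp}$ induces a bijection on closed points (this is precisely condition (1) in the definition of a link morphism), any sufficiently high power of Frobenius factors through it. Your phrasing that $\eta''_{(n),\sharp}$ is ``likewise'' a finite flat universal homeomorphism is a slight over-claim relative to the axioms of Subsection~\ref{S:link morphism}, which only guarantee a bijection on geometric points; but the Helm argument needs only the bijectivity, so the logic survives. You should cite that result rather than leave the factorization as an assertion.

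For (3), the paper compresses your explicit multiplicativity computation into a one-line appeal to Lemma~\ref{L:uniqueness-link} (uniqueness of link morphisms), which forces $(\eta^{-1})^\star_{(-n)}\circ\eta^\star_{(n)}$ and $\eta^\star_{(n)}\circ(\eta^{-1})^\star_{(-n)}$ to agree with the canonical $(\eta_?^{2d})^{N,\star}_{(0)}\circ((\eta_?^{2d})^{N,\star}_{(0)})^{-1}=\id$. Your unwinding via the definition in Subsection~\ref{S:link morphism II} is equivalent. The trace computation is identical to the paper's.
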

\begin{proof}
(1) is obvious.  
For (2), we may first find $N$ so that $\xi : = \eta^{-1} \circ (\eta_{\ttS'}^{2d})^N$ has all curves turning to the right.
Then we consider the two morphisms
\[
\xymatrix{
\Sh_{K''_p}(G''_{\tilde \ttS}) \ar[dr]_{\eta_{(n),\sharp}} && \Sh_{K''_p}(G''_{\tilde \ttS'})
\ar[dl]^{(\eta_{\ttS'}^{2d})^N_{(0),\sharp}}
\\
&
\Sh_{K''_p}(G''_{\tilde \ttS'})
}
\]
Since the link morphism $\eta_{(n),\sharp}$ induces a bijection on the closed points, \cite[Proposition~4.8]{helm} implies that after possibly enlarging $N$, the map $(\eta_{\ttS'}^{2d})^N_{(0),\sharp}$ factors through $\eta_{(n),\sharp}$, as $\eta_{(n),\sharp} \circ \xi_\sharp$.  It is easy to see that $\xi_\sharp$ gives the required link morphism.

The first part of (3) follows from the uniqueness of link morphism (Lemma~\ref{L:uniqueness-link}).
For the second part of  (3),   note that the composed morphism 
\[
H^{d}_{\et}(\Sh_{K_p}(G_{\ttS',\ttT'})_{\overline \FF_p},\calL^{(\kb,w)}_{\ttS', \ttT'})\xra{p^{v(\eta)/2}\eta^{\star}_{(n)}} H^{d}_{\et}(\Sh_{K_p}(G_{\ttS,\ttT})_{\overline \FF_p},\calL^{(\kb,w)}_{\ttS, \ttT})\xra{\Tr_{\eta_{(n),\sharp}}}
H^{d}_{\et}(\Sh_{K_p}(G_{\ttS',\ttT'})_{\overline \FF_p},\calL^{(\kb,w)}_{\ttS', \ttT'}),
\] 
is nothing but the multiplication by $p^{v(\eta)}$, according to our normalization of $\eta_{(n),\sharp}^{\star}$ \eqref{E:normalized-link}. 
It follows immediately that $(\eta^{-1})^\star_{(-n)}=p^{-v(\eta)/2}\Tr_{\eta_{(n),\sharp}}$.
\end{proof}

\subsection{Goren--Oort divisors}
We recall the definition of the Goren--Oort stratification from \cite[Section~4]{tian-xiao1}.
We will  make essential use of the case of divisors. 
Let $\Sh_{K_p}(G_{\ttS,\ttT})$ be the special fiber of a quaternionic Shimura variety of  type considered in Subsection~\ref{S:quaternoinic Shimura varieties}. 
We fix throughout this paper a choice of lifting $\tilde\ttS_{\infty}$ of $\ttS_{\infty}$,  and let  $\Sh_{K''_p}(G''_{\tilde\ttS})$ be the associated unitary Shimura variety.

In \cite[Definition~4.6 and \S 4.9]{tian-xiao1}, we defined, for each $\tau\in \ttS_{\infty}^c$, the \emph{Goren--Oort divisor} $\Sh_{K''_p}(G''_{\tilde\ttS})_{\tau}$ of $\Sh_{K''_p}(G''_{\tilde\ttS})$ at $\tau$ as the vanishing locus of the $\tau$-th  partial Hasse invariant of the versal family $\bfA''_{\tilde\ttS}$. Each $\Sh_{K''_p}(G''_{\tilde\ttS})_{\tau}$ is projective and  smooth by \cite[Proposition~4.7]{tian-xiao1}.
Transferring these structures to the quaternionic Shimura varieties using Proposition~\ref{P:integral model of Sh}, one gets  a Goren--Oort divisor $\Sh_{K_p}(G_{\ttS,\ttT})_{\tau}$ on $\Sh_{K_p}(G_{\ttS,\ttT})$ for each $\tau\in \ttS_{\infty}^c$.
 When $\ttT=\emptyset$, this is done in \cite[4.9]{tian-xiao1}, and the general case is the same. 
 
For a subset $J \subset \ttS_\infty^c$, we put $\Sh_{K_p}(G_{\ttS,\ttT})_J = \cap _{\tau \in J} \Sh_{K_p}(G_{\ttS, \ttT})_\tau$ and $\Sh_{K''_p}(G''_{\tilde\ttS})_{J}=\cap_{\tau\in J}\Sh_{K''_p}(G''_{\tilde\ttS})_{\tau}$.
The closed subvarieties  $\Sh_{K_p}(G_{\ttS,\ttT})_{J}$ (resp. $\Sh_{K''_p}(G''_{\tilde\ttS})_{J}$) with $J$ running through the subsets of $\ttS_{\infty}^c$ form the Goren--Oort stratification of $\Sh_{K_p}(G_{\ttS,\ttT})$ (resp. $\Sh_{K''_p}(G''_{\tilde\ttS})$).

The main results of \cite{tian-xiao1} give an explicit description of all closed Goren--Oort strata $\Sh_{K''_p}(G''_{\tilde\ttS})_{J}$ (resp.  $\Sh_{K_p}(G_{\ttS,\ttT})_{J}$) as  iterated $\PP^1$-bundles  over another unitary (resp. quaternionic) Shimura variety of the same type.
We list results from \cite{tian-xiao1} that we will make use in this paper.  (One more result will be used later in proving Lemma~\ref{L:restriction gysin compatibility}.)

\begin{prop}\label{P:GO-fibration}
Let $\tau \in \ttS_\infty^c$.
 Assume that $\tau^-:=\sigma^{-n_{\tau}}\tau$ is different from $\tau$ (See \ref{N:tau-plus} for the notation). 
We put $\ttS_\tau = \ttS \cup \{\tau, \tau^-\}$ and $\ttT_\tau = \ttT\cup \{\tau\}$. 
Let  $\tilde\ttS_{\tau,\infty}$  be the lifting of $\ttS_{\tau,\infty}$ derived from $\tilde\ttS_{\infty}$  according to  the rule of \cite[5.3]{tian-xiao1}, and  put $\tilde\ttS_{\tau}=(\ttS_{\tau},\tilde\ttS_{\tau,\infty})$. In particular, $\Delta_{\tilde \ttS_{\tau,\infty}} = \Delta_{\tilde \ttS_{\infty}}$ when $\gothp $ splits in $E/F$. 
\begin{enumerate}
\item[(1)]
There exists a  $\PP^1$-bundle fibration
\[
\pi_{\tau}'': \Sh_{K''_p}(G''_{\tilde\ttS})_{\tau}\ra \Sh_{K''_p}(G''_{\tilde\ttS_{\tau}})_{\tau}
\]
equivariant for the action of $\calG''_{\tilde \ttS, p} = \calG''_{\tilde \ttS_\tau, p}$, and a $p$-quasi-isogeny of abelian schemes on $\Sh_{K''}(G''_{\tilde\ttS})_{\tau}$
\[
\Phi_{\pi_{\tau}''}: \bfA''_{\tilde\ttS}\ra \pi''^*_{\tau}(\bfA''_{\tilde\ttS'}).
\]
By Construction~\ref{S:from unitary to quaternionic}, this gives rise to a $\PP^1$-bundle fibration 
\[
\pi_\tau: \Sh_{K_p}(G_{\ttS,\ttT})_{\tau} \longto \Sh_{K_p}(G_{\ttS_\tau, \ttT_\tau})
\]
with some shift $\boldsymbol t_\tau = \boldsymbol t_\tau(\ttS, \ttT) \in E^{\times, \cl}\backslash \AAA_E^{\infty, \times} / \calO^\times_{E_{\gothp}}$, which is unique up to $F^{\times, \cl}\backslash \AAA_{F}^{\infty,\times}/\cO^{\times}_{F_{\gothp}}$ and 
compatible with the Hecke action of $G(\AAA^{\infty,p})$, together with an isomorphism of \'etale sheaves for each given  regular multiweight $(\underline k, w)$
\[
\pi_\tau^\sharp\colon \calL^{(\underline k, w)}_{\ttS, \ttT}|_{\Sh_{K_p}(G_{\ttS,\ttT})_{\tau}} \xra{\cong} \pi_\tau^*(\calL^{(\underline k, w)}_{\ttS_\tau, \ttT_\tau}).
\]
The morphisms $\pi_\tau$ and $\pi_\tau^\sharp$ are unique up to a central Hecke action by an element of $F^{\times, \cl} \backslash \AAA_F^{\infty,  \times} /\calO_\gothp^\times$.

\item[(2)]
Let $\cO(1)$ be the tautological quotient line bundle  on $\Sh_{K_p}(G_{\ttS,\ttT})_{\tau}$ for the $\PP^1$-bundle given by $\pi_\tau$.
If $\tau^-$ is different from $\tau$, then the normal bundle of the closed immersion  $\Sh_{K_p}(G_{\ttS, \ttT})_\tau \hookrightarrow \Sh_{K_p}(G_{\ttS, \ttT})$ is, up to tensoring a line bundle which is a torsion class in the Picard group of $\Sh_{K_p}(G_{\ttS, \ttT})_\tau$, the same as $\calO(-2p^{n_\tau}) = \calO(1)^{\otimes(-2p^{n_\tau})}$.

 
\end{enumerate}

\end{prop}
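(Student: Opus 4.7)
\medskip

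\textbf{Proof plan.}

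For part (1), the statement on the unitary side is essentially quoted from \cite{tian-xiao1}: under the running hypothesis $\tau^{-} \neq \tau$, the main structural theorem of \cite[\S1.5, Thm.~5.8]{tian-xiao1} identifies the Goren--Oort divisor $\Sh_{K''_p}(G''_{\tilde\ttS})_{\tau}$ with a $\PP^1$-bundle over $\Sh_{K''_p}(G''_{\tilde\ttS_\tau})$ (after possibly restricting to $\Sh_{K''_p}(G''_{\tilde\ttS_\tau})_{\tau}$ according to the notation convention). The bundle is constructed from the moduli interpretation: on $\Sh_{K''_p}(G''_{\tilde\ttS})_{\tau}$ the $\tau$-partial Hasse invariant vanishes, so the Verschiebung $V_{\tau^{-}} \circ \cdots \circ V_{\sigma^{-1}\tau}$ kills $\omega_{\bfA''_{\tilde\ttS},\tau}$; the fibre of $\pi''_\tau$ at a point $x$ parametrizes the choice of a line in the two-dimensional $\tilde\tau$-component of the Dieudonn\'e module satisfying appropriate conditions. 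The quasi-isogeny $\Phi_{\pi''_\tau}$ is the one induced from this choice. The $\calG''_{\tilde\ttS,p} = \calG''_{\tilde\ttS_\tau,p}$-equivariance is built into the moduli problem. To pass to the quaternionic side, I verify the hypotheses of Construction~\ref{S:from unitary to quaternionic}: the finite places of $\ttS$ and $\ttS_\tau$ agree; $\#\ttS_{\tau,\infty} - 2\#\ttT_\tau = (\#\ttS_\infty+2) - 2(\#\ttT+1) = \#\ttS_\infty - 2\#\ttT$; and by construction (\cite[5.3]{tian-xiao1}) we have $\Delta_{\tilde\ttS_\tau} = \Delta_{\tilde\ttS}$ when $\gothp$ splits. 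Because the fibres of $\pi''_\tau$ are $\PP^1$, which is geometrically connected, Steps I--IV of the Construction apply verbatim to produce $\pi_\tau$ with a shift $\boldsymbol t_\tau$ well-defined up to $F^{\times,\cl} \backslash \AAA_F^{\infty,\times}/\calO_\gothp^\times$. Step~III yields the isomorphism $\pi_\tau^\sharp$ of automorphic sheaves, since the tensor-product decomposition \eqref{E:pull back along alpha = tensor} is compatible with pullback along the closed immersion into $\Sh_{K''_p}(G''_{\tilde\ttS})_\tau$.

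For part (2), the cleanest strategy is to compute the normal bundle on the unitary side using the Kodaira--Spencer isomorphism, and then transfer the conclusion to the quaternionic side (the transfer preserves line bundles up to torsion because the shift acts by central Hecke operators and the sheaves $\calL_{E,\tilde\ttS,\ttT,\tilde\Sigma}^w$ are torsion in the Picard group after descent). On the unitary side, versality of $\bfA''_{\tilde\ttS}$ gives
\[
\mathrm{T}_{\Sh_{K''_p}(G''_{\tilde\ttS})} \;\cong\; \bigoplus_{\tilde\tau \in \tilde\ttS^c_{E,\infty/\gothq}} \calHom\bigl(\omega_{\bfA''_{\tilde\ttS},\tilde\tau}^{\circ},\; \mathrm{Lie}(\bfA''^\vee_{\tilde\ttS})_{\tilde\tau}^{\circ}\bigr)
\]
via Kodaira--Spencer. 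The normal bundle $\calN$ of $\Sh_{K''_p}(G''_{\tilde\ttS})_\tau \hookrightarrow \Sh_{K''_p}(G''_{\tilde\ttS})$ is the quotient of $\mathrm{T}_{\Sh}$ by $\mathrm{T}_{\Sh_\tau}$, and the results of \cite[\S5--6]{tian-xiao1} show that it is identified with a specific summand indexed by $\tilde\tau$. The $\PP^1$-bundle $\pi''_\tau$ parametrizes a line in a certain rank-two bundle, and the tautological quotient $\calO(1)$ is obtained from one of the $\omega$-components. A direct Dieudonn\'e-module computation, tracing through the $n_\tau$-fold essential Frobenius linking $\tau^-$ to $\tau$, shows that $\calN$ pairs against $\calO(1)$ with weight $-2p^{n_\tau}$: the factor $p^{n_\tau}$ comes from the semi-linearity of iterating Frobenius $n_\tau$ times across the intermediate ramified places in $\ttS_\infty$, and the factor $2$ reflects that the two $\omega$-components at $\tilde\tau$ and its complex conjugate $\overline{\tilde\tau}$ both contribute symmetrically via the polarization. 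This gives the formula modulo a line bundle pulled back from $\Sh_{K''_p}(G''_{\tilde\ttS_\tau})$, which is torsion in $\mathrm{Pic}$ after restriction to an irreducible $\PP^1$-fibre (since such pullbacks are trivial on fibres and any line bundle on the base becomes torsion after enough pullback/Hecke twist in our setup).

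The main obstacle will be the normal bundle calculation in part (2): keeping track of the precise power of the tautological line bundle through the iterated essential Frobenius $F_{\es}^{n_\tau}$, and verifying that the ambiguity is indeed only by a torsion line bundle. The transfer across Construction~\ref{S:from unitary to quaternionic} is essentially formal once the unitary statement is proved. I will rely heavily on the compatibility of the Kodaira--Spencer map with the $E$-action decomposition, and on the explicit description of partial Hasse invariants in terms of the $\tau$-component of Frobenius acting on $\tilde\calD(\bfA''_{\tilde\ttS})$, so that the normal direction to the Hasse-invariant vanishing locus receives a precise weight as a function of the displacement $n_\tau$.
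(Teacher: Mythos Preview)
Your treatment of part~(1) is correct and is exactly what the paper does: the existence of $\pi''_\tau$ is quoted from \cite[Corollary~5.9]{tian-xiao1}, the transfer condition~\eqref{E:transfer condition} is verified (you do this more explicitly than the paper, which just says it is ``straightforward''), and Construction~\ref{S:from unitary to quaternionic} is applied.

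For part~(2) you diverge from the paper. The paper's proof is a one-line citation: the normal bundle computation is already done in \cite[Proposition~6.4]{tian-xiao1} on the unitary side, and since the quaternionic and unitary Shimura varieties have isomorphic geometric connected components (Proposition~\ref{P:integral model of Sh}(1)), the statement transfers directly. You instead propose to reprove that cited proposition via Kodaira--Spencer. Your overall strategy (normal bundle of a Hasse-invariant divisor equals the restriction of the line bundle of which the Hasse invariant is a section, then relate the relevant $\omega$-components to $\calO(1)$ through the $n_\tau$-fold iterated Frobenius) is the right one and is indeed how \cite[Proposition~6.4]{tian-xiao1} is proved. However, your heuristic for the factor $2$ is off: it does not come from a symmetry between $\tilde\tau$ and $\overline{\tilde\tau}$ via the polarization. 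Rather, the partial Hasse invariant is a section of a line bundle of the form $\omega_{\tau^-}^{\otimes p^{n_\tau}} \otimes \omega_\tau^{\otimes(-1)}$ (up to torsion), and under the $\PP^1$-bundle description \emph{both} $\omega_{\tau^-}$ and $\omega_\tau$ are identified (up to pullbacks from the base and Frobenius twists) with powers of $\calO(-1)$; the ``$2$'' arises because both factors contribute with the same sign after this identification. If you pursue your route you should trace this carefully rather than rely on the polarization-symmetry heuristic, or else simply cite \cite[Proposition~6.4]{tian-xiao1} as the paper does.
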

\begin{proof}
In statement (1), the existence of $\pi''_{\tau}$ is a special case of \cite[Corollary 5.9]{tian-xiao1}.
 Roughly speaking, this $\PP^1$-bundle $\pi_{\tau}''$ parametrizes the lines  (the Hodge filtration) in the reduced $\tilde\tau^-:=\sigma^{-n_{\tau}}\tilde\tau$-component of the relative de Rham homology of the versal family  $\bfA''_{\tilde\ttS_{\tau}}$ on $\Sh_{K''_p}(G''_{\tilde\ttS_{\tau}})_{\tau}$.
It is straightforward to check that the condition~\eqref{E:transfer condition} is satisfied for the pairs $(\tilde \ttS, \ttT)$ and $(\tilde \ttS_\tau, \ttT_\tau)$.
We apply Construction~\ref{S:from unitary to quaternionic} to deduce the existence of $(\pi_{\tau}, \pi_\tau^\sharp)$ from that of $(\pi''_{\tau}, \Phi_{\pi''_\tau})$.
 
 Statement (2) follows from \cite[Proposition~6.4]{tian-xiao1}, when noting that the quaternionic Shimura varieties and the unitary Shimura varieties have isomorphic geometric connected components. 
\end{proof}

 Proposition~\ref{P:cohomology of sh}(1) implies that we have a  morphism     
\[
\pi_\tau^*\colon 
H^\star_\et \big(\Sh_K(G_{\ttS_\tau, \ttT_\tau})_{\overline \FF_p}, \calL^{(\underline k, w)}_{\ttS_\tau, \ttT_\tau}\big) \longto 
H^\star_\et \big(\Sh_K(G_{\ttS, \ttT})_{\tau, \overline \FF_p}, \calL^{(\underline k, w)}_{\ttS, \ttT}\big)
\]
equivariant under the actions of the prime-to-$p$ Hecke algebra 
$\calH_{K^p}$.
It is canonical up to the action of the central Hecke character, which comes from the ambiguity of choosing the shift in Construction~\ref{S:from unitary to quaternionic}.

\begin{theorem}
\label{T:GO description}
\
\begin{itemize}
\item[(1)]
Let $\tau_1, \tau_2 \in \ttS_\infty^c$ be two places such that $\tau_1, \tau_2, \tau_1^-, \tau_2^-$ are distinct. We have a Cartesian diagram
\[
\xymatrix{
	\Sh_{K_p}(G_{\ttS,\ttT})_{\{\tau_1,\tau_2\}} \ar[r]^-{\pi_{\tau_1}} \ar[d]^{\pi_{\tau_2}}
	& \Sh_{K_p}(G_{\ttS _{\tau_1}, \ttT_{\tau_1}})_{\tau_2} \ar[d]_{\pi_{\tau_2}}
	\\
	\Sh_{K_p}(G_{\ttS_{\tau_2}, \ttT_{\tau_2}})_{\tau_1}
	\ar[r]^-{\pi_{\tau_1}}
	&
	\Sh_{K_p}(G_{\ttS_{\tau_1,\tau_2}, \ttT_{\tau_1,\tau_2}}).
}
\]
If we use the notation of shifts of these $\pi_{\tau_i}$ as in Proposition~\ref{P:GO-fibration}(1), then we have an equality
 \[
\boldsymbol {t}_{\tau_1}(\ttS,\ttT) \boldsymbol t_{\tau_2}(\ttS_{\tau_1}, \ttT_{\tau_2}) =\boldsymbol {t}_{\tau_2}(\ttS,\ttT) \boldsymbol{t}_{\tau_1}(\ttS_{\tau_2},\ttT_{\tau_2}).
\]
Moreover, we have a commutative diagram of induced morphisms on  the cohomology groups:
\[
\xymatrix{
H^\star_\et\big(\Sh_{K_p}(G_{\ttS_{\tau_1}\cup \ttS_{\tau_2}, \ttT_{\tau_1} \cup \ttT_{\tau_2}})_{\overline \FF_p}, \calL^{(\underline k,w)}_{\ttS_{\tau_1}\cup \ttS_{\tau_2}, \ttT_{\tau_1} \cup \ttT_{\tau_2}} \big)
 \ar[r]^-{\pi_{\tau_2}^*}
\ar[d]^{\pi_{\tau_1}^*}
&
H^\star_\et\big( \Sh_{K_p}(G_{\ttS_{\tau_1}, \ttT_{\tau_1}})_{\tau_2, \overline \FF_p}, \calL^{(\underline k, w)}_{\ttS_{\tau_1}, \ttT_{\tau_1}}\big)
\ar[d]^{\pi_{\tau_1}^*}
\\
H^\star_\et\big( \Sh_{K_p}(G_{\ttS_{\tau_2}, \ttT_{\tau_2}})_{\tau_1, \overline \FF_p}, \calL^{(\underline k, w)}_{\ttS_{\tau_2}, \ttT_{\tau_2}}\big)
\ar[r]^-{\pi_{\tau_2}^*}
&
H^\star_\et\big( \Sh_{K_p}(G_{\ttS, \ttT})_{\{\tau_1, \tau_2\},\overline \FF_p},
\calL_{\ttS, \ttT}^{(\underline k, w)}\big).
}
\]
\item[(2)]
Let $\tau \in \ttS_\infty^c$ be a place such that $\tau,\tau^+, \tau^-$ are distinct. Put $n = n_{\tau^+}- n_{\tau}$ if $\gothp$ splits in $E/F$ and $n=0$ if $\gothp$ is inert in $E/F$.
Let  $\eta: \ttS_{\tau^+} = \ttS \cup \{ \tau^+, \tau\} \to \ttS_{\tau} = \ttS \cup \{ \tau, \tau^-\}$ be the link  given by straight lines except sending $\tau^-$ to $\tau^+$ over $\tau$:
\[
\psset{unit=0.3}
\begin{pspicture*}(-5,-0.4)(23,4)
\psset{linecolor=red}
\psset{linewidth=1pt}
\psline{-}(0,0)(0,2)
\psline{-}(19.2,0)(19.2,2)
\psbezier(4.8,2)(4.8,0)(14.4,2)(14.4,0)
\psset{linecolor=black}
\psdots(0,0)(0,2)(4.8,2)(14.4,0)(19.2,0)(19.2,2)
\psdots[dotstyle=+](1,0)(-1,0)(-1,2)(1,2)(3.8,0)
(3.8,2)(4.8,0)(5.8,0)(5.8,2)(8.6,0)(8.6,2)(10.6,0)(9.6,0)(9.6,2)(10.6,2)(13.4,0)(13.4,2)(15.4,0)(14.4,2)(15.4,2)(18.2,0)(18.2,2)(20.2,0)(20.2,2)
\psset{linewidth=.1pt}
\psdots(1.7,0)(1.7,2)(2.4,0)(2.4,2)(3.1,0)(3.1,2)(6.5,0)(7.2,0)(7.9,0)(6.5,2)(7.2,2)(7.9,2)(11.3,0)(12,0)(12.7,0)(11.3,2)(12,2)(12.7,2)(16.1,0)(16.8,0)(17.5,0)(16.1,2)(16.8,2)(17.5,2)(-1.7,0)(-1.7,2)(-2.4,0)(-2.4,2)(-3.1,0)(-3.1,2)(20.9,0)(21.6,0)(22.3,0)(20.9,2)(21.6,2)(22.3,2)
\rput(.2,3.3){\psframebox*{\begin{tiny}$\tau^{--}$\end{tiny}}}
\rput(5,3.3){\psframebox*{\begin{tiny}$\tau^{-}$\end{tiny}}}
\rput(9.6,3.1){\psframebox*{\begin{tiny}$\tau$\end{tiny}}}
\rput(14.5,3.3){\psframebox*{\begin{tiny}$\tau^{+}$\end{tiny}}}
\rput(19.5,3.3){\psframebox*{\begin{tiny}$\tau^{++}$\end{tiny}}}
\end{pspicture*}
\]
Let  $\eta_{(n),\sharp}$ be  the morphism defined by the following commutative diagram 
\begin{equation}\label{D:link-morphism-Sh}
\xymatrix{
\Sh_{K_p}(G_{\ttS, \ttT})_{\tau^+} \ar[d]_{\pi_{\tau^+}}
&
\Sh_{K_p}(G_{\ttS, \ttT})_{\{\tau^+,\tau\}}
\ar@{_{(}->}[l]
\ar@{^{(}->}[r]
\ar@{->}[ld]_\cong
&
\Sh_{K_p}(G_{\ttS, \ttT})_{\tau}
\ar[d]^{\pi_\tau}
\\
\Sh_{K_p}(G_{\ttS_{ \tau^+}, \ttT_{\tau^+}}) \ar[rr]^{\eta_{(n), \sharp}}
&&
\Sh_{K_p}(G_{\ttS_{\tau}, \ttT_\tau}),
}
\end{equation}
Then the following statements hold:
\begin{enumerate}
\item[(a)] The map $\eta_{(n), \sharp}$ is the morphism obtained by applying  Construction~\ref{S:from unitary to quaternionic} to a link morphism on $\Sh_{K''_p}(G''_{\tilde\ttS_{\tau^+}})$ with indentation degree $n$. 

\item[(b)] If $\boldsymbol t_? \in E^{\times, \cl} \backslash \AAA_E^{\infty,\times}/\calO_{E, \gothp}^\times$ for $? = \tau, \tau^+$ denotes the shift  of the correspondence 
$$\Sh_{K_p}(G_{\ttS_?, \ttT_?}) \xleftarrow{\pi_?} \Sh_{K_p}(G_{\ttS, \ttT})_? \hookrightarrow \Sh_{K_p}(G_{\ttS, \ttT}),$$
then $\eta_{(n), \sharp}$ has shift $\boldsymbol t_{\tau^+} \boldsymbol t^{-1}_\tau$. 

\item[(c)] The morphism  $\eta_{(n),\sharp}$ is finite flat of degree $p^{v(\eta)}$.

\item[(d)] The $p$-quasi-isogenies of the versal families of abelian varieties on  $\Sh_{K''_p}(G''_{\tilde\ttS_{\tau^+}})$ given by 
\[
\pi''^*_{\tau^+}(\bfA''_{\tilde\ttS_{\tau^+}})|_{\Sh_{K''_p}(G''_{\tilde\ttS})_{\{\tau^+,\tau\}}}\xleftarrow{\Phi_{\pi''_{\tau^+}}} \bfA''_{\tilde\ttS}|_{\Sh_{K''_p}(G''_{\tilde\ttS})_{\{\tau^+,\tau\}}}\xra{\Phi_{\pi''_{\tau}}} \pi''^*_{\tau}(\bfA''_{\tilde\ttS_{\tau}})|_{\Sh_{K''_p}(G''_{\tilde\ttS})_{\{\tau^+,\tau\}}}
\]
induces a link morphism on the sheaves $\eta_{(n)}^\sharp: \calL^{(\underline k, w)}_{\ttS_{\tau^+}, \ttT_{\tau^+}} \longrightarrow \eta_{(n),\sharp}^*(\calL^{(\underline k, w)}_{\ttS_\tau, \ttT_\tau})$. 
 Then the induced normalized link morphism $\eta_{(n)}^\star$ on the cohomology groups constructed as in \ref{S:normalization of link morphisms}   fits into the following commutative diagram:
\begin{equation}\label{D:link-quaternionic-Sh}
\xymatrix{
H^\star_\et\big(\Sh_{K_p}(G_{\ttS, \ttT})_{\tau^+, \overline \FF_p}\big) \ar[r] &
H^\star_\et\big(\Sh_{K_p}(G_{\ttS, \ttT})_{\{\tau^+, \tau\}, \overline \FF_p}\big) & H^\star_\et\big(\Sh_{K_p}(G_{\ttS, \ttT})_{\tau,\overline \FF_p}\big)
\ar[l]
\\
H^\star_\et\big(\Sh_{K_p}(G_{\ttS _{ \tau^+}, \ttT_{\tau^+}})_{\overline \FF_p}\big) \ar[u]^{\pi_{\tau^+}^*} \ar@{-->}[ru]^\cong
&&
H^\star_\et\big(\Sh_{K_p}(G_{\ttS _{ \tau}, \ttT_\tau})_{\overline \FF_p}\big) \ar[u]^{\pi_\tau^*}
\ar[ll]_-{p^{(n_\tau + n_{\tau^+})/2} \eta^\star_{(n)}},
}
\end{equation}
where  the upper horizontal arrows are natural restriction maps.
Here,  for simplification, we have suppressed the sheaves from the notation. For instance, $ H^\star_\et(\Sh_{K_p}(G_{\ttS, \ttT})_{\tau^+, \overline \FF_p}) $ should be understood as $H^\star_\et(\Sh_{K_p}(G_{\ttS, \ttT})_{\tau^+, \overline \FF_p}, \calL_{\ttS,\ttT}^{(\kb,w)}|_{\Sh_{K_p}(G_{\ttS, \ttT})_{\tau^+}} )$.

\end{enumerate}

\item[(3)]
Assume that  $\ttS_\infty^c = \{\tau, \tau^-\}$ (and hence $\gothp$ splits in $E/F$).
Then $\Sh_{K_p}(G_{\ttS,\ttT})_{\{\tau,\tau^-\}}$ is isomorphic to the special fiber of the zero-dimensional Shimura variety $\Sh_{\Iw_p}(G_{\ttS_{\tau},\ttT_{\tau}})$ of Iwahori level at $\gothp$.
Let $\eta: \ttS_{\tau^-} \to \ttS_{\tau}$ denote the link map (with no curve). 
Then the link morphism $\eta_{(n_{\tau^-}),\sharp}:  \Sh_{K_p}(G_{\ttS_{\tau^-}, \ttT_{\tau^-}})\xra{\sim} \Sh_{K_p}(G_{\ttS_\tau, \ttT_\tau})$ of indentation degree $2n_{\tau^-}$ associated to $\eta$ exists, and the following diagram 
\[
\xymatrix{
& \Sh_{K_p}(G_{\ttS, \ttT})_{ \{\tau, \tau^-\}}\ar[dl]_{\pi_\tau} \ar[dr]^{\pi_{\tau^-}}
\\
\Sh_{K_p}(G_{\ttS_\tau, \ttT_\tau}) && 
\Sh_{K_p}(G_{\ttS_{\tau^-}, \ttT_{\tau^-}})\ar[r]_-{\eta_{(n_{\tau^-})}}^-\cong
&
\Sh_{K_p}(G_{\ttS_\tau, \ttT_\tau}).
}
\]
is (the base change to $\FF_{p^g}$ of)   the Hecke  correspondence $T_\gothp$ on $\Sh_{K_p}(G_{\ttS_{\tau},\ttT_{\tau}})$.
If $\boldsymbol t_?\in E^{\times, \cl} \backslash \AAA_E^{\infty,\times}/\calO_{E_{\gothp}}^\times$ for $? = \tau, \tau^+$ denotes the shift of the correspondence 
$$\Sh_{K_p}(G_{\ttS_?, \ttT_?}) \xleftarrow{\pi_?} \Sh_{K_p}(G_{\ttS, \ttT})_? \hookrightarrow \Sh_{K_p}(G_{\ttS, \ttT}),$$
then $\eta_{(n_{\tau^-}), \sharp}$ has shift $\varpi_{\bar \gothq}\boldsymbol t_{\tau}^{-1} \boldsymbol t_{\tau^-}$.
Moreover, 
the  map induced by the diagram above on cohomology groups
\[
H^0_\et \big(\Sh_K(G_{\ttS_{\tau}, \ttT_{\tau}})_{\overline \FF_p}\big)
\xrightarrow{(\eta_{(n_{\tau^-})}\circ\pi_{\tau^-})^*}H^0_\et \big(\Sh_K(G_{\ttS, \ttT})_{\{\tau,\tau^-\}, \overline \FF_p} \big) \xrightarrow{\mathrm{Tr}_{\pi_\tau}}
H^0_\et \big(\Sh_K(G_{\ttS_\tau, \ttT_\tau})_{\overline \FF_p}\big).
\]
is the usual Hecke action $T_\gothp$. Here, as in (2), we have suppressed the sheaves from the notation.
\end{itemize}
\end{theorem}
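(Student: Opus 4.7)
The strategy throughout is to reduce each assertion to a corresponding statement on the unitary Shimura varieties $\Sh_{K''_p}(G''_{\tilde\ttS_?})$, where the geometric interpretation via the versal abelian scheme $\bfA''_{\tilde\ttS}$ is available, and then transfer back to the quaternionic side using Construction~\ref{S:from unitary to quaternionic}, together with Proposition~\ref{P:integral model of Sh} and Remark~\ref{R:composition of shifts}.

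For part (1), I would first establish the Cartesian square on the unitary side. Recall from \cite[\S5]{tian-xiao1} (quoted in Proposition~\ref{P:GO-fibration}(1)) that $\pi''_{\tau_i}$ parametrizes the Hodge filtration in the reduced $\tilde\tau_i^-$-component of the relative de Rham homology of $\bfA''_{\tilde\ttS_{\tau_i}}$. Since $\tau_1, \tau_2, \tau_1^-, \tau_2^-$ are all distinct, the two choices involve different components and are independent; hence the induced diagram on $\Sh_{K''_p}(G''_{\tilde\ttS})_{\{\tau_1,\tau_2\}}$ is Cartesian. Applying Construction~\ref{S:from unitary to quaternionic} to each arrow produces the Cartesian square of quaternionic Shimura varieties, and the shift identity is exactly the compatibility statement in Remark~\ref{R:composition of shifts}: composing $\pi_{\tau_1}$ then $\pi_{\tau_2}$ versus the reversed order yields the same correspondence on the unitary side, so the products of shifts must agree. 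The cohomology diagram then commutes by functoriality of pullback along the Cartesian square and the compatibility isomorphism \eqref{E:pull back along alpha = tensor}.

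For part (2), I would identify the map $\eta_{(n),\sharp}$ constructed in \eqref{D:link-morphism-Sh} with a link morphism by verifying conditions (1)--(4) of Subsection~\ref{S:link morphism}. Geometrically, $\Sh_{K''_p}(G''_{\tilde\ttS})_{\{\tau^+,\tau\}}$ parametrizes both a Hodge filtration at $\tilde\tau$ (the $\pi''_{\tau^+}$-fibration) and one at $\tilde\tau^-$ (the $\pi''_\tau$-fibration), each being the image of the Verschiebung/essential Frobenius from the next component. The composite quasi-isogeny $\pi''^*_\tau(\bfA''_{\tilde\ttS_\tau}) \leftarrow \bfA''_{\tilde\ttS}|_{\Sh_{\{\tau^+,\tau\}}} \to \pi''^*_{\tau^+}(\bfA''_{\tilde\ttS_{\tau^+}})$ therefore modifies the Dieudonn\'e modules precisely along the iterated essential Frobenius $F_{\es}^{n_\tau+n_{\tau^+}}$ connecting $\tilde\tau^-$ to $\tilde\tau^+$, which is exactly the displacement data of the arc in $\eta$; this verifies (3) of the link morphism definition. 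Condition (4) on the $\gothq$-indentation degree $n = n_{\tau^+} - n_\tau$ is a direct count of how many of these essential Frobenius steps lie in $\tilde \ttS_{\infty/\gothq}$ versus $\tilde \ttS_{\infty/\bar \gothq}$. Uniqueness (Lemma~\ref{L:uniqueness-link}) then pins down $\eta_{(n),\sharp}''$. Statement (b) on the shift follows from Remark~\ref{R:composition of shifts} applied to the composition ``$\pi_\tau$ composed with the inverse correspondence $\pi_{\tau^+}^{-1}$''. For (c), the finite flatness of degree $p^{v(\eta)} = p^{n_\tau + n_{\tau^+}}$ is inherited from the fact that both $\pi''_\tau$ and $\pi''_{\tau^+}$ restrict to finite flat morphisms on $\Sh_{K''_p}(G''_{\tilde\ttS})_{\{\tau^+,\tau\}}$ (this is the divisor case of the iterated $\PP^1$-bundle structure, where each intersection with an additional Goren--Oort divisor collapses one $\PP^1$-factor to a section of degree $p^{n_?}$; see the relevant computation in \cite{tian-xiao1}). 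For (d), commutativity of the cohomology diagram is formal once (a)--(c) are established, and the appearance of the factor $p^{(n_\tau+n_{\tau^+})/2} = p^{v(\eta)/2}$ is precisely the normalization convention \eqref{E:normalized-link} for $\eta^\star_{(n)}$.

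For part (3), when $\ttS_\infty^c = \{\tau,\tau^-\}$ the ambient Shimura variety is a surface and the intersection $\Sh_{K_p}(G_{\ttS,\ttT})_{\{\tau,\tau^-\}}$ is zero-dimensional. Over such a point, specifying the two Hodge filtrations (at $\tilde\tau$ and $\tilde\tau^-$) on the versal abelian scheme amounts to specifying an Iwahori-type flag at $\gothp$ on $\bfA''_{\tilde\ttS_\tau}$, which identifies the intersection with the special fiber of $\Sh_{\Iw_p}(G_{\ttS_\tau,\ttT_\tau})$. The two projections $\pi_\tau$ and $\pi_{\tau^-}\circ\eta_{(n_{\tau^-})}^{-1}$ then become the two natural projections in the Hecke correspondence \eqref{E:Tp operator correspondence} defining $T_\gothp$, as was already explained in Subsection~\ref{S:Hecke-at-p}; the shift of $\eta_{(n_{\tau^-}),\sharp}$ is determined by the same composition-of-shifts argument as in (2)(b), combined with the shift $\varpi_{\bar\gothq}$ coming from the ``unitary'' Hecke operator $T_\gothq$ as computed in Subsection~\ref{S:Hecke-at-p}. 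The cohomological identification of $\mathrm{Tr}_{\pi_\tau}\circ(\eta_{(n_{\tau^-})}\circ \pi_{\tau^-})^*$ with $T_\gothp$ is then exactly the definition \eqref{E:Tp operator correspondence} of $T_\gothp$ as a cohomological correspondence.

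The main obstacle, as in much of this section, is bookkeeping: the shifts live in a torsor under $F^{\times,\cl}\backslash \AAA_F^{\infty,\times}/\calO_\gothp^\times$ and one must consistently track how each elementary shift accumulates across the diagrams; the actual geometric content (Cartesian-ness, link-morphism identification, finite flatness of the correct degree) is essentially already contained in \cite{tian-xiao1}, but re-checked here in the ``$\ttT\neq\emptyset$'' twisted setup.
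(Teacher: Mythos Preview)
Your proposal is correct and follows essentially the same approach as the paper: reduce each item to the corresponding statement on the unitary Shimura varieties (established in \cite{tian-xiao1}) and transfer back via Construction~\ref{S:from unitary to quaternionic}, with the shift identities read off from Remark~\ref{R:composition of shifts}. The paper's own proof is terser than yours: for (1), (2)(a), (2)(c) it simply cites \cite[Proposition~7.12 and Theorem~7.16]{tian-xiao1} rather than re-sketching the Dieudonn\'e-module argument; for (2)(d) it says ``direct from the construction of $\eta_{(n)}^\sharp$ and $\eta_{(n)}^\star$''; and for (3) it cites \cite[Theorem~7.16(2)]{tian-xiao1} (with $T_\gothq$ in place of $T_\gothp$) and invokes Subsection~\ref{S:Hecke-at-p} to pass back, again using Remark~\ref{R:composition of shifts} for the shift. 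One small imprecision in your (2)(c): the diagram \eqref{D:link-morphism-Sh} shows that $\pi_{\tau^+}$ restricted to $\Sh(G_{\ttS,\ttT})_{\{\tau^+,\tau\}}$ is an \emph{isomorphism}, so the degree $p^{v(\eta)}$ of $\eta_{(n),\sharp}$ comes entirely from $\pi_\tau$ restricted to that intersection, not from a product of two nontrivial finite flat degrees.
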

\begin{proof}
The analogs of (1),    (2)(a)  and (2)(c) for unitary Shimura varieties were  proved in
\cite[Proposition~7.12 and Theorem~7.16]{tian-xiao1}.  The statements here follow  from Construction~\ref{S:from unitary to quaternionic}.

Statement (2)(b) regarding shifts follows directly from Remark~\ref{R:composition of shifts}. Statement (2)(d) is direct by the construction of $\eta_{(n)}^{\sharp}$ and $\eta^{\star}_{(n)}$.
 
For (3), the analogous statement for unitary Shimura varieties $\Sh_{K''}(G''_{\tilde\ttS'})$ (with $T_\gothp$ replaced by $T_{\gothq}$) was proved in \cite[Theorem~7.16(2)]{tian-xiao1}. One deduces (3) using the construction in Subsection~\ref{S:Hecke-at-p}, and computes the shifts by Remark~\ref{R:composition of shifts}.
\end{proof}

\section{Goren--Oort cycles}

In this section, we investigate  certain generalization of Goren--Oort strata, called Goren--Oort cycles.  They are parametrized by certain combinatorial data, called \emph{periodic semi-meanders}.
We will show later that the intersection matrix of the Goren--Oort cycles turns out to be closely related to the Gram matrix associated to these periodic semi-meanders (which explains our choice of the combinatorial model).

\subsection{Periodic semi-meanders}
\label{S:semi-meanders}
The combinatorial construction that we will use later is related to the so-called link representations of periodic Temperley--Lieb algebras, which appear naturally in the study of mathematical physics; see for example \cite{di-francesco, graham-lehrer, xxz-model}.
We will simply state here the main result with minimal input, and refer to \cite{xxz-model} for a detailed discussion of the mathematical physics background and the proofs.

We slightly modify the usual definition of periodic semi-meanders to adapt to  our situation.
Recall that $F$ is a totally real field of degree $g$ and $\ttS$, $\ttT$ are introduced as in Subsection~\ref{S:quaternoinic Shimura varieties}, and $d = \#\ttS_\infty^c$.
We consider the band associated to $\ttS$ defined as in Subsection~\ref{S:links}, and recall that the band is placed on a cylinder, but we often draw it over the $2$-dimensional $xy$-plane with $x$-coordinate taken modulo $g$.

A \emph{periodic semi-meander} for $\ttS$ is a collection of curves (called \emph{arcs}) that link two nodes of the band for $\ttS$, and straight lines (called \emph{semi-lines}) that links a node to the infinity ($+\infty$ in the $y$-direction) subject to the following conditions: 
\begin{itemize}
\item All the arcs and semi-lines lie on the cylinder above the band (that is to have positive $y$-coordinate in the $2$-dimensional picture). 
\item Each node of the band for $\ttS$ is exactly one end point of an arc or a semi-line.  
\item There are no intersection points among these arcs and semi-lines. 
\end{itemize}   
The number of arcs is denoted by $r$ (so $r \leq d/2$), and the number of semi-lines $d-2r$ is called the \emph{defect} of the periodic semi-meander.
Two periodic semi-meanders are considered as the same if they can be continuously deformed into each other while keeping the above three properties in the process.
We use $\gothB_\ttS^r$ denote the set of semi-meanders for $\ttS$ with $r$ arcs (up to the deformations).
For example, if $F$ has degree $7$ over $\QQ$, $r=2$, and $\ttS = \{\infty_1, \infty_4\}$, we have
\begin{eqnarray}
\label{E:BS2}\qquad\quad
\gothB_\ttS^2 &=\ 
\Big\{
\psset{unit=0.6}
\begin{pspicture*}(-0.25,-0.1)(3.25,0.8)
\psset{linewidth=1pt}
\psset{linecolor=red}
\psarc{-}(1.25,0){0.25}{0}{180}
\psarc{-}(2.75,0){0.25}{0}{180}
\psline{-}(0,0)(0,0.75)
\psset{linecolor=black}
\psdots(0,0)(1,0)(1.5,0)(2.5,0)(3,0)
\psdots[dotstyle=+](0.5,0)(2,0)
\end{pspicture*},
\
\begin{pspicture*}(-0.25,-0.1)(3.25,0.8)
\psset{linewidth=1pt}
\psset{linecolor=red}
\psarc{-}(-0.25,0){0.25}{0}{90}
\psarc{-}(3.25,0){0.25}{90}{180}
\psarc{-}(2,0){0.5}{0}{180}
\psline{-}(1,0)(1,0.75)
\psset{linecolor=black}
\psdots(0,0)(1,0)(1.5,0)(2.5,0)(3,0)
\psdots[dotstyle=+](0.5,0)(2,0)
\end{pspicture*},
\
\begin{pspicture*}(-0.25,-0.1)(3.25,0.8)
\psset{linewidth=1pt}
\psset{linecolor=red}
\psarc{-}(0.5,0){0.5}{0}{180}
\psarc{-}(2.75,0){0.25}{0}{180}
\psline{-}(1.5,0)(1.5,0.75)
\psset{linecolor=black}
\psdots(0,0)(1,0)(1.5,0)(2.5,0)(3,0)
\psdots[dotstyle=+](0.5,0)(2,0)
\end{pspicture*},\
\begin{pspicture*}(-0.25,-0.1)(3.25,0.8)
\psset{linewidth=1pt}
\psset{linecolor=red}
\psarc{-}(1.25,0){0.25}{0}{180}
\psarc{-}(-0.25,0){0.25}{0}{90}
\psarc{-}(3.25,0){0.25}{90}{180}
\psline{-}(2.5,0)(2.5,0.75)
\psset{linecolor=black}
\psdots(0,0)(1,0)(1.5,0)(2.5,0)(3,0)
\psdots[dotstyle=+](0.5,0)(2,0)
\end{pspicture*},\
\begin{pspicture*}(-0.25,-0.1)(3.25,0.8)
\psset{linewidth=1pt}
\psset{linecolor=red}
\psarc{-}(0.5,0){0.5}{0}{180}
\psarc{-}(2,0){0.5}{0}{180}
\psline{-}(3,0)(3,0.75)
\psset{linecolor=black}
\psdots(0,0)(1,0)(1.5,0)(2.5,0)(3,0)
\psdots[dotstyle=+](0.5,0)(2,0)
\end{pspicture*},\\
\nonumber& 
\psset{unit=0.6}
\begin{pspicture*}(-0.25,-0.1)(3.25,0.8)
\psset{linewidth=1pt}
\psset{linecolor=red}
\psarc{-}(2,0){0.5}{0}{180}
\psbezier{-}(1,0)(1,1)(3,1)(3,0)
\psline{-}(0,0)(0,0.75)
\psset{linecolor=black}
\psdots(0,0)(1,0)(1.5,0)(2.5,0)(3,0)
\psdots[dotstyle=+](0.5,0)(2,0)
\end{pspicture*},\
\begin{pspicture*}(-0.25,-0.1)(3.25,0.8)
\psset{linewidth=1pt}
\psset{linecolor=red}
\psarc{-}(2.75,0){0.25}{0}{180}
\psbezier{-}(1.5,0)(1.5,1)(3.5,1)(3.5,0)
\psbezier{-}(-1.5,0)(-1.5,0.75)(0,0.75)(0,0)
\psline{-}(1,0)(1,0.75)
\psset{linecolor=black}
\psdots(0,0)(1,0)(1.5,0)(2.5,0)(3,0)
\psdots[dotstyle=+](0.5,0)(2,0)
\end{pspicture*},\
\begin{pspicture*}(-0.25,-0.1)(3.25,0.8)
\psset{linewidth=1pt}
\psset{linecolor=red}
\psarc{-}(-0.25,0){0.25}{0}{90}
\psarc{-}(3.25,0){0.25}{90}{180}
\psbezier{-}(-2,0)(-2,1)(1,1)(1,0)
\psbezier{-}(2.5,0)(2.5,1)(4.5,1)(4.5,0)
\psline{-}(1.5,0)(1.5,0.75)
\psset{linecolor=black}
\psdots(0,0)(1,0)(1.5,0)(2.5,0)(3,0)
\psdots[dotstyle=+](0.5,0)(2,0)
\end{pspicture*},\
\begin{pspicture*}(-0.25,-0.1)(3.25,0.8)
\psset{linewidth=1pt}
\psset{linecolor=red}
\psarc{-}(0.5,0){0.5}{0}{180}
\psbezier{-}(-.5,0)(-.5,1)(1.5,1)(1.5,0)
\psbezier{-}(3,0)(3,1)(5,1)(5,0)
\psline{-}(2.5,0)(2.5,0.75)
\psset{linecolor=black}
\psdots(0,0)(1,0)(1.5,0)(2.5,0)(3,0)
\psdots[dotstyle=+](0.5,0)(2,0)
\end{pspicture*},\
\begin{pspicture*}(-0.25,-0.1)(3.25,0.8)
\psset{linewidth=1pt}
\psset{linecolor=red}
\psarc{-}(1.25,0){0.25}{0}{180}
\psbezier{-}(0,0)(0,1)(2.5,1)(2.5,0)
\psline{-}(3,0)(3,0.75)
\psset{linecolor=black}
\psdots(0,0)(1,0)(1.5,0)(2.5,0)(3,0)
\psdots[dotstyle=+](0.5,0)(2,0)
\end{pspicture*} \Big\}.
\end{eqnarray}
When drawing in the $xy$-plane, points are placed on the $x$-axis at points of coordinates $(0,0), \dots, (g-1,0)$ and the diagram for a periodic semi-meander is taken to be periodic in the $x$-direction of period $g$.
  The curves connecting the points can connect across the imaginary boundary lines at $x=-1/2$ and $x =g-1/2$ (which are identified). See for example \eqref{E:BS2}. 
An elementary calculation shows that $\#\gothB_\ttS^r = \binom dr$.

A \emph{standard presentation} of a semi-meander is where all the  arcs are monotonic in the $x$-direction, namely, it does not twist back and forth.
Using the $xy$-plane picture, we define the \emph{left} and \emph{right end-nodes} of an arc, as follows:
\begin{itemize}
\item
when the arc appears as one arc in the standard presentation, its left (resp. right) end-node is the left (resp. right) endpoint of the arc;
\item
when the arc  appears in two parts linked through the imaginary boundary  lines at $x = -1/2$ and $x= g-1/2$, its left (resp. right) end-node  is the right (resp. left) endpoint of the arc 
\end{itemize}

For $\gotha \in \gothB_\ttS^r$, we use $\ell(\gotha)$ to denote the \emph{total span} of $\gotha$, that is the sum of the span of all curves over the band, where the span takes into account of the periodicity at the imaginary boundary.  For example, the last element of $\gothB_\ttS^2$ in \eqref{E:BS2} has two arcs with spans $1$ and $5$, respectively, and hence its total span is $6$. The second element of $\gothB_{\ttS}^2$ in \eqref{E:BS2} has two arcs with spans $1$ and $2$, respectively, and hence its total span is $3$.

\subsection{Gram matrix}
\label{S:gram matrix}

For $\gotha, \gothb \in \gothB_\ttS^r$, we consider the drawing $D(\gotha, \gothb)$ obtained by taking mirror image of $\gothb$ reflected about the $x$-axis and then identifying the $d$ nodes of $\gothb$ with those of $\gotha$ according to their labellings.
\begin{itemize}
\item
We say a loop (namely, a closed curve) in $D(\gotha, \gothb)$ is \emph{contractible} if it can be continuously contracted to a point on the cylinder (ignoring all other curves and lines on the picture). We write $m_0(\gotha, \gothb)$ for the number of contractible loops in $D(\gotha, \gothb)$.
\item
We say a loop in $D(\gotha, \gothb)$ is \emph{non-contractible} if, ignoring other curves and lines on the picture, it can be continuously deformed into a loop wrapped around the cylinder. (Since all loops do not intersect itself, the loop can only wrap the cylinder for one round.) We write $m_T(\gotha, \gothb)$ for the number of non-contractible loops in $D(\gotha,\gothb)$. This number  can be zero   if $r = d/2$.
\item
We use $\ttS_\gotha$ to denote the union of $\ttS$ with the nodes that are connected to an arc of $\gotha$.
So the band of $\ttS_\gotha$ maybe obtained from the band of $\gotha$ by replacing the end-nodes of arcs in $\gotha$ by plus signs.
 We define $\ttS_\gothb$ similarly.
\item
Assume that $r<d/2$, that no semi-line of $\gotha$ is connected to another semi-line of $\gotha$ in $D(\gotha,\gothb)$, and that the same is true for $\gothb$.
We define the \emph{reduction} of $D(\gotha, \gothb)$ to be a link $\eta_{\ttS_\gotha, \ttS_\gothb}$ from the band of $\ttS_\gotha$ to the band of $\ttS_\gothb$ such that each node $\tau_\gotha$ of $\ttS_\gotha$ (corresponding to an element of $\ttS_{\gotha, \infty}^c$) is linked to a node $\tau_\gothb$ of $\ttS_\gothb$ in the same way as the semi-line at $\tau_\gotha$ is linked to the semi-line at $\tau_\gothb$ in $D(\gotha, \gothb)$.
In practice, this amounts to removing all the (contractible) loops in $D(\gotha, \gothb)$, and then continuously deforming the remaining curves into a link (with top and bottom extended by semi-lines).
We put $m_v(\gotha, \gothb)$ to be the total displacement of $\eta_{\ttS_\gotha, \ttS_\gothb}$.
\item
When $r = \frac d2$, $\ttS_\gotha = \ttS_\gothb$ contains all the archimedean places. For consistency,  we write $\eta_{\ttS_\gotha, \ttS_\gothb}$ for the trivial link from the band of $\ttS_\gotha$ to the band of $\ttS_\gothb$ (as there are no nodes on the bands).
\end{itemize}

We define the \emph{Gram product} to be the following pairing
\[
\langle \cdot\,|\,\cdot\rangle_\ttS: \gothB_\ttS^r \times \gothB_\ttS^r \xrightarrow{\quad\quad}
\begin{cases} \overline \QQ_\ell(v) & \textrm{ if } r< d/2, \\
\overline \QQ_\ell[T] & \textrm{ if }r =d/2.
\end{cases}
\]
\[
\langle \gotha \,|\, \gothb\rangle_\ttS
=
\begin{cases}
0 &  {\begin{array}{l}\textrm{if in the diagram }D(\gotha, \gothb), \textrm{ two semi-lines}\\ \textrm{of }\gotha \textrm{(or of }\gothb) \textrm{ are connected,}
\end{array}} \\
(-2)^{m_0(\gotha, \gothb)} v^{m_v(\gotha, \gothb)} & \textrm{ \;otherwise if } r< d/2 , \textrm{ and}\\
 (-2)^{m_0(\gotha, \gothb)} T^{m_T(\gotha, \gothb)} & \textrm{ \;otherwise if }r =d/2.
\end{cases}
\]
Note that only one of $m_v(\gotha, \gothb)$ and $m_T(\gotha, \gothb)$ can be non-zero by definition.
We use $\gothV_\ttS^r$ to denote the $\overline \QQ_\ell$-vector space with basis $\gothB_\ttS^r$ and extend the Gram product linearly to all of $\gothV_\ttS^r$.

\begin{example}
\label{Ex:periodic semi-meanders}
The following examples are copied from \cite{xxz-model}.

\begin{enumerate}
\item
$
\psset{unit=0.3}
\gotha =
\begin{pspicture*}(-1.5,-0.2)(9.5,3)
\psset{linewidth=1pt}
\psset{linecolor=red}
\psline{-}(9,0)(9,3)
\psbezier{-}(0,0)(0,3.5)(8,3.5)(8,0)
\psbezier{-}(1,0)(1,2)(5,2)(5,0)
\psarc{-}(3.5,0){0.5}{0}{180}
\psarc{-}(6.5,0){0.5}{0}{180}
\psset{linecolor=black}
\psdots(0,0)(1,0)(3,0)(4,0)(9,0)(8,0)(7,0)(6,0)(5,0)
\psdots[dotstyle=+](2,0)
\end{pspicture*}$,
$\psset{unit=0.3}
\gothb =
\begin{pspicture*}(-1.5,-0.2)(9.5,3)
\psset{linewidth=1pt}
\psset{linecolor=red}
\psline{-}(0,0)(0,3)
\psbezier{-}(4,0)(4,1.5)(7,1.5)(7,0)
\psarc{-}(2,0){1}{0}{180}
\psarc{-}(5.5,0){0.5}{0}{180}
\psarc{-}(8.5,0){0.5}{0}{180}
\psset{linecolor=black}
\psdots(0,0)(1,0)(3,0)(4,0)(9,0)(8,0)(7,0)(6,0)(5,0)
\psdots[dotstyle=+](2,0)
\end{pspicture*}$,
$\psset{unit=0.3}
D(\gotha,\gothb) = 
\begin{pspicture*}(-.5,-3)(9.5,3)
\psset{linewidth=1pt}
\psset{linecolor=red}
\psline{-}(-1,0)(-1,3)
\psline{-}(9,0)(9,3)
\psbezier{-}(0,0)(0,3.5)(8,3.5)(8,0)
\psbezier{-}(1,0)(1,2)(5,2)(5,0)
\psarc{-}(3.5,0){0.5}{0}{180}
\psarc{-}(6.5,0){0.5}{0}{180}
\psline{-}(0,0)(0,-3)
\psline{-}(-1,0)(-1,-3)
\psbezier{-}(4,0)(4,-1.5)(7,-1.5)(7,0)
\psarc{-}(2,0){1}{180}{360}
\psarc{-}(5.5,0){0.5}{180}{360}
\psarc{-}(8.5,0){0.5}{180}{360}
\psset{linecolor=black}
\psdots(-1,0)(0,0)(1,0)(3,0)(4,0)(9,0)(8,0)(7,0)(6,0)(5,0)
\psdots[dotstyle=+](2,0)
\end{pspicture*}$,
the reduction of the link is $
\psset{unit=0.3}
\eta_{\ttS_\gotha, \ttS_\gothb} = 
\begin{pspicture*}(-.5,-1)(9.5,3)
\psset{linewidth=1pt}
\psset{linecolor=red}
\psline{-}(9,2)(9,3)
\psline{-}(0,0)(0,-1)
\psbezier{-}(0,0)(0,1.5)(9,.5)(9,2)
\psset{linecolor=black}
\psdots(0,0)(9,2)
\psdots[dotstyle=+](2,0)(1,0)(3,0)(4,0)(8,0)(7,0)(6,0)(5,0)(2,2)(0,2)(1,2)(3,2)(4,2)(8,2)(7,2)(6,2)(5,2)(9,0)
\end{pspicture*}
$,
and $\langle \gotha| \gothb \rangle_\ttS = (-2) v^{-9}$.

\item
$
\psset{unit=0.3}
\gotha =
\begin{pspicture*}(-.5,-0.2)(9.5,3)
\psset{linewidth=1pt}
\psset{linecolor=red}
\psline{-}(0,0)(0,3)
\psline{-}(9,0)(9,3)
\psbezier{-}(1,0)(1,3.5)(8,3.5)(8,0)
\psbezier{-}(2,0)(2,1.5)(5,1.5)(5,0)
\psarc{-}(3.5,0){0.5}{0}{180}
\psarc{-}(6.5,0){0.5}{0}{180}
\psset{linecolor=black}
\psdots(0,0)(1,0)(2,0)(3,0)(4,0)(9,0)(8,0)(7,0)(6,0)(5,0)
\end{pspicture*}$,
$\psset{unit=0.3}\gothb =
\begin{pspicture*}(-.5,-0.2)(9.5,3)
\psset{linewidth=1pt}
\psset{linecolor=red}
\psline{-}(6,0)(6,3)
\psline{-}(7,0)(7,3)
\psbezier{-}(1,0)(1,1.5)(4,1.5)(4,0)
\psbezier{-}(0,0)(0,2.5)(5,2.5)(5,0)
\psarc{-}(2.5,0){0.5}{0}{180}
\psarc{-}(8.5,0){0.5}{0}{180}
\psset{linecolor=black}
\psdots(0,0)(1,0)(2,0)(3,0)(4,0)(9,0)(8,0)(7,0)(6,0)(5,0)
\end{pspicture*}$,
$\psset{unit=0.3} D(\gotha, \gothb) = 
\begin{pspicture*}(-.5,-3)(9.5,3)
\psset{linewidth=1pt}
\psset{linecolor=red}
\psline{-}(0,0)(0,3)
\psline{-}(9,0)(9,3)
\psbezier{-}(1,0)(1,3.5)(8,3.5)(8,0)
\psbezier{-}(2,0)(2,1.5)(5,1.5)(5,0)
\psarc{-}(3.5,0){0.5}{0}{180}
\psarc{-}(6.5,0){0.5}{0}{180}
\psline{-}(6,0)(6,-3)
\psline{-}(7,0)(7,-3)
\psbezier{-}(1,0)(1,-1.5)(4,-1.5)(4,0)
\psbezier{-}(0,0)(0,-2.5)(5,-2.5)(5,0)
\psarc{-}(2.5,0){0.5}{180}{360}
\psarc{-}(8.5,0){0.5}{180}{360}
\psset{linecolor=black}
\psdots(0,0)(1,0)(2,0)(3,0)(4,0)(9,0)(8,0)(7,0)(6,0)(5,0)
\end{pspicture*}$,
and $\langle\gotha\,|\,\gothb\rangle_\ttS = 0$.

\item
$\gotha =
\psset{unit=0.3}
\begin{pspicture*}(-.5,-0.2)(9.5,2)
\psset{linewidth=1pt}
\psset{linecolor=red}
\psbezier{-}(3,0)(3,1.5)(6,1.5)(6,0)
\psarc{-}(-0.5,0){0.5}{0}{90}
\psarc{-}(1.5,0){0.5}{0}{180}
\psarc{-}(4.5,0){0.5}{0}{180}
\psarc{-}(7.5,0){0.5}{0}{180}
\psarc{-}(9.5,0){0.5}{90}{180}
\psset{linecolor=black}
\psdots(0,0)(1,0)(2,0)(3,0)(4,0)(9,0)(8,0)(7,0)(6,0)(5,0)
\end{pspicture*}$,
$\gothb =\psset{unit=0.3}
\begin{pspicture*}(-.5,-0.2)(9.5,4.6)
\psset{linewidth=1pt}
\psset{linecolor=red}
\psbezier{-}(-4,0)(-4,3.5)(3,3.5)(3,0)
\psbezier{-}(-5,0)(-5,4.5)(4,4.5)(4,0)
\psbezier{-}(6,0)(6,3.5)(13,3.5)(13,0)
\psbezier{-}(5,0)(5,4.5)(14,4.5)(14,0)
\psarc{-}(-.5,0){0.5}{0}{90}
\psarc{-}(1.5,0){0.5}{0}{180}
\psarc{-}(7.5,0){0.5}{0}{180}
\psarc{-}(9.5,0){0.5}{90}{180}
\psset{linecolor=black}
\psdots(0,0)(1,0)(2,0)(3,0)(4,0)(9,0)(8,0)(7,0)(6,0)(5,0)
\end{pspicture*}$,
and $\psset{unit=0.3}
D(\gotha, \gothb) = 
\begin{pspicture*}(-.5,-4.6)(9.5,2)
\psset{linewidth=1pt}
\psset{linecolor=red}
\psbezier{-}(3,0)(3,1.5)(6,1.5)(6,0)
\psarc{-}(-0.5,0){0.5}{-90}{90}
\psarc{-}(1.5,0){0.5}{0}{360}
\psarc{-}(4.5,0){0.5}{0}{180}
\psarc{-}(7.5,0){0.5}{0}{360}
\psarc{-}(9.5,0){0.5}{90}{270}
\psbezier{-}(-4,0)(-4,-3.5)(3,-3.5)(3,0)
\psbezier{-}(-5,0)(-5,-4.5)(4,-4.5)(4,0)
\psbezier{-}(6,0)(6,-3.5)(13,-3.5)(13,0)
\psbezier{-}(5,0)(5,-4.5)(14,-4.5)(14,0)
\psset{linecolor=black}
\psdots(0,0)(1,0)(2,0)(3,0)(4,0)(9,0)(8,0)(7,0)(6,0)(5,0)
\end{pspicture*}$, and $\langle \gotha \,|\, \gothb\rangle_\ttS = (-2)^3 T^2$.
\end{enumerate}
\end{example}

\begin{remark}
\label{R:quantization}
When $\ttS = \emptyset$,
the vector space  $\gothV_\ttS^r$ is the \emph{link representation} of the so-called periodic Temperley--Lieb algebra $\calE TLP_N(T, -2)$ under the notation of \cite{xxz-model}.  (In particular, we specialize the theory to the case when the quantum variable $q = i$.)  With respect to the bilinear form we introduced earlier, the representation is $\dagger$-hermitian with respect to the natural involution $\dagger$ on the Temperley--Lieb algebra.  Since we will not use the structure of this representation, we simply refer to \cite[Section~2.3]{xxz-model} for further discussion.  
It seems that the mysterious relationship between this mathematical physics calculation and our Shimura variety calculation probably  comes from  some common representation theory feature. 
  It might be an intriguing question to ask what quantization could mean for  Shimura varieties (or its local analogues) so that the intersection matrix computed in a similar manner as we did later would have a chance to match with the quantized version of the Gram determinant in {\it loc. cit.}
\end{remark}

The following theorem is essentially the main theorem of \cite{xxz-model} (which seems to have been  known by  \cite{graham-lehrer} using a different argument).
\begin{theorem}
\label{T:determinant}
Put $t_{d,r} = \sum_{i = 0}^{r-1} \binom di$.
Let $\gothG_\ttS^r$ denote the Gram matrix $(\langle \gotha\,|\,\gothb\rangle)_{\gotha, \gothb \in \gothB_\ttS^r}$.  Then its determinant is given as follows.
\begin{enumerate}
\item When $d$ is even, $\det \gothG_\ttS^{d/2} = \pm (T^2 - 4)^{t_{d, d/2}}$.
\item For $r <d/2$, $\det \gothG_\ttS^r = \pm (v^g - v^{-g})^{2t_{d,r}}$.
\end{enumerate}
\end{theorem}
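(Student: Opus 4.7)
The plan is to treat this as essentially a translation of the Pasquier--Saleur/Graham--Lehrer Gram determinant formula for cell modules of the periodic Temperley--Lieb algebra, specialized to loop weight $\delta = -2$ (equivalently quantum parameter $q = i$). The vector space $\gothV_\ttS^r$ is the link/cell representation indexed by $d-2r$ through-lines; the Gram form $\langle\cdot\,|\,\cdot\rangle_\ttS$ is the standard bilinear form on this cell module, with the parameter $T$ (resp.\ $v$) keeping track of non-contractible loops (resp.\ winding of the reduced link). So the theorem, up to a change of normalization of the winding/twist parameter, is a specialization of the Gram determinant already computed in \cite{xxz-model, graham-lehrer}, and the proof strategy below spells out this specialization.

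First I would make the identification rigorous. Forget $\ttS$ itself and note that $\gothB_\ttS^r$ depends only on $d$ and $g$; the band is just $d$ nodes on a cylinder of circumference $g$ (the pluses are inert and only serve to offset positions). So it suffices to compute the determinant of the pairing on the space spanned by periodic link patterns with $d$ boundary points and $r$ arcs. Under the dictionary ``contractible loop'' $\leftrightarrow$ factor $-2$, ``non-contractible loop'' $\leftrightarrow$ factor $T$, ``winding'' $\leftrightarrow$ factor $v$, our pairing coincides with the Gram form on the affine/periodic Temperley--Lieb cell module at $\delta = -2$. Thus (1) and (2) of the theorem become, respectively, the even-defect and positive-defect cases of the cellular Gram determinant.

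Next I would run the standard inductive argument. The induction is on the number of arcs $r$ (equivalently, on the number of through-lines $d-2r$). A semi-meander with $r \geq 1$ arcs always contains a ``small arc'' joining two neighbouring nodes. Organising $\gothB_\ttS^r$ according to the position of a distinguished small arc gives a block/triangular decomposition of $\gothG_\ttS^r$: stripping off this arc sends $\gotha$ to an element $\gotha'$ of $\gothB_{\ttS\cup\{\text{two positions}\}}^{r-1}$, and one verifies that the corresponding entries of the Gram matrix transform as
\[
\langle \gotha \,|\, \gothb \rangle_\ttS = (-2)\,\langle \gotha' \,|\, \gothb' \rangle_{\ttS'} + (\text{lower-order terms})
\]
when the distinguished small arc is shared, while the remaining block encodes semi-meanders in which the small-arc slot is bridged by a larger arc. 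Row/column reductions turn $\gothG_\ttS^r$ into a block-diagonal matrix whose diagonal blocks are (i) copies of $\gothG_{\ttS'}^{r-1}$ and (ii) Gram matrices for $\gothV_\ttS^{r-1}$ (``adding a through-line pair'' via the Jones basic construction), the latter controlled by the inductive hypothesis.

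Finally I would read off the two factors. For the top case $r = d/2$, all semi-lines are gone and the only new contribution at the last step of the recursion is the Gram determinant of the one-dimensional module spanned by the empty diagram: that determinant equals the eigenvalue $T^2 - 4$ of the transfer matrix at loop weight $-2$ (equivalently, the discriminant $(T-2)(T+2)$ of the characteristic polynomial of a single loop factor), which gives (1). For $r < d/2$ there are still $d-2r \geq 1$ through-lines; at each step of the recursion we pick up the ``twist Gram determinant'' of the one-through-line module, which for loop weight $-2$ evaluates to $\pm(v^g - v^{-g})^{2}$ (this is the Chebyshev-type identity $U_{g-1}(-2)\cdot(\text{something}) = v^g - v^{-g}$, up to sign). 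The exponent $t_{d,r} = \sum_{i=0}^{r-1}\binom{d}{i}$ arises because the $i$-th step of the recursion contributes $\binom{d}{i}$ copies of the key factor, and $\sum \binom{d}{i}$ accumulates these multiplicities.

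The main obstacle I expect is the bookkeeping in the last paragraph: keeping track of how the winding parameter $v$ on the reduced link $\eta_{\ttS_\gotha,\ttS_\gothb}$ transforms under the recursion (since ``wrapping around the cylinder'' interacts non-trivially with the act of stripping arcs), and ensuring that the overall sign and the exponent $t_{d,r}$ come out correctly. Translating the Graham--Lehrer/Pasquier--Saleur normalization (typically stated in terms of a twist $z$ with $z + z^{-1} = T$, or of a quantum parameter) into our $v, T$ normalization, and confirming the specialization $q = i$ is consistent with the loop factor $-2$ throughout, is the other place where care is needed.
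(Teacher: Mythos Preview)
Your identification of $\gothV_\ttS^r$ with a periodic Temperley--Lieb cell module at loop weight $-2$ is correct and is exactly how the paper invokes \cite{xxz-model}. However, the paper does not rerun the inductive arc-stripping argument you sketch; it cites \cite[Theorem~4.1]{xxz-model} as a black box for the case $\ttS = \emptyset$ (so $d = g$), and then gives a short reduction for general $\ttS$.

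That reduction is the step your proposal does not quite have, and your claim that ``the pluses are inert and only serve to offset positions'' is where the gap lies. The plus signs are \emph{not} inert at the level of individual Gram entries: the exponent $m_v(\gotha,\gothb)$ is the total displacement of the reduced link $\eta_{\ttS_\gotha,\ttS_\gothb}$ measured on the cylinder of circumference $g$, so inserting plus signs genuinely changes $\langle \gotha\,|\,\gothb\rangle_\ttS$ compared with the corresponding entry of the matrix $\gothG_d^r$ on the $d$-node cylinder with no pluses. The paper's observation is that this discrepancy disappears \emph{in the determinant}. Expanding $\det \gothG_\ttS^r$ as a signed sum over permutations of $\gothB_\ttS^r$, each cycle $(\gotha_1\cdots\gotha_t)$ contributes the product $\langle\gotha_1|\gotha_2\rangle_\ttS\cdots\langle\gotha_t|\gotha_1\rangle_\ttS$, whose $v$-exponent is the total displacement of the composed link
\[
\eta_{\ttS_{\gotha_t},\ttS_{\gotha_1}}\circ\cdots\circ\eta_{\ttS_{\gotha_1},\ttS_{\gotha_2}}.
\]
This is a link from $\ttS_{\gotha_1}$ to itself, hence an integer power $n$ of the fundamental link $\eta_{\ttS_{\gotha_1}}$, so its displacement is $ng$. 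The same cycle in the $\ttS=\emptyset$ picture has displacement $nd$ with the \emph{same} $n$. Thus $\det \gothG_\ttS^r$ is obtained from $\det \gothG_d^r$ by the formal substitution $v^d \mapsto v^g$, and the formula follows from the $\ttS = \emptyset$ case. (For $r = d/2$ there are no semi-lines, hence no $m_v$, and one really can just ignore the plus signs.)

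Your recursive approach is closer to how \cite{graham-lehrer} and \cite{xxz-model} prove their determinant formulas in the first place, and could in principle be made to work, but it is considerably more laborious and you would still have to resolve the $g$ versus $d$ discrepancy somewhere in the recursion --- precisely the ``bookkeeping'' obstacle you flag at the end. The paper's cycle argument sidesteps this entirely in a few lines, and is also set up deliberately as a warm-up for the analogous (but harder) determinant computation in the proof of Theorem~\ref{T:Tate}.
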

\begin{proof}
When $\ttS = \emptyset$ (so $d=g$), this is a special case of \cite[Theorem~4.1]{xxz-model}.
Indeed, the parameter $\alpha$ in {\it loc. cit.} is $T$ in our notation, and since the $\beta$ in {\it loc. cit.}  is $-2$, the $C_k$  in {\it loc. cit.} are  equal to $\pm 1$ for all $k$.  One easily simplifies their formula to the one stated in this theorem.

The general case requires little modification, but the method of the proof may be viewed as a toy model for the proof of Theorem~\ref{T:Tate} later.
When $r= \frac d2$, we just simply ignore all points corresponding to $\ttS_\infty$. This verifies (1).
So we assume $r < \frac d2$ from now on to prove (2). We use $\langle \gotha | \gothb \rangle_d$ to denote the pairing computed by removing all points from $\ttS_\infty$ (and shrink the cylinder accordingly) and hence with displacements computed with respect to only the $d$ nodes.
Let $\gothG_d^r$ denote the corresponding matrix.  We need to compare $\det \gothG_d^r$ with $\det \gothG_\ttS^r$, by showing that $\det \gothG_\ttS^r$ can be obtained by replacing all $v^d$ in the expression of $\det\gothG_d^r$ by $v^g$.

By the definition of determinant, $\det \gothG_\ttS^r$ is the sum over all permutations $\sigma$ of the set $\gothB_\ttS^r$, of the product of the signature of $\sigma$, and, for every cycle $(\gotha_1 \dots \gotha_t)$ of the permutation $\sigma$, the product
\begin{equation}
\label{E:product of links}
\langle \gotha_1 |\gotha_2\rangle_\ttS \cdot \langle \gotha_2 |\gotha_3\rangle_\ttS\cdots \langle \gotha_t|\gotha_1\rangle_\ttS.
\end{equation}
The same applies to $\det \gothG_d^r$ except that the product \eqref{E:product of links} are taken for the pairing $\langle \cdot |\cdot\rangle_d$.
The product \eqref{E:product of links}, if not zero, is equal to $(-2)^{m_0}v^{m_v}$, where $m_0 = m_0(\gotha_1, \gotha_2)+\cdots +m_0(\gotha_t,\gotha_1)$ is the sum of  total number of contractible loops in the diagrams $D(\gotha_{1},\gotha_2), D(\gotha_2, \gotha_3),\dots,D(\gotha_t,\gotha_1)$, and   $m_v = m_v(\gotha_1, \gotha_2)+\cdots +m_v(\gotha_t,\gotha_1)$ is equal to the total displacement of the composition of the link 
\begin{equation}
\label{E:composition of links}
\eta_{\ttS_ {\gotha_t}, \ttS_{ \gotha_1}}  \circ \cdots\circ \eta_{\ttS_ {\gotha_2}, \ttS_{ \gotha_3}} \circ
\eta_{\ttS_{ \gotha_1}, \ttS_ {\gotha_2}},
\end{equation}
by the additivity of total displacements as  remarked in Subsection~\ref{S:links}.
Note that \eqref{E:composition of links} is in fact a link from $\ttS_{ \gotha_1}$ to itself.  So it must be an integer power $n$ of the fundamental link $\eta_{\ttS_{ \gotha_1}}$ defined in Subsection~\ref{S:links}.
In particular, we have $m_v = n g$.
Making the same observation for computing the standard Gram determinant $\det \gothG_d^r$, the product \eqref{E:product of links} with $\langle \cdot |\cdot \rangle_d$ is instead equal to $(-2)^{m_0} v^{m'_v}$ with the same $m_0$ as above, and $m'_v$ is the total displacement of \eqref{E:composition of links} with all points corresponding to $\ttS_\infty$ removed.  By the same discussion above, we have $m'_v=  n d$ with the \emph{same} $n$ above.

In conclusion, each term of $\det \gothG_\ttS^r$ can be obtained from the corresponding term of $\det \gothG_d^r$ via replacing $v^d$ by $v^g$.
Therefore, $\det \gothG_\ttS^r = \pm (v^g - v^{-g})^{2t_{d,r}}$.
\end{proof}

\begin{notation}
\label{N:basic arc}
Using the illustration of periodic semi-meanders as in \eqref{E:BS2}, we say an arc $\delta$ \emph{lies over} another arc $\delta'$ if the contractible closed loop in the picture given by adjoining $\delta$ with the equator contains $\delta'$ inside.  
For example, in the list of $\gothB_\ttS^2$ in \eqref{E:BS2}, the last five periodic semi-meanders each has an arc lying over another.

In a periodic semi-meander for $\ttS$, a \emph{basic arc} is an arc $\delta$ which satisfies the following equivalent conditions
\begin{itemize}
\item
in the 2-dimensional picture, 
$\delta$ does not lie over any other arcs,
\item
in the 2-dimensional picture, the only points below $\delta$ are plus signs, or
\item
$\delta$ is an arc which links some $\tau$ to $\tau^-$ (See \ref{N:tau-plus} for the notation).
\end{itemize}
For example, in the list of $\gothB_\ttS^2$ in \eqref{E:BS2}, each of the five periodic semi-meanders in the first row  has two basic arcs, and each of the five periodic semi-meanders in the second row  has one basic arc.

It is clear that all periodic semi-meanders have a basic arc except the one with only semi-lines.
Given a periodic semi-meander $\gotha \in \gothB_\ttS^r$ for $\ttS$  with a basic arc $\delta$ linking two nodes $\tau,\tau^- \in \ttS_\infty^c$, we can delete the arc and replace its end-nodes by $+$ to get a periodic semi-meander $\gotha\backslash \delta \in \gothB_{\ttS\cup\{\tau, \tau^-\}}^{r-1}$ for $\ttS \cup \{\tau, \tau^-\}$.
\end{notation}

\subsection{Goren--Oort cycles}
\label{S:GO cycles}
We fix a pair $(\ttS, \ttT)$ as before.
For a periodic semi-meander $\gotha$ for $\ttS$, we define a pair $(\ttS_\gotha,\ttT_{\gotha})$ as follows: $\ttS_\gotha$ is obtained by adjoining to $\ttS$ all end-nodes of the arcs of $\gotha$ and $\ttT_\gotha$ is obtained by adjoining to $\ttT$ all the \emph{right} end-nodes (in the sense of \ref{S:semi-meanders}) of the arcs of $\gotha$.

We now construct the \emph{Goren--Oort cycle} $\Sh_{K_p}(G_{\ttS,\ttT})_\gotha$  associated to a periodic semi-meander $\gotha$ for $(\ttS, \ttT)$.\footnote{It is expected that the Goren--Oort cycle $\Sh_{K_p}(G_{\ttS,\ttT})_\gotha$  is independent of the auxiliary choices when defining the unitary Shimura variety $\Sh_{K_p''}(G_{\tilde \ttS})$. However, we do not know how to prove this.} The cycle will admit an iterated $\PP^1$-bundle morphism
\[
\pi_\gotha: \Sh_{K_p}(G_{\ttS, \ttT})_\gotha \to \Sh_{K_p}(G_{\ttS_\gotha, \ttT_\gotha})
\]
for some appropriate subsets $\ttS_\gotha$ and $\ttT_\gotha$ of $\Sigma_\infty$. The resulting correspondence 
\begin{equation}
\label{E:GO cycle correspondence} \Sh_{K_p}(G_{\ttS_\gotha, \ttT_\gotha}) \xleftarrow{\pi_\gotha}\Sh_{K_p}(G_{\ttS, \ttT})_\gotha \hookrightarrow \Sh_{K_p}(G_{\ttS, \ttT})
\end{equation}
will be constructed using the unitary Shimura varieties via Construction~\ref{S:from unitary to quaternionic} depending on a shift $\boldsymbol t_{\gotha} = \boldsymbol t_{\emptyset, \gotha}  \in E^{\times, \cl} \backslash \AAA_E^{\infty, \times}/\cO_{E_{\gothp}}^{\times}$, which is canonical up to $F^{\times, \cl}\backslash\AAA_F^{\infty,\times}/\cO_{F_{\gothp}}^{\times}$,  as explained in Construction~\ref{S:from unitary to quaternionic}.

We first define a partial order on the set of all periodic semi-meander for our fixed pair $(\ttS, \ttT)$ by setting $\gotha \prec \gothb$ if all arcs of $\gotha$ appears (up to graphic deformation) as arcs of $\gothb$.
Clearly, the periodic semi-meander consisting of only semi-lines is the unique minimal element for this partial order.
We will define the Goren--Oort cycles correspondence \eqref{E:GO cycle correspondence} inductively for  the partial order above.

Suppose that we have defined this for all periodic semi-meanders $\gotha' \prec \gotha$.
Now, for $\gotha$, among its arcs, there exists at least one arc that does not lie below any other arc in the sense of Notation~\ref{N:basic arc}. We fix  such an  arc $\delta$. Let $\gotha \backslash \delta$ denote the periodic semi-meander obtained by removing the arc $\delta$ from $\gotha$ and adjoin semi-lines to the end nodes of $\delta$.
By the inductive hypothesis, we have a correspondence
\begin{equation}
\label{E:GO cycle correspondence induction} \Sh_{K_p}(G_{\ttS_{\gotha \backslash \delta}, \ttT_{\gotha \backslash \delta}}) \xleftarrow{\pi_{\gotha \backslash \delta}}\Sh_{K_p}(G_{\ttS, \ttT})_{\gotha \backslash \delta} \hookrightarrow \Sh_{K_p}(G_{\ttS, \ttT})
\end{equation}
with shift $\boldsymbol t_{\gotha \backslash \delta}$.
Let $\tau$ (resp. $\tau_-$) denote the right (resp. left) end-node of $\delta$.
Applying  Proposition~\ref{P:GO-fibration}(1) to the Shimura variety $\Sh_{K_p}(G_{\ttS_{\gotha \backslash \delta}, \ttT_{\gotha \backslash \delta}})$, 
we deduce a natural $\PP^1$-bundle morphism
\[
\pi_\tau: \Sh_{K_p}(G_{\ttS_{\gotha \backslash \delta},\ttT_{\gotha \backslash \delta}})_{\tau} \longto \Sh_{K_p}(G_{\ttS_{\gotha \backslash \delta} \cup\{\tau, \tau_-\}, \ttT_{\gotha \backslash \delta}\cup\{\tau\}})
\]
with some shift $\boldsymbol t_{\gotha\backslash \delta, \gotha}$ (and we fix such a choice).
We define the \emph{Goren--Oort cycle} $\Sh_{K_p}(G_{\ttS,\ttT})_\gotha$ to be
\[
\Sh_{K_p}(G_{\ttS,\ttT})_\gotha: =
\pi_{\gotha \backslash \delta}^{-1}\big(\Sh_{K_p}(G_{\ttS_{\gotha \backslash \delta}, \ttT_{\gotha\backslash \delta}})_{\tau} \big),
\]
namely it fits the following commutative diagram where the square is Cartesian.
\[
\xymatrix{
\Sh_{K_p}(G_{\ttS,\ttT})_{\gotha } \ar[d] \ar@{^{(}->}[r] & \ar[d]_{\pi_{\gotha\backslash \delta}} \Sh_{K_p}(G_{\ttS,\ttT})_{\gotha \backslash \delta} \ar@{^{(}->}[r] \ar[d] & \Sh_{K_p}(G_{\ttS,\ttT})
\\
\Sh_{K_p}(G_{\ttS_{\gotha \backslash \delta},\ttT_{\gotha \backslash \delta}})_\tau \ar@{^{(}->}[r] \ar[d]^{\pi_\tau} & \Sh_{K_p}(G_{\ttS_{\gotha \backslash \delta},\ttT_{\gotha \backslash \delta}})
\\
\Sh_{K_p}(G_{\ttS_\gotha,\ttT_\gotha}).
}
\]
The induced
 correspondence \[
\Sh_{K_p}(G_{\ttS_\gotha, \ttT_\gotha}) \xleftarrow{\pi_\gotha := \pi_\tau\circ \pi_{\gotha \backslash \delta}}\Sh_{K_p}(G_{\ttS, \ttT})_\gotha \hookrightarrow \Sh_{K_p}(G_{\ttS, \ttT})
\]
has   shift 
\[
\boldsymbol{t}_{\gotha} = \boldsymbol t_{\emptyset, \gotha}: = \boldsymbol t_{\emptyset, \gotha \backslash \delta} \cdot \boldsymbol t_{\gotha\backslash \delta, \gotha}.
\]
This completes the inductive construction of the Goren--Oort cycles. Using Theorem~\ref{T:GO description}(1), it is easy to see inductively that such a definition of $\Sh_{K_p}(G_{\ttS,\ttT})_\gotha$ does not depend on the choice of the basic arc $\delta$.

Note that, in the process of the inductive construction, by Theorem~\ref{T:GO description}(1), 
if $\delta'$ is another arc of $\gotha$ with left (resp. right) end-nodes $\tau'$ (resp. $\tau'_-$) that does not lie below any other arc, then we have a correspondence
\[
\Sh_{K_p}(G_{\ttS_\gotha, \ttT_\gotha}) \xleftarrow{\pi_{\delta'}} \Sh_{K_p}(G_{\ttS_{\gotha \backslash \delta'}, \ttT_{\gotha \backslash \delta'}})_{\tau'} \hookrightarrow \Sh_{K_p}(G_{\ttS_{\gotha \backslash \delta'}, \ttT_{\gotha \backslash \delta'}})
\]
such that $\pi_{\delta'}$ is a $\PP^1$-bundle and, by Remark~\ref{R:composition of shifts}, the shift of this correspondence is
\[
\boldsymbol t_{\gotha \backslash \delta', \gotha}: = \boldsymbol t_\gotha \boldsymbol t_{\gotha \backslash \delta'}^{-1}.
\]
So  applying inductively this discussion, we have, for every periodic semi-meander $\gotha \prec \gothb$, a correspondence
\begin{equation}
\label{E:correspondence ab}
\Sh_{K_p}(G_{\ttS_\gotha, \ttT_\gotha}) \xleftarrow{\pi_{\gothb, \gotha}} \Sh_{K_p}(G_{\ttS_{\gothb}, \ttT_{\gothb}})_{\gotha \backslash \gothb} \hookrightarrow \Sh_{K_p}(G_{\ttS_{\gothb}, \ttT_{\gothb}})
\end{equation}
of shift $
\boldsymbol t_{\gothb, \gotha} := \boldsymbol t_\gotha \boldsymbol t_\gothb^{-1}.$

We point out that a key feature of our construction is that \emph{the  dimension of fibers of $\Sh_{K_p}(G_{\ttS,\ttT})_{\gotha}$ over $\Sh_{K_p}(G_{\ttS_{\gotha},\ttT_{\gotha}})$ is the same as the codimension of $\Sh_{K_p}(G_{\ttS,\ttT})_{\gotha}$ in $\Sh_{K_p}(G_{\ttS,\ttT})$}, which is $r$.

We fix a regular multiweight $(\underline k,w)$. Recall that $\calL_{\ttS, \ttT}^{(\underline k, w)}$ denotes the automorphic $\ell$-adic local system on $\Sh_{K_p}(G_{\ttS,\ttT})$.
The same construction above also gives rise to an isomorphism
\[
\pi_\gotha^\sharp:\pi_\gotha^*( \calL^{(\underline k, w)}_{\ttS_\gotha, \ttT_\gotha}) \xrightarrow{\ \cong\ }
\calL_{\ttS, \ttT}^{(\underline k, w)}|_{\Sh_{K_p}(G_{\ttS, \ttT})_\gotha}.
\]

\begin{remark}
\label{R:GO cycles are NP stratification}
It was pointed out  to us by X. Zhu that the union of all Goren--Oort cycles associated to periodic semi-meanders with $r$ arcs is exactly the closure of certain \emph{Newton strata} of the unitary Shimura variety, transported to the quaternionic side.
In the case of Hilbert modular varieties, the union of all codimension $r$ Goren--Oort cycles are exactly the closed Newton stratum of the Newton polygon with slopes  $\frac{r}{g}$ and $\frac{g-r}{g}$ both with multiplicity $g$. 
So maybe the name ``Goren--Oort" is slightly misleading, as it usually refers to stratification given by the $p$-torsion subgroup of the universal abelian varieties.
\end{remark} 

\begin{example}
\label{Ex:GO cycles}
Let $F$ be of degree $6$ over $\QQ$ and $\ttS = \ttT = \emptyset$. Then $\Sh_K(G_{\emptyset,\emptyset})$ is (the special fiber of) the Hilbert modular variety for $F$. 
  Let $\tau_0, \dots, \tau_5$ denote the embeddings of $\calO_F$ into $\ZZ_p^\ur$ so that $\tau_i = \tau_{i\pmod 6}$ and $\tau_{i+1} = \sigma \tau_i$. 
We have a universal abelian variety $A$ over $\Sh_K(G_{\emptyset, \emptyset})$ equipped with an $\calO_F$-action.

 We consider the periodic semi-meander $\gotha = \psset{unit=0.6}\begin{pspicture*}(-0.25,-0.1)(2.75,0.75)
\psset{linewidth=1pt}
\psset{linecolor=red}
\psarc{-}(0.75,0){0.25}{0}{180}
\psbezier{-}(0,0)(0,0.75)(1.5,0.75)(1.5,0)
\psarc{-}(2.25,0){0.25}{0}{180}
\psset{linecolor=black}
\psdots(0,0)(0.5,0)(1,0)(1.5,0)(2,0)(2.5,0)
\end{pspicture*}$.
For each $\overline \FF_p$-point $x \in \Sh_K(G_{\emptyset,\emptyset})$, 
the Dieudonn\'e module $\calD_x$ of the universal abelian variety $A_x$ at $x$ decomposes as 
$\calD_x = \bigoplus_{i=0}^5 \calD_{x, i}$, where $\calO_F$ acts on the $i$-th factor via $\tau_i$. Let $V_i: \calD_{x,i+1}\ra \calD_{x,i}$ denote the Verschiebung map for $i\in \Z/5\Z$.
Then $x \in \Sh_K(G_{\emptyset,\emptyset})_\gotha$ if and only if
\[
V_1 \circ V_2(\calD_{x, 3})\subseteq p \calD_{x, 1}, \quad V_4 \circ V_5(\calD_{x, 0})\subseteq p \calD_{x, 4}, \quad \textrm{and } V_0\circ V_1 \circ V_2 \circ V_3(\calD_{x, 4})\subseteq p^2 \calD_{x, 0}.
\]
In fact, these inclusions are forced to be equalities.
In this case, $\Sh_{K_p}(G_{\emptyset,\emptyset})_\gotha$ is just an iterated $\PP^1$-fibration over  the discrete Shimura variety $\Sh_K(G_{\Sigma_\infty, \{\tau_2, \tau_3, \tau_5\}})$.  Moreover, one can prove that each geometric connected component is isomorphic to the product of $\PP^1$ with the projective bundle $\PP(\calO_{\PP^1}(-p) \oplus \calO_{\PP^1}(p)) $ over $\PP^1$.\footnote{More canonically, it is the product of $\PP^1$ with the  projective bundle attached to a rank two bundle $E$ over $\PP^1$ which sits inside an exact sequence $0 \to \calO_{\PP^1}(p) \to E \to \calO_{\PP^1}(-p) \to 0$, which  splits (non-canonically).}
\end{example}

\section{Cohomology of Goren--Oort cycles}
Let $\Sh_{K_p}(G_{\ttS,\ttT})$ be the special fiber of a  quaternionic Shimura variety as in Section~\ref{S:quaternoinic Shimura varieties}.
Using Gysin maps, the cohomology of the Goren--Oort cycles gives rise to part of the cohomology of the big Shimura variety $\Sh_{K_p}(G_{\ttS,\ttT})$.

\subsection{Generalities on \'etale cohomology}\label{S:etale-cohomology}
We recall first some generalities on Gysin maps and \'etale cohomology of iterated $\PP^1$-bundles. 
Let $\ell$ be a fixed prime number, and $k$ be an algebraically closed field of characteristic different from  $\ell$.

Consider a closed immersion $i: Y\hra X$ of smooth varieties over $k$ of codimension $r$. 
The functor of direct image $i_*$ has a right adjoint, denoted by $i^!$.  For an $\ell$-adic \'etale sheaf $\calF$ on $X$, $i^!\calF$ is the sheaf of sections of $\calF$ with support in $Y$. 
This is a left exact functor, and let $R^{q}i^!$ denote its $q$-th derived functor.
 Then by the relative cohomological purity \cite[XVI, Th\'eor\`eme 3.7]{SGA4}, we have $R^{q}i^!\overline \Q_{\ell}=0$ for $q\neq 2r$, and a canonical isomorphism $R^{2r}i^{!}\overline\Q_{\ell}\xra{\cong}\overline\Q_{\ell}(-r)$. 
 Explicitly,  the inverse isomorphism $\overline \Q_{\ell}\xra{\cong}R^{2r}i^{!}\overline \Q_{\ell}(r)$ is given by  the fundamental class of $Y$ in $X$:  $\cl_{X}(Y)\in H^{2r}_{\et, Y}(X,\overline\Q_{\ell}(r))\cong H^{0}_{\et}(Y, R^{2r}i^{!} \overline \QQ_\ell)$. 
 Now for any lisse $\overline {\QQ}_{\ell}$-sheaf $\calF$ on $X$, we define the Gysin map as the composition
 \begin{equation}\label{E:Gysin-general}\mathrm{Gysin}\colon H^{q}_{\et}(Y, i^*\calF)\xra{\cup\, \cl_X(Y)} H^{q+2r}_{\et, Y}(X,\calF(r))\ra H^{q+2r}_{\et}(X,\calF(r)),
 \end{equation}
 where the second map is the canonical morphism from cohomology supported in $Y$ to the usual cohomology group. 
  If $i_{Z}:Z\hra X$ is another closed immersion of smooth varieties  such that $Y$ intersects with $Z$ transversally, then one has  $i_Z^*\cl_X(Y)=\cl_Z(Y\cap Z)$. It follows that the following diagram is commutative
  \begin{equation}\label{E:gysin-restriction}
  \xymatrix{H^q_{\et}(Y, i^*\calF)\ar[r]^{\mathrm{Gysin}} \ar[d]^{\mathrm{Restr.}} & H^{q+2r}_{\et}(X, \calF(r))\ar[d]^{\mathrm{Restr.}}\\
H^{q}_{\et}(Y\cap Z, i_{Y\cap Z}^*\calF)\ar[r]^{\mathrm{Gysin}} & H^{q+2r}_{\et}(Z,i^*_Z\calF(r)),}
  \end{equation}
  where the vertical maps are given by natural restrictions, and $i_{Y\cap Z}:Y\cap Z\hra X$ is the natural embedding.

 Let $\pi: X\ra Y$ be an $r$-th  iterated  $\PP^1$-bundle of proper and  smooth $k$-varieties, i.e. $\pi$ admits a factorization 
 \begin{equation}\label{E:factorization-iterated-bundle}
 \pi\colon X_0:=X\xra{\pi_1}X_1\xra{\pi_2} X_2\ra \cdots\xra{\pi_r} X_{r}:=Y,
 \end{equation}
 where each $\pi_i: X_{i-1}\ra X_{i}$ is a $\PP^1$-fibration for $1\leq i\leq r$. 
 Then the trace map 
 \[\Tr_{\pi}\colon  R^{2r}\pi_{*}(\overline\Q_{\ell}(r))\xra{\cong} \overline \Q_{\ell}\]
 is an isomorphism.
 We denote by $\cl_{\pi}\in  H^0(Y, R^{2r}\pi_*\overline\Q_{\ell}(r))$ with $\Tr_{\pi}(\cl_{\pi})=1$, and call it \emph{fundamental class of the fibration $\pi$}. 
 For any $\overline \Q_{\ell}$-lisse sheaf $\calF$ on $Y$ and any integer $q\geq 0$,  the isomorphism $\Tr_{\pi}$ induces a map 
 \[
 \pi_{!}\colon H^{q}_{\et}(X, \pi^*\calF(r))\ra H^{q-2r}_\et(Y, \calF\otimes R^{2r}\pi_{*}(\overline \Q_{\ell}(r)))\xra{\Tr_{\pi}} H^{q-2r}_{\et}(Y, \calF),
 \]
where  the first morphism comes from the Leray spectral sequence $E_2^{a,b}=H^a_{\et}(Y, R^{b}\pi_* \pi^*\calF(r))\Rightarrow H^{a+b}_\et(X, \pi^*\calF(r)).$
Explicitly, $\pi_!$ admits the following description. 
 Put  $\pi_{[0,i]}:=\pi_{i}\circ\pi_{i-1}\circ\cdots\circ\pi_1$ for $1\leq i\leq r$.
  Let $\cO_{\pi_i}(1)$ be the tautological quotient line bundle of the $\PP^1$-bundle $\pi_i$, and $c_1(\cO_{\pi_i}(1))\in H^2_\et(X_{i-1},\overline \Q_{\ell}(1))$ be its first Chern class. 
  Put $\xi_{i}=\pi_{[0,i-1]}^*c_1(\cO_{\pi_i}(1))\in H^2_\et(X, \overline \Q_{\ell}(1))$. 
  By induction on $r$,  one  deduces easily from \cite[VII, Corollaire 2.2.6]{SGA5}  a decomposition 
  \[
  H^q_{\et}(X,\pi^*\calF(r))\cong \bigoplus_{0\leq j\leq r}\bigg(\bigoplus_{1\leq i_{1}<\cdots< i_j\leq r}\pi^*H^{q-2j}_{\et}(Y, \calF(r-j))\cup\xi_{i_1}\cup\cdots \cup\xi_{i_j}\bigg).
  \]
 Then for an element $x=\sum_{j}\sum_{1\leq i_1<\cdots <i_{j}\leq r } \pi^*(y_{i_1,\dots,i_j})\cup\xi_{i_1}\cup\cdots \cup\xi_{i_j}$,  one has 
 \begin{equation}\label{E:simple-formula}
 \pi_{!}(x)=y_{1,\dots, r}.
 \end{equation}
 In particular, the fundamental class $\cl_{\pi}$ is  the image of $\xi_{1}\cup\cdots \cup\xi_{r}$ in $H^0_\et(X, R^{2r}\pi_*\overline\Q_{\ell}(r))$.
  
\subsection{Gysin and restriction maps}
\label{S:Gysin and restriction maps}
We keep the notation of Section~\ref{S:GO cycles}.
The pair of morphisms $(\pi_\gotha, \pi_\gotha^\sharp)$ induces  the following sequence of natural homomorphisms, whose composition we denote by $\mathrm{Gys}_\gotha$,
\begin{eqnarray*}
H_\et^{d-2r} \big(\Sh_{K_p}(G_{\ttS_\gotha, \ttT_\gotha})_{\overline \FF_p}, \calL_{\ttS_\gotha, \ttT_\gotha}^{(\underline k, w)} \big) &
\xrightarrow{\quad \pi_{\gotha}^*,\, \cong \quad}
& H_\et^{d-2r}\big(\Sh_{K_p}(G_{\ttS, \ttT})_{\gotha,\overline \FF_p}, \pi_\gotha^*(\calL_{\ttS_\gotha, \ttT_\gotha}^{(\underline k, w)}) \big)
\\&
\xrightarrow{\quad \pi_\gotha^\sharp,\, \cong\ \quad }
&
 H_\et^{d-2r}\big(\Sh_{K_p}(G_{\ttS, \ttT})_{\gotha,\overline \FF_p}, \calL_{\ttS, \ttT}^{(\underline k, w)}|_{\Sh_{K_p}(G_{\ttS, \ttT})_\gotha}\big)
 \\&
\xrightarrow{\mathrm{Gysin},\,\eqref{E:Gysin-general}}
& H_\et^d\big(\Sh_{K_p}(G_{\ttS, \ttT})_{\overline \FF_p}, \calL_{\ttS, \ttT}^{(\underline k, w)}(r)\big).
\end{eqnarray*}

We can also consider the dual picture,
defining the morphism $\mathrm{Res}_\gotha$ to be the composition of the following homomorphisms:
\begin{eqnarray*}
\Res_\gotha: H_\et^d\big(\Sh_{K_p}(G_{\ttS, \ttT})_{\overline \FF_p}, \calL_{\ttS, \ttT}^{(\underline k, w)}(r)\big)
&
\xrightarrow{ \mathrm{Restriction}}
&
H_\et^d\big(\Sh_{K_p}(G_{\ttS, \ttT})_{\gotha,\overline \FF_p}, \calL_{\ttS, \ttT}^{(\underline k, w)}|_{\Sh_{K_p}(G_{\ttS, \ttT})_\gotha}(r)\big)
\\
&
\xrightarrow{ (\pi_\gotha^\sharp)^{-1}, \, \cong}&
H_\et^d\big(\Sh_{K_p}(G_{\ttS, \ttT})_{\gotha,\overline \FF_p}, \pi_\gotha^*\calL_{\ttS_\gotha, \ttT_\gotha}^{(\underline k, w)}(r)\big)
\\
&
\xrightarrow{ \quad \pi_{\gotha,!}\quad }
&
H_\et^{d-2r}\big(\Sh_{K_p}(G_{\ttS_\gotha, \ttT_\gotha})_{\overline \FF_p}, \calL_{\ttS_\gotha, \ttT_\gotha}^{(\underline k, w)}\big).
\end{eqnarray*}
It is clear from the construction that both morphisms $\Gys_\gotha$ and $\Res_\gotha$ are equivariant for the action of the prime-to-$p$ Hecke algebra $\calH_{K^p}=\overline \QQ_{\ell}[K^p\backslash G(\AAA^{\infty, p})/K^p]$.

The following theorem is the key to proving our main result.  We defer its proof to the next section.

\begin{theorem}
\label{T:intersection combinatorics}
Fix $\pi \in \scrA_{(\underline k, w)}$, and fix a choice of system of shifts $\boldsymbol t_{\gotha}$ of the correspondences $\Sh_{K_p}(G_{\ttS_\gotha, \ttT_\gotha}) \xleftarrow{\pi_\gotha}\Sh_{K_p}(G_{\ttS, \ttT})_\gotha \hookrightarrow \Sh_{K_p}(G_{\ttS, \ttT})$ as in Subsection~\ref{S:GO cycles}.
For $\gotha, \gothb \in \gothB_\ttS^r$, we have the following description of the composition
\begin{align*}
H_\et^{d-2r} \big(\Sh_{K_p}(G_{\ttS_\gothb, \ttT_\gothb})_{\overline \FF_p}, \calL_{\ttS_\gothb, \ttT_\gothb}^{(\underline k, w)}\big)&
\xrightarrow{\Gys_\gothb}
H_\et^d\big(\Sh_{K_p}(G_{\ttS, \ttT})_{\overline \FF_p}, \calL_{\ttS, \ttT}^{(\underline k, w)}(r)\big)
\\ &
\xrightarrow{\Res_\gotha}
H_\et^{d-2r} \big(\Sh_{K_p}(G_{\ttS_\gotha, \ttT_\gotha})_{\overline \FF_p}, \calL_{\ttS_\gotha, \ttT_\gotha}^{(\underline k, w)} \big).
\end{align*}
\begin{itemize}
\item[(1)]
When $\langle \gotha |\gothb \rangle =0$, the $\pi$-isotypical component of the composed map $\Res_\gotha \circ \Gys_\gothb$ factors through the $\pi$-isotypical component of the cohomology group 
$H^{d-{2(r+1)}}_\et (\Sh_{K_p}(G_{\ttS',\ttT'})_{\overline \FF_p}, \calL^{(\underline k, w)}_{\ttS',\ttT'})(-1)$
of some quaternionic Shimura variety of dimension $d-2(r+1)$ with $\#\ttT' = \#\ttT + (r+1)$ and $\ttS'$ having the same set of finite places as $\ttS$. 
\item[(2)]
When $r<\frac d2$ and $\langle \gotha |\gothb \rangle=(-2)^{m_0(\gotha, \gothb)} v^{m_v(\gotha,\gothb)}$, we can define the induced link $\eta_{\ttS_ \gotha, \ttS_ \gothb}: \ttS_\gotha \to \ttS_\gothb$ as in Subsection~\ref{S:gram matrix}.
Then there exists a normalized link morphism in the sense of Subsection~\ref{S:link morphism II}
\[
\eta_{\ttS_ \gotha, \ttS_ \gothb,(z)}^\star \colon H^{d-2r}_\et\big(\Sh_{K_p}(G_{\ttS_{\gothb},\ttT_{\gothb}})_{\overline \FF_p} \calL_{\ttS_\gothb, \ttT_\gothb}^{(\underline k, w)} \big)\ra H^{d-2r}_\et\big(\Sh_{K_p}(G_{\ttS_{\gotha},\ttT_{\gotha}})_{\overline \FF_p} \calL_{\ttS_\gotha, \ttT_\gotha}^{(\underline k, w)} \big),
\]
associated to $\eta_{\ttS_{\gotha},\ttS_{\gothb}}$ with shift $\boldsymbol t_{\gotha}\boldsymbol t_{\gothb}^{-1}$ and indentation degree 
\[z=\begin{cases}\ell(\gotha)-\ell(\gothb) & \text{if $\gothp$ splits in $E/F$},\\
0 & \text{if $\gothp$ is inert in $E/F$.}\end{cases}\] 
Moreover, we have an equality 
\[
\Res_\gotha \circ \Gys_\gothb=(-2)^{m_0(\gotha, \gothb)}\cdot p ^{(\ell(\gotha) + \ell(\gothb))/2} \eta_{\ttS_ \gotha, \ttS_ \gothb, (z)}^\star.
\]

\item[(3)]
When $r=\frac d2$ and $\langle \gotha |\gothb \rangle=(-2)^{m_0(\gotha, \gothb)}  T^{m_T(\gotha,\gothb)}$, we have   
\[
\Res_\gotha \circ \Gys_\gothb=(-2)^{m_0(\gotha,\gothb)}\cdot p^{(\ell(\gotha)+ \ell(\gothb))/2} (T_\gothp/p^{g/2})^{m_T(\gotha,\gothb)} \circ \eta_{\ttS_\gotha, \ttS_\gothb,(z)}^\star,
\]
where $\eta_{\ttS_\gotha,\ttS_\gothb}$ is the trivial link from $\ttS_\gotha$ to $\ttS_\gothb$ and \[
\eta_{\ttS_\gotha,\ttS_\gothb,(z)}^\star:H^{d-2r}_\et(\Sh_{K_p}(G_{\ttS_\gothb,\ttT_\gothb})_{\overline \FF_p}, \calL_{\ttS, \ttT}^{(\underline k, w)}) \longto H^{d-2r}_\et(\Sh_{K_p}(G_{\ttS_\gotha,\ttT_\gotha})_{\overline \FF_p}, \calL_{\ttS, \ttT}^{(\underline k, w)})\] is the associated normalized link morphism with shift $\boldsymbol t_\gotha \boldsymbol t_\gothb^{-1} \varpi_{\bar \gothq}^{-m_T(\gotha,\gothb)}$ and indentation degree  $z = \ell(\gotha) - \ell(\gothb)-m_T(\gotha,\gothb) g$.
\end{itemize}
\end{theorem}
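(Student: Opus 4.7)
The plan is to prove Theorem~\ref{T:intersection combinatorics} by induction on the number of arcs $r$, peeling off one basic arc of $\gothb$ at each step. In the base case $r=0$, both meanders consist only of semi-lines, $\ttS_\gotha = \ttS_\gothb = \ttS$, the morphisms $\pi_\gotha$ and $\pi_\gothb$ are identities, and $\Res_\gotha \circ \Gys_\gothb$ is the identity, matching $\langle \gotha | \gothb \rangle = 1$.

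For $r \geq 1$, I would fix a basic arc $\delta$ of $\gothb$ with right end-node $\tau$ and left end-node $\tau^-$, which exists by Notation~\ref{N:basic arc}. The inductive definition of Goren--Oort cycles in Subsection~\ref{S:GO cycles} places $\Sh_{K_p}(G_{\ttS,\ttT})_\gothb$ inside the divisor $\Sh_{K_p}(G_{\ttS,\ttT})_\tau$, and $\pi_\gothb$ factors through $\pi_\tau$ followed by $\pi_{\gothb \setminus \delta}$; accordingly $\Gys_\gothb$ factors as $\Gys_\tau$ composed with the $\pi_\tau$-pullback and $\Gys_{\gothb \setminus \delta}$. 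The key is then to analyze $\Res_\gotha \circ \Gys_\tau$. In \textbf{Case A}, when $\gotha$ also contains an arc linking $\tau$ and $\tau^-$, the cycle $\Sh_{K_p}(G_{\ttS,\ttT})_\gotha$ lies in the same divisor and the composition reduces to cupping with the first Chern class of the normal bundle; by Proposition~\ref{P:GO-fibration}(2) this bundle restricts to $\calO(-2p^{n_\tau})$ on each $\PP^1$-fiber of $\pi_\tau$, and pushing forward via formula~\eqref{E:simple-formula} yields the factor $-2p^{n_\tau}$, accounting for one contractible loop of $D(\gotha,\gothb)$ and arc length $n_\tau$ in the Gram pairing. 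In \textbf{Case B}, when $\gotha$ connects $\tau$ to something other than $\tau^-$, I would invoke Theorem~\ref{T:GO description}(2) and diagram~\eqref{D:link-quaternionic-Sh} to convert the restriction into a link morphism between two adjacent Goren--Oort strata; iterating this conversion walks the endpoint of $\delta$ through $D(\gotha,\gothb)$, accumulating a total displacement equal to the length of the resulting composite arc and the appropriate power of $p$.

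When the peeling process forces two semi-lines of $\gotha$ (or of $\gothb$) to be joined, equivalently $\langle \gotha | \gothb\rangle = 0$, the intersection $\Sh_{K_p}(G_{\ttS,\ttT})_\gotha \cap \Sh_{K_p}(G_{\ttS,\ttT})_\gothb$ lands inside a deeper Goren--Oort stratum which, by Proposition~\ref{P:GO-fibration}(1), is an iterated $\PP^1$-bundle over a quaternionic Shimura variety with $\#\ttT$ enlarged by at least one; this is statement (1). For the top-degree case $r = d/2$ in statement (3), after peeling all arcs we reach a zero-dimensional Shimura variety, and each non-contractible loop in $D(\gotha,\gothb)$ corresponds to a link morphism which wraps around the cylinder once. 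Theorem~\ref{T:GO description}(3), via the identification of $\Sh_{K_p}(G_{\ttS,\ttT})_{\{\tau,\tau^-\}}$ with the special fiber of the Iwahori-level Shimura variety $\Sh_{\Iw_p}(G_{\ttS_\tau,\ttT_\tau})$, matches such a circular composition with the Hecke operator $T_\gothp$, producing the factor $(T_\gothp/p^{g/2})^{m_T(\gotha,\gothb)}$ once the normalization $p^{v(\eta)/2}$ in $\eta^\star$ is accounted for.

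The main obstacle I expect is the careful bookkeeping of shifts $\boldsymbol t \in E^{\times,\cl}\backslash \AAA_E^{\infty,\times}/\calO_{E_\gothp}^\times$ and indentation degrees through the iterated peeling. Each application of Case~A or Case~B invokes Construction~\ref{S:from unitary to quaternionic}, which determines the shift uniquely up to $F^{\times,\cl}\backslash\AAA_F^{\infty,\times}/\calO_\gothp^\times$; by Remark~\ref{R:composition of shifts} these shifts compose multiplicatively, and verifying that the accumulated shift is exactly $\boldsymbol t_\gotha \boldsymbol t_\gothb^{-1}$ (with the appropriate $\varpi_{\bar\gothq}^{-m_T(\gotha,\gothb)}$ correction in statement (3) absorbed into the $T_\gothp$ contributions) is a delicate combinatorial check. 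A secondary obstacle is proving path-independence: different orders of peeling basic arcs must yield the same result, which I expect follows from the commutative Cartesian squares of Theorem~\ref{T:GO description}(1) together with the uniqueness of link morphisms (Lemma~\ref{L:uniqueness-link}). Finally, confirming that $p^{(\ell(\gotha)+\ell(\gothb))/2}$ emerges precisely requires verifying, inductively, the identity $\ell(\gotha)+\ell(\gothb) = v(\eta_{\ttS_\gotha,\ttS_\gothb}) + 2\sum_{\text{loops}}(\text{half-perimeter})$ that relates the arc lengths to the displacement of the induced link and the loop contributions.
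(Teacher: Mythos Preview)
Your inductive strategy—peel a basic arc $\delta$ of $\gothb$ with end-nodes $\tau,\tau^-$, then analyze how $\Res_\gotha$ interacts with the divisor $\Sh(G_{\ttS,\ttT})_\tau$—is exactly the paper's approach, and your Case~A, the vanishing case, and the $T_\gothp$ case match the paper's cases~(ii), (i), and~(iii) respectively.

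The gap is in your Case~B. You propose to ``iterate'' Theorem~\ref{T:GO description}(2) to walk along the arc of $\gotha$ touching $\tau$, but that theorem only moves between \emph{adjacent} nodes $\tau,\tau^+,\tau^-$ on the band for $\ttS$. In general the arc $\delta_\gotha$ of $\gotha$ touching $\tau$ is \emph{not basic}: it lies over other arcs of $\gotha$, so its other end-node is not adjacent to $\tau$ in the band, and the diagram~\eqref{D:link-quaternionic-Sh} does not apply directly. The paper's solution is a single reduction rather than an iteration: it introduces an auxiliary meander $\gotha^\flat$ collecting all arcs of $\gotha$ disjoint from semi-lines at $\tau,\tau^-$, uses the decomposition of semi-meanders into saturated subsets (their \S\ref{SS:decomposition of semi-meanders}--\ref{SS:decomposition of semi-meanders contd}) to factor out $\gotha^\flat$ via iterated $\PP^1$-bundle pushforwards, and thereby reduces to a situation on $\Sh(G_{\ttS_{\gotha^\flat},\ttT_{\gotha^\flat}})$ where the remaining arc $\delta_{\gotha/\gotha^\flat}$ \emph{is} basic, so the $r=1$ base case applies once. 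The output is an identity $\Res_\gotha\circ\Gys_\gothb = p^{(x+y)/2}\eta^\star_{\gotha,\gotha^\star}\circ\Res_{\gotha^\circ}\circ\Gys_{\gothb\setminus\delta}$, where $\gotha^\circ$ is obtained by adjoining $\delta_\gothb$ to $\gotha$ from \emph{underneath} the band and tightening. Making this reduction commute requires a non-obvious compatibility (their Lemma~\ref{L:restriction gysin compatibility}) between the link morphism $\eta^\star_{\gotha^\flat_{\tau'},\gotha^\flat_\tau}$ and the $\PP^1$-bundle pushforward $\pi_{\gotha^\flat_\res,!}$, split into six subcases according to whether the non-trivial curve of the link lands on a left or right end-node of an arc in $\gotha^\flat_\res$; the point is that the link can rescale one of the fiber classes $\xi_i$ by $p^y$, and one must check this rescaling matches the discrepancy between $\ell(\gotha^\flat_\res)$ and $\ell(\gotha^\circ_\res)$. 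Your walking picture does not see this interaction between the link and the residual $\PP^1$-fibers, and without the $\gotha^\flat$-factorization there is no evident way to organize the induction.
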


We now assume Theorem~\ref{T:intersection combinatorics} and deduce the main theorem of this paper.

\begin{theorem}
\label{T:Tate}
Fix a positive integer $r \leq \frac d2$, and keep the notation of Theorem~\ref{T:intersection combinatorics}.
\begin{itemize}
\item[(1)] For each periodic semi-meander $\gotha \in \gothB_\ttS^r$, the Goren--Oort cycle $\Sh_{K_p}(G_{\ttS, \ttT})_{\gotha}$ of the Shimura variety $\Sh_{K_p}(G_{\ttS, \ttT})$ is a subvariety of codimension $r$, stable under the action of the tame Hecke action of $G(\AAA^{\infty,p})$.
Moreover,  it admits a natural proper smooth morphism 
$$\pi_\gotha: \Sh_{K_p}(G_{\ttS, \ttT})_\gotha \to \Sh_{K_p}(G_{\ttS_\gotha, \ttT_\gotha})
$$ to another quaternionic Shimura variety (in  characteristic $p$), such that the fibers of $\pi_\gotha$ are $r$-times iterated $\PP^1$-bundles.
The morphism is equivariant for the tame Hecke action.
\item[(2)]
We fix  a cuspidal automorphic representation $\pi \in \scrA_{(\underline k, w)}$ so that its associated Galois representation $\rho_\pi$ is unramified at $p$.
Let $\alpha_\pi$ and $\beta_\pi$ denote the (generalized) eigenvalues of $\rho_{\pi, \gothp}(\Frob_{p^g})$.
Suppose that $\alpha_\pi / \beta_\pi$ is not a $2n$-th root of unity for any $n \leq d$ so that $\alpha^{2i}_\pi \beta^{2(d-i)}_\pi$ are distinct from each other for $1\leq i\leq d$.
Then the action of $\Frob_{p^{2g}}$ on the generalized eigenspace of $H^{d}_\et\big(\Sh_{K_p}(G_{\ttS, \ttT})_{\overline \FF_p}, \calL_{\ttS, \ttT}^{(\underline k, w)}(r)\big)[\pi]$ with eigenvalue $\alpha^{2(d-r)}_\pi \beta^{2r}_\pi (\alpha_\pi \beta_\pi / p^g)^{2\#\ttT}p^{-2gr}$ is semisimple (so that the generalized eigenspace is a genuine eigenspace), and  the direct sum of the Gysin morphisms
\begin{equation}
\label{E:sum of Gysin map}
\bigoplus_{\gotha \in \gothB_\ttS^r} H^{d-2r}_\et(\Sh_{K_p}\big(G_{\ttS, \ttT})_{\gotha, \overline \FF_p}, \calL^{(\underline k, w)}_{\ttS, \ttT}\big)[\pi] \xra{\sum_{\gotha}\Gys_{\gotha}} H^{d}_\et\big(\Sh_{K_p}(G_{\ttS, \ttT})_{\overline \FF_p}, \calL_{\ttS, \ttT}^{(\underline k, w)}(r)\big)[\pi]
\end{equation}
induces an \emph{isomorphism} on  the $\Frob_{p^{2g}}$-eigenspaces  with  eigenvalue $\alpha^{2(d-r)}_\pi \beta^{2r}_\pi (\alpha_\pi \beta_\pi / p^g)^{2\#\ttT}p^{-2gr}$. 
\item[(2')]
Keep the notation in (2)  but assume that $r = \frac d2$ (so $d$ is even) and $(\underline k, w) = \underline 2$.  Suppose that $\alpha_\pi / \beta_\pi$ is not a $2n$-th root of unity for $n \leq \frac d2$.
Then the $\Frob_{p^{2g}}$-invariant subspace of $H^d_\et (\Sh_{K_p}(G_{\ttS, \ttT})_{\overline \FF_p}, \overline \QQ_\ell(\frac d2))[\pi]$ is generated by the cycle classes of  $\Sh_{K_p}(G_{\ttS, \ttT})_\gotha$ for $\gotha \in \gothB_\ttS^{d/2}$.
\end{itemize}
\end{theorem}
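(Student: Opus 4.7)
The plan is to deduce Theorem~\ref{T:Tate} from the combinatorial identity of Theorem~\ref{T:intersection combinatorics} together with the Gram determinant formula of Theorem~\ref{T:determinant}. Part (1) is essentially a record of the construction in Subsection~\ref{S:GO cycles}: the codimension and iterated $\PP^1$-bundle structure follow by induction on the number of arcs using Proposition~\ref{P:GO-fibration}(1), and Hecke equivariance is automatic from Construction~\ref{S:from unitary to quaternionic}.

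For part (2), I first perform a dimension count on both sides of \eqref{E:sum of Gysin map}. By Proposition~\ref{P:cohomology of sh}, the $\pi$-isotypical component of $H^{d-2r}_\et(\Sh_{K_p}(G_{\ttS_\gotha,\ttT_\gotha}))$ has $\Frob_{p^{2g}}$-eigenvalues $p^{-2g\#\ttT_\gotha}\alpha^{2(i+\#\ttT_\gotha)}\beta^{2(d-2r-i+\#\ttT_\gotha)}$ with multiplicity $\binom{d-2r}{i}$; using $\#\ttT_\gotha = \#\ttT+r$, a direct computation shows that only $i = d-2r$ yields the target eigenvalue $\alpha^{2(d-r)}\beta^{2r}(\alpha\beta/p^g)^{2\#\ttT}p^{-2gr}$, contributing a canonical one-dimensional subspace in each source. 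Since $\#\gothB_\ttS^r = \binom{d}{r}$ and the target eigenspace has multiplicity $\binom{d}{r}$, both sides have the same dimension $\binom{d}{r}$.

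It then suffices to show that the $\binom{d}{r}\times\binom{d}{r}$ matrix $M = (\Res_\gotha \circ \Gys_\gothb)_{\gotha,\gothb \in \gothB_\ttS^r}$ acts invertibly on the prescribed eigenspace. By Theorem~\ref{T:intersection combinatorics}, each entry is either zero (when $\langle\gotha|\gothb\rangle_\ttS = 0$) or equal to $(-2)^{m_0(\gotha,\gothb)} p^{(\ell(\gotha)+\ell(\gothb))/2}\eta_{\ttS_\gotha,\ttS_\gothb,(z)}^\star$. The factor $p^{(\ell(\gotha)+\ell(\gothb))/2}$ can be absorbed into the diagonal conjugation by $(p^{\ell(\gotha)/2})$ and ignored. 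On the canonical one-dimensional eigenspace in each source, the normalized link morphism $\eta^\star$ acts by a $\pi$-equivariantly determined scalar that depends only on the total displacement $v(\eta_{\ttS_\gotha,\ttS_\gothb}) = m_v(\gotha,\gothb)$ and the Satake ratio $\alpha/\beta$; this matches the $v^{m_v}$ factor in the Gram pairing precisely because the normalization \eqref{E:normalized-link} was tuned against the formula for $(\eta_\ttS^{2d})^\star_{(0)}$ in Proposition~\ref{P:self composition of eta}(3). After this identification, $M$ coincides (up to invertible conjugation) with a specialization of the Gram matrix $\gothG_\ttS^r$ under $v^g \leftrightarrow \alpha_\pi/\beta_\pi$ (and, when $r = d/2$, $T \leftrightarrow T_\gothp/p^{g/2}$, acting by $(\alpha_\pi+\beta_\pi)/p^{g/2}$). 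By Theorem~\ref{T:determinant}, $\det\gothG_\ttS^r$ is a nonzero scalar times $(v^g - v^{-g})^{2t_{d,r}}$ (or $(T^2 - 4)^{t_{d,d/2}}$ when $r = d/2$), which under the specialization becomes a polynomial in $\alpha/\beta$ nonvanishing precisely when $\alpha/\beta$ is not a $2n$-th root of unity for any $n \leq d$. This yields invertibility of $M$ and hence the isomorphism in (2); semisimplicity of $\Frob_{p^{2g}}$ on the target eigenspace is then immediate since the source is already a direct sum of honest one-dimensional eigenspaces.

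Part (2') is the specialization $r = d/2$, $(\underline k, w) = \underline 2$. Trivial central character forces $\alpha\beta = p^g$, whence $T^2-4 = (\alpha-\beta)^2/p^g$, and the relevant link morphisms all have displacement bounded by $dg/2$, which accounts for the weakened hypothesis; the argument then proceeds exactly as in (2) via Theorem~\ref{T:intersection combinatorics}(3) and the $(T^2-4)^{t_{d,d/2}}$ case of Theorem~\ref{T:determinant}. The main obstacle is of course Theorem~\ref{T:intersection combinatorics} itself, which is the technical heart of the paper and requires a delicate analysis of the Gysin-restriction composition through the $\PP^1$-fibrations of Proposition~\ref{P:GO-fibration}, the normal-bundle computation in Proposition~\ref{P:GO-fibration}(2), and a careful combinatorial matching of intersection patterns with the reduction of the diagram $D(\gotha,\gothb)$.
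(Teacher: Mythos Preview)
Your overall strategy matches the paper's: deduce (2) from Theorem~\ref{T:intersection combinatorics} and the Gram determinant Theorem~\ref{T:determinant} by showing that the composition $(\Res_\gotha \circ \Gys_\gothb)$ gives an invertible matrix on the correct eigenspace. Part (1) and the dimension count are fine.

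However, there is a genuine gap at the heart of your argument. You assert that after diagonal conjugation the matrix $M$ ``coincides (up to invertible conjugation) with a specialization of the Gram matrix $\gothG_\ttS^r$ under $v^g \leftrightarrow \alpha_\pi/\beta_\pi$.'' This would require identifying each entry $\eta_{\ttS_\gotha,\ttS_\gothb,(z)}^\star$ with a scalar depending only on $m_v(\gotha,\gothb)$. But $\eta_{\ttS_\gotha,\ttS_\gothb,(z)}^\star$ is a map between \emph{different} one-dimensional spaces $H^{d-2r}(\Sh(G_{\ttS_\gothb,\ttT_\gothb}))[\pi]^{\Frob=\lambda} \to H^{d-2r}(\Sh(G_{\ttS_\gotha,\ttT_\gotha}))[\pi]^{\Frob=\lambda}$, so it does not act by any intrinsic scalar: one must choose bases in each summand, and there is no canonical choice that makes these scalars compatible. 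Worse, the link morphism depends on its \emph{shift} $\boldsymbol t_\gotha \boldsymbol t_\gothb^{-1}$, which in turn depends on the non-canonical choices made in constructing the Goren--Oort cycles. The paper addresses exactly this point: it explicitly notes that one \emph{cannot} identify $B$ with $\gothG_\ttS^r$ entry by entry, and instead expands $\det B$ as a sum over permutations. Around each cycle $(\gotha_1\cdots\gotha_t)$ the composed link morphism is an \emph{endomorphism} of a single space, hence genuinely a scalar; and crucially the shifts $\boldsymbol t_{\gotha_i}\boldsymbol t_{\gotha_{i+1}}^{-1}$ telescope to $\boldsymbol 1$ and the indentation degrees sum to $0$, so this self-link morphism is the \emph{canonical} one of Proposition~\ref{P:self composition of eta}. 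One then shows the resulting scalar $\lambda_n$ is independent of the basepoint $\gotha_1$ via a conjugation argument, allowing the introduction of the formal symbol $\eta_\univ^\star$ that plays the role of $v$ only at the level of the determinant.

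There are two secondary issues in your treatment of (2'). First, you invoke ``trivial central character forces $\alpha\beta = p^g$,'' but the theorem makes no such assumption (that simplification appears only in the introduction). Second, your remark about ``displacement bounded by $dg/2$'' is not the mechanism behind the weaker hypothesis; rather, when $r=d/2$ the Shimura varieties $\Sh(G_{\ttS_\gotha,\ttT_\gotha})$ are zero-dimensional, so the determinant involves only the $T_\gothp$-operator and $S_\gothp$ (via the shift computation identifying the cycle-composition with $S_\gothp^{-m_T/2}$), not the eigenvalue $(\alpha/\beta)^{d-2r}$ which would be trivial. The paper's careful shift bookkeeping, especially the appearance of $\varpi_{\bar\gothq}^{-m_T}$ in Theorem~\ref{T:intersection combinatorics}(3), is what makes this work.
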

\begin{proof}
Statement (1)  follows from the construction of Goren--Oort cycles in Subsection~\ref{S:GO cycles}.
(2') is clearly a special case of (2).
We now focus on the proof of (2).  
By Proposition~\ref{P:cohomology of sh}, the Frobenius semi-simplification of the morphism \eqref{E:sum of Gysin map} is the same as 
\begin{equation}
\label{E:pi component morphism}
\bigoplus_{\gotha \in \gothB_\ttS^r} \rho_{\pi, \gothp}^{\otimes (d - 2r)} \otimes(\det\rho_{\pi, \gothp}(1))^{\otimes(\#\ttT +r)}\longrightarrow\rho_{\pi, \gothp}^{\otimes d} \otimes(\det\rho_{\pi, \gothp}(1))^{\otimes\#\ttT}(r).
\end{equation}
Thus the  generalized eigenspace for the action of $\Frob_{p^{2g}}$ with eigenvalue 
\begin{equation}
\label{E:correct eigenvalue}
\alpha_\pi^{2(d-2r)} (\alpha_\pi \beta_\pi / p^g)^{2(\#\ttT +r)} =\alpha_\pi^{2(d-r)}\beta_\pi^{2r} (\alpha_\pi \beta_\pi / p^g)^{2\#\ttT}p^{-2gr} 
\end{equation} 
has  dimension exactly equal to $\binom dr$ for both sides of \eqref{E:sum of Gysin map}. Thanks to the assumption on the ratio of Satake parameters,   the generalized eigenspace on left hand side  is a genuine eigenspace (since it is the direct sum of $\binom{d}{r}$-copies of one-dimensional generalized eigenspace).  
Thus, the proof of (2) and (2') will be finished if we show that \eqref{E:sum of Gysin map} is injective on the corresponding generalized eigenspace. 

We consider the composition of the Gysin morphisms \eqref{E:sum of Gysin map} with the Restriction morphisms:
\begin{align}
\label{E:composition of Gysin and Restriction}
\bigoplus_{\gothb \in \gothB_\ttS^r} H^{d-2r}_\et(\Sh_{K_p}(G_{\ttS_\gothb, \ttT_\gothb})_{\overline \FF_p},& \calL^{(\underline k, w)}_{\ttS_\gothb, \ttT_\gothb})[\pi] \xrightarrow{\sum \Gys_\gothb}
H^d_\et(\Sh_{K_p}(G_{\ttS,\ttT})_{\overline \FF_p}, \calL^{(\underline k, w)}_{\ttS, \ttT})(r)[\pi] \\
\nonumber&
\xrightarrow{\oplus \Res_\gotha}
\bigoplus_{\gotha \in \gothB_\ttS^r} H^{d-2r}_\et(\Sh_{K_p}(G_{\ttS_\gotha, \ttT_\gotha})_{\overline \FF_p}, \calL^{(\underline k ,w)}_{\ttS_\gotha, \ttT_\gotha})[\pi].
\end{align}
Here, we switched the first sum from over $\gotha$ (as in \eqref{E:sum of Gysin map}) to over $\gothb$.
Taking a basis of  the generalized eigenspace for $\Frob_{p^{2g}}$ acting on \eqref{E:composition of Gysin and Restriction}  with the eigenvalue \eqref{E:correct eigenvalue} and using the description Proposition~\ref{P:cohomology of sh}, we arrive at the following linear map
\begin{equation}
\label{E:Res Gysin simplified}
\bigoplus_{\gothb \in\gothB_\ttS^r} \overline \QQ_\ell \to \bigoplus_{\gotha \in\gothB_\ttS^r} \overline \QQ_\ell
\end{equation}
of vector spaces, which  is represented by a $\binom dr \times \binom dr$-matrix $A$ with coefficients in $\overline \QQ_\ell $. The proof of (2) will be finished if we can show that $\det(A)$ is nonzero.

We explain how this matrix $A$ is related to the Gram matrix $\gothG^r_{\ttS}$ for the periodic semi-meanders (See Theorem~\ref{T:determinant}).
Let $\Lambda$ be the diagonal matrix, whose  $(\gotha, \gotha)$-entry with $\gotha\in \gothB_{\ttS}^r$ is  $p^{-\ell(\gotha)/2}$.  Let   $B$ be the product matrix $\Lambda A\Lambda$. Then dropping  the auxiliary factor $p^{(\ell(\gotha)+\ell(\gothb))/2}$ from formulae in Theorem~\ref{T:intersection combinatorics} gives the entries of $B$. 
We will prove that 
\[
\det B = \begin{cases}
\det \gothG_\ttS^{d/2}|_{T^2 = T_p^\mathrm{n}}, & \textrm{if }r = d/2\\
\det \gothG_\ttS^r|_{v^g = \eta_\univ^\star}, & \textrm{if }r < d/2,
\end{cases}
\]
where $|_{T^2 = T_p^\rmn}$ and $|_{v^g = \eta_\univ^\star}$ are formal substitutions, and $T_p^\rmn$ and $\eta_\univ^\star$ are some formal symbols we define later.

We first compare the entries of $B$ with the entries of $\gothG_\ttS^r$ when $\langle \gotha |\gothb\rangle =0$. In this case, by Theorem~\ref{T:intersection combinatorics}(1), the $\pi$-isotypical component of $\Res_\gotha \circ \Gys_\gothb$ factors through 
\[
H^{d-2(r+1)}_{\et}(\Sh_{K_p}(G_{\ttS',\ttT'})_{\Fpb}, \calL^{(\kb,w)}_{\ttS',\ttT'})(-1)[\pi]
\]
for some quaternionic Shimura variety $\Sh_{K_p}(G_{\ttS',\ttT'})$ of dimension $d-2(r+1)$.
Thanks to the assumption on the ratio of Satake parameters, we see that $\alpha_\pi^{2i} \beta_\pi^{2(d-i)}$ are distinct. The cohomology group above does not contain any generalized $\Frob_{p^{2g}}$-eigenspaces with eigenvalue \eqref{E:correct eigenvalue}.
  Thus the $(\gotha, \gothb)$-entry of $B$ is zero.

We separate the discussion for $r <\frac d2$ and $r = \frac d2$.
First, suppose that $r< \frac d2$.
A subtle point of our argument is that we can not directly identify the matrix $B$ with $\gothS_\ttS^r$ entry by entry,  because there is no canonical choice of basis on each of the factor of \eqref{E:Res Gysin simplified}.
 The proof resembles to that of Theorem~\ref{T:determinant}.
The determinant of $B$ is equal to the sum over all permutations $s$ of the set $\gothB_\ttS^r$, of the product of the signature of $s$, and, for every cycle $(\gotha_1\cdots \gotha_t)$ of the permutation $s$, the product 
\begin{equation}
\label{E:product of gysin and restriction}
p^{-(\ell(\gotha_1) + \cdots + \ell(\gotha_t))} \cdot
(\Res_{\gotha_1} \circ \Gys_{\gotha_t}) \cdot (\Res_{\gotha_t} \circ \Gys_{\gotha_{t-1}}) \cdots 
(\Res_{\gotha_2} \circ \Gys_{\gotha_1}).
\end{equation}
Let $m_0 = m_0(\gotha_1, \gotha_2)+\cdots+ m_0(\gotha_t, \gotha_1)$ be the sum of  total number of contractible loops in the diagrams $D(\gotha_{1},\gotha_2)$, $D(\gotha_2, \gotha_3)$, $\dots$, $D(\gotha_t,\gotha_1)$.
 Then by Theorem~\ref{T:intersection combinatorics}(2), \eqref{E:product of gysin and restriction} is  of the form $(-2)^{m_0}$ times the following composition of link morphisms on the cohomology groups:
\begin{equation}
\label{E:composition of links pullback}
\eta_{\ttS_{ \gotha_1},\ttS_{ \gotha_2}, z( \gotha_1 , \gotha_2)}^\star \circ \eta_{\ttS_{\gotha_2},\ttS_{ \gotha_3}, z(\gotha_2, \gotha_3)}^\star\circ \cdots \circ
\eta_{\ttS_{\gotha_t}, \ttS_{\gotha_1}, z( \gotha_t, \gotha_1)}^\star,
\end{equation}
of shift $\prod_{i=1}^{t-1} (\boldsymbol t_{\gotha_i} \boldsymbol t_{\gotha_{i+1}}^{-1}) \boldsymbol t_{\gotha_{t}} \boldsymbol t_{\gotha_1}^{-1} = \boldsymbol 1$ and indentation degree 
\[
\sum_{i=1}^{t-1} z( \gotha_i,  \gotha_{i+1}) + z( \gotha_t,  \gotha_1) = \begin{cases}
\sum_{i=1}^{t-1} (\ell( \gotha_i)- \ell(\gotha_{i+1})) + \ell( \gotha_n)- \ell(\gotha_1) = 0, & \textrm{ if }\gothp\textrm{ splits in }E/F,
\\
0+\cdots + 0 = 0,& \textrm{ if }\gothp\textrm{ is inert in }E/F.
\end{cases}\]
So this composition \eqref{E:composition of links pullback} is the same link morphism associated to some $n$-th power of the \emph{fundamental link} $\eta_{\ttS_{\gotha_1}}$ for $\ttS_{\gotha_1}$, with trivial shift and indentation degree $0$ (no matter $\gothp$ splits or not in $E/F$).  

Note that the link morphism $(\eta_{\ttS_{\gotha_1}}^n)_{(0)}^\star$ acting on the  one-dimensional $\Frob_{p^{2g}}$-eigenspace  
\begin{equation}\label{E:eigenspace}
\big(H^{d-2r}_{\et}(\Sh_{K_p}(G_{\ttS_{\gotha_1},\ttT_{\gotha_1}}),\calL^{(\underline k, w)}_{\ttS_{\gotha_1},\ttT_{\gotha_1}})[\pi]\big)^{\Frob_{p^{2g}}=\alpha_\pi^{2(d-2r)} (\alpha_\pi \beta_\pi / p^g)^{2(\#\ttT +r)}}
\end{equation} is just the multiplication by  a scalar which we denote by $\lambda_{\gotha_1,n}$. We claim that \emph{$\lambda_{\gotha_1,n}$ does not depend on $\gotha_1\in \gothB_{\ttS}^r$}.
 Indeed, for $\gotha, \gotha' \in \gothB_\ttS^r$ with $\langle \gotha|\gotha'\rangle \neq 0$, Theorem~\ref{T:intersection combinatorics}(2) gives a normalized link morphism 
\[
\eta_{\ttS_\gotha, \ttS_\gotha', (z)}^\star: H_\et^{d-2r}
\big(\Sh_{K_p}(G_{\ttS_{\gotha'},\ttT_{\gotha'}}), \calL_{\ttS_{\gotha'}, \ttT_{\gotha'}}^{(\underline k, w)} \big)\ra \big(\Sh_{K_p}(G_{\ttS_{\gotha},\ttT_{\gotha}}), \calL_{\ttS_{\gotha}, \ttT_{\gotha}}^{(\underline k, w)} \big)
\] 
with some indentation degree $z$ and some shift, then 
\[
(\eta_{\ttS_\gotha}^n)^\star_{(0)} =  \eta_{\ttS_{\gotha},\ttS_{ \gotha'}, (z)}^\star \circ (\eta_{\ttS_{\gotha'}}^n )_{(0)}^\star
\circ ( \eta_{\ttS_{\gotha},\ttS_{ \gotha'},(z)}^\star)^{-1}
\]
provided one of $(\eta_{\ttS_\gotha}^n)^\star_{(0)}$ or $(\eta_{\ttS_{\gotha'}}^n)^\star_{(0)}$ exists.
When this happens, we must have $\lambda_{\gotha, n}=\lambda_{\gotha',n}$.
For general $\gotha$ and $\gotha'$, we can always find $\gotha_1 =\gotha, \dots, \gotha'  = \gotha_t \in \gothB_\ttS^r$ such that $\langle \gotha_i| \gotha_{i+1} \rangle \neq 0$. So if for some $n$ the link morphism  $(\eta_{\ttS_\gotha}^n)^\star$ exists, then it does not depend on $\gotha$. In the sequel, we put $\lambda_n=\lambda_{\gotha,n}$ as long as $(\eta^{n}_{\ttS_\gotha})^{\star}_{(0)}$ exists for some $\gotha\in \gothB_\ttS^r$. The element $\lambda_n$ is clearly multiplicative in $n$.

We can thus introduce the formal symbol $\eta_\univ^\star$ such that $(\eta_\univ^\star)^n = \lambda_{n}$ whenever $(\eta_{\ttS_{\gotha}}^n)^\star_{(0)}$ exists for an integer $n$.
Comparing this computation with $\det \gothG_\ttS^r$ in the proof of Theorem~\ref{T:determinant}, we see that $\det B$ is obtained by  replacing every $v^g$ in $\det \gothG_\ttS^r$ by $\eta_\univ^\star$.
By Theorem~\ref{T:determinant}, this means that
\[
\det B = \pm (\eta_\univ^\star - (\eta_\univ^\star)^{-1})^{2t_{d,r}}.
\]
In particular, $(\eta_{\univ}^\star)^2$ appears in the determinant and hence $(\eta_{\ttS_{\gotha}}^2)^\star_{(0)}$ exists.

Finally, it follows from Proposition~\ref{P:self composition of eta} that
 $(\eta^\star_\univ)^{2(d-2r)}=\lambda_{2(d-2r)}=(\alpha_\pi / \beta_\pi)^{d-2r}$.  
Our assumption implies that $(\alpha_\pi/\beta_\pi)^{d-2r} \neq 1$; so $(\eta_\univ^\star)^2 \neq 1$ and hence $\det B \neq 0$.
This concludes (2).

We now treat the case of $r=\frac d2$. 
Similarly to the discussion above, $\det B$ is equal to the sum over all permutations $s$ of the set $\gothB_\ttS^r$, of the product of the signature of $s$, and, for every cycle $(\gotha_1 \cdots \gotha_t)$ of the permutation $s$, the product \eqref{E:product of gysin and restriction}.
By Theorem~\ref{T:intersection combinatorics}(3), \eqref{E:product of gysin and restriction} in this case is of the form $(-2)^{m_0} \cdot (T_\gothp/p^{g/2})^{m_T}$ times the   link morphism from $(\ttS_{\gotha_1}, \ttT_{\gotha_1})$ to itself with
shift $\prod_{i=1}^{t-1} (\boldsymbol t_{\gotha_i} \boldsymbol t_{\gotha_{i+1}}^{-1}\varpi_{\bar \gothq}^{-m_{T}(\gotha_i, \gotha_{i+1})}) \boldsymbol t_{\gotha_t} \boldsymbol t_{\gotha_1}^{-1}\varpi_{\bar \gothq}^{-m_{T}(\gotha_n, \gotha_1)} =  \varpi_{\bar \gothq}^{-m_{T}}$ and indentation degree
\[
\sum_{i=1}^t \big(\ell(\gotha_i) - \ell(\gotha_{i-1}) - m_{T, i}g \big)= -m_T g.
\]
Here $m_{0} = m_{0}(\gotha_1, \gotha_2) + \cdots+ m_0(\gotha_t, \gotha_1)$ (resp. $m_{T} = m_{T}(\gotha_1, \gotha_2) + \cdots+ m_T(\gotha_t, \gotha_1)$) is the total number of contractible (resp. non-contractible) loops in $D(\gotha_1, \gotha_2), \dots, D(\gotha_t,\gotha_1)$. 
By Example~\ref{Ex:Sq} and the uniqueness of link morphisms (Lemma~\ref{L:uniqueness-link}), this link morphism is equal to the one associated to  $S_\gothq^{-m_T/2}$ with shift $\varpi_{\bar \gothq}^{-m_T}$. This in particular says that $m_T$ is even.
By the second part of Example~\ref{Ex:Sq}, we see that this link morphism is exactly $S_\gothp^{-m_T/2}$.
Therefore, \eqref{E:product of gysin and restriction} is given by
\[
(-2)^{m_0} (T_\gothp/p^{g/2})^{m_T} (S_\gothp)^{-m_T/2}= (-2)^{m_0} \big( (\alpha_\pi + \beta_\pi)^2 / (\alpha_\pi \beta_\pi) \big)^{m_T/2}.
\]
Comparing this with the computation of $\det \gothG_\ttS^r$, we see that $\det B$ is nothing but replacing every $T^2$ by $T_{\gothp}^\rmn \colon = (\alpha_\pi + \beta_\pi)^2 / \alpha_\pi \beta_\pi$.  By Theorem~\ref{T:determinant}, we see that
\[
\det B = \pm \big( (\alpha_\pi + \beta_\pi)^2 / \alpha_\pi \beta_\pi - 4 \big)^{t_{d, d/2}}  = \pm \big( (\alpha_\pi - \beta_\pi) / \alpha_\pi \beta_\pi \big)^{t_{d, d/2}}.
\]
It is nonzero as long as $\alpha_\pi \neq \beta_\pi$.\footnote{Note that we still need $\alpha_\pi / \beta_\pi$ to avoid certain roots of unity to get \eqref{E:Res Gysin simplified}.} This concludes the proof of Theorem~\ref{T:Tate}.
\end{proof}

Before giving a more detailed discussion of the case $\alpha_\pi = \beta_\pi$, we first give some general remarks.
\begin{remark}
\label{Remark after the theorems}
\begin{enumerate}
\item
We discuss the possibility of generalizing this main theorem to the case when $p$ is only assumed to be unramified, namely, $p\calO_F= \gothp_1 \cdots \gothp_h$.
In this case, one can construct the twisted partial Frobenius $\gothF''_{\gothp_i^2}$ for each prime ideal $\gothp_i$ as in \cite[\S 3.22]{tian-xiao1}. Roughly speaking, on the level of moduli space, this is to send the abelian variety $A$ to $A/\Ker_{\gothp^2_i} \otimes_{\calO_F} \gothp_i$, where $\Ker_{\gothp^2_i}$ is the $\gothp_i$-component of the kernel of $\mathrm{Fr}^2: A \to A^{(p^2)}$.

Suppose that one can describe the action of $\gothF''_{\gothp^2}$ on the cohomology of the unitary Shimura variety as in Proposition~\ref{P:self composition of eta}(3), or more precisely   \cite[Conjecture 5.18]{tian-xiao2} holds true.
Then the same argument above can generalize the Theorem in complete generality to the case when $p$ is only assumed to be unramified, an every prime ideal $\gothp_i$ behaves ``in an independent way". More precisely, we fix $r_i \leq \frac {d_i}2$ for all $i$, where $d_i = \#(\ttS_\infty^c \cap \Sigma_{\gothp_i})$ and $\Sigma_{\gothp_i}$ is the subset of $p$-adic embeddings that induce the prime $\gothp_i$.
Then the Goren--Oort cycles would be parameterized by $h$-tuples whose $i$th component is a semi-meander with $d_i$ nodes and $r_i$ arcs. Under the genericity condition: the eigenvalues of $\rho_\pi(\Frob_{\gothp_i})$ avoid certain roots of unity, the cohomology of the  Goren--Oort cycles generate the subspace of the cohomology $H^d_\et(\Sh_{K_p}(G_{\ttS, \ttT})_{\overline \FF_p}, \calL_{\ttS, \ttT}^{(\underline k, w)})[\pi]$ where certain analogs of $\gothF''_{\gothp_i}$ acts with appropriate eigenvalues determined by $r_i$ and $\Frob_{\gothp_i}$.

Without \cite[Conjecture~5.18]{tian-xiao2}, we can only prove the analogous statement when $r_i = \frac{d_i}2$ for $i$, that is, in the case for Tate cycles.\footnote{If $r_i < \frac{d_i}2$ for some $i$, the determinant of the intersection matrix would involve the knowledge of different powers of the action of $\gothF''_{\gothp_i}$. But we only have the information of their product $S_p^{-1} \cdot F^2: = \prod_{i = 1}^h \gothF''_{\gothp_i^2}$.  On the other hand, the case $r_i = \frac{d_i}2$ is fine, because we only use the Hecke operators, whose action on the cohomology is known.}
Moreover, since we can not distinguish the actions of each $\gothF''_{\gothp_i}$, we would have to assume that the eigenvalues of $\rho_\pi(\Frob_{\gothp_i})$ are ``generic", so that all eigenvalues of $\Frob_p^\mathtt{g}$ acting on $H^2_\et(\Sh_{K_p}(G_{\ttS, \ttT})_{\overline \FF_p}, \calL_{\ttS, \ttT}^{(\underline k, w)})$ are ``as distinct as possible", where $\mathtt{g}$ stands for the least common multiple of the inertia degrees of the $\gothp_i$s. For example, this excludes the case when both $\gothp_1$ and $\gothp_2$ have inertia degree $2$ and $\Frob_{\gothp_1}$ and $\Frob_{\gothp_2}$ have the same set of eigenvalues (which would be okay if \cite[Conjecture~5.18]{tian-xiao2} is known).
\item
It would be interesting to know, when $p$ is ramified in $F/\QQ$, whether one can prove a similar result for the special fiber of the splitting model of the Hilbert modular variety of Pappas and Rapoport. The construction of the corresponding Goren--Oort divisors is discussed in \cite{Reduzzi-Xiao}.
\item
The construction of these Goren--Oort cycles uses the CM extension $E$ of $F$. Even though we think these cycles should be independent of the choice of $E$, we do not know how to prove this. 
This auxiliary CM extension is also responsible for avoiding $2n$th roots of unity as opposed to just $n$th roots of unity.
We think these this issues is purely technical, as our current technique relies very much on the PEL moduli interpretation.
\item
In the case of $r=d/2$ (namely the case for Tate classes), the map \eqref{E:sum of Gysin map} is injective as long as $\alpha_\pi \neq \beta_\pi$. We need $\alpha_\pi/\beta_\pi$ to avoid more roots of unities so that both sides of \eqref{E:sum of Gysin map} have the same dimension.

\item
It is attempting to ask the question: to what extend does this imply the semisimplicity of the Frobenius action on $H^{d}(\Sh_{K_p}(G_{\ttS, \ttT})_{\overline \FF_p}, \overline \QQ_{\ell})[\pi]$?
Unfortunately, our theorem is in its strongest form only when $\alpha_\pi \neq \beta_\pi$, where $\rho_\pi(\Frob_{p^g})$ is automatically  semi-simple. Thus if $\otimes\Ind_{\Gal_F}^{\Gal_{\QQ}}\rho_\pi$ is irreducible as a representation of $\Gal_\QQ$, then the $\Gal_{\QQ}$ representation $H^{d}(\Sh_{K_p}(G_{\ttS, \ttT})_{\overline \QQ}, \overline \QQ_{\ell})[\pi]$ is isomorphic to $\otimes\Ind_{\Gal_F}^{\Gal_{\QQ}}\rho_\pi$ up to characters, so that  $\Frob_{p^{g}}$ is  semi-simple. 
However,  $\otimes\Ind_{\Gal_F}^{\Gal_{\QQ}}\rho_\pi$ might be reducible (e.g. when $\pi$ is CM). In this case, our theorem might provide some insight into the semi-simplicity of $H^d_\et(X_{\overline \QQ}, \overline \QQ_\ell)[\pi]$ as a representation of $\Gal_\QQ$. See also \cite{nekovar}.

\item
It is also attempting to ask: in the case of $r = d/2$ (the Tate classes case), is the determinant of the intersection matrix related to the higher derivatives of the local L-function (to get certain local version of the Beilinson--Bloch Conjecture)?
We feel the answer might be negative.
Note that the intersection matrix is always a power of $(\alpha_\pi - \beta_\pi)$, but the higher derivatives of the local L-functions can involve factors of the form $\alpha^s -\beta^s$ for $s<d/2$.
We also point out that, in the recent preprint \cite{YZ} of Z. Yun and W. Zhang, they seem to suggest a new philosophy for higher derivatives of \emph{global} L-functions. We do not know how to compare the determinant of our intersection matrix to their formulation.
\end{enumerate}

\end{remark}

\begin{remark}
\label{Ex:alpha=beta}
It is a very interesting question to understand the case when $\alpha_\pi = \beta_\pi$. We explain this in the quadratic case.
Let $F$ be a real quadratic field in which $p$ is inert.  Let $\pi \in \scrA_{(\underline 2, 2)}$ be an automorphic representation with trivial central character, defined over $\RR$.
Suppose that $\pi$ appears in the cohomology of the quaternionic Shimura variety $X = \Sh_K(G_{\{v_1, v_2\}, \emptyset})$, where $v_1$ and $v_2$ are two finite prime-to-$p$ places of $F$ (so that $X$ is proper for simplicity). Suppose that the unramified  $\alpha_\pi = \beta_\pi = \pm p$.
For instance, when $\pi$ comes from the base change of a usual  modular form  corresponding to an elliptic curve  over $\Q$ which has  supersingular  (good) reduction at $p$, then the local Satake parameters  of  $\pi$ at $p$ are $\alpha_\pi = \beta_\pi =  p$.

In this case, $H^2_\et(X_{\overline \FF_p}, \overline \QQ_\ell(1))[\pi]$ is $4$-dimensional on which $\Frob_{p^2}$ acts trivially.
More precisely, as pointed out by Prasanna, the action of $\Frob_p$ on this four dimensional subspace has two eigenvalues: $\alpha_\pi/p$ (with multiplicity $3$) and $-\alpha_\pi/p$ (with multiplicity $1$).
There are two Goren--Oort cycles, both given by a collection of $\PP^1$'s. The $\pi$-isotypical components of their cycle classes contribute non-trivially to the subspace with $\Frob_p$-eigenvalue $-\alpha_\pi/p$.
We claim that the $\pi$-isotypical component of the their cycles classes does not contribute to the subspace with $\Frob_p$-eigenvalue $\alpha_\pi/p$.
Indeed, the intersection matrix $B$ given above is degenerate (having rank $1$). Note that, for any cuspidal $\pi$, the $\pi$-isotypical component of the rational N\'eron--Severi group of $X$ is orthogonal to the subspace of ample line bundles, and  Hodge index theorem implies that the intersection pairing on the $\pi$-isotypical component is non-degenerate.
So the degeneracy of the intersection matrix means that the contribution from the Goren--Oort cycles is indeed a one-dimensional subspace of $H^2_\et(X_{\overline \FF_p}, \overline \QQ_\ell(1))[\pi]$, namely the subspace with $\Frob_p$-eigenvalue $-\alpha_\pi/p$.

We think this phenomenon is comparable to the case of Heegner points: when the rank of the elliptic curve is one (``generic rank''), the Heegner point gives a canonical generator of the Mordell--Weil group tensored with $\QQ$; however, when the rank of the elliptic curve is strictly bigger than one (``generic rank''),  the Heegner point becomes torsion.
In our case, the classes of the Goren--Oort cycles are similar to Heegner points. When the dimension of the corresponding Frobenius (generalized) eigenspace is ``generic", the classes of the Goren--Oort cycles give a canonical basis, but when the dimension is bigger than the generic one, the contribution from the  Goren--Oort cycles tends to degenerate.
\end{remark}

\section{Computation of the intersection matrix}
The aim of this section is to establish Theorem~\ref{T:intersection combinatorics} and hence to finish the proof of the main theorems.
We keep the notation from the previous section.

\begin{notation}
To simplify notation, we suppress the automorphic sheaf $\calL_{\ttS, \ttT}^{(\underline k, w)}$, the level structure ${K_p}$, the change of base to $\overline \FF_p$, and the subscript et from the notation of cohomology groups, as they are all fixed throughout this section. 
For example, we write 
\[
\textrm{\ }
H^\star(\Sh(G_{\ttS, \ttT})_\gotha)(r) \textrm{ \  for \ }H^\star_\et\big(\Sh_{K_p}(G_{\ttS, \ttT})_{\gotha,\overline \FF_p}, \calL_{\ttS, \ttT}^{(\underline k, w)}(r)|_{\Sh_{K_p}(G_{\ttS, \ttT})_\gotha}\big).
\]
This should not cause any confusion because all the automorphic sheaves are compatible on the Goren--Oort cycles.
As in Theorem~\ref{T:intersection combinatorics}, we fix a choice of system of shifts $\boldsymbol t_{\gotha}$ of the correspondences $\Sh_{K_p}(G_{\ttS_\gotha, \ttT_\gotha}) \xleftarrow{\pi_\gotha}\Sh_{K_p}(G_{\ttS, \ttT})_\gotha \hookrightarrow \Sh_{K_p}(G_{\ttS, \ttT})$ as in Subsection~\ref{S:GO cycles}.
\end{notation}

Before going into the intricate induction, we first handle a few simple but essential cases. The general case will be essentially reduced to these cases.

\subsection{The case of $r=1$ and $\gotha = \gothb$}
\label{S:case r=1 a=b}
This is the case where the corresponding periodic semi-meanders are given as 
\[
\gotha = \gothb = \psset{unit=0.3}
\begin{pspicture*}(-9,-1.6)(14,2)
\psset{linewidth=1pt}
\psset{linecolor=red}
\psbezier{-}(0,0)(0,2)(4.8, 2)(4.8,0)
\psline{-}(9.6,0)(9.6,2)
\psline{-}(-4.8,0)(-4.8,2)
\psset{linecolor=black}
\psdots(0,0)(4.8,0)(-4.8,0)(9.6,0)
\psdots[dotstyle=+](1,0)(3.8,0)(-1,0)(5.8,0)(-3.8,0)(-5.8,0)(8.6,0)(10.6,0)
\psset{linewidth=.1pt}
\psdots(1.7,0)(2.4,0)(3.1,0)(-1.7,0)(-2.4,0)(-3.1,0)(6.5,0)(7.2,0)(7.9,0)(-6.5,0)(-7.2,0)(-7.9,0)(11.3,0)(12,0)(12.7,0)
\rput(0,-1){\psframebox*{\begin{tiny}$\tau^{-}$\end{tiny}}}
\rput(4.8,-1.2){\psframebox*{\begin{tiny}$\tau$\end{tiny}}}
\end{pspicture*}
\]
(or their shifts), linking $\tau$ with $\tau^- = \sigma^{-n_\tau} \tau$. 

Unwinding the definition, we have the following commutative diagram
\[
\xymatrix{
H^{d-2}(\Sh(G_{\ttS_\gotha, \ttT_\gotha})) \ar[rr]^{\Res_\gotha \circ \Gys_\gotha}
\ar[d]^{\pi_\gotha^*}
&& H^{d-2}(\Sh(G_{\ttS_\gotha, \ttT_\gotha}))
\\
H^{d-2}(\Sh(G_{\ttS, \ttT})_\gotha) \ar[r]^{\textrm{Gysin}}
&
H^d(\Sh(G_{\ttS, \ttT}))(1)
\ar[r]^{\textrm{Restr.}}
&
H^d(\Sh(G_{\ttS, \ttT})_\gotha)(1)
\ar[u]^{\pi_{\gotha,!}}.
}
\]
Recall that $\Sh(G_{\ttS,\ttT})_{\gotha}$ is a $\PP^1$-bundle over $\Sh(G_{\ttS_{\gotha},\ttT_{\gotha}})$, hence $\pi_{\gotha}^*$ and $\pi_{\gotha,!}$ are both isomorphisms. 
By the excessive intersection formula \cite[\S 6.3]{fulton}, the composition of the bottom line is given by  the cup product with the first Chern class of the normal bundle of the embedding 
$\Sh(G_{\ttS, \ttT})_\gotha \hookrightarrow\Sh(G_{\ttS, \ttT})$, which  is isomorphic to  $-2p^{n_\tau}$ times the canonical quotient ample line  bundle for the $\PP^1$-bundle given by $\pi_\gotha$, according to  Proposition~\ref{P:GO-fibration}(2).
Therefore, the top line morphism $\Res_\gotha\circ \Gys_\gotha$ is nothing but multiplication by $-2p^{n_\tau} = -2p^{\ell(\gotha)}$.

\subsection{The case of $d=2$ and $r=1$ with $\gotha \neq \gothb$}
\label{S:case of g=2 Tp}
This is the case where the corresponding periodic semi-meanders are given as 
\[
\gotha = \psset{unit=0.3}\begin{pspicture*}(-2.6,-1.6)(8.1,2)
\psset{linewidth=1pt}
\psset{linecolor=red}
\psbezier{-}(0,0)(0,2)(4.8, 2)(4.8,0)
\psset{linecolor=black}
\psdots(0,0)(4.8,0)
\psdots[dotstyle=+](1,0)(3.8,0)(-1,0)(5.8,0)
\psset{linewidth=.1pt}
\psdots(1.7,0)(2.4,0)(3.1,0)(-1.7,0)(-2.4,0)(-3.1,0)(6.5,0)(7.2,0)(7.9,0)
\rput(0,-1){\psframebox*{\begin{tiny}$\tau^{-}$\end{tiny}}}
\rput(4.8,-1.2){\psframebox*{\begin{tiny}$\tau$\end{tiny}}}
\end{pspicture*} \quad \textrm{and} \quad
\gothb =\begin{pspicture*}(-2.6,-1.6)(8.1,2)
\psset{linewidth=1pt}
\psset{linecolor=red}
\psbezier{-}(0,0)(0,2)(-6, 2)(-6,0)
\psbezier{-}(10.8,0)(10.8,2)(4.8, 2)(4.8,0)
\psset{linecolor=black}
\psdots(0,0)(4.8,0)
\psdots[dotstyle=+](1,0)(3.8,0)(-1,0)(5.8,0)
\psset{linewidth=.1pt}
\psdots(1.7,0)(2.4,0)(3.1,0)(-1.7,0)(-2.4,0)(-3.1,0)(6.5,0)(7.2,0)(7.9,0)
\rput(0,-1){\psframebox*{\begin{tiny}$\tau^{-}$\end{tiny}}}
\rput(4.8,-1.2){\psframebox*{\begin{tiny}$\tau$\end{tiny}}}
\end{pspicture*}
\]
(or their  simultaneous shifts).
Let $\tau^-$ denote the left end-node of the arc of $\gotha$, and $\tau$ is the right end-node. We have $\tau^+ = \tau^-$. Here the meaning of left and right refers to the $xy$-plane presentation of $\gotha$, as explained in \ref{S:semi-meanders}.

Unwinding the definition, the morphism $\Res_\gotha \circ \Gys_\gothb$ is the composition of the following commutative diagram from the upper-left to the lower-right (first rightward and then downward):
\[
\xymatrix{
H^0(\Sh(G_{\ttS_\gothb, \ttT_\gothb})) \ar[r]^{\pi_\gothb^*}
\ar[dr]
&
H^0(\Sh(G_{\ttS, \ttT})_{\tau^-})
\ar[r]^{\textrm{Gysin}}
\ar[d]^{\textrm{Restr.}}
& 
H^2(\Sh(G_{\ttS, \ttT}))(1)
\ar[d]^{\textrm{Restr.}}
\\
&
H^0(\Sh(G_{\ttS, \ttT})_{\{\tau^-,\tau\}}) \ar[r]^-{\textrm{Gysin}}
\ar[dr]_-{\Tr_{\pi_\gotha}}
&
H^2(\Sh(G_{\ttS, \ttT})_{\tau})(1)
\ar[d]^{\pi_{\gotha,!}}
\\
&&
H^0(\Sh(G_{\ttS_\gotha, \ttT_\gotha})).
}
\]
Here, the commutativity of the square follows from \eqref{E:gysin-restriction} the fact that   $\Sh(G_{\ttS,\ttT})_{\{\tau, \tau^-\}}$ is the transversal intersection of $\Sh(G_{\ttS,\ttT})_{\tau^-}$ and $\Sh(G_{\ttS,\ttT})_{\tau}$ in $\Sh(G_{\ttS,\ttT})$, 
 and  $\Tr_{\pi_\gotha}$ is the trace map   induced by  the finite \'etale map (of zero-dimensional Shimura varieties)
\[
\Sh(G_{\ttS,\ttT})_{\{\tau^-,\tau\}}\hra \Sh(G_{\ttS,\ttT})_{\tau}\xra{\pi_{\gotha}}\Sh(G_{\ttS_{\gotha},\ttT_{\gotha}}),
\]
and the natural isomorphism  between  the pullback of the  automorphic sheaf on $\Sh(G_{\ttS_\gotha, \ttT_{\gotha}})$ with that on $\Sh(G_{\ttS,\ttT})_{\tau}$.

By Theorem~\ref{T:GO description}(3), the diagonal composition from upper-left to lower-right, or equivalently the morphism $\Res_\gotha\circ \Gys_\gothb$,  is $T_\gothp \circ (\eta^\star_{\ttS_\gotha,\ttS_\gothb,(n)})^{-1}$, where $\eta^\star_{\ttS_\gotha,\ttS_\gothb,(n)}$ is the link morphism associated to the trivial link $\eta_{\ttS_\gothb,\ttS_\gotha}: \ttS_\gothb \to \ttS_\gotha$ with indentation degree $n =2n_{\tau^-} = -(\ell(\gotha) - \ell(\gothb) - g)$ and shift $\varpi_{\bar \gothq} \boldsymbol t_\gotha^{-1} \boldsymbol t_\gothb$.
Thus the inverse $(\eta^\star_{\ttS_\gothb,\ttS_\gotha,(n)})^{-1} = (\eta^{-1}_{\ttS_\gotha,\ttS_\gothb,(-n)})^\star$ is the link morphism associated to the link $\eta^{-1}$ with indention degree $\ell(\gotha) - \ell(\gothb) - g$ and shift $\varpi_{\bar \gothq}^{-1} \boldsymbol t_\gotha \boldsymbol t_\gothb^{-1}$.
This proves Theorem~\ref{T:intersection combinatorics}(3) for the given case.

\subsection{The case of $r=1$, $d>2$, and $\langle \gotha, \gothb\rangle = v^{m_v}$}
\label{S:case r=1 link}
Assume that $m_v >0$ first.
In this situation, the corresponding periodic semi-meanders, up to shifting, are given by 
\begin{eqnarray}
\label{E:a and b when r=1}
&&\gotha = \psset{unit=0.3}\begin{pspicture*}(-8.5,-1.6)(18,2)
\psset{linewidth=1pt}
\psset{linecolor=red}
\psbezier{-}(9.6, 0)(9.6, 2)(4.8, 2)(4.8,0)
\psline{-}(0,0)(0,2)
\psline{-}(-4.8,0)(-4.8,2)
\psline{-}(14.4,0)(14.4,2)
\psset{linecolor=black}
\psdots(0,0)(4.8,0)(9.6,0)(-4.8,0)(14.4,0)
\psdots[dotstyle=+](1,0)(3.8,0)(5.8,0)(8.6,0)(-1,0)(-3.8,0)(-5.8,0)(10.6,0)(15.4,0)(13.4,0)
\psset{linewidth=.1pt}
\psdots(1.7,0)(2.4,0)(3.1,0)(6.5,0)(7.2,0)(7.9,0)(-1.7,0)(-2.4,0)(-3.1,0)(-6.5,0)(-7.2,0)(-7.9,0)(11.3,0)(12,0)(12.7,0)(16.1,0)(16.8,0)(17.5,0)
\rput(0,-1){\psframebox*{\begin{tiny}$\tau^{-}$\end{tiny}}}
\rput(4.8,-1.2){\psframebox*{\begin{tiny}$\tau$\end{tiny}}}
\rput(9.6,-1){\psframebox*{\begin{tiny}$\tau^{+}$\end{tiny}}}
\end{pspicture*}
\quad \textrm{and}
\\
\nonumber &&
\psset{unit=0.3}
\gothb = 
\begin{pspicture*}(-8.5,-1.6)(18,2)
\psset{linewidth=1pt}
\psset{linecolor=red}
\psbezier{-}(0,0)(0,2)(4.8, 2)(4.8,0)
\psline{-}(-4.8,0)(-4.8,2)
\psline{-}(14.4,0)(14.4,2)
\psline{-}(9.6, 0)(9.6, 2)
\psset{linecolor=black}
\psdots(0,0)(4.8,0)(9.6,0)(-4.8,0)(14.4,0)
\psdots[dotstyle=+](1,0)(3.8,0)(5.8,0)(8.6,0)(-1,0)(-3.8,0)(-5.8,0)(10.6,0)(15.4,0)(13.4,0)
\psset{linewidth=.1pt}
\psdots(1.7,0)(2.4,0)(3.1,0)(6.5,0)(7.2,0)(7.9,0)(-1.7,0)(-2.4,0)(-3.1,0)(-6.5,0)(-7.2,0)(-7.9,0)(11.3,0)(12,0)(12.7,0)(16.1,0)(16.8,0)(17.5,0)\rput(0,-1){\psframebox*{\begin{tiny}$\tau^{-}$\end{tiny}}}
\rput(4.8,-1.2){\psframebox*{\begin{tiny}$\tau$\end{tiny}}}
\rput(9.6,-1){\psframebox*{\begin{tiny}$\tau^{+}$\end{tiny}}}
\end{pspicture*}.
\end{eqnarray}
Note that the two arcs in $\gotha$ and $\gothb$ must be adjacent, otherwise, $\langle\gotha, \gothb\rangle =0$.
Let $\tau$ denote the left end-node of the (unique) arc in $\gotha$, as shown in the pictures above. Then $\tau^-$ is the left end-node of the arc in $\gothb$, and $\tau^+$ is the right end-node of the arc in $\gotha$.
So if $\tau = \sigma^{-n_{\tau^+}} \tau^+$ and $\tau^- = \sigma^{-n_\tau} \tau$, then $m_v = m_v(\gotha, \gothb) = n_\tau  + n_{\tau^+}$.

Unwinding the definition, the morphism $\Res_\gotha \circ \Gys_\gothb$ is the composition of the following commutative diagram from the upper-left to the lower-right:
\begin{equation}
\label{E:CD r=1 g>2}
\xymatrix{
H^{d-2}(\Sh(G_{\ttS_\gothb, \ttT_\gothb})) \ar[r]^{\pi_\gothb^*}
\ar[dr]^{\pi_\gothb^*}
&
H^{d-2}(\Sh(G_{\ttS, \ttT})_{\tau})
\ar[r]^{\textrm{Gysin}}
\ar[d]^{\textrm{Restr.}}
& 
H^d(\Sh(G_{\ttS, \ttT}))(1)
\ar[d]^{\textrm{Restr.}}
\\
&
H^{d-2}(\Sh(G_{\ttS, \ttT})_{\{\tau, \tau^+\}}) \ar[r]^-{\textrm{Gysin}}\ar[dr]_-{ (\theta^{-1})^*,\cong }
&
H^d(\Sh(G_{\ttS, \ttT})_{\tau^+})(1)
\ar[d]^{\pi_{\gotha,!}}
\\
&&
H^{d-2}(\Sh(G_{\ttS_\gotha, \ttT_\gotha})).
}
\end{equation}
By Theorem~\ref{T:GO description}(2), the morphism
$$
\theta: \Sh(G_{\ttS, \ttT})_{\{\tau, \tau^+\}} \hra \Sh(G_{\ttS,\ttT})_{\tau^+}\xra{\pi_{\gotha}} \Sh(G_{\ttS_\gotha, \ttT_\gotha})
$$
 is an isomorphism, and the composition
\[
\Sh(G_{\ttS_\gotha, \ttT_\gotha}) \xleftarrow{\ \theta^{-1}\ }
\Sh(G_{\ttS, \ttT})_{\{\tau, \tau^+\}}\hra \Sh(G_{\ttS,\ttT})_{\tau} \xrightarrow{\ \pi_\gothb\ } \Sh(G_{\ttS_\gothb, \ttT_\gothb})
\]
is exactly the link  morphism
\[
\eta_{\gotha,\gothb, (z), \sharp}: \Sh(G_{\ttS_\gotha, \ttT_\gotha}) \longto \Sh(G_{\ttS_\gothb, \ttT_\gothb}),
\]
associated to the link $\eta_{\gotha,\gothb}\colon \ttS_{\gotha}\ra \ttS_{\gothb}$ given by 
\begin{equation}
\label{E:eta a to b when r=1}
\psset{unit=0.3}
\begin{pspicture*}(-3.5,-0.2)(23,4)
\psset{linecolor=red}
\psset{linewidth=1pt}
\psline{-}(0,0)(0,2)
\psline{-}(19.2,0)(19.2,2)
\psbezier(4.8,2)(4.8,0)(14.4,2)(14.4,0)
\psset{linecolor=black}
\psdots(0,0)(0,2)(4.8,2)(14.4,0)(19.2,0)(19.2,2)
\psdots[dotstyle=+](1,0)(-1,0)(-1,2)(1,2)(3.8,0)
(3.8,2)(4.8,0)(5.8,0)(5.8,2)(8.6,0)(8.6,2)(10.6,0)(9.6,0)(9.6,2)(10.6,2)(13.4,0)(13.4,2)(15.4,0)(14.4,2)(15.4,2)(18.2,0)(18.2,2)(20.2,0)(20.2,2)
\psset{linewidth=.1pt}
\psdots(1.7,0)(1.7,2)(2.4,0)(2.4,2)(3.1,0)(3.1,2)(6.5,0)(7.2,0)(7.9,0)(6.5,2)(7.2,2)(7.9,2)(11.3,0)(12,0)(12.7,0)(11.3,2)(12,2)(12.7,2)(16.1,0)(16.8,0)(17.5,0)(16.1,2)(16.8,2)(17.5,2)(-1.7,0)(-1.7,2)(-2.4,0)(-2.4,2)(-3.1,0)(-3.1,2)(20.9,0)(21.6,0)(22.3,0)(20.9,2)(21.6,2)(22.3,2)
\rput(5,3.3){\psframebox*{\begin{tiny}$\tau^{-}$\end{tiny}}}
\rput(9.6,3.1){\psframebox*{\begin{tiny}$\tau$\end{tiny}}}
\rput(14.5,3.3){\psframebox*{\begin{tiny}$\tau^{+}$\end{tiny}}}
\end{pspicture*},
\end{equation}
with shift $\boldsymbol t_\gotha \boldsymbol t_\gothb^{-1}$ and indentation degree $z$ equal to $\ell(\gotha) -\ell(\gothb)$ if $\gothp$ splits in $E/F$ and to $0$ if $\gothp$ is inert in $E/F$.
Therefore, $\Res_\gotha\circ \Gys_\gothb$ is exactly $p^{v(\eta_{\gotha, \gothb})/2} \eta_{\gotha, \gothb, (z)}^{\star} = p^{m_v/2}\eta_{\gotha, \gothb, (z)}^\star$ (note the normalization in \eqref{E:normalized-link}), verifying Theorem~\ref{T:intersection combinatorics}(2). 

We now come to the case where $m_v$ is negative.
In this case, the picture of $\gotha$ and $\gothb$ in  \eqref{E:a and b when r=1} are swapped.
 Then we have a commutative  diagram similar to \eqref{E:CD r=1 g>2}:
 \[
 \xymatrix{
H^{d-2}(\Sh(G_{\ttS_\gothb, \ttT_\gothb})) \ar[r]^{\pi_\gothb^*}
\ar[dr]^{ \cong}
&
H^{d-2}(\Sh(G_{\ttS, \ttT})_{\tau^+})
\ar[r]^{\textrm{Gysin}}
\ar[d]^{\textrm{Restr.}}
& 
H^d(\Sh(G_{\ttS, \ttT}))(1)
\ar[d]^{\textrm{Restr.}}
\\
&
H^{d-2}(\Sh(G_{\ttS, \ttT})_{\{\tau, \tau^+\}}) \ar[r]^-{\textrm{Gysin}}\ar[dr]
&
H^d(\Sh(G_{\ttS, \ttT})_{\tau})(1)
\ar[d]^{\pi_{\gotha,!}}
\\
&&
H^{d-2}(\Sh(G_{\ttS_\gotha, \ttT_\gotha})),
}
 \]
and the composed diagonal morphism gives $\Res_{\gotha}\circ\Gys_{\gothb}$.
Let $\eta_{\gothb,\gotha}: \ttS_{\gothb}\ra \ttS_{\gotha}$ denote the inverse link of $\eta_{\gotha,\gothb}$.
Since $\gotha$ and $\gothb$ are obtained by swapping with each other from the previous case, the link morphism $\eta_{\gothb, \gotha, (-z), \sharp}: \Sh(G_{\ttS_{\gothb}, \ttT_{\gothb}})\ra \Sh(G_{\ttS_{\gotha}, \ttT_{\gotha}})$ with shift $\boldsymbol t_{\gothb}\boldsymbol t_{\gotha}^{-1}$ exists, where $z=\ell(\gotha)-\ell(\gothb)$ if $\gothp$ splits in $E$ and $z=0$ if $\gothp$ is inert in $E$.
Note also that $\eta_{\gothb, \gotha, (-z), \sharp}$ is finite flat of degree $p^{-m_{v}}=p^{v(\eta_{\gothb, \gotha})}$  by Theorem~\ref{T:GO description}. 
One sees easily that  $\Res_{\gotha}\circ\Gys_{\gothb}=\Tr_{\eta_{\gothb, \gotha, (-z),\sharp}}$. 
   By Lemma~\ref{L:inverse of link morphism}(3), this is exactly   $p^{-m_{v}/2} (\eta_{\gothb, \gotha, (-z)}^\star)^{-1}=p^{(\ell(\gotha)+\ell(\gothb))/2}\eta^{\star}_{\gotha,\gothb, (z)}$.   This proves Theorem~\ref{T:intersection combinatorics}(2) in this case.

\subsection{Decomposition of periodic semi-meanders}
\label{SS:decomposition of semi-meanders}
Before proceeding to the inductive proof, we discuss certain ways to ``decompose" periodic semi-meanders appearing in the induction.
Let $\gotha \in \gothB_\ttS^r$ be a periodic semi-meander.  
We call a subset $\Delta$  of $r'$ arcs ($r' \leq r$) in $\gotha$ \emph{saturated}, if for each arc $\delta$ belonging to $ \Delta$, any arc that lies below $\delta$ in the sense of Notation~\ref{N:basic arc} belongs to $\Delta$. 
For example, if 
$\psset{unit=0.3}
\gotha =
\begin{pspicture*}(-.5,-0.2)(9.5,3)
\psset{linewidth=1pt}
\psset{linecolor=red}
\psline{-}(0,0)(0,3)
\psline{-}(9,0)(9,3)
\psbezier{-}(1,0)(1,3.5)(8,3.5)(8,0)
\psbezier{-}(2,0)(2,1.5)(5,1.5)(5,0)
\psarc{-}(3.5,0){0.5}{0}{180}
\psarc{-}(6.5,0){0.5}{0}{180}
\psset{linecolor=black}
\psdots(0,0)(1,0)(2,0)(3,0)(4,0)(9,0)(8,0)(7,0)(6,0)(5,0)
\end{pspicture*}$, the subset $\Delta=\psset{unit=0.3}
\begin{pspicture*}(-.5,-0.2)(9.5,1.6)
\psset{linewidth=1pt}
\psset{linecolor=red}
\psbezier{-}(2,0)(2,1.5)(5,1.5)(5,0)
\psarc{-}(3.5,0){0.5}{0}{180}
\psarc{-}(6.5,0){0.5}{0}{180}
\psset{linecolor=black}
\psdots(0,0)(1,0)(2,0)(3,0)(4,0)(9,0)(8,0)(7,0)(6,0)(5,0)
\end{pspicture*}$\footnote{Here $\Delta$ is only the set of the arcs, not including the nodes in the picture.} is saturated, but  
$\psset{unit=0.3}
\begin{pspicture*}(-.5,-0.2)(9.5,1.6)
\psset{linewidth=1pt}
\psset{linecolor=red}
\psbezier{-}(2,0)(2,1.5)(5,1.5)(5,0)
\psarc{-}(6.5,0){0.5}{0}{180}
\psset{linecolor=black}
\psdots(0,0)(1,0)(2,0)(3,0)(4,0)(9,0)(8,0)(7,0)(6,0)(5,0)
\end{pspicture*}$ 
is not.

Now fix a saturated $\Delta$. We use $\gotha^\flat$ to denote the periodic semi-meander for $\ttS$ given by all the arcs in $\Delta$ and then adjoining semi-lines to the rest of  nodes.
Then  $\ttS_{\gotha^\flat}$ is the union of $\ttS$ and all nodes connected to an arc in $\Delta$.
We use $\gotha_\res = \gotha\backslash \Delta$ to denote the periodic semi-meander for $\ttS_{\gotha^\flat}$ obtained by removing all the arcs in $\Delta$ and replacing their end-nodes by plus signs.  In the example above, $ \gotha^\flat = \psset{unit=0.3}
\begin{pspicture*}(-.5,-0.2)(9.5,1.6)
\psset{linewidth=1pt}
\psset{linecolor=red}
\psline(0,0)(0,1.4)
\psline(1,0)(1,1.4)
\psline(8,0)(8,1.4)
\psline(9,0)(9,1.4)
\psbezier{-}(2,0)(2,1.5)(5,1.5)(5,0)
\psarc{-}(3.5,0){0.5}{0}{180}
\psarc{-}(6.5,0){0.5}{0}{180}
\psset{linecolor=black}
\psdots(0,0)(1,0)(2,0)(3,0)(4,0)(9,0)(8,0)(7,0)(6,0)(5,0)
\end{pspicture*}$
and
$\gotha_\res = \psset{unit=0.3}
\begin{pspicture*}(-.5,-0.2)(9.5,3)
\psset{linewidth=1pt}
\psset{linecolor=red}
\psline{-}(0,0)(0,3)
\psline{-}(9,0)(9,3)
\psbezier{-}(1,0)(1,3.5)(8,3.5)(8,0)
\psset{linecolor=black}
\psdots(0,0)(1,0)(9,0)(8,0)
\psdots[dotstyle=+](2,0)(3,0)(4,0)(7,0)(6,0)(5,0)
\end{pspicture*}$, where the plus signs indicates points corresponding to $\ttS_{\gotha^\flat, \infty}$.

By the construction of the Goren--Oort cycles, we have the following commutative diagram, where the middle square is Cartesian.
\begin{equation}
\label{E:factor of gothb}
\xymatrix{
\Sh(G_{\ttS,\ttT})_{\gotha} \ar@{^{(}->}[r] \ar[d] 
\ar@/_50pt/[dd]_{\pi_\gotha}
& 
\Sh(G_{\ttS,\ttT})_{\gotha^\flat} \ar@{^{(}->}[r]
\ar[d]^{\pi_{\gotha^\flat}}
& \Sh(G_{\ttS,\ttT})
\\
\Sh(G_{\ttS_{\gotha^\flat},\ttT_{\gotha^\flat}})_{\gotha_\res} \ar@{^{(}->}[r]
\ar[d]^{\pi_{\gotha_\res}}
&
\Sh(G_{\ttS_{\gotha^\flat},\ttT_{\gotha^\flat}})
\\
\Sh(G_{\ttS_{\gotha},\ttT_{\gotha}})
}
\end{equation}
Since the construction of this diagram comes from the unitary Shimura varieties, we point out that, as explained near \eqref{E:correspondence ab}, the shift of the correspondence
$\Sh(G_{\ttS_\gotha, \ttT_\gotha}) \xleftarrow{ \pi_{\gotha_\res}} \Sh(G_{\ttS_{\gotha^\flat}, \ttT_{\gotha^\flat}})_{\gotha_\res} \hookrightarrow \Sh(G_{\ttS_{\gotha^\flat}, \ttT_{\gotha^\flat}})$ is $\boldsymbol t_{\gotha^\flat, \gotha} = \boldsymbol{t}_{\gotha} \boldsymbol t_{\gotha^\flat}^{-1}$.
From the commutative diagram, we can decompose the morphisms $\Res_\gotha$ and $\Gys_\gotha$ as follows:
\begin{align*}
\Gys_\gotha: H^{d-2r}(\Sh(G_{\ttS_\gotha, \ttT_\gotha}))& \xrightarrow{\pi_{\gotha^*_\res}} H^{d-2r}(\Sh(G_{\ttS_{\gotha^\flat}, \ttT_{\gotha^\flat}})_{\gotha_\res})
\xrightarrow{\textrm{Gysin}}
H^{d-2r'}(\Sh(G_{\ttS_{\gotha^\flat}, \ttT_{\gotha^\flat}}))(r-r')
\\
&
\xrightarrow{\pi_{\gotha^\flat}^*}
H^{d-2r'}(\Sh(G_{\ttS, \ttT})_{\gotha^\flat})(r-r')
\xrightarrow{\textrm{Gysin}}
H^d(\Sh(G_{\ttS,\ttT}))(r)
\end{align*}
and 

\begin{align*}
\Res_\gotha:
H^d(\Sh(G_{\ttS,\ttT}))(r)
&
\xrightarrow{\textrm{Restr.}}
H^d(\Sh(G_{\ttS, \ttT})_{\gotha^\flat})(r)
\xrightarrow{\pi_{\gotha^\flat, !}}
H^{d-2r'}(\Sh(G_{\ttS_{\gotha^\flat}, \ttT_{\gotha^\flat}}))(r-r')
\\
&
\xrightarrow{\textrm{Restr.}}
H^{d-2r'}(\Sh(G_{\ttS_{\gotha^\flat}, \ttT_{\gotha^\flat}})_{\gotha_\res})(r-r')
\xrightarrow{\pi_{\gotha_\res, !}}
H^{d-2r}(\Sh(G_{\ttS_\gotha, \ttT_\gotha})).
\end{align*}
Here, to get the decomposition for $\Res_\gotha$, we  have used the fact that  the trace map $\Tr_{\pi_{\gotha}}$  can be factorized as 
\[
R^{2r}\pi_{\gotha*}\overline \QQ_\ell(r) \cong
R^{2r-2r'}\pi_{\gotha_\res*} \big(R^{2r'}\pi_{\gotha^\flat*}\overline\QQ_\ell\big)(r)\xra{\Tr_{\pi_{\gotha^\flat}}}R^{2r-2r'}\pi_{\gotha_\res*}(\overline\QQ_\ell)(r-r')\xra{\Tr_{\pi_{\gotha_{\res}}}}\overline\QQ_\ell.
\]
Summing up everything in short, we obtain thus 
\[
\Gys_\gotha = \Gys_{\gotha^\flat} \circ \Gys_{\gotha_\res} , \quad
\Res_\gotha = \Res_{\gotha_\res} \circ \Res_{\gotha^\flat}, \quad \textrm{and } \boldsymbol t_\gotha = \boldsymbol t_{\gotha^\flat} \boldsymbol t_{\gotha^\flat, \gotha}.
\]

We will apply this to appropriate $\Delta$'s to reduce the calculation to $\Sh(G_{\ttS_{\gotha^\flat}, \ttT_{\gotha^\flat}})$ and  reduce the inductive proof essentially to the cases considered above.

\subsection{Decomposition of periodic semi-meanders continued}
\label{SS:decomposition of semi-meanders contd}
We will also encounter the following situation:  assume that the set of arcs in a periodic semi-meander $\gotha$ is the disjoint union of two \emph{saturated} subsets $\Delta$ and $\Delta'$. Put $s = \#\Delta$ and $s' = \#\Delta'$ so that $r=s+s'$.
We will show that $\Delta$ and $\Delta'$ ``behave" independently.

We write $\gotha^\flat$ (resp. $\gotha^{\flat \prime}$) for the periodic semi-meander for $\ttS$ given by all arcs in $\Delta$ (resp. $\Delta'$) and then adjoining semi-lines to the rest of the nodes.
We put $\gotha_\res$ (resp. $\gotha'_\res$) for the periodic semi-meander for $\ttS_{\gotha^\flat}$ (resp. $\ttS_{\gotha^{\flat\prime}}$) obtained by removing all arcs in $\Delta$ (resp. $\Delta'$) and replacing all their end-nodes by plus signs.

In this case, in view of the construction of the Goren--Oort cycle $\Sh(G_{\ttS, \ttT})_\gotha$, we could either go through the arcs in $\Delta$ first, or the arcs in $\Delta'$ first.  So we have the following commutative \emph{Cartesian} diagram:
\begin{equation}
\label{E:cartesian decomposition diagram}
\xymatrix{
\Sh(G_{\ttS, \ttT})
&
\Sh(G_{\ttS, \ttT})_{\gotha^\flat}
\ar@{_{(}->}[l]
\ar[r]^-{\pi_{\gotha^\flat}}
&
\Sh(G_{\ttS_{\gotha^\flat}, \ttT_{\gotha^\flat}})
\\
\Sh(G_{\ttS, \ttT})_{\gotha^{\flat\prime}}
\ar@{^{(}->}[u]
\ar[d]^{\pi_{\gotha^{\flat\prime}}}
&
\Sh(G_{\ttS, \ttT})_{\gotha}
\ar[r]^-{\pi_\Delta}
\ar@{^{(}->}[u]\ar@{_{(}->}[l]
\ar[d]^{\pi_{\Delta'}}
&
\Sh(G_{\ttS_{\gotha^\flat}, \ttT_{\gotha^\flat}})_{\gotha_\res}
\ar[d]^{\pi_{\gotha_\res}}
\ar@{^{(}->}[u]
\\
\Sh(G_{\ttS_{\gotha^{\flat\prime}}, \ttT_{\gotha^{\flat\prime}}})
&
\Sh(G_{\ttS_{\gotha^{\flat\prime}}, \ttT_{\gotha^{\flat\prime}}})_{\gotha'_\res}
\ar@{_{(}->}[l]
\ar[r]^-{\pi_{\gotha'_\res}}
&
\Sh(G_{\ttS_{\gotha}, \ttT_{\gotha}}),
}
\end{equation}
where $\pi_\Delta$ and $\pi_{\Delta'}$ are the morphisms defined by the natural pull-back of upper-right and lower-left Cartesian squares, respectively.
By Remark~\ref{R:composition of shifts} the shifts satisfies the following equality
\begin{equation}
\label{E:decomposition meander shift}
\boldsymbol t_{\gotha^\flat}
\boldsymbol t_{\gotha^\flat, \gotha} = \boldsymbol t_\gotha = \boldsymbol t_{\gotha^{\flat\prime}}
\boldsymbol t_{\gotha^{\flat\prime}, \gotha} \quad \textrm{ in } E^{\times, \cl} \backslash \AAA_E^{\infty,\times}/\calO_{E_{\gothp}}^\times.
\end{equation}

This implies that both $\pi_\Delta$ and $\pi_{\Delta'}$ are iterated $\PP^1$-bundles of relative dimensions $s$ and $s'$ respectively.
We use $\pi_{\Delta,!}$ to denote the natural morphism
\begin{align*}
\pi_{\Delta, !}:
H^\star_\et\big(\Sh_{K_p}(G_{\ttS, \ttT})_{\gotha,\overline \FF_p}, \calL_{\ttS, \ttT}^{(\underline k, w)}(s)\big)
& \xrightarrow{\cong} H^{\star}_\et\big(\Sh_{K_p}(G_{\ttS_{\gotha^\flat}, \ttT_{\gotha^\flat}})_{\gotha_\res, \overline \FF_p}, \calL_{\ttS_{\gotha^\flat}, \ttT_{\gotha^\flat}}^{(\underline k, w)}
\otimes 
R \pi_{\Delta*}\overline \QQ_\ell(s)\big)
\\
&\xrightarrow{ \Tr_{\pi_{\Delta}}}
H^{\star-2s}_\et \big(\Sh_{K_p}(G_{\ttS_{\gotha^\flat}, \ttT_{\gotha^{\flat}}} )_{\gotha_\res, \overline \FF_p}, \calL_{\ttS_{\gotha^\flat}, \ttT_{\gotha^{\flat}}}^{(\underline k, w)}\big),
\end{align*}
where the last map is induced by the trace isomorphism $R^{2s}\pi_{\Delta,*}(\overline \Q_\ell(s))\cong \overline \Q_\ell$.

As a consequence of the Cartesian property and Theorem~\ref{T:GO description}(1),
we have the following commutative diagram (which is placed into \eqref{E:cartesian decomposition diagram} vertically on the right)
\begin{equation}
\label{E:cartesian decomposition}
\xymatrix{
H^{d-2s'}(\Sh(G_{\ttS_{\gotha^{\flat\prime}}, \ttT_{\gotha^{\flat\prime}}})_{\gotha'_{\res}})(s)
\ar[r]^-{\pi_{\Delta'}^*}
\ar[d]^{\pi_{\gotha'_\res,!}}
&
H^{d-2s'}(\Sh(G_{\ttS, \ttT})_{\gotha})(s)
\ar[d]^{\pi_{\Delta,!}}
\ar[r]^-{\textrm{Gysin}}
&
H^d(\Sh(G_{\ttS, \ttT})_{\gotha^\flat})(s+s')
\ar[d]^{\pi_{\gotha^\flat,!}}
\\
H^{d-2s-2s'}(\Sh(G_{\ttS_\gotha, \ttT_\gotha}))
\ar[r]^-{\pi^*_{\gotha_\res}}
&
H^{d-2s-2s'}(\Sh(G_{\ttS_{\gotha^\flat}, \ttT_{\gotha^\flat}})_{\gotha_\res})
\ar[r]^-{\textrm{Gysin}}
&
H^{d-2s}(\Sh(G_{\ttS_{\gotha^\flat}, \ttT_{\gotha^\flat}}))(s').
}
\end{equation}

\subsection{Inductive proof of Theorem~\ref{T:intersection combinatorics}}
We now start the proof of Theorem~\ref{T:intersection combinatorics} by induction on $d=\#\ttS^c_{\infty}$ or equivalently the dimension of the Shimura variety $\Sh(G_{\ttS,\ttT})$ (and also on $r$ by keeping $d-2r$ fixed throughout the induction).  The base case $d=0$ and $d=1$ are trivial (as there is no nontrivial periodic semi-meander).

We now assume that Theorem~\ref{T:intersection combinatorics} holds for all Shimura varieties $\Sh_K(G_{\ttS,\ttT})$ with $\#\ttS_\infty^c < d$.
We now fix $\ttS, \ttT$ so that $\#\ttS_\infty^c =d$. 
The case of $r=0$ is clear.  We henceforth assume that $r>0$.

Let $\gotha, \gothb \in \gothB_\ttS^r$ be as in Theorem~\ref{T:intersection combinatorics}.
We fix a \emph{basic} arc $\delta_\gothb$ of $\gothb$, with right end-node $\tau \in \ttS_\infty^c$ (and left end-node $\tau^- \in \ttS_\infty^c$).
As in Subsection~\ref{SS:decomposition of semi-meanders}, we use $\gothb_\res \in \gothB_{\ttS\cup\{\tau, \tau^-\}}^{r-1}$ to denote the periodic semi-meander $\gothb \backslash\delta_\gothb$ obtained by removing $\delta_\gothb$ from $\gothb$ and replacing its end-nodes by plus signs.
We will use $\delta_\gothb$ itself to denote the corresponding $\gothb^\flat$, that is, we also view $\delta_\gothb$ as a periodic semi-meander for $\ttS$ with only one arc $\delta_\gothb$ (and  $d-2$ semi-lines).

The basic 
idea is to factor the Gysin map $\Gys_\gothb$ using $\delta_\gothb$, in the sense of Subsection~\ref{SS:decomposition of semi-meanders}, and to factor the restriction map $\Res_\gotha$ according to the following list of four cases.
\begin{itemize}
\item[(i)]
The two nodes $\tau, \tau^-$ are both linked to semi-lines in $\gotha$. This forces us to fall into the case (1) of Theorem~\ref{T:intersection combinatorics}.
\item[(ii)]
There is a (basic) arc $\delta_\gotha$ in $\gotha$ linking $\tau^-$ to $\tau$ from left to right, so that $\delta_\gotha$ and $\delta_\gothb$ form a contractible loop in $D(\gotha, \gothb)$. In other words,   $\delta_\gotha$ and $\delta_\gothb$ are the same (up to deformation of the arcs).
We shall reduce the proof of Theorem~\ref{T:intersection combinatorics} to the case for $\ttS' = \ttS\cup\{\tau, \tau^-\}$, $\ttT'= \ttT\cup \{\tau\}$, $\gotha_\res = \gotha \backslash \delta_\gotha$ and $\gothb_\res$ and it hence follows from the inductive hypothesis.  In particular, we shall see that the contractible loop $\delta_\gotha$ and $\delta_\gothb$ contributes a factor of $-2p^{\ell(\delta_\gothb)}$.
\item[(iii)]
There is an arc $\delta_\gotha$ in $\gotha$ connecting $\tau$ to $\tau^-$ wrapped around the cylinder from right to left. In other words, $\delta_\gotha$ and $\delta_\gothb$ together form a non-contractible loop in $D(\gotha, \gothb)$. This can only happen if $r=d/2$. We will show that the composition $\Res_\gotha\circ \Gys_\gothb$ is essentially the $T_\gothp$-operator composed with $\Res_{\gotha \backslash\delta_\gotha} \circ \Gys_{\gothb \backslash \delta_\gothb}$ for the Shimura variety with $\ttS' = \ttS\cup\{\tau, \tau^-\}$ and $\ttT'= \ttT\cup \{\tau\}$, up to some link morphism which we make explicit later.

\item[(iv)]
Neither of above happens.
Then, in $\gotha$, either $\tau$ is connected by an arc  whose other end-node is not $\tau^-$, and/or $\tau^-$ is connected by an arc whose other end-node is not $\tau$.
In either case, we will reduce to a case with the two nodes $\tau$ and $\tau^-$ removed, after composing with a certain link morphism.
\end{itemize}
We now treat each of the cases separately.

\subsection{Case (i)}
This is the case when $\tau$ and $\tau^-$ are connected to semi-lines in $\gotha$. This implies that $\langle \gotha |\gothb \rangle = 0$. So we are in the situation of Theorem~\ref{T:intersection combinatorics}(1). We need to show that the $\pi$-isotypical component  of $\Res_\gotha \circ \Gys_\gothb$ factors through the cohomology of a Shimura variety of smaller dimension. 
Let $\gotha^*$ denote the periodic semi-meander for $\ttS$ given by removing the two semi-lines of $\gotha$ connected to $\tau$ and $\tau^-$ and reconnecting $\tau$ and $\tau^-$ by a (basic) arc. Note that this is possible because $\delta_\gothb$ is a basic arc, so $\tau$ and $\tau^-$ are adjacent nodes in the band for $\ttS$.
In particular, $\gotha^* \in \gothB_\ttS^{r+1}$.

By the discussion of Subsection~\ref{SS:decomposition of semi-meanders}, we see that the morphism $\Res_\gotha\circ \Gys_\gothb$ is the  composition from top-left to the bottom-right of the following commutative diagram by going first downwards and then rightwards.
\[
\xymatrix{
H^{d-2r}(\Sh(G_{\ttS_\gothb, \ttT_\gothb})) 
\ar[d]^{\pi^*_{\delta_\gothb} \circ \Gys_{\gothb_\res}}
\\
H^{d-2}(\Sh(G_{\ttS, \ttT})_{\delta_\gothb})(r-1)
\ar[d]^{\textrm{Gysin}}
\ar[r]^{\textrm{Restr.}}
&
H^{d-2}(\Sh(G_{\ttS, \ttT})_{\gotha^*})(r-1)
\ar[d]^{\textrm{Gysin}}
\\
H^d(\Sh(G_{\ttS, \ttT}))(r)
\ar[r]^{\textrm{Restr.}}
&
H^d(\Sh(G_{\ttS, \ttT})_\gotha)(r)
\ar[r]^-{\pi_{\gotha,!}}
&
H^{d-2r}(\Sh(G_{\ttS_\gotha, \ttT_\gotha})).
}
\]
Here, the square is commutative because the corresponding morphisms on the Shimura varieties form a Cartesian square.
  The diagram implies that the $\pi$-component of $\Res_\gotha \circ \Gys_\gothb$ factors through the cohomology group
$$H^{d-2}(\Sh(G_{\ttS, \ttT})_{\gotha^*})(r-1)[\pi] \cong H^{d-2(r+1)}(\Sh(G_{\ttS_{\gotha^*}, \ttT_{\gotha^*}}))(-1)[\pi],$$
which is the $\pi$-isotypical component of the cohomology of a quaternionic Shimura variety of dimension $d-2(r+1)$.
This means that the conclusion of Theorem~\ref{T:intersection combinatorics}(1) holds if we ever arrive in case (i) during the inductive proof.
  
\subsection{Case (ii)}
This is the case when there is a basic arc $\delta_\gotha$ in $\gotha$ linking $\tau^-$ to $\tau$ from left to right, and hence $\delta_\gotha$ and $\delta_\gothb$ are the same (up to deformation of the arcs). We write $\delta$ for the periodic semi-meander for $\ttS$ with only one arc $\delta_\gotha$.
We write $\gotha_\res =\gotha \backslash\delta$ for the periodic semi-meander for $\ttS_\delta$ obtained by removing $\delta_\gotha$ from $\gotha$ and placing its end-nodes by plus signs.

Using the discussion of Subsection~\ref{SS:decomposition of semi-meanders}, the morphism $\Res_\gotha \circ \Gys_\gothb$ is the composition from the upper-left to the upper-right going all the way around: first downwards to the bottom, then all the way to the right, and finally upwards.
\[
\xymatrix{
H^{d-2r}(\Sh(G_{\ttS_\gotha, \ttT_\gotha}))
\ar[d]^{\Gys_{\gothb_\res}}
&&
H^{d-2r}(\Sh(G_{\ttS_\gothb, \ttT_\gothb}))
\\
H^{d-2}(\Sh(G_{\ttS_\delta, \ttT_\delta}))(r-1) \ar@{-->}[rr]
\ar[d]^{\pi_\delta^*}
&& 
H^{d-2}(\Sh(G_{\ttS_\delta, \ttT_\delta}))(r-1)
\ar[u]_{\Res_{\gotha_\res}}
\\
H^{d-2}(\Sh(G_{\ttS, \ttT})_\delta)(r-1) \ar[r]^-{\textrm{Gysin}}
&
H^d(\Sh(G_{\ttS, \ttT}))(r)
\ar[r]^{\textrm{Restr.}}
&
H^d(\Sh(G_{\ttS, \ttT})_\delta)(r)
\ar[u]^{\pi_{\delta,!}}
}
\]
As in Subsection~\ref{S:case r=1 a=b}, the composition of the bottom line is given by the excessive intersection formula, that is to take the cup product with the first Chern class of the normal bundle of the embedding 
$\Sh(G_{\ttS, \ttT})_\delta \hookrightarrow\Sh(G_{\ttS, \ttT})$, which is $-2p^{\ell(\delta)}$ times the class of the canonical quotient bundle for the $\PP^1$-bundle given by $\pi_\delta$, according to Proposition~\ref{P:GO-fibration}(2).
Therefore, the dotted arrow in the middle  is simply multiplication by $-2p^{\ell(\delta)}$.
From this, we deduce that
\begin{equation}
\label{E:proof case ii}
\Res_{\gotha} \circ \Gys_{\gothb} = -2p^{\ell(\delta)} \cdot \Res_{\gotha_\res} \circ \Gys_{\gothb_\res},
\end{equation}
where the latter morphism is constructed over the Shimura variety $\Sh(G_{\ttS_\delta, \ttT_\delta})$ of lower dimension.  (Here we choose the shift $\boldsymbol t'_{\gotha'}$ for a periodic semi-meander $\gotha'$ for $(\ttS_\delta, \ttT_\delta)$ to be $\boldsymbol t_{\delta, \tilde \gotha'}$, where $\tilde \gotha'$ is a periodic semi-meander of $(\ttS, \ttT)$ consisting of all the arcs and semi-lines of $\gotha'$ together with the arc $\delta$.)

We can now complete the induction in this case, since we have already known Theorem~\ref{T:intersection combinatorics} for $\Res_{\gotha_\res} \circ \Gys_{\gothb_\res}$ by induction hypothesis.
\begin{enumerate}
\item
If $\langle \gotha, \gothb\rangle = 0$, then $\langle \gotha_\res, \gothb_\res\rangle = 0$ for simple combinatorics reasons. Then the $\pi$-isotypical component of  $\Res_{\gotha_\res} \circ \Gys_{\gothb_\res}$ factors through the cohomology of a lower dimensional Shimura variety, so the same is true for $\Res_{\gotha} \circ \Gys_{\gothb}$.
\item[(2) or (3)]
We have $\langle \gotha|\gothb\rangle = (-2)^{m_0}v^{m_v}$ or $(-2)^{m_0}T^{m_T}$. The picture $D(\gotha_\res, \gothb_\res)$ is given by removing from $D(\gotha, \gothb)$ the contractible loop consisting of $\delta_\gotha$ and $\delta_\gothb$.
So we have 
\[
\langle \gotha_\res| \gothb_\res \rangle = (-2)^{-1} \langle \gotha|\gothb\rangle = \begin{cases} (-2)^{m_0-1}v^{m_v} & \textrm{if }r<\frac d2,\\ (-2)^{m_0-1}T^{m_T} & \textrm{if }r = \frac d2.
\end{cases}
\]
Since we have
$\ell(\gotha) - \ell(\gotha_\res) = \ell(\gothb) - \ell(\gothb_\res) = \ell(\delta)$ and $\boldsymbol t'_{\gotha_\res} \boldsymbol t'^{-1}_{\gothb_\res} = \boldsymbol t_\gotha \boldsymbol t^{-1}_\gothb$, we see that $\eta_{\ttS_\gotha, \ttS_\gothb}$ gives the same link morphism as $\eta_{\ttS_{\delta, \gotha_\res}, \ttS_{\delta, \gothb_\res}}$ (with the same indentation and shift). By the inductive hypothesis and \eqref{E:proof case ii},
\begin{align*}
\Res_{\gotha} &\circ \Gys_{\gothb} = -2p^{\ell(\delta)} \cdot \Res_{\gotha_\res} \circ \Gys_{\gothb_\res}
\\
& =
\begin{cases}
-2p^{\ell(\delta)} \cdot (-2)^{m_0-1}\cdot p ^{(\ell(\gotha_\res) + \ell(\gothb_\res))/2} \eta_{\ttS_{\delta, \gotha_\res}, \ttS_{\delta, \gothb_\res}, (z)}^\star, & \textrm{if }r < \frac d2,
\\
-2p^{\ell(\delta)} \cdot (-2)^{m_0-1}\cdot p ^{(\ell(\gotha_\res) + \ell(\gothb_\res))/2} (T_\gothp/p^{g/2})^{m_T} \eta_{\ttS_{\delta, \gotha_\res}, \ttS_{\delta, \gothb_\res}, (z)}^\star
& \textrm{if }r = \frac d2,
\end{cases}
\\
& =
\begin{cases}
(-2)^{m_0}\cdot p ^{(\ell(\gotha) + \ell(\gothb))/2} \eta_{\ttS_\gotha, \ttS_\gothb, (z)}^\star, & \textrm{if }r < \frac d2,
\\ (-2)^{m_0}\cdot p ^{(\ell(\gotha) + \ell(\gothb))/2} (T_\gothp/p^{g/2})^{m_T} \eta_{\ttS_\gotha, \ttS_\gothb, (z)}^\star
& \textrm{if }r = \frac d2.
\end{cases}
\end{align*}

\end{enumerate}

\subsection{Case (iii)}
This is the case when there is an arc $\delta_\gotha$ in $\gotha$ connecting $\tau$ and $\tau^-$ wrapped around the cylinder from right to left, and hence $\delta_\gotha$ and $\delta_\gothb$ together form a non-contractible loop in $D(\gotha,\gothb)$.  We are forced to have $d=2r$ in this case (and hence $p$ splits in $E/F$).
Moreover, the arc $\delta_\gotha$  must lie over all other arcs of $\gotha$ (if there is any).
We now define a list of notations followed by an example.
\begin{itemize}
\item
Let $\delta_{\gotha, \bullet}$ (resp. $\delta_{\gothb, \bullet}$) denote the periodic semi-meander of two nodes obtained from $\gotha$ (resp.  $\gothb$) by keeping $\delta_\gotha$ (resp. $\delta_{\gothb}$) and its end-nodes and replacing  the other nodes of $\gotha$ replaced by plus signs.
\item
Let $\gotha^\flat_\bullet = \gotha \backslash \delta_\gotha$ denote the periodic semi-meander for $\ttS_\gotha$ given by removing the arc $\delta_\gotha$ from $\gotha$ and replacing the nodes at $\tau$ and $\tau^-$ by plus signs.
\item Let $\gotha^\flat$ denote the periodic semi-meander for $\ttS$ given by removing the arc $\delta_\gotha$ and adjoining two semi-lines attached to both $\tau$ and $\tau^-$.
\item 
Let $\gotha^\star$ denote the semi-meander for $\ttS$ obtained by replacing the arc $\delta_\gotha$ in $\gotha$ with $\delta_\gothb$ instead.
\end{itemize}
For example, if $
\psset{unit=0.3}
\gotha =
\begin{pspicture*}(-.5,-1.5)(5.5,1.5)
\psset{linewidth=1pt}
\psset{linecolor=red}
\psbezier{-}(0,0)(0,2)(5,2)(5,0)
\psarc{-}(3.5,0){0.5}{0}{180}
\psarc{-}(1.5,0){0.5}{0}{180}
\psset{linecolor=black}
\psdots(0,0)(1,0)(2,0)(3,0)(4,0)(5,0)
\rput(5.2,-.92){\psframebox*{\begin{tiny}$\tau^{-}$\end{tiny}}}
\rput(0,-.8){\psframebox*{\begin{tiny}$\tau$\end{tiny}}}
\end{pspicture*}$ and $
\psset{unit=0.3}
\gothb =
\begin{pspicture*}(-0.5,-1.5)(5.5,1.5)
\psset{linewidth=1pt}
\psset{linecolor=red}
\psarc{-}(5.5,0){0.5}{90}{180}
\psbezier{-}(1,0)(1,1.5)(4,1.5)(4,0)
\psarc{-}(2.5,0){0.5}{0}{180}
\psarc{-}(-0.5,0){0.5}{0}{90}
\psset{linecolor=black}
\psdots(0,0)(1,0)(2,0)(3,0)(4,0)(5,0)
\rput(5.2,-.92){\psframebox*{\begin{tiny}$\tau^{-}$\end{tiny}}}
\rput(0,-.8){\psframebox*{\begin{tiny}$\tau$\end{tiny}}}
\end{pspicture*}$, and we choose $\delta_\gothb$ to be the arc of $\gothb$ linking the first and the last nodes ($\tau$ and $\tau^-$ respectively in the pictures), then $\delta_\gotha$ is the arc linking the first and the last nodes (but ``over" all other arcs).
In this case, we have
\[
\psset{unit=0.3}
\delta_{\gotha, \bullet} =
\begin{pspicture*}(-.5,-1.5)(5.5,1.5)
\psset{linewidth=1pt}
\psset{linecolor=red}
\psbezier{-}(0,0)(0,2)(5,2)(5,0)
\psset{linecolor=black}
\psdots[dotstyle=+](1,0)(2,0)(3,0)(4,0)
\psdots(0,0)(5,0)
\rput(5.2,-.92){\psframebox*{\begin{tiny}$\tau^{-}$\end{tiny}}}
\rput(0,-.8){\psframebox*{\begin{tiny}$\tau$\end{tiny}}}
\end{pspicture*}
,\quad
\delta_{\gothb, \bullet} =
\begin{pspicture*}(-.5,-1.5)(5.5,1.5)
\psset{linewidth=1pt}
\psset{linecolor=red}
\psarc{-}(5.5,0){0.5}{90}{180}
\psarc{-}(-0.5,0){0.5}{0}{90}
\psset{linecolor=black}
\psdots[dotstyle=+](1,0)(2,0)(3,0)(4,0)
\psdots(0,0)(5,0)
\rput(5.2,-.92){\psframebox*{\begin{tiny}$\tau^{-}$\end{tiny}}}
\rput(0,-.8){\psframebox*{\begin{tiny}$\tau$\end{tiny}}}
\end{pspicture*}
,\quad
\gotha^\flat_\bullet =
\begin{pspicture*}(-.5,-1.5)(5.5,1)
\psset{linewidth=1pt}
\psset{linecolor=red}
\psarc{-}(3.5,0){0.5}{0}{180}
\psarc{-}(1.5,0){0.5}{0}{180}
\psset{linecolor=black}
\psdots(1,0)(2,0)(3,0)(4,0)
\psdots[dotstyle=+](0,0)(5,0)
\rput(5.2,-.92){\psframebox*{\begin{tiny}$\tau^{-}$\end{tiny}}}
\rput(0,-.8){\psframebox*{\begin{tiny}$\tau$\end{tiny}}}
\end{pspicture*},
\]
\[
\psset{unit=.3}
\gotha^\flat=
\begin{pspicture*}(-.5,-1.5)(5.5,1)
\psset{linewidth=1pt}
\psset{linecolor=red}
\psline(0,0)(0,1)
\psline(5,0)(5,1)
\psarc{-}(3.5,0){0.5}{0}{180}
\psarc{-}(1.5,0){0.5}{0}{180}
\psset{linecolor=black}
\psdots(0,0)(1,0)(2,0)(3,0)(4,0)(5,0)
\rput(5.2,-.92){\psframebox*{\begin{tiny}$\tau^{-}$\end{tiny}}}
\rput(0,-.8){\psframebox*{\begin{tiny}$\tau$\end{tiny}}}
\end{pspicture*}
, \quad
\gotha^\star = 
\begin{pspicture*}(-.5,-1.5)(5.5,1.5)
\psset{linewidth=1pt}
\psset{linecolor=red}
\psarc{-}(3.5,0){0.5}{0}{180}
\psarc{-}(1.5,0){0.5}{0}{180}
\psarc{-}(5.5,0){0.5}{90}{180}
\psarc{-}(-0.5,0){0.5}{0}{90}
\psset{linecolor=black}
\psdots(0,0)(1,0)(2,0)(3,0)(4,0)(5,0)
\rput(5.2,-.92){\psframebox*{\begin{tiny}$\tau^{-}$\end{tiny}}}
\rput(0,-.8){\psframebox*{\begin{tiny}$\tau$\end{tiny}}}
\end{pspicture*}
,\quad  \textrm{and }
\gothb_\res =
\begin{pspicture*}(-.5,-1.5)(5.5,1.2)
\psset{linewidth=1pt}
\psset{linecolor=red}
\psbezier{-}(1,0)(1,1.5)(4,1.5)(4,0)
\psarc{-}(2.5,0){0.5}{0}{180}
\psset{linecolor=black}
\psdots(1,0)(2,0)(3,0)(4,0)
\psdots[dotstyle=+](0,0)(5,0)
\rput(5.2,-.92){\psframebox*{\begin{tiny}$\tau^{-}$\end{tiny}}}
\rput(0,-.8){\psframebox*{\begin{tiny}$\tau$\end{tiny}}}
\end{pspicture*}
.
\]

Our goal is to prove an equality
\begin{equation}
\label{E:proof case iii}
\Res_\gotha \circ \Gys_\gothb = T_\gothp \circ \eta^\star_{\ttS_{\gotha^\star}, \ttS_\gotha} \circ \Res_{\gotha^\flat_\bullet} \circ \Gys_{\gothb_\res},
\end{equation}
where $\eta^\star_{\ttS_{\gotha^\star}, \ttS_\gotha}$ is a certain link morphism associated to the trivial link $\eta_{\ttS_{\gotha^\star}, \ttS_\gotha}: \ttS_{\gotha^\star} \to \ttS_\gotha$ which we specify later.

Using the discussion of Subsection~\ref{SS:decomposition of semi-meanders}, we see that the morphism $\Res_\gotha \circ \Gys_\gothb$ is the composition from top-left to bottom-left of the following diagram, by going first rightward to the end, then downwards to the bottom, and finally to the left by the long arrow:
\begin{equation}
\label{E:commutative diagram case iii}
\xymatrix{
H^0\big(\Sh(G_{\ttS_\gothb, \ttT_\gothb})\big) \ar[d]_{\Gys_{\gothb_\res}}
\\
H^{d-2}\big(\Sh(G_{\ttS_{\delta_{\gothb}}, \ttT_{\delta_{\gothb}}}) \big)(\frac d2-1) \ar[r]^-{\pi_{\delta_\gothb}^*}
\ar[d]_{\mathrm{Restr.}}
&
H^{d-2}\big(\Sh(G_{\ttS, \ttT})_{\delta_\gothb}\big)(\frac d2 -1)
\ar[r]^-{\mathrm{Gysin}}
\ar[d]^{\mathrm{Restr.}}
&
H^{d}\big(\Sh(G_{\ttS, \ttT})\big)(\frac{d}{2})
\ar[d]^{\mathrm{Restr.}}
\\
H^{d-2}\big(\Sh(G_{\ttS_{\delta_\gothb}, \ttT_{\delta_\gothb}})_{\gotha^\flat_\bullet} \big)(\frac{d}{2}-1)
\ar[d]^-{\pi_{\gotha_\bullet^\flat, !}}
\ar[r]^{\pi_{\delta_\gothb}^*}
&
H^{d-2}\big(\Sh(G_{\ttS, \ttT})_{\gotha^\star}\big)(\frac{d}{2}-1)
\ar[r]^-{\mathrm{Gysin}}
&
H^{d}\big(\Sh(G_{\ttS, \ttT})_{\gotha^\flat}\big)(\frac d2)
\ar[d]^{\pi_{\gotha^\flat, !}}
\\
H^0
\big(\Sh(G_{\ttS_{\gotha^\star}, \ttT_{\gotha^\star}})
\big)
\ar[r]^-{\pi^*_{\delta_{\gothb, \bullet}}}
\ar@{-->}[d]
&
H^0\big(\Sh(G_{\ttS_{\gotha^\flat}, \ttT_{\gotha^\flat}})_{\delta_{\gothb, \bullet}}
\big)
\ar[r]^{\mathrm{Gysin}}
&
H^2
\big(\Sh(G_{\ttS_{\gotha^\flat}, \ttT_{\gotha^\flat}})
\big)(1)
\ar[d]^{\mathrm{Restr.}}
\\
H^{0}\big(\Sh(G_{\ttS_\gotha, \ttT_\gotha})
\big)
&&
H^2
\big(\Sh(G_{\ttS_{\gotha^\flat}, \ttT_{\gotha^\flat}})_{\delta_{\gotha,\bullet}}
\big)(1)
\ar[ll]^{\pi_{\delta_{\gotha, \bullet}, !}}
}
\end{equation}
The commutativity of the left  top square in the diagram above follows from the commutative diagram of morphisms of varieties, and that of the right top  square follows from \eqref{E:gysin-restriction} and the fact that $\Sh(G_{\ttS,\ttT})_{\gotha^{\star}}$ is the transversal intersection of $\Sh(G_{\ttS,\ttT})_{\delta_{\gothb}}$ and $\Sh(G_{\ttS,\ttT})_{\gotha^{\flat}}$ in $\Sh(G_{\ttS,\ttT})$.

The middle rectangle of \eqref{E:commutative diagram case iii} is commutative by \eqref{E:cartesian decomposition} (applied with our $\gotha^\star$, $\gotha^\flat$, and $\delta_\gothb$ being the $\gotha$, $\gotha^{\flat}$, and $\gotha^{\flat\prime}$ therein, respectively).
Now, for the bottom rectangle, we are simply working with the Shimura variety $\Sh(G_{\ttS_{\gotha^\flat}, \ttT_{\gotha^\flat}})$ and hence are reduced to the case of $d=2$.  

Using Subsection~\ref{S:case of g=2 Tp}, we see that the dotted downward arrow on the left is exactly the operator $T_\gothp$ times a link morphism $\eta^\star_{\ttS_\gotha, \ttS_{\gotha^\star}}$ associated to the link $\eta_{\ttS_\gotha, \ttS_{\gotha^\star}}: \ttS_\gotha \to \ttS_{\gotha^\star}$, with indentation degree $-2 \ell(\delta_{\gothb, \bullet})$ and shift 
\begin{equation}
\label{E:shift case 3-1}
\varpi_{\bar \gothq}^{-1} 
\boldsymbol t_{\gotha^\flat, \gotha^\star}^{-1} \boldsymbol t_{\gotha^\flat, \gotha}
= \varpi_{\bar \gothq}^{-1} \boldsymbol t_{\gotha^\star}^{-1}
\boldsymbol t_\gotha.
\end{equation}

To sum up, the morphism $\Res_\gotha \circ \Gys_\gothb$ is the same as the composition of the downward arrows on the left in \eqref{E:commutative diagram case iii}. So we have proved \eqref{E:proof case iii}.

We now complete the inductive proof of Theorem~\ref{T:intersection combinatorics}. The condition for case (iii) implies that we are in the setup of Theorem~\ref{T:intersection combinatorics}(3).
Assume that we have $\langle \gotha|\gothb\rangle = (-2)^{m_0}T^{m_T}$. The picture $D(\gotha^\flat_\bullet, \gothb_\res)$ is given by removing from $D(\gotha, \gothb)$ the non-contractible loop consisting of $\delta_\gotha$ and $\delta_\gothb$.
So we have 
\[
\langle \gotha^\flat_\bullet| \gothb_\res \rangle = T^{-1} \langle \gotha|\gothb\rangle =  (-2)^{m_0}T^{m_T-1}.
\]
By inductive hypothesis applied to the Shimura variety $\Sh_{K_p}(G_{\ttS_{\delta_\gothb}, \ttT_{\delta_\gothb}})$ of lower dimension (where the shift $\boldsymbol t'_{\gotha'}$ for a periodic semi-meander $\gotha'$ for $(\ttS_{\delta_\gothb}, \ttT_{\delta_\gothb})$ is taken to be $\boldsymbol t_{\delta_\gothb, \tilde \gotha'}$, where $\tilde \gotha'$ is a periodic semi-meander of $(\ttS, \ttT)$ consisting of all the arcs and semi-lines of $\gotha'$ together with the arc $\delta_\gothb$),
\begin{equation}
\label{E:proof case iii2}
\Res_{\gotha^\flat_\bullet} \circ \Gys_{\gothb_\res} = 
(-2)^{m_0}\cdot p^{(\ell(\gotha^\flat_\bullet)+ \ell(\gothb_\res))/2} (T_\gothp/p^{g/2})^{m_T-1} \circ \eta_{\ttS_{\gotha^\flat_\bullet}, \ttS_{\gothb_\res},(z')}^\star,
\end{equation}
where $\eta_{\ttS_{\gotha^\flat_\bullet}, \ttS_{\gothb_\res},(z')}^\star$ is the trivial link morphism with shift 
\begin{equation}
\label{E:shift case 3-2}
\boldsymbol t'_{\gotha^\flat_\bullet} \boldsymbol t'^{-1}_{\gothb_\res} \varpi_{\bar \gothq}^{-m_T+1}
= \boldsymbol{t}_{\delta_\gothb, \gotha^\star} \boldsymbol t^{-1}_{\delta_\gothb, \gothb}
\varpi_{\bar \gothq}^{-m_T+1}
= \boldsymbol t_{\gotha^\star} \boldsymbol t_{\gothb}^{-1}\varpi_{\bar \gothq}^{-m_T+1}
\end{equation}
and indentation degree  $z = \ell(\gotha^\flat_\bullet) - \ell(\gothb_\res)-(m_T-1) g$.
Combining \eqref{E:proof case iii} and \eqref{E:proof case iii2} with the numerical equalities
\[
\ell(\gotha^\flat_\bullet) = \ell(\gotha) - g + \ell(\delta_{\gothb,\bullet}) \quad \textrm{and} \quad \ell(\gothb_\res) = \ell(\gothb) - \ell(\delta_{\gothb,\bullet}),
\]
we deduce that
\[
\Res_\gotha \circ \Gys_\gothb = (-2)^{m_0} p^{(\ell(\gotha) + \ell(\gothb))/2}(T_\gothp / p^{g/2})^{m_T} \circ \eta_{\ttS_\gotha, \ttS_{\gotha^\star}}^\star \circ \eta_{\ttS_{\gotha^\flat_\bullet}, \ttS_{\gothb_\res},(z')}^\star.
\]
The composition of the last two link morphism is a link morphism $\ttS_\gotha \to \ttS_{\gotha^\star} = \ttS_{\gotha_\bullet^\flat} \to \ttS_{\gothb_\res} = \ttS_\gothb$, whose indentation degree is
\[
-2 \ell(\delta_{\gothb, \bullet}) + z = \ell(\gotha) - \ell(\gothb) - m_Tg
\]
and whose shift is equal to the product of \eqref{E:shift case 3-1} and \eqref{E:shift case 3-2}, or explicitly,
\[
\varpi_{\bar \gothq}^{-1} \boldsymbol t_{\gotha^\star}^{-1}
\boldsymbol t_\gotha  \cdot \boldsymbol t_{\gotha^\star} \boldsymbol t_{\gothb}^{-1} \varpi_{\bar \gothq}^{-m_T+1} = \boldsymbol t_{\gotha}
\boldsymbol t_\gothb^{-1} \varpi_{\bar \gothq}^{-m_T}.
\]

This completes Theorem~\ref{T:intersection combinatorics}(3) in this case.

\subsection{Case (iv)}
Recall that $\delta_\gothb$ is a basic arc of $\gothb$ linking $\tau$ with $\tau^-$ from right to left.
We are looking at the situation when at least one of  $\tau$ and $\tau^-$ is linked to an arc in $\gotha$ that is not connected to the other node.
We start with a long list of combinatorics construction, followed by two examples.
\begin{itemize}
\item
Let $\gotha^\circ$ be the periodic semi-meander for $\ttS_{\delta_\gothb}$ given by first replacing the nodes $\tau, \tau^-$ by plus signs, then adjoining the basic arc $\delta_\gothb$ to $\gotha$ from \emph{underneath} the band to connect to the arcs or links that are already linked to the nodes $\tau^-,\tau$, and finally continuously deform the picture so that all arcs are above the band and all semi-lines are straight.
Intuitively, one can view the last step as ``pulling the strings to tighten the drawing."

\item
Let $\gotha^\star$ denote the periodic semi-meander for $\ttS$ modified from $\gotha^{\star}$ by replacing the plus signs $\tau, \tau^-$ by nodes and adjoining them by the arc $\delta_\gothb$.

\item
Let $\gotha^\flat$ denote the periodic semi-meander for $\ttS$ that consists of two semi-lines at both $\tau$ and $\tau^-$,  all arcs in $\gotha$ that do not intersect with these two semi-lines, and semi-lines at the nodes that are not connected to anything above.
Let $r'$ ($<r$) denote the number of arcs in $\gotha^\flat$ so that $\gotha^\flat \in \gothB_\ttS^{r'}$.
\item
Let $\gotha^\flat_+$ denote the periodic semi-meander for $\ttS_{\delta_\gothb}$ obtained by removing the two semi-lines at both $\tau$ and $\tau^-$ from $\gotha^\flat$ and replacing the nodes at $\tau, \tau^-$ by plus signs.
\item
We use $\gotha^\flat_\tau$ to denote the periodic semi-meander for $\ttS$ given by replacing in $\gotha^\flat$ the two semi-lines connected to $\tau$ and $\tau^-$ by $\delta_\gothb$.
\item
We use  $\delta_{\gothb/\gotha^\flat}$ to denote the periodic semi-meander for $\ttS_{\gotha^\flat}$ consisting of only one arc  $\delta_\gothb$ (and all semi-lines of $\gotha^\flat_+$).
\item
We choose and fix an arc $\delta_{\gotha}$ of $\gotha$ such that
\begin{itemize}
\item {\it Case (a)}. either  $\tau$ is the left end-node of $\delta_\gotha$, or
\item {\it Case (b)}.
$\tau^-$ is the right end-node of $\delta_\gotha$.
\end{itemize}
Such an arc $\delta_{\gotha}$ exists under the assumption of Case (iv) (there might be one or two such arcs).

We use $\tau'$ to denote the right endpoint of $\delta_\gotha$. Thus, $\tau'$ is neither $\tau$ nor $\tau^-$ in Case (a), and $\tau'$ is the same as $ \tau^-$ in Case (b).

\item
We use $\gotha^\flat_{\tau'}$ to denote the periodic semi-meander for $\ttS$ given by deleting from $\gotha^\flat$ the two semi-lines connected to the end-nodes of $\delta_\gotha$ and then adjoining the arc $\delta_\gotha$.
\item
We use  $\delta_{\gotha/\gotha^\flat}$ to denote the periodic semi-meander for $\ttS_{\gotha^\flat}$ consisting of only one arc $\delta_\gotha$ (and all semi-lines of $\gotha^\flat_{\tau'}$).

\item
We use $\eta_{\gotha^\flat_{\tau'}, \gotha^\flat_{\tau}}$ to denote the link from $\ttS_{\gotha^\flat_{\tau'}}$ to $\ttS_{\gotha^\flat_{\tau}}$ given by the reduction of $D(\delta_{\gothb/\gotha^\flat}, \delta_{\gotha/\gotha^\flat})$ as defined in \ref{S:semi-meanders}.

\item
We use $\gotha^\flat_\res$ to denote the periodic semi-meander for $\ttS_{\gotha^\flat_{\tau'}}$ given by deleting all arcs in $\gotha$ that has already appeared in $\gotha^\flat_{\tau'}$, and changing their end-nodes to plus signs.

\item
We use $\gotha^\circ_\res$ to  denote the periodic  semi-meander for $\ttS_{\gotha^\circ}$ with nodes given by deleting all arcs in $\gotha^\circ$ that has already appeared in $\gotha^\flat_+$, and changing their end-nodes to plus signs.

\item
We use $\eta_{\gotha, \gotha^\star}$ to denote the link from $\ttS_\gotha$  to $\ttS_{\gotha^\star} = (\ttS_{\delta_\gothb})_{\gotha^\circ}$ which is the restriction of $\eta_{\gotha^\flat_{\tau'}, \gotha^\flat_{\tau}}$ to $\ttS_{\gotha}$.

\item We use $\gothb_{\res}$ to denote the semi-meander for $\ttS_{\delta_{\gothb}}$ obtained by deleting the arc $\delta_{\gothb}$ and replacing the nodes $\tau, \tau^-$ by plus signs.
\end{itemize}

We now give two examples. In both instances, $\gothb$ has a basic arc connecting the node $1$ with $2$ (starting with node $0$ on the left). So node $1$ is $\tau^-$ and node $2$ is $\tau$.

\textbf{Example 1:} We take $\psset{unit=0.3}
\gotha =
\begin{pspicture*}(-.5,-0.2)(9.5,3)
\psset{linewidth=1pt}
\psset{linecolor=red}
\psline{-}(0,0)(0,3)
\psline{-}(9,0)(9,3)
\psbezier{-}(1,0)(1,3.5)(8,3.5)(8,0)
\psbezier{-}(2,0)(2,1.5)(5,1.5)(5,0)
\psarc{-}(3.5,0){0.5}{0}{180}
\psarc{-}(6.5,0){0.5}{0}{180}
\psset{linecolor=black}
\psdots(0,0)(1,0)(2,0)(3,0)(4,0)(9,0)(8,0)(7,0)(6,0)(5,0)
\end{pspicture*}$,
then the arc $\delta_\gotha$ has to be the one connecting nodes $2$ and $5$ and $\tau'$ is the node $5$. 
We are in Case (a), and we have
\begin{eqnarray*}
\psset{unit=0.3}
\delta_\gotha =
\begin{pspicture*}(-.5,-1.5)(9.5,2)
\psset{linewidth=1pt}
\psset{linecolor=red}
\psbezier{-}(2,0)(2,1.5)(5,1.5)(5,0)
\psset{linecolor=black}
\psdots(0,0)(1,0)(2,0)(3,0)(4,0)(9,0)(8,0)(7,0)(6,0)(5,0)
\rput(1,-.92){\psframebox*{\begin{tiny}$\tau^{-}$\end{tiny}}}
\rput(2,-1.1){\psframebox*{\begin{tiny}$\tau$\end{tiny}}}
\rput(5,-1){\psframebox*{\begin{tiny}$\tau'$\end{tiny}}}
\end{pspicture*},
&
\psset{unit=0.3}
\gotha^\flat =
\begin{pspicture*}(-.5,-1.5)(9.5,1.5)
\psset{linewidth=1pt}
\psset{linecolor=red}
\psline{-}(0,0)(0,1.5)
\psline{-}(9,0)(9,1.5)
\psline{-}(1,0)(1,1.5)
\psline{-}(2,0)(2,1.5)
\psline{-}(5,0)(5,1.5)
\psline{-}(8,0)(8,1.5)
\psarc{-}(3.5,0){0.5}{0}{180}
\psarc{-}(6.5,0){0.5}{0}{180}
\psset{linecolor=black}
\psdots(0,0)(1,0)(2,0)(3,0)(4,0)(9,0)(8,0)(7,0)(6,0)(5,0)
\rput(1,-.92){\psframebox*{\begin{tiny}$\tau^{-}$\end{tiny}}}
\rput(2,-1.1){\psframebox*{\begin{tiny}$\tau$\end{tiny}}}
\rput(5,-1){\psframebox*{\begin{tiny}$\tau'$\end{tiny}}}
\end{pspicture*}
,&
\psset{unit=0.3}
\gotha^\flat_+ =
\begin{pspicture*}(-.5,-1.5)(9.5,1.5)
\psset{linewidth=1pt}
\psset{linecolor=red}
\psline{-}(0,0)(0,1.5)
\psline{-}(9,0)(9,1.5)
\psline{-}(5,0)(5,1.5)
\psline{-}(8,0)(8,1.5)
\psarc{-}(3.5,0){0.5}{0}{180}
\psarc{-}(6.5,0){0.5}{0}{180}
\psset{linecolor=black}
\psdots(0,0)(3,0)(4,0)(9,0)(8,0)(7,0)(6,0)(5,0)
\psdots[dotstyle=+](1,0)(2,0)
\rput(1,-.92){\psframebox*{\begin{tiny}$\tau^{-}$\end{tiny}}}
\rput(2,-1.1){\psframebox*{\begin{tiny}$\tau$\end{tiny}}}
\rput(5,-1){\psframebox*{\begin{tiny}$\tau'$\end{tiny}}}
\end{pspicture*}
,
\\
\psset{unit=0.3}
\gotha^\flat_\tau =
\begin{pspicture*}(-.5,-1.5)(9.5,1.5)
\psset{linewidth=1pt}
\psset{linecolor=red}
\psline{-}(0,0)(0,1.5)
\psline{-}(9,0)(9,1.5)
\psarc{-}(1.5,0){0.5}{0}{180}
\psline{-}(5,0)(5,1.5)
\psline{-}(8,0)(8,1.5)
\psarc{-}(3.5,0){0.5}{0}{180}
\psarc{-}(6.5,0){0.5}{0}{180}
\psset{linecolor=black}
\psdots(0,0)(1,0)(2,0)(3,0)(4,0)(9,0)(8,0)(7,0)(6,0)(5,0)
\rput(1,-.92){\psframebox*{\begin{tiny}$\tau^{-}$\end{tiny}}}
\rput(2,-1.1){\psframebox*{\begin{tiny}$\tau$\end{tiny}}}
\rput(5,-1){\psframebox*{\begin{tiny}$\tau'$\end{tiny}}}
\end{pspicture*},
&
\psset{unit=0.3}
\gotha^\flat_{\tau'} =
\begin{pspicture*}(-.5,-1.5)(9.5,2)
\psset{linewidth=1pt}
\psset{linecolor=red}
\psline{-}(0,0)(0,2)
\psline{-}(9,0)(9,2)
\psline{-}(1,0)(1,2)
\psline{-}(8,0)(8,2)
\psbezier{-}(2,0)(2,1.5)(5,1.5)(5,0)
\psarc{-}(3.5,0){0.5}{0}{180}
\psarc{-}(6.5,0){0.5}{0}{180}
\psset{linecolor=black}
\psdots(0,0)(1,0)(2,0)(3,0)(4,0)(9,0)(8,0)(7,0)(6,0)(5,0)
\rput(1,-.92){\psframebox*{\begin{tiny}$\tau^{-}$\end{tiny}}}
\rput(2,-1.1){\psframebox*{\begin{tiny}$\tau$\end{tiny}}}
\rput(5,-1){\psframebox*{\begin{tiny}$\tau'$\end{tiny}}}
\end{pspicture*},
&
\psset{unit=0.3}
\delta_{\gothb/\gotha^\flat} =
\begin{pspicture*}(-.5,-1.5)(9.5,1.5)
\psset{linewidth=1pt}
\psset{linecolor=red}
\psline{-}(0,0)(0,1.5)
\psline{-}(9,0)(9,1.5)
\psline{-}(5,0)(5,1.5)
\psline{-}(8,0)(8,1.5)
\psarc{-}(1.5,0){0.5}{0}{180}
\psset{linecolor=black}
\psdots(0,0)(9,0)(8,0)(5,0)(1,0)(2,0)
\psdots[dotstyle=+](3,0)(4,0)(7,0)(6,0)
\rput(1,-.92){\psframebox*{\begin{tiny}$\tau^{-}$\end{tiny}}}
\rput(2,-1.1){\psframebox*{\begin{tiny}$\tau$\end{tiny}}}
\rput(5,-1){\psframebox*{\begin{tiny}$\tau'$\end{tiny}}}
\end{pspicture*}
,\\
\psset{unit=0.3}
\delta_{\gotha/\gotha^\flat} =
\begin{pspicture*}(-.5,-1.5)(9.5,1.5)
\psset{linewidth=1pt}
\psset{linecolor=red}
\psline{-}(0,0)(0,1.5)
\psline{-}(9,0)(9,1.5)
\psline{-}(1,0)(1,1.5)
\psline{-}(8,0)(8,1.5)
\psbezier{-}(2,0)(2,1.5)(5,1.5)(5,0)
\psset{linecolor=black}
\psdots(0,0)(9,0)(8,0)(5,0)(1,0)(2,0)
\psdots[dotstyle=+](3,0)(4,0)(7,0)(6,0)
\rput(1,-.92){\psframebox*{\begin{tiny}$\tau^{-}$\end{tiny}}}
\rput(2,-1.1){\psframebox*{\begin{tiny}$\tau$\end{tiny}}}
\rput(5,-1){\psframebox*{\begin{tiny}$\tau'$\end{tiny}}}
\end{pspicture*}
,&
\psset{unit=0.3}
\gotha^\flat_\res =
\begin{pspicture*}(-.5,-1.5)(9.5,3)
\psset{linewidth=1pt}
\psset{linecolor=red}
\psline{-}(0,0)(0,3)
\psline{-}(9,0)(9,3)\psbezier{-}(1,0)(1,3.5)(8,3.5)(8,0)
\psset{linecolor=black}
\psdots(0,0)(1,0)(9,0)(8,0)
\psdots[dotstyle=+](2,0)(3,0)(4,0)(7,0)(6,0)(5,0)
\rput(1,-.92){\psframebox*{\begin{tiny}$\tau^{-}$\end{tiny}}}
\rput(2,-1.1){\psframebox*{\begin{tiny}$\tau$\end{tiny}}}
\rput(5,-1){\psframebox*{\begin{tiny}$\tau'$\end{tiny}}}
\end{pspicture*}
,&
\psset{unit=0.3}
\gotha^\circ =
\begin{pspicture*}(-.5,-1.5)(9.5,1.5)
\psset{linewidth=1pt}
\psset{linecolor=red}
\psline{-}(0,0)(0,1.5)
\psline{-}(9,0)(9,1.5)
\psbezier{-}(5,0)(3,1.5)(6,2.5)(8,0)
\psarc{-}(3.5,0){0.5}{0}{180}
\psarc{-}(6.5,0){0.5}{0}{180}
\psset{linecolor=black}
\psdots(0,0)(3,0)(4,0)(9,0)(8,0)(7,0)(6,0)(5,0)
\psdots[dotstyle=+](1,0)(2,0)
\rput(1,-.92){\psframebox*{\begin{tiny}$\tau^{-}$\end{tiny}}}
\rput(2,-1.1){\psframebox*{\begin{tiny}$\tau$\end{tiny}}}
\rput(5,-1){\psframebox*{\begin{tiny}$\tau'$\end{tiny}}}
\end{pspicture*}
,\footnotemark
\\
\psset{unit=0.3}
\gotha^\star =
\begin{pspicture*}(-.5,-1.5)(9.5,1.5)
\psset{linewidth=1pt}
\psset{linecolor=red}
\psline{-}(0,0)(0,1.5)
\psline{-}(9,0)(9,1.5)
\psbezier{-}(5,0)(3,1.5)(6,2.5)(8,0)
\psarc{-}(1.5,0){0.5}{0}{180}
\psarc{-}(3.5,0){0.5}{0}{180}
\psarc{-}(6.5,0){0.5}{0}{180}
\psset{linecolor=black}
\psdots(0,0)(1,0)(2,0)(3,0)(4,0)(9,0)(8,0)(7,0)(6,0)(5,0)
\rput(1,-.92){\psframebox*{\begin{tiny}$\tau^{-}$\end{tiny}}}
\rput(2,-1.1){\psframebox*{\begin{tiny}$\tau$\end{tiny}}}
\rput(5,-1){\psframebox*{\begin{tiny}$\tau'$\end{tiny}}}
\end{pspicture*}, &
\psset{unit=0.3}
\gotha^\circ_\res =
\begin{pspicture*}(-.5,-1.5)(9.5,1.5)
\psset{linewidth=1pt}
\psset{linecolor=red}
\psline{-}(0,0)(0,1.5)
\psline{-}(9,0)(9,1.5)
\psbezier{-}(5,0)(3,1.5)(6,2.5)(8,0)
\psset{linecolor=black}
\psdots(0,0)(9,0)(8,0)(5,0)
\psdots[dotstyle=+](1,0)(2,0)(3,0)(4,0)(7,0)(6,0)
\rput(1,-.92){\psframebox*{\begin{tiny}$\tau^{-}$\end{tiny}}}
\rput(2,-1.1){\psframebox*{\begin{tiny}$\tau$\end{tiny}}}
\rput(5,-1){\psframebox*{\begin{tiny}$\tau'$\end{tiny}}}
\end{pspicture*}
,&
\psset{unit=0.3}
\eta_{\gotha_{\tau'}^\flat, \gotha_{\tau}^\flat}=
\begin{pspicture*}(-.5,-1.5)(9.5,2.3)
\psset{linecolor=red}
\psset{linewidth=1pt}
\psline{-}(0,2)(0,0)
\psline{-}(8,2)(8,0)
\psline{-}(9,2)(9,0)
\psbezier(5,0)(5,2)(1,0)(1,2)
\psset{linecolor=black}
\psdots(0,2)(5,0)(8,2)(9,2)
\psdots(0,0)(1,2)(8,0)(9,0)
\psdots[dotstyle=+](1,0)(2,2)(3,2)(4,2)(6,2)(7,2)
\psdots[dotstyle=+](2,0)(3,0)(4,0)(5,2)(6,0)(7,0)
\rput(1,-.92){\psframebox*{\begin{tiny}$\tau^{-}$\end{tiny}}}
\rput(2,-1.1){\psframebox*{\begin{tiny}$\tau$\end{tiny}}}
\rput(5,-1){\psframebox*{\begin{tiny}$\tau'$\end{tiny}}}
\end{pspicture*}
,\\
\eta_{\gotha, \gotha^\star}=
\psset{unit=0.3}
\begin{pspicture*}(-.5,-1.5)(9.5,2.3)
\psset{linecolor=red}
\psset{linewidth=1pt}
\psline{-}(0,2)(0,0)
\psline{-}(9,2)(9,0)
\psset{linecolor=black}
\psdots(0,2)(9,2)
\psdots(0,0)(9,0)
\psdots[dotstyle=+](1,2)(2,2)(3,2)(4,2)(6,2)(7,2)(5,2)(8,2)
\psdots[dotstyle=+](2,0)(3,0)(4,0)(5,0)(6,0)(7,0)(1,0)(8,0)
\rput(1,-.92){\psframebox*{\begin{tiny}$\tau^{-}$\end{tiny}}}
\rput(2,-1.1){\psframebox*{\begin{tiny}$\tau$\end{tiny}}}
\rput(5,-1){\psframebox*{\begin{tiny}$\tau'$\end{tiny}}}
\end{pspicture*}.
\end{eqnarray*}
\footnotetext{We give special shape to the arc linking nodes $5$ and $8$ here to remind the reader that this arc is obtained by ``pulling the strings".}

\textbf{Example 2:} We take $\psset{unit=0.3}
\gotha =
\begin{pspicture*}(-.5,-0.2)(9.5,2)
\psset{linewidth=1pt}
\psset{linecolor=red}
\psline{-}(2,0)(2,2)
\psline{-}(7,0)(7,2)
\psbezier{-}(1,0)(1,1.5)(-2,1.5)(-2,0)
\psbezier{-}(8,0)(8,1.5)(11,1.5)(11,0)
\psbezier{-}(3,0)(3,1.5)(6,1.5)(6,0)
\psarc{-}(4.5,0){0.5}{0}{180}
\psarc{-}(-0.5,0){0.5}{0}{180}
\psarc{-}(9.5,0){0.5}{0}{180}
\psset{linecolor=black}
\psdots(0,0)(1,0)(2,0)(3,0)(4,0)(9,0)(8,0)(7,0)(6,0)(5,0)
\end{pspicture*}$,
then the arc $\delta_\gotha$ has to be the one connecting nodes $1$ and $8$ through the imaginary boundary at $x=-1/2$ and $x = g-1/2$. We are in Case (b), so $\tau' = \tau^-$ is the node $1$.  We have
\begin{eqnarray*}
\psset{unit=0.3}
\delta_\gotha =
\begin{pspicture*}(-.5,-1.5)(9.5,2)
\psset{linewidth=1pt}
\psset{linecolor=red}
\psbezier{-}(1,0)(1,1.5)(-2,1.5)(-2,0)
\psbezier{-}(8,0)(8,1.5)(11,1.5)(11,0)
\psset{linecolor=black}
\psdots(0,0)(1,0)(2,0)(3,0)(4,0)(9,0)(8,0)(7,0)(6,0)(5,0)
\rput(1,-.92){\psframebox*{\begin{tiny}$\tau^{-}$\end{tiny}}}
\rput(2,-1.1){\psframebox*{\begin{tiny}$\tau$\end{tiny}}}
\end{pspicture*},
&
\psset{unit=0.3}
\gotha^\flat =
\begin{pspicture*}(-.5,-1.5)(9.5,1.5)
\psset{linewidth=1pt}
\psset{linecolor=red}
\psline{-}(2,0)(2,1.5)
\psline{-}(7,0)(7,1.5)
\psline{-}(1,0)(1,1.5)
\psline{-}(8,0)(8,1.5)
\psbezier{-}(3,0)(3,1.5)(6,1.5)(6,0)
\psarc{-}(4.5,0){0.5}{0}{180}
\psarc{-}(-0.5,0){0.5}{0}{180}
\psarc{-}(9.5,0){0.5}{0}{180}
\psset{linecolor=black}
\psdots(0,0)(1,0)(2,0)(3,0)(4,0)(9,0)(8,0)(7,0)(6,0)(5,0)
\rput(1,-.92){\psframebox*{\begin{tiny}$\tau^{-}$\end{tiny}}}
\rput(2,-1.1){\psframebox*{\begin{tiny}$\tau$\end{tiny}}}
\end{pspicture*}
,&
\psset{unit=0.3}
\gotha^\flat_+ =
\begin{pspicture*}(-.5,-1.5)(9.5,1.5)
\psset{linewidth=1pt}
\psset{linecolor=red}
\psline{-}(8,0)(8,1.5)
\psline{-}(7,0)(7,1.5)
\psbezier{-}(3,0)(3,1.5)(6,1.5)(6,0)
\psarc{-}(4.5,0){0.5}{0}{180}
\psarc{-}(-0.5,0){0.5}{0}{180}
\psarc{-}(9.5,0){0.5}{0}{180}
\psset{linecolor=black}
\psdots(0,0)(3,0)(4,0)(9,0)(8,0)(7,0)(6,0)(5,0)
\psdots[dotstyle=+](1,0)(2,0)
\rput(1,-.92){\psframebox*{\begin{tiny}$\tau^{-}$\end{tiny}}}
\rput(2,-1.1){\psframebox*{\begin{tiny}$\tau$\end{tiny}}}
\end{pspicture*}
,
\\
\psset{unit=0.3}
\gotha^\flat_\tau =
\begin{pspicture*}(-.5,-1.5)(9.5,1.5)
\psset{linewidth=1pt}
\psset{linecolor=red}
\psline{-}(7,0)(7,1.5)
\psline{-}(8,0)(8,1.5)
\psbezier{-}(3,0)(3,1.5)(6,1.5)(6,0)
\psarc{-}(4.5,0){0.5}{0}{180}
\psarc{-}(-0.5,0){0.5}{0}{180}
\psarc{-}(9.5,0){0.5}{0}{180}
\psarc{-}(1.5,0){0.5}{0}{180}
\psset{linecolor=black}
\psdots(0,0)(1,0)(2,0)(3,0)(4,0)(9,0)(8,0)(7,0)(6,0)(5,0)
\rput(1,-.92){\psframebox*{\begin{tiny}$\tau^{-}$\end{tiny}}}
\rput(2,-1.1){\psframebox*{\begin{tiny}$\tau$\end{tiny}}}
\end{pspicture*},
&
\psset{unit=0.3}
\gotha^\flat_{\tau'} =
\begin{pspicture*}(-.5,-1.5)(9.5,1.5)
\psset{linewidth=1pt}
\psset{linecolor=red}
\psline{-}(7,0)(7,1.5)
\psline{-}(2,0)(2,1.5)
\psbezier{-}(3,0)(3,1.5)(6,1.5)(6,0)
\psarc{-}(4.5,0){0.5}{0}{180}
\psarc{-}(-0.5,0){0.5}{0}{180}
\psarc{-}(9.5,0){0.5}{0}{180}
\psbezier{-}(1,0)(1,1.5)(-2,1.5)(-2,0)
\psbezier{-}(8,0)(8,1.5)(11,1.5)(11,0)
\psset{linecolor=black}
\psdots(0,0)(1,0)(2,0)(3,0)(4,0)(9,0)(8,0)(7,0)(6,0)(5,0)
\rput(1,-.92){\psframebox*{\begin{tiny}$\tau^{-}$\end{tiny}}}
\rput(2,-1.1){\psframebox*{\begin{tiny}$\tau$\end{tiny}}}
\end{pspicture*},
&
\psset{unit=0.3}
\delta_{\gothb/\gotha^\flat} =
\begin{pspicture*}(-.5,-1.5)(9.5,1.5)
\psset{linewidth=1pt}
\psset{linecolor=red}
\psline{-}(7,0)(7,1.5)
\psline{-}(8,0)(8,1.5)
\psarc{-}(1.5,0){0.5}{0}{180}
\psset{linecolor=black}
\psdots(8,0)(7,0)(1,0)(2,0)
\psdots[dotstyle=+](0,0)(3,0)(4,0)(5,0)(6,0)(9,0)
\rput(1,-.92){\psframebox*{\begin{tiny}$\tau^{-}$\end{tiny}}}
\rput(2,-1.1){\psframebox*{\begin{tiny}$\tau$\end{tiny}}}
\end{pspicture*}
,\\
\psset{unit=0.3}
\delta_{\gotha/\gotha^\flat} =
\begin{pspicture*}(-.5,-1.5)(9.5,1.5)
\psset{linewidth=1pt}
\psset{linecolor=red}
\psline{-}(2,0)(2,1.5)
\psline{-}(7,0)(7,1.5)
\psbezier{-}(1,0)(1,1.5)(-2,1.5)(-2,0)
\psbezier{-}(8,0)(8,1.5)(11,1.5)(11,0)
\psset{linecolor=black}
\psdots(8,0)(7,0)(1,0)(2,0)
\psdots[dotstyle=+](0,0)(3,0)(4,0)(5,0)(6,0)(9,0)
\rput(1,-.92){\psframebox*{\begin{tiny}$\tau^{-}$\end{tiny}}}
\rput(2,-1.1){\psframebox*{\begin{tiny}$\tau$\end{tiny}}}
\end{pspicture*}
,&
\psset{unit=0.3}
\gotha^\flat_\res =
\begin{pspicture*}(-.5,-1.5)(9.5,1.5)
\psset{linewidth=1pt}
\psset{linecolor=red}
\psline{-}(1,0)(1,1.5)
\psline{-}(7,0)(7,1.5)
\psset{linecolor=black}
\psdots(1,0)(7,0)
\psdots[dotstyle=+](2,0)(3,0)(4,0)(0,0)(6,0)(5,0)(9,0)(8,0)
\rput(1,-.92){\psframebox*{\begin{tiny}$\tau^{-}$\end{tiny}}}
\rput(2,-1.1){\psframebox*{\begin{tiny}$\tau$\end{tiny}}}
\end{pspicture*}
,&
\psset{unit=0.3}
\gotha^\circ =
\begin{pspicture*}(-.5,-1.5)(9.5,1.5)
\psset{linewidth=1pt}
\psset{linecolor=red}
\psline{-}(8,0)(8,1.5)
\psline{-}(7,0)(7,1.5)
\psbezier{-}(3,0)(3,1.5)(6,1.5)(6,0)
\psarc{-}(4.5,0){0.5}{0}{180}
\psarc{-}(-0.5,0){0.5}{0}{180}
\psarc{-}(9.5,0){0.5}{0}{180}
\psset{linecolor=black}
\psdots(0,0)(3,0)(4,0)(9,0)(8,0)(7,0)(6,0)(5,0)
\psdots[dotstyle=+](1,0)(2,0)
\rput(1,-.92){\psframebox*{\begin{tiny}$\tau^{-}$\end{tiny}}}
\rput(2,-1.1){\psframebox*{\begin{tiny}$\tau$\end{tiny}}}
\end{pspicture*},\\
\psset{unit=0.3}
\gotha^\star =
\begin{pspicture*}(-.5,-1.5)(9.5,1.5)
\psset{linewidth=1pt}
\psset{linecolor=red}
\psline{-}(8,0)(8,1.5)
\psline{-}(7,0)(7,1.5)
\psbezier{-}(3,0)(3,1.5)(6,1.5)(6,0)
\psarc{-}(4.5,0){0.5}{0}{180}
\psarc{-}(1.5,0){0.5}{0}{180}
\psarc{-}(-0.5,0){0.5}{0}{180}
\psarc{-}(9.5,0){0.5}{0}{180}
\psset{linecolor=black}
\psdots(0,0)(1,0)(2,0)(3,0)(4,0)(9,0)(8,0)(7,0)(6,0)(5,0)
\rput(1,-.92){\psframebox*{\begin{tiny}$\tau^{-}$\end{tiny}}}
\rput(2,-1.1){\psframebox*{\begin{tiny}$\tau$\end{tiny}}}
\end{pspicture*}
,&
\psset{unit=0.3}
\gotha^\circ_\res =
\begin{pspicture*}(-.5,-1.5)(9.5,1.5)
\psset{linewidth=1pt}
\psset{linecolor=red}
\psline{-}(7,0)(7,1.5)
\psline{-}(8,0)(8,1.5)
\psset{linecolor=black}
\psdots(8,0)(7,0)
\psdots[dotstyle=+](1,0)(2,0)(3,0)(4,0)(5,0)(6,0)(0,0)(9,0)
\rput(1,-.92){\psframebox*{\begin{tiny}$\tau^{-}$\end{tiny}}}
\rput(2,-1.1){\psframebox*{\begin{tiny}$\tau$\end{tiny}}}
\end{pspicture*}
,&
\psset{unit=0.3}
\eta_{\gotha_{\tau'}^\flat, \gotha_{\tau}^\flat}=\eta_{\gotha, \gotha^\star}=
\begin{pspicture*}(-.5,-1.5)(9.5,2.3)
\psset{linecolor=red}
\psset{linewidth=1pt}
\psline{-}(7,2)(7,0)
\psbezier(2,2)(2,0)(-2,2)(-2,0)
\psbezier(12,2)(12,0)(8,2)(8,0)
\psset{linecolor=black}
\psdots(7,0)(8,0)(7,2)(2,2)
\psdots[dotstyle=+](0,2)(1,2)(5,2)(3,2)(4,2)(6,2)(8,2)(9,2)
\psdots[dotstyle=+](0,0)(1,0)(2,0)(3,0)(4,0)(5,0)(6,0)(9,0)
\rput(1,-.92){\psframebox*{\begin{tiny}$\tau^{-}$\end{tiny}}}
\rput(2,-1.1){\psframebox*{\begin{tiny}$\tau$\end{tiny}}}
\end{pspicture*}
.\footnotemark
\end{eqnarray*}
\footnotetext{When either $\tau$ or $\tau^-$ is connected to a semi-line, a lot of the new periodic semi-meanders constructed are either ``simple" or ``similar" to $\gotha$.}

Using the discussion in Subsection~\ref{SS:decomposition of semi-meanders}, we see that the morphism $\Res_\gotha \circ \Gys_\gothb$ is the composition of the following diagram from the top-left to the bottom through first $\Gys_{\gothb_\res}$ and then all the way to the right and then all the way downwards, and finally through $\pi_{\delta_{\gotha/\gotha^\flat}, !}$ and $\Res_{\gotha^\flat_\res}$.
\begin{equation}
\label{E:commutative diagram case iv}
\xymatrix{
H^{d-2r}\big(\Sh(G_{\ttS_\gothb, \ttT_\gothb})\big) \ar[d]_{\Gys_{\gothb_\res}}
\\
H^{d-2}\big(\Sh(G_{\ttS_{\delta_\gothb}, \ttT_{\delta_\gothb}}) \big) \ar[r]^-{\pi_{\delta_\gothb}^*}
\ar[d]_{\mathrm{Restr.}}
&
H^{d-2}\big(\Sh(G_{\ttS, \ttT})_{\delta_\gothb}\big)
\ar[r]^-{\mathrm{Gysin}}
\ar[d]^{\mathrm{Restr.}}
&
H^{d}\big(\Sh(G_{\ttS, \ttT})\big)
\ar[d]^{\mathrm{Restr.}}
\\
H^{d-2}\big(\Sh(G_{\ttS_{\delta_\gothb}, \ttT_{\delta_\gothb}})_{\gotha^\flat_+} \big)
\ar[d]^-{\pi_{\gotha_+^\flat,!}}
\ar[r]^{\pi_{\delta_\gothb}^*}
&
H^{d-2}\big(\Sh(G_{\ttS, \ttT})_{\gotha^\flat_\tau}\big)
\ar[r]^-{\mathrm{Gysin}}
&
H^{d}\big(\Sh(G_{\ttS, \ttT})_{\gotha^\flat}\big)
\ar[d]^{\pi_{\gotha^\flat,!}}
\\
H^{d-2r'-2}
\big(\Sh(G_{\ttS_{\gotha^\flat_\tau}, \ttT_{\gotha^\flat_\tau}})\big)
\ar[r]^-{\pi^*_{\delta_{\gothb/\gotha^\flat}}}
\ar[dr]_{p^{y/2}\eta^\star_{\gotha^\flat_{\tau'}, \gotha^\flat_{\tau}}}
\ar[d]^{\Res_{\gotha^\circ_\res}}
&
H^{d-2r'-2}\big(\Sh(G_{\ttS_{\gotha^\flat}, \ttT_{\gotha^\flat}})_{\delta_{\gothb/\gotha^\flat}}
\big)
\ar[r]^-{\mathrm{Gysin}}
&
H^{d-2r'}
\big(\Sh(G_{\ttS_{\gotha^\flat}, \ttT_{\gotha^\flat}})
\big)
\ar[d]^{\mathrm{Restr.}}
\\
H^{d-2r}(\Sh(G_{\ttS_{\gotha^\star}, \ttT_{\gotha^\star}}))
\ar[dr]_{p^{(x+y)/2}\eta^\star_{\gotha, \gotha^\star}}
&
H^{d-2r'-2}\big(\Sh(G_{\ttS_{\gotha^\flat_{\tau'}}, \ttT_{\gotha^\flat_{\tau'}}})
\big)
\ar[d]^{\Res_{\gotha^\flat_\res}}
&
H^{d-2r'}
\big(\Sh(G_{\ttS_{\gotha^\flat}, \ttT_{\gotha^\flat}})_{\delta_{\gotha/\gotha^\flat}}
\big)
\ar[l]_-{\pi_{\delta_{\gotha/\gotha^\flat},!}}
\\
&
H^{d-2r}\big(\Sh(G_{\ttS_\gotha, \ttT_\gotha})\big).
}
\end{equation}
Here, the numbers $x,y$ and the link morphisms $\eta^\star_{\gotha^\flat_{\tau'}, \gotha^\flat_\tau}$ and $\eta^\star_{\gotha, \gotha^\star}$ will be defined explicitly later. For simplicity, we have omitted the Tate twists from the notation, and each cohomology group $H^{a}(\star)$ should be understood as $H^a(\star)(b)$ with $a-2b=d-2r$; for instance, $H^{d-2r'-2}\big(\Sh(G_{\ttS_{\gotha^\flat_{\tau'}}, \ttT_{\gotha^\flat_{\tau'}}})
\big)$ should be understood as $H^{d-2r'-2}\big(\Sh(G_{\ttS_{\gotha^\flat_{\tau'}}, \ttT_{\gotha^\flat_{\tau'}}})
\big)(r-r'-1)$.

We now explain the commutativity of this diagram.  The top two squares are commutative because the corresponding morphisms on the Shimura varieties form Cartesian squares.
The commutativity of the middle rectangle follows from that of \eqref{E:cartesian decomposition} (applied with our $\gotha^\flat_\tau$ and $\gotha^\flat$ being the $\gotha$ and $\gotha^\flat$ therein, respectively).
The commutativity of the lower trapezoid will follow from applying Subsection~\ref{S:case r=1 link} (applied to the Shimura variety $\Sh(G_{\ttS_{\gotha^\flat}, \ttT_{\gotha^\flat}})$ with our $\delta_{\gotha/\gotha^\flat}$ and $\delta_{\gothb/\gotha^\flat}$ being the $\gotha$ and $\gothb$ therein) once we have clarified the meaning of $y$ and $\eta^\star_{\gotha^\flat_{\tau'}, \gotha^\flat_\tau}$  in \ref{SS:proof case iv link morphism}.
Finally, the commutativity of  the bottom parallelogram will be justified in Subsection~\ref{SS:proof case iv commutative diagram} and  Lemma~\ref{L:restriction gysin compatibility} later.

To sum up, the morphism $\Res_\gotha \circ \Gys_\gothb  $ will be the composition of \eqref{E:commutative diagram case iv} from the top-left to the bottom by first going all the way down and through $\eta^\star_{\gotha, \gotha^\star}$.
This gives the following equality
\begin{equation}
\label{E:proof case iv}
\Res_\gotha \circ \Gys_\gothb = p^{(x+y)/2} \eta^\star_{\gotha , \gotha^\star} \circ \Res_{\gotha^\star} \circ \Gys_{\gothb_\res},
\end{equation}
using which we will complete the inductive proof of Theorem~\ref{T:intersection combinatorics} in Case (iv), as we shall explain in \ref{SS:proof of case iv finish}.

\subsection{Link morphism $\eta_{\gotha^{\flat}_{\tau'}, \gotha^{\flat}_{\tau}}$}
\label{SS:proof case iv link morphism}
Now, let us get to the details, starting with the link morphism associated with $\eta_{\gotha^{\flat}_{\tau'}, \gotha^{\flat}_{\tau}}$. We distinguish the two cases:
\begin{itemize}
\item[\emph{Case (a).}] Suppose that the left node of $\delta_{\gotha}$ is $\tau$. Example 1 above falls into this case. 
All the curves in the  link $\eta_{\gotha^{\flat}_{\tau'}, \gotha^{\flat}_{\tau}}$ are semi-lines, except for one which turns to the right and we denote by $\xi$. The curve $\xi$ sends $\tau^-$ to $\tau'$. 
 By Theorem~\ref{T:GO description}(2), there exists a link morphism $\eta_{\gotha^{\flat}_{\tau'}, \gotha^{\flat}_{\tau}, \sharp}$ with indentation degree $\ell(\delta_\gotha)-\ell(\delta_{\gothb})$ and shift $\boldsymbol t_{\gotha^{\flat}_{\tau'}}\boldsymbol t_{\gotha^{\flat}_{\tau}}^{-1}$ (and also a link morphism on the local system as in Theorem~\ref{T:GO description}(2)(d)), which fits into the following commutative diagram
\[
\xymatrix{
\Sh(G_{\ttS_{\gotha^{\flat}}, \ttT_{\gotha^\flat}})_{\delta_{\gotha}} \ar[d]_{\pi_{\delta_{\gotha}}}
&
\Sh(G_{\ttS_{\gotha^\flat}, \ttT_{\gotha^{\flat}}})_{\{\tau',\tau\}}
\ar@{_{(}->}[l]
\ar@{^{(}->}[r]
\ar@{->}[ld]_\cong
&
\Sh(G_{\ttS_{\gotha^\flat}, \ttT_{\gotha^\flat}})_{\delta_{\gothb}}
\ar[d]^{\pi_{\delta_\gothb}}
\\
\Sh(G_{\ttS_{\gotha^{\flat}_{\tau'}}, \ttT_{\gotha^{\flat}_{\tau'}}}) \ar[rr]^{\eta_{\gotha^{\flat}_{\tau'},\gotha^{\flat}_{\tau},  \sharp}}
&&
\Sh(G_{\ttS_{\gotha^{\flat}_{\tau}}, \ttT_{\gotha^{\flat}_{\tau}}}).
}
\]
By Theorem~\ref{T:GO description}(2)(c) $\eta_{\gotha^{\flat}_{\tau'},\gotha^{\flat}_{\tau},  \sharp}$ is finite flat of degree $p^y$ with $y:=v(\eta_{\gotha^{\flat}_{\tau'},\gotha^{\flat}_{\tau},  \sharp})=\ell(\delta_{\gotha})+\ell(\delta_{\gothb})$.
Let $\eta_{\gotha^{\flat}_{\tau'},\gotha^{\flat}_{\tau}}^\star: H^{d-2r'-2}\big(\Sh(G_{\ttS_{\gotha^\flat_\tau}, \ttT_{\gotha^\flat_\tau}})\big) \to H^{d-2r'-2}\big(\Sh(G_{\ttS_{\gotha^\flat_{\tau'}}, \ttT_{\gotha^\flat_{\tau'}}})\big)$ denote the induced link homomorphism on the cohomology groups.
By the same argument as in \ref{S:case r=1 link}, we see that the trapezoid in the diagram \eqref{E:commutative diagram case iv} is commutative.

\item[\emph{Case (b).}]
Suppose now that the right node of $\delta_{\gotha}$ is $\tau^-$. Example 2 above falls into this case. Then the only genuine curve in the link $\eta_{\gotha^{\flat}_{\tau'},\gotha^{\flat}_{\tau}}$ is turning to the left with displacement $y=\ell(\delta_{\gotha})+\ell(\delta_{\gothb})$.
Let $\eta_{\gotha_{\tau}^\flat, \gotha_{\tau'}^{\flat}}$ be the inverse link of $\eta_{\gotha^{\flat}_{\tau'},\gotha^{\flat}_{\tau}}$. 
Applying the discussion in \emph{Case (a)} to $\eta_{\gotha^{\flat}_{\tau},\gotha^{\flat}_{\tau'}}$, one gets a link morphism $\eta_{\gotha^{\flat}_{\tau},\gotha^{\flat}_{\tau'}, \sharp}: \Sh(G_{\ttS_{\tau}^{\flat}, \ttT_{\tau}^{\flat}})\ra \Sh(G_{\ttS_{\tau'}^{\flat}, \ttT_{\tau'}^{\flat}})$ of indentation degree $\ell(\delta_{\gothb})-\ell(\delta_{\gotha})$ and shift $\boldsymbol t_{\gotha^{\flat}_{\tau}}\boldsymbol t_{\gotha^{\flat}_{\tau'}}^{-1}$.
By Lemma~\ref{L:inverse of link morphism}, we get a well-defined  link morphism on the cohomology groups
\begin{equation}\label{E:link-traces-1}
\eta^{\star}_{\gotha^{\flat}_{\tau'},\gotha^{\flat}_{\tau}}=(\eta^{\star}_{\gotha^{\flat}_{\tau},\gotha^{\flat}_{\tau'}})^{-1}=p^{-y/2}\Tr_{\eta_{\gotha^{\flat}_{\tau},\gotha^{\flat}_{\tau'},\sharp}}\colon
 H^{d-2r'-2}_{\et}\big(\Sh(G_{  \ttS_{\gotha^{\flat}_{\tau}}, \ttT_{\gotha^{\flat}_{\tau}}})\big) \ra H^{d-2r'-2}_{\et}\big(\Sh(G_{  \ttS_{\gotha^{\flat}_{\tau'}}, \ttT_{\gotha^{\flat}_{\tau'}}})\big)
 \end{equation}
of indentation degree $\ell(\gotha)-\ell(\gothb)$ associated to the link $\eta_{\gotha^{\flat}_{\tau'},\gotha^{\flat}_{\tau}}$ and shift $\boldsymbol t_{\gotha^{\flat}_{\tau'}}\boldsymbol t_{\gotha^{\flat}_{\tau}}^{-1}$.
Now the argument as in \ref{S:case r=1 link} proves the commutativity of the trapezoid in the diagram \eqref{E:commutative diagram case iv}.
\end{itemize}

\subsection{Commutativity of the parallelogram in \eqref{E:commutative diagram case iv}}
\label{SS:proof case iv commutative diagram}
We continue the discussion above by separating the two cases.

\begin{itemize}

\item[\emph{Case (a).}]
Consider the $(r-r'-1)$-th iterated $\PP^1$-bundle $\pi_{\gotha^{\flat}_{\res}}\colon \Sh(G_{ \ttS_{\gotha^{\flat}_{\tau'}}, \ttT_{\gotha^{\flat}_{\tau'}}})_{\gotha^\flat_{\res}}\ra  \Sh(G_{ \ttS_\gotha, \ttT_\gotha})$. 
By  applying repeatedly \cite[Proposition~7.17]{tian-xiao1} and Construction~\ref{S:from unitary to quaternionic}, one produces a commutative diagram:
\begin{equation}\label{E:factorization-link-bundles}
\xymatrix{
\Sh(G_{ \ttS_{\gotha^{\flat}_{\tau'}}, \ttT_{\gotha^{\flat}_{\tau'}}})_{\gotha^\flat_{\res}}\ar[r]^-{\pi_1^{\flat}}\ar[d]^{\eta_{\gotha^{\flat}_{\tau'},\gotha^{\flat}_{\tau},\sharp}} 
& X_1 \ar[r]^{\pi^{\flat}_2}\ar[d]^{\eta_{1, \sharp}} & X_2\ar[r]\ar[d]^{\eta_{2,\sharp}} & \cdots\ar[r] & X_{r-r'-2}\ar[r]^-{\pi^\flat_{r-r'-1}}\ar[d]^-{\eta_{r-r'-2,\sharp}} &
\Sh(G_{ \ttS_\gotha, \ttT_\gotha})\ar[d]^-{\eta_{\gotha,\gotha^{\star}, \sharp}}\\
\Sh(G_{ \ttS_{\gotha^{\flat}_{\tau}}, \ttT_{\gotha^{\flat}_{\tau}}})_{\gotha^\circ_{\res}}
\ar[r]^-{\pi^{\circ}_1} &Y_1 \ar[r]^{\pi^{\circ}_2} & Y_2\ar[r] &\cdots\ar[r]& Y_{r-r'-2} \ar[r]^-{\pi_{r-r'-1}^{\circ}}&
\Sh(G_{\ttS_{\gotha^{\star}},\ttT_{\gotha^{\star}}}),
}
\end{equation}
where  $\pi^{\flat}_i$ and $\pi^{\circ}_{i}$ are all $\PP^1$-fibrations,  the vertical arrows are link morphisms (associated to certain links),  and the composition of top (resp. bottom) horizontal arrows is $\pi_{\gotha^{\flat}_{\res}}$ (resp. $\pi_{\gotha_{\res}^{\circ}}$). 
There exist at the same time link morphisms $\eta_i^\sharp$ and $\eta_{\gotha,\gotha^{\circ}}^{\sharp}$ on the \'etale local systems satisfying  a similar commutative diagram.
We explain now how to construct $\eta_{1,\sharp}\colon X_1\ra Y_1$, one chooses a basic arc $\delta_{\gothc}$ in $\gotha^{\flat}_{\res}$. 
Let $\gotha_{\res, 1}^{\flat}$ be the periodic semi-meander obtained by removing $\delta_{\gothc}$ from $\gotha_{\res}^{\flat}$ and replacing the end-nodes of $\delta_\gothc$ by  plus signs, and $\gotha^{\flat}_{\tau',1}$ be the periodic semi-meander obtained by removing from $\gotha_{\tau'}^\flat$ the semi-lines at the end-nodes of $\delta_\gothc$ and adjoining $\delta_\gothc$.
Put  $X_1:=\Sh(G_{ \ttS_{\gotha^{\flat}_{\tau', 1}}, \ttT_{\gotha^{\flat}_{\tau',1}}})_{\gotha_{\res,1}^{\flat}}$, and  denote by 
$$
\pi^{\flat}_1\colon \Sh(G_{ \ttS_{\gotha^{\flat}_{\tau'}}, \ttT_{\gotha^{\flat}_{\tau'}}})_{\gotha^\flat_{\res}}\longto X_1
$$
the $\PP^1$-fibration given by the arc $\delta_\gothc$.
Let $\delta_{\gothc^\circ}$denote the arc $\eta_{\gotha^{\flat}_{\tau'}, \gotha^{\flat}_{\tau}}(\delta_\gothc)$ obtained by extending $\delta_\gothc$ using the curves of $\eta_{\gotha^{\flat}_{\tau'}, \gotha^{\flat}_{\tau}}$ at the end-nodes of $\delta_\gothc$. This $\delta_{\gothc^\circ}$ is a basic arc in $\gotha^{\circ}_{\res}$. 
We define periodic semi-meanders $\gotha^{\flat}_{\tau, 1}$  and $\gotha_{\res,1}^{\circ}$ in the same way as $\gotha^{\flat}_{\tau', 1}$ and $\gotha_{\res,1}^{\flat}$ with $\delta_\gothc$ replaced by $\delta_{\gothc^{\circ}}$. Then we have a $\PP^1$-fibration
 $$
 \pi_1^{\circ}\colon \Sh(G_{ \ttS_{\gotha^{\flat}_{\tau}}, \ttT_{\gotha^{\flat}_{\tau}}})_{\gotha^\circ_{\res}} \longto Y_1:=\Sh(G_{ \ttS_{\gotha^{\flat}_{\tau, 1}}, \ttT_{\gotha^{\flat}_{\tau,1}}})_{\gotha_{\res,1}^{\circ}}.
 $$ 
If $\eta_1: \ttS_{\gotha^{\flat}_{\tau', 1}}\ra \ttS_{\gotha^{\flat}_{\tau, 1}}$ denotes the link induced by $\eta_{\gotha^{\flat}_{\tau'},\gotha^{\flat}_{\tau}}$, then 
 \cite[Proposition~7.17]{tian-xiao1} and Construction~\ref{S:from unitary to quaternionic} implies the existence of the link morphism $\eta_{1,\sharp}$ which fits into the left commutative square of \eqref{E:factorization-link-bundles}. 
 This finishes the construction of $X_1$ and $Y_1$.
The induced link $\eta_1$ has the same property as $\eta_{\gotha^{\flat}_{\tau'},\gotha^{\flat}_{\tau}}$, namely all the curves of $\eta_1$ are semi-lines except possibly for one turning to the right. 
The rest of \eqref{E:factorization-link-bundles} can be constructed inductively in a similar way.

Since we require the diagram \eqref{E:factorization-link-bundles} to be commutative, by Remark~\ref{R:composition of shifts}, the link morphism $\eta_{\gotha,\gotha^{\star}, \sharp}$ has shift 
\[
\boldsymbol t_{\gotha^{\flat}_{\tau'}, \gotha} \cdot \boldsymbol t_{\gotha^{\flat}_{\tau}, \gotha^\star}^{-1} \cdot \textrm{(shift of }\eta_{\gotha^\flat_{\tau'}, \gotha^\flat_{\tau}}) = 
\boldsymbol t_{\gotha} \boldsymbol t_{\gotha^\star}^{-1}.
\]
Moreover, the indentation degree of $\eta_{\gotha,\gotha^{\star}, \sharp}$ is $\ell(\delta_{\gotha})-\ell(\delta_{\gothb})+\ell(\gotha^{\flat}_{\res})-\ell(\gotha_{\res}^{\circ})$ if $\gothp$ splits in $E/F$ and degree $0$ if $\gothp$ is inert in $E/F$.
Note  that  even though each $\eta_{i,\sharp}$ is not unique (since there are many ways to choose a basic arc of $\gotha^{\flat}_{\res}$ for instance), the final link morphism $\eta_{\gotha, \gotha^\star, \sharp}$ is uniquely determined by the uniqueness of link morphisms.
  By \cite[Proposition~7.17(3)]{tian-xiao1} and Construction~\ref{S:from unitary to quaternionic},   $\eta_{\gotha,\gotha^{\star},\sharp}$ is  finite flat of degree $p^{v(\eta_{\gotha, \gotha^{\star}})}$.
We have thus the  normalized link morphisms $\eta^{\star}_{\gotha^{\flat}_{\tau'},\gotha^{\flat}_{\tau}}$ and $\eta^{\star}_{\gotha,\gotha^{\star}}$ on the corresponding cohomology groups as defined in \eqref{E:normalized-link} induced by 
$(\eta_{\gotha^{\flat}_{\tau'},\gotha^{\flat}_{\tau}, \sharp}, \eta^\sharp_{\gotha^{\flat}_{\tau'},\gotha^{\flat}_{\tau}})$ and $(\eta_{\gotha,\gotha^{\star}}^{\sharp}, \eta_{\gotha, \gotha^{\star}}^{\sharp})$ respectively.

\item[\emph{Case (b)}] Suppose now that the right node of $\delta_{\gotha}$ is $\tau^-$. 
Applying the discussion in \emph{Case (a)} to the inverse link $\eta_{\gotha^{\flat}_{\tau},\gotha^{\flat}_{\tau'}}$, one gets a link morphism $\eta_{\gotha^{\star}, \gotha, \sharp}: \Sh(G_{\ttS_{\gotha^{\star}},\ttT_{\gotha^\star}})\ra \Sh(G_{\ttS_{\gotha}, \ttT_{\gotha}})$ associated to the inverse link of $\eta_{\gotha,\gotha^{\star}}$ of indentation degree $\ell(\delta_{\gothb})-\ell(\delta_{\gotha})+\ell(\gotha_{\res}^{\circ})-\ell(\gotha_{\res}^{\flat})$,  and shift $\boldsymbol t_{\gotha}^{-1}\boldsymbol t_{\gotha^\star}$.
By Lemma~\ref{L:inverse of link morphism}, we get a well-defined  link morphism on the cohomology groups
\begin{equation}\label{E:link-traces-2}
\eta^{\star}_{\gotha,\gotha^{\star}}=( \eta^{\star}_{\gotha^{\star},\gotha})^{-1}=p^{v(\eta_{\gotha,\gotha^{\star}})/2}\Tr_{\eta_{\gotha^{\star},\gotha, \sharp}}\colon H^{d-2r}(\Sh_{G_{\ttS,\ttT}})\ra H^{d-2r}(\Sh_{G_{\ttS_{\gotha^{\star}},\ttT_{\gotha^{\star}}}})
\end{equation}
of indentation degree  $\ell(\delta_\gotha)-\ell(\delta_{\gothb})+\ell(\gotha^{\flat}_{\res})-\ell(\gotha_{\res}^{\circ})$ and shift $\boldsymbol t_{\gotha} \boldsymbol t_{\gotha^\star}^{-1}$ associated to the link $\eta_{\gotha,\gotha^{\star}}$.

\end{itemize}

\begin{lemma}\label{L:restriction gysin compatibility}
Under the above notation,  put $x=\ell(\gotha^{\flat}_{\res})-\ell(\gotha^{\circ}_{\res})$ and $y = \ell(\delta_\gotha) + \ell(\delta_\gothb)$. 
Then in both  {Case (a)} and {Case (b)} above,  one has a commutative diagram of cohomology groups:
\[
\xymatrix@C=40pt{
H_\et^{d-2r'-2}(\Sh(G_{  \ttS_{\gotha^{\flat}_{\tau}}, \ttT_{\gotha^{\flat}_{\tau}}}))
\ar[r]^-{\mathrm{Rest.}}
\ar[d]^{p^{y/2}\eta^\star_{\gotha^{\flat}_{\tau'},\gotha^{\flat}_{\tau}}}
&
H_\et^{d-2r'-2}(\Sh( G_{ \ttS_{\gotha^{\flat}_{\tau}}, \ttT_{\gotha^{\flat}_{\tau}}})_{\gotha^{\star}_{\res}})
\ar[r]^-{\pi_{\gotha_{\res}^{\circ},!}}
\ar[d]^{p^{y/2}\eta_{\gotha^{\flat}_{\tau'},\gotha^{\flat}_{\tau}}^{\star}}
&
H_\et^{d-2r}(\Sh( G_{\ttS_{\gotha^{\star}}, \ttT_{\gotha^{\star}}}))
\ar[d]^{p^{(x+y)/2}\eta_{\gotha, \gotha^{\star}}^\star}
\\
H_\et^{d-2r'-2}(\Sh(G_{\ttS_{\gotha^{\flat}_{\tau'}}, \ttT_{\gotha^{\flat}_{\tau'}}}))
\ar[r]^-{\mathrm{Rest.}}
&
H_\et^{d-2r'-2}(\Sh(G_{\ttS_{\gotha^{\flat}_{\tau'}}, \ttT_{\gotha^{\flat}_{\tau'}}})_{\gotha_{\res}^{\flat}})\ar[r]^-{\pi_{\gotha^{\flat}_{\res},!}}
&
H_\et^{d-2r}(\Sh(G_{\ttS_\gotha, \ttT_\gotha})).
}
\]
Note that the composite of the top (resp. bottom) two horizontal morphisms above is exactly    $\Res_{\gotha^{\circ}_{\res}}$ (resp.  $\Res_{\gotha^{\flat}_{\res}}$). So this verifies the commutativity of the parallelogram in \eqref{E:commutative diagram case iv}.
\end{lemma}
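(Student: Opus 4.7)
The plan is to verify the left and right squares separately, reducing both to compatibility statements at each step of the iterated $\PP^1$-bundle factorization \eqref{E:factorization-link-bundles} (and its obvious analogue in Case (b) obtained by inverting arrows). Throughout, the combinatorics of total displacements will justify the normalization factors $p^{y/2}$ and $p^{(x+y)/2}$: the displacement of $\eta_{\gotha, \gotha^\star}$ equals $y+x$, the displacement of $\eta_{\gotha^\flat_{\tau'},\gotha^\flat_{\tau}}$ equals $y$, and at each $\PP^1$-bundle step $\pi^\flat_i$ vs.\ $\pi^\circ_i$ the induced link has displacement accounting for exactly one summand of $x$.

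First I would treat Case (a). For the left square, the key observation is that the morphism $\eta_{\gotha^\flat_{\tau'},\gotha^\flat_{\tau},\sharp}$ restricts to a morphism $\Sh(G_{\ttS_{\gotha^\flat_{\tau'}}, \ttT_{\gotha^\flat_{\tau'}}})_{\gotha^\flat_\res}\to \Sh(G_{\ttS_{\gotha^\flat_{\tau}}, \ttT_{\gotha^\flat_{\tau}}})_{\gotha^\circ_\res}$, as explicitly exhibited in the top row of \eqref{E:factorization-link-bundles}. Combined with the link morphism on the \'etale local systems, this gives the commutativity of the left square on the nose; since both vertical arrows are normalized by the same factor $p^{y/2}$, no power of $p$ is introduced. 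For the right square, using \eqref{E:factorization-link-bundles} we decompose the map $\pi_{\gotha_\res^\flat,!}$ (resp.\ $\pi_{\gotha_\res^\circ,!}$) as a composition of $(r-r'-1)$ trace maps along the $\PP^1$-bundles $\pi^\flat_i$ (resp.\ $\pi^\circ_i$); hence it suffices to check the commutativity of each small rectangle
\[
\xymatrix{
H^\star_\et(X_{i-1})\ar[r]^-{\pi^\flat_{i,!}}\ar[d]_{\eta_{i-1,\sharp}^\star}& H^{\star-2}_\et(X_i)(-1)\ar[d]^{\eta_{i,\sharp}^\star}\\
H^\star_\et(Y_{i-1})\ar[r]^-{\pi^\circ_{i,!}} & H^{\star-2}_\et(Y_i)(-1).
}
\]
At each such step, $\eta_{i-1,\sharp}$ fits into a commutative square with the $\PP^1$-bundles, and the projection formula together with the compatibility of fundamental classes (the pullback of $\cl_{\pi^\circ_i}$ along $\eta_{i-1,\sharp}$ is $\cl_{\pi^\flat_i}$, up to a power of $p$ dictated by the displacement introduced at that step) yields the commutativity after insertion of the correct power of $p$. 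Summing over the $r-r'-1$ steps produces exactly the normalization factor $p^{x/2}$, as $x$ equals the total sum of these individual displacement contributions.

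For Case (b), the link morphisms $\eta^\star_{\gotha^\flat_{\tau'},\gotha^\flat_{\tau}}$ and $\eta^\star_{\gotha,\gotha^\star}$ are defined as in \eqref{E:link-traces-1} and \eqref{E:link-traces-2} as the inverses of link morphisms going the other way, or equivalently (up to the normalization factor) as traces along the finite flat morphisms $\eta_{\gotha^\flat_{\tau},\gotha^\flat_{\tau'},\sharp}$ and $\eta_{\gotha^\star,\gotha,\sharp}$. Using this description, the two squares become assertions about compatibility of traces along Cartesian squares: one checks that the relevant square of morphisms of Shimura varieties is Cartesian (by appealing again to the factorization~\eqref{E:factorization-link-bundles} applied to the inverse link and to Theorem~\ref{T:GO description}(1)), and then base change for proper morphisms yields the commutativity, with the powers of $p$ entering through the two different normalizations $p^{-y/2}$ and $p^{-(x+y)/2}$ in the respective definitions.

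The main obstacle I expect is the careful bookkeeping of powers of $p$ at each step of the factorization, and the verification that the pullback of the relative fundamental class of $\pi^\circ_i$ under $\eta_{i-1,\sharp}$ is indeed the fundamental class of $\pi^\flat_i$ times the predicted power of $p$; this is essentially a local statement on the normal bundle of the closed immersion giving rise to the link morphism at each step, which can be extracted from Proposition~\ref{P:GO-fibration}(2) and the construction of $\eta_{i,\sharp}$ via a basic arc argument as in Subsection~\ref{S:case r=1 link}. Once this local computation is in place, the induction on the number of $\PP^1$-bundle steps carries the result through.
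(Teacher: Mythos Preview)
Your overall strategy---reduce to the factorization diagram \eqref{E:factorization-link-bundles} and handle the left and right squares separately---is exactly the paper's approach, and the left square is indeed immediate. But there are two genuine gaps in your execution of the right square.

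\textbf{Case (a): the bookkeeping is not governed by displacement.} Your claim that the power of $p$ in the pullback of $\cl_{\pi^\circ_i}$ along $\eta_{i-1,\sharp}$ is ``dictated by the displacement introduced at that step'' and that summing these gives $p^{x/2}$ is not correct as stated. The actual input (from \cite[Proposition~7.17(3)]{tian-xiao1}, not from Proposition~\ref{P:GO-fibration}(2), which concerns a different normal bundle) says that $\eta^*_{\gotha^\flat_{\tau'},\gotha^\flat_\tau,\sharp}(\xi_i)$ equals $\xi'_i$ or $p^y\xi'_i$ depending on whether the unique non-trivial curve of the link connects the \emph{right} or the \emph{left} end-node of the arc removed at step $i$. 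This forces a subcase analysis on how $\tau^-$ sits in $\gotha$: if $\tau^-$ is on a semi-line then $x=0$ and both $\pi_{?,!}$ are isomorphisms; if $\tau^-$ is a left end-node then $x=y$ and exactly one $\xi_{i_0}$ acquires the factor $p^y$; if $\tau^-$ is a right end-node then $x=-y$ and no $\xi_i$ changes. In the last subcase the total Chern-class contribution is $p^0$ while $x=-y<0$, so your ``sum of displacements equals $x$'' heuristic fails. Once this trichotomy is in place, the explicit decomposition of cohomology via $\xi_i$'s and formula \eqref{E:simple-formula} gives the result directly, without needing to iterate step by step.

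\textbf{Case (b): the squares are not Cartesian.} Your proposed argument via base change for proper morphisms requires the relevant square of Shimura varieties to be Cartesian, but it is not: the vertical map $\eta_{\gotha^\flat_\tau,\gotha^\flat_{\tau'},\sharp}$ on the strata is finite flat of degree $p^y>1$, while $\eta_{\gotha^\star,\gotha,\sharp}$ on the base is an isomorphism (since $\eta_{\gotha,\gotha^\star}$ consists only of semi-lines when $\tau$ is an end-node of an arc). So base change does not apply. The paper's argument instead splits into the same three subcases on how $\tau$ sits in $\gotha$: when $\tau$ is a left end-node ($x=-y$), the required identity $\Tr_{\eta_{\gotha^\star,\gotha,\sharp}}\circ\pi_{\gotha^\circ_\res,!}=\pi_{\gotha^\flat_\res,!}\circ\Tr_{\eta_{\gotha^\flat_\tau,\gotha^\flat_{\tau'},\sharp}}$ follows from compatibility of trace maps with \emph{composition} (not base change); when $\tau$ is a right end-node ($x=y$), one rewrites the identity using $(\eta^\star)^{-1}$ and reduces to the Case~(a3) computation applied to the inverse link.
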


\begin{proof}
The commutativity of the left square is evident. We check the  commutativity of the right hand side square case by case.

Suppose first we are in \emph{Case (a)}, i.e. the left end-node of $\delta_{\gotha}$ is $\tau$. We distinguish three subcases:
\begin{enumerate}

\item[\emph{Case (a1)}] $\tau^-$ is linked to a semi-line in $\gotha$. 
Then both $\gotha^{\flat}_{\res}$ and $\gotha^{\circ}_{\res}$ contain no arcs. It follows that $x=0$,  and  $\pi_{\gotha^{\flat}_{\res}}$ and $\pi_{\gotha^{\circ}_{\res}}$ are isomorphisms. In this case, the commutativity of the right hand side square is trivial.

\item[\emph{Case (a2)}] $\tau^-$ is the left end-node of an arc in $\gotha$. Example 1 above falls into this case. It is easy to see that $x=y$, and that the link $\eta_{\gotha, \gotha^{\star}}$ contains only semi-lines. 
By \cite[Proposition~7.17(3)]{tian-xiao1} and Construction~\ref{S:from unitary to quaternionic}, $\eta_{\gotha, \gotha^{\star},\sharp}$ is an isomorphism. 
 Consider the commutative diagram  \eqref{E:factorization-link-bundles}. 
 Both top and bottom rows are factorizations of $(r-r'-1)$-th iterated $\PP^1$-bundles as in \eqref{E:factorization-iterated-bundle}. 
For each $1\leq i\leq r-r'-1 $, let $\xi_i'\in H^2_{\et}(\Sh_{K_p}\big(G_{ \ttS_{\gotha^{\flat}_{\tau'}}, \ttT_{\gotha^{\flat}_{\tau'}}})_{\overline \FF_p}, \overline \Q_{\ell}(1)\big)$ (resp. $\xi_i\in H^2_{\et}\big(\Sh_{K_p}(G_{ \ttS_{\gotha^{\flat}_{\tau}}, \ttT_{\gotha^{\flat}_{\tau}}})_{\overline \FF_p}, \overline \Q_{\ell}(1)\big)$) be the inverse image of the first Chern class of the tautological quotient line bundle of $\pi_{i}^{\flat}$ (resp. $\pi^{\circ}_i$) as considered in Subsection~\ref{S:etale-cohomology}. 
Note that the only curve in $\eta_{\gotha_{\tau'}^{\flat}, \gotha^{\flat}_{\tau}}$ links the \emph{left} end-node of an arc of $\gotha_{\res}^{\flat}$ to the \emph{left} end-node of an arc of $\gotha_{\res}^{\circ}$.
Then  by applying iteratively \cite[Proposition~7.17(3)]{tian-xiao1} and Construction~\ref{S:from unitary to quaternionic},  there exists a unique integer $i_0$ with $1\leq i_0\leq r-r'-1$ such that $\eta^{*}_{\gotha_{\tau'}^{\flat}, \gotha^{\flat}_{\tau}, \sharp}(\xi_{i_0})=p^{y}\xi_{i_0}'$, and $\eta^{*}_{\gotha_{\tau'}^{\flat}, \gotha^{\flat}_{\tau}, \sharp}(\xi_i)=\xi_{i}'$ for all $i\neq i_0$. 
  Let 
  $$
  z=\sum_{1\leq j\leq r-r'-1}\bigg(\sum_{1\leq i_1<\cdots <i_{j}\leq r-r'-1} \pi^{*}_{\gotha_{\res}^{\circ}}(z_{i_1,\dots, i_{j}})\cup \xi_{i_1}\cup \cdots \cup \xi_{i_j}\bigg) 
  $$
be an element of $H^{d-2r'-2}(\Sh(G_{\ttS_{\gotha^{\flat}_{\tau}}, \ttT_{\gotha^{\flat}_{\tau}}})_{\gotha_{\res}^{\circ}})$ with $z_{i_1, \dots, i_j}\in H^{d-2r'-2-2j}(\Sh(G_{\ttS_{\gotha^\star}, \ttT_{\gotha^{\star}}}))$. Then one has 
\begin{align*}
\pi_{\gotha_{\res}^{\flat}, !}(p^{y/2}\eta^{\star}_{\gotha_{\tau'}^{\flat},\gotha_{\tau}^{\flat} }(z))&= \pi_{\gotha_{\res}^{\flat}, !} \eta^{*}_{\gotha_{\tau'}^{\flat},\gotha_{\tau}^{\flat}, \sharp}(z)
\\
&=p^y \pi_{\gotha_{\res}^{\flat}, !}\Big(\sum\big(\eta^{*}_{\gotha_{\tau'}^{\flat},\gotha_{\tau}^{\flat}, \sharp}(\pi^{*}_{\gotha_{\res}^{\circ}}(z_{i_1, \dots, i_j}))\cup \xi'_{i_1}\cup \cdots \cup \xi'_{i_j} \big)\Big)
\\
&=p^y \pi_{\gotha_{\res}^{\flat}, !}\Big(\sum\big(\pi^{*}_{\gotha_{\res}^{\flat}}(\eta^{*}_{\gotha,\gotha^{\star}, \sharp}(z_{i_1, \dots, i_j}))\cup \xi'_{i_1}\cup \cdots \cup \xi'_{i_j}\big)\Big)\\
&=p^y\eta^{*}_{\gotha,\gotha^{\star}, \sharp}(z_{1, \dots, r-r'-1})=p^{(x+y)/2}\eta^{\star}_{\gotha,\gotha^{\star}}(\pi_{\gotha^{\circ}_{\res},!}(z)),
\end{align*}
where the forth and fifth equalities use the formula \eqref{E:simple-formula}. This shows the commutativity of the right square in the Lemma.

\item[\emph{Case (a3)}] $\tau^-$ is the right end-node of an arc in $\gotha$. Then $x=-y$ and  $\eta_{\gotha, \gotha^{\star}}$ contains only semi-lines.  Hence, $\eta_{\gotha, \gotha^{\star},\sharp}$ is an isomorphism as in \emph{Case (a2)}. We want to show 
$$\eta_{\gotha,\gotha^{\star}}^{\star}\circ\pi_{\gotha^{\circ}_{\res}, !}= \pi_{\gotha^{\flat}_{\res},!}\circ (p^{y/2}\eta^{\star}_{\gotha^{\flat}_{\tau'},\gotha^{\flat}_{\tau}}) .$$
The argument is quite similar to that of \emph{Case (a2)}.
 Let $\xi_i, \xi_i'$ be as defined  in \emph{Case (a2)}  for $1\leq i\leq r-r'-1$.
  Then by \cite[Proposition~7.17(3)]{tian-xiao1}, we have $\eta^{*}_{\gotha_{\tau'}^{\flat}, \gotha^{\flat}_{\tau}, \sharp}(\xi_i)=\xi_{i}'$ for all $1\leq i\leq r-r'-1$ (this differs from the situation of  \emph{Case (a2)} because the unique curve in $\eta_{\gotha_{\tau'}^{\flat}, \gotha^{\flat}_{\tau}}$ links the \emph{right} end-node of an arc of $\gotha_{\res}^{\flat}$ to the \emph{right} end-node of an arc of $\gotha_{\res}^{\circ}$). Then the rest of the computation is the same as in \emph{Case (a2)}. 
\end{enumerate}

Consider now \emph{Case (b)}, i.e. the right end-node of $\delta_{\gotha}$ is $\tau^-$. Symmetrically, we have three subcases:
\begin{enumerate}
\item[\emph{Case (b1)}] $\tau$ is linked to a semi-line in $\gotha$. Then as in \emph{Case (a1)}, 
we have $x=0$,  and  $\pi_{\gotha^{\flat}_{\res}}$ and $\pi_{\gotha^{\circ}_{\res}}$ are both isomorphisms. The commutativity of the right hand side square is trivial.

\item[\emph{Case (b2)}]   $\tau$ is the left end-node of an arc in $\gotha$. Then $x=-y$, and $\eta_{\gotha, \gotha^{\star}}$ contains only semi-lines. Hence, $\eta_{\gotha^{\star}, \gotha,\sharp}$ is an isomorphism as in \emph{Case (a2)}. By \eqref{E:link-traces-1} and \eqref{E:link-traces-2}, the desired commutativity is equivalent to 
$$
\Tr_{\eta_{\gotha^{\star},\gotha,\sharp}}\circ \pi_{\gotha^{\circ}_{\res}, !}=\pi_{\gotha^{\flat}_{\res},!}\circ \Tr_{\eta_{\gotha^{\flat}_{\tau},\gotha^{\flat}_{\tau'},\sharp}},
$$
which is an easy consequence of the compatibility of trace maps with composition.

\item[\emph{Case (b3)}] $\tau$ is the right end-node of an arc in $\gotha$. Then $x=y$, and $\eta_{\gotha^{\star}, \gotha,\sharp}$ is an isomorphism as in \emph{Case (a2)}. The desired commutativity is equivalent to 
\[
\pi_{\gotha^{\flat}_{\res}, !}\circ p^{y/2}(\eta^{\star}_{\gotha^{\flat}_{\tau},\gotha^{\flat}_{\tau'}})^{-1}=p^y(\eta_{\gotha^{\star}, \gotha}^{\star})^{-1}\circ \pi_{\gotha^{\circ}_{\res}, !}\quad \Longleftrightarrow \quad \eta_{\gotha^{\star}, \gotha}^{\star}\circ \pi_{\gotha^{\flat}_{\res}, !}= \pi_{\gotha^{\circ}_{\res}, !}\circ (p^{y/2}\eta^{\star}_{\gotha^{\flat}_{\tau},\gotha^{\flat}_{\tau'}}).
\]
Thus the situation is exactly the same as \emph{Case (a3)} above (except for switching the roles of $\Sh(G_{\ttS_{\gotha^{\flat}_{\tau'}}, \ttT_{\gotha_{\tau'}^{\flat}}})$ and $\Sh(G_{\ttS_{\gotha^{\flat}_{\tau}}, \ttT_{\gotha_{\tau}^{\flat}}})$), and we conclude by the same arguments. \qedhere
\end{enumerate}\end{proof}

\subsection{Finish of the proof in Case (iv)}
\label{SS:proof of case iv finish}
We are now in position to complete the inductive proof of Theorem~\ref{T:GO description} in Case (iv). We have shown the commutativity of the diagram~\eqref{E:commutative diagram case iv}, from which we deduce \eqref{E:proof case iv}:
\[
\Res_\gotha \circ \Gys_\gothb = p^{(x+y)/2} \eta^\star_{\gotha , \gotha^\star} \circ \Res_{\gotha^\circ} \circ \Gys_{\gothb_\res},
\]
where $\eta_{\gotha, \gotha^\star}^\star$ is the link homomorphism associated to the link $\eta_{\gotha, \gotha^\star}: \ttS_{\gotha} \to \ttS_{\gotha^\star}$ with
\begin{itemize}
\item
indentation $\ell(\delta_{\gotha}) - \ell(\delta_{\gothb}) + \ell(\gotha_\res^\flat) - \ell(\gotha_\res^\circ)$ if $p$ splits in $E/F$ and trivial if $p$ is inert in $E/F$,
\item and
shift $\boldsymbol t_{\gotha} \boldsymbol t_{\gotha^\star}^{-1}
$.
\end{itemize}

Before proceeding, 
we point out the following equality of shifts which we shall use later:
\begin{equation}
\label{E:equality of shifts}
\boldsymbol t_{\gotha} \boldsymbol t_{\gotha^\star}^{-1} \cdot \boldsymbol t_{\delta_{\gothb}, \gotha^\star} \boldsymbol t_{\delta_\gothb, \gothb}^{-1} = \boldsymbol t_\gotha \boldsymbol t_\gothb^{-1}.
\end{equation}
Also, we point out that
our decomposition of periodic semi-meanders gives a numerical equalities of spans:
\begin{equation}
\label{E:equalities of spans}
\ell(\gotha) = \ell(\gotha_+^\flat) + \ell(\delta_{\gotha/\gotha^\flat}) + \ell(\gotha_\res^\flat) , \quad \ell(\gotha^\circ)
= \ell(\gotha^\flat_+) + \ell(\gotha_\res^\circ) ,\quad \text{and} \quad
\ell(\gothb) = \ell(\delta_\gothb)  +\ell(\gothb_\res)
.
\end{equation}
This (and the trivial equality $\ell(\delta_\gotha) = \ell(\delta_{\gotha/\gotha^\flat})$) implies that the indentation degree of $\eta_{\gotha, \gotha^\star}^\star$ when $p$ splits in $E/F$, is equal to
\begin{equation}
\label{E:case iv indentation equality}
\ell(\delta_\gotha) - \ell(\delta_\gothb)+ \big( \ell(\gotha_\res^\flat)- \ell(\gotha_\res^\circ)\big) = \ell(\gotha) -\ell(\gothb) - \big( \ell(\gotha^\circ) -\ell(\gothb_\res)\big).
\end{equation}
Similarly, \eqref{E:equalities of spans} also implies that
\begin{equation}
\label{E:case iv x+y}
x+y =
\ell(\gotha_\res^\flat) - \ell(\gotha^\circ_\res) + \ell(\delta_\gotha) + \ell(\delta_\gothb)= \ell(\gotha) + \ell(\gothb) - (\ell(\gotha^\circ) + \ell(\gothb_\res)).
\end{equation}

Now we separate the discussion according to $\langle \gotha|\gothb\rangle$.

\begin{enumerate}
\item
If $\langle \gotha, \gothb\rangle = 0$, then $\langle \gotha^\circ| \gothb_\res\rangle = 0$ for simple combinatorics reasons. Then the $\pi$-isotypical component of  $\Res_{\gotha^\circ} \circ \Gys_{\gothb_\res}$ factors through the cohomology of a lower dimensional Shimura variety, so the same is true for $\Res_{\gotha} \circ \Gys_{\gothb}$.
\item[(2) or (3)]
We have $\langle \gotha|\gothb\rangle = (-2)^{m_0}v^{m_v}$ or $(-2)^{m_0}T^{m_T}$. The picture $D(\gotha^\circ, \gothb_\res)$ can be identified with the picture  $D(\gotha, \gothb)$ after deforming some curves (``pulling strings").
In particular, we have $\langle \gotha^\circ| \gothb_\res\rangle = \langle \gotha|\gothb\rangle$. By the inductive hypothesis for the pair $(\ttS_{\delta_\gothb}, \ttT_{\delta_\gothb})$\footnote{Here, as before, the shift $\boldsymbol t'_{\gotha'}$ for a periodic semi-meander $\gotha'$ for $(\ttS_{\delta_\gothb}, \ttT_{\delta_\gothb})$ is taken to be $\boldsymbol t_{\delta_\gothb, \tilde \gotha'}$, where $\tilde \gotha'$ is a periodic semi-meander of $(\ttS, \ttT)$ consisting of all the arcs and semi-lines of $\gotha'$ together with the arc $\delta_\gothb$.} and \eqref{E:proof case iv}, we have
\begin{align*}
\Res_{\gotha} &\circ \Gys_{\gothb} =  p^{(x+y)/2} \eta^\star_{\gotha , \gotha^\star} \circ \Res_{\gotha^\circ} \circ \Gys_{\gothb_\res}
\\
& =
\begin{cases}
p^{(x+y)/2} \eta^\star_{\gotha , \gotha^\star} \circ (-2)^{m_0}\cdot p ^{(\ell(\gotha^\circ) + \ell(\gothb_\res))/2} \eta_{\ttS_{\delta_\gothb, \gotha^\circ}, \ttS_{\delta_\gothb, \gothb_\res}}^\star, & \textrm{if }r < \frac d2,
\\
p^{(x+y)/2} \eta^\star_{\gotha , \gotha^\star} \circ (-2)^{m_0}\cdot p ^{(\ell(\gotha^\circ) + \ell(\gothb_\res))/2} (T_\gothp/p^{g/2})^{m_T}\eta_{\ttS_{\delta_\gothb, \gotha^\circ}, \ttS_{\delta_\gothb, \gothb_\res}}^\star
& \textrm{if }r = \frac d2,
\end{cases}
\\
& \stackrel{\eqref{E:case iv x+y}}=
\begin{cases}
(-2)^{m_0}\cdot p ^{(\ell(\gotha) + \ell(\gothb))/2}
\eta^\star_{\gotha , \gotha^\star} \circ \eta_{\ttS_{\delta_\gothb, \gotha^\circ}, \ttS_{\delta_\gothb, \gothb_\res}}^\star, & \textrm{if }r < \frac d2,
\\ (-2)^{m_0}\cdot p ^{(\ell(\gotha) + \ell(\gothb))/2} (T_\gothp/p^{g/2})^{m_T}\eta^\star_{\gotha , \gotha^\star} \circ \eta_{\ttS_{\delta_\gothb, \gotha^\circ}, \ttS_{\delta_\gothb, \gothb_\res}}^\star
& \textrm{if }r = \frac d2.
\end{cases}
\end{align*}
The composition of the two links is exactly $\eta^\star_{\ttS_\gotha, \ttS_\gothb}$ of the needed indentation degree by \eqref{E:case iv indentation equality} an of the required shift by \eqref{E:equality of shifts}.
\end{enumerate}
This concludes the proof of Theorem~\ref{T:intersection combinatorics}.
\appendix

\section{Cohomology of quaternionic Shimura varieties}
We include the proof of Proposition~\ref{P:cohomology of sh} regarding the cohomology of our ``slightly twisted" quaternionic Shimura varieties. It is based on comparing the cohomology with the known case when $\ttT=\emptyset$.  This is certainly  known to the experts, but we could not find the exact version in the literature.

\subsection{Discrete Shimura varieties for $F^\times$}
\label{S:shimura for F times}
Consider a Deligne homomorphism for $T_{F, \ttT}:=\Res_{F/\QQ}(\GG_m)$ given by
\[
\xymatrix@R=0pt{
h_\ttT: \SSS(\RR) = \CC^\times \ar[r] & T_F(\RR)=(\RR^\times)^\ttT \times (\RR^\times)^{\Sigma_\infty - \ttT}
\\
z \ar@{|->}[r] &  \big((|z|^2, \dots, |z|^2), (1, \dots,1) \big).
}
\]
Under this choice of Deligne homomorphism,
we can define a discrete Shimura variety $\calS h_{K_{T,p}}(T_{F, \ttT})$ for $K_{T,p} = \calO_\gothp^\times$ whose complex points are given by
\[
\calS h_{K_{T,p}}(T_{F, \ttT})(\CC) = F^{\times,\cl} \backslash \AAA_F^{\infty,\times}/ \calO_\gothp^\times.
\] 
It admits an integral canonical model with special fiber $\Sh_{K_{T,p}}(T_{F, \ttT})$ over $\FF_{p^g}$ (in the sense of \cite[Section 2.8]{tian-xiao1}), which is determined by the Shimura reciprocity map
\[
\Rec_{T, \ttT,p}: \Gal_{\FF_{p^g}} \longrightarrow F^{\times,\cl} \backslash \AAA_F^{\infty,\times}/ \calO_\gothp^\times.
\]
Explicitly, $\Rec_{T,\ttT,p}$ sends the geometric Frobenius $\Frob_{p^g}$ to the finite idele $(\underline p_F)^{\#\ttT}$.

Fix a prime number $\ell\neq p$.  The algebraic representation $\rho^w_{T, \ttT}$ of $ T_{F, \ttT} \times \CC \cong \prod_{\tau \in \Sigma_\infty} \GG_{m,\tau}$ sending $x$ to $(x^{2-w}, \dots, x^{2-w})$ gives a lisse $\overline \QQ_{\ell}$-\'etale sheaf $\calL^w_{T, \ttT}$ of pure weight $2(w-2)\#\ttT$ on $\Sh_{K_{T,p}}(T_{F, \ttT})$.

\subsection{Changing $\ttT$}
We need to compare the Shimura varieties $\Sh_{K_p}(G_{\ttS, \ttT})$ and  $\Sh_{K_p}(G_{\ttS, \emptyset})$.
Note that the natural product morphism
\[
\pr: G_{\ttS, \emptyset} \times \Res_{F/\QQ} \GG_m \to G_{\ttS, \ttT}
\]
is compatible with the Deligne homomorphism $h_{\ttS, \emptyset} \times h_{\ttT}$ on the source and $h_{\ttS, \ttT}$ on the target, i.e. $\pr \circ (h_{\ttS, \emptyset} \times h_{\ttT}) = h_{\ttS, \ttT}$.
This gives a natural morphism of Shimura varieties
\begin{equation}
\label{E:pr bullet}
\pr_\bullet:
\Sh_{K_p}(G_{\ttS, \emptyset}) \times \Sh_{K_{T,p}}(T_{F, \ttT}) \longrightarrow\Sh_{K_p}(G_{\ttS, \ttT}).
\end{equation}
Moreover, the product morphism is compatible with the algebraic representations in the sense that
\[\rho^{(\underline k,w)}_{\ttS, \ttT} \circ
\pr   \cong \rho^{(\underline k,w)}_{\ttS, \emptyset}\boxtimes  \rho^w_{T, \ttT} .
\]
So we have a natural isomorphism of sheaves
\begin{equation}
\label{E:equality of pullback sheaves in quaternion case}
\pr_\bullet^*(\calL_{\ttS, \ttT}^{(\underline k, w)}) \cong \calL_{\ttS, \emptyset}^{(\underline k, w)} \boxtimes \calL_{T, \ttT}^w.
\end{equation}

\begin{prop}
\label{P:cohomology of sh appendix}
Let $\pi \in\scrA_{(\underline k, w)}$ be an automorphic representation appearing in the cohomology of the Shimura variety $\calS h_K(G_{\ttS, \ttT})$.
Then we have a canonical isomorphism 
\[
H^{i}_\et(\Sh_K(G_{\ttS, \ttT})_{\overline \FF_p}, \calL_{\ttS, \ttT}^{(\underline k, w)})[\pi]^{\Fr-s.s.} =
\begin{cases}
 \rho_{\pi, \gothp}^{\otimes d} \otimes [\det(\rho_{\pi,\gothp})(1)]^{\otimes \#\ttT} & \textrm{if }i =d,\\
 0 & \textrm{if }i \neq d;
 \end{cases}
\]
  equivariant for the action of the geometric Frobenius $\Frob_{p^{g}}$. Here, the superscript $\Fr-s.s.$ means taking the semi-simplification as $\Frob_{p^g}$-modules.
\end{prop}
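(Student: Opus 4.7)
The plan is to reduce to the case $\ttT=\emptyset$, which is the classical result for quaternionic Shimura varieties (cf.~Brylinski--Labesse and its generalizations; in our notation, the $\pi$-isotypic component of $H^i_\et(\Sh_K(G_{\ttS,\emptyset})_{\overline\FF_p},\calL_{\ttS,\emptyset}^{(\underline k,w)})$ vanishes outside degree $d$ and, in degree $d$, gives $\rho_{\pi,\gothp}^{\otimes d}$ up to Frobenius semisimplification). To descend the Deligne homomorphism shift carried by $\ttT$, I would exploit the morphism
\[
\pr_\bullet:\Sh_{K_p}(G_{\ttS,\emptyset})\times\Sh_{K_{T,p}}(T_{F,\ttT})\longrightarrow \Sh_{K_p}(G_{\ttS,\ttT})
\]
of~\eqref{E:pr bullet}, together with~\eqref{E:equality of pullback sheaves in quaternion case}. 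After passing to a sufficiently small prime-to-$p$ level (which is harmless, since we are extracting $\pi$-isotypic parts and Hecke actions intertwine), $\pr_\bullet$ becomes a finite \'etale surjection whose automorphism group is a finite quotient of the diagonally embedded $\calO_{F,(p)}^{\times}$; consequently its image on cohomology is obtained by taking invariants under this finite group, which is a trivial operation on the $\pi$-isotypic component because such invariants are controlled by the central character of $\pi$.

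The next step is the K\"unneth decomposition
\[
H^i_\et\big(\Sh_{K_p}(G_{\ttS,\emptyset})_{\overline\FF_p}\times\Sh_{K_{T,p}}(T_{F,\ttT})_{\overline\FF_p},\calL_{\ttS,\emptyset}^{(\underline k,w)}\boxtimes\calL_{T,\ttT}^{w}\big)\cong H^i_\et\big(\Sh_{K_p}(G_{\ttS,\emptyset}),\calL_{\ttS,\emptyset}^{(\underline k,w)}\big)\otimes H^0_\et\big(\Sh_{K_{T,p}}(T_{F,\ttT}),\calL_{T,\ttT}^{w}\big),
\]
valid because $\Sh_{K_{T,p}}(T_{F,\ttT})$ is a discrete Shimura variety. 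Combining with the induced pullback map and the classical case for $\ttT=\emptyset$ yields the vanishing outside degree $d$ and the identification of the degree $d$ piece with $\rho_{\pi,\gothp}^{\otimes d}$ tensored with the $\pi$-component of $H^0_\et(\Sh_{K_{T,p}}(T_{F,\ttT}),\calL_{T,\ttT}^{w})$. The latter is a one-dimensional space on which Galois acts through the central character of $\pi$ composed with the Shimura reciprocity map $\Rec_{T,\ttT,p}$.

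The final step is to identify this torus contribution with $\bigl(\det\rho_{\pi,\gothp}(1)\bigr)^{\otimes\#\ttT}$. By the construction in~\ref{S:shimura for F times}, $\Rec_{T,\ttT,p}(\Frob_{p^g})=(\underline p_F)^{\#\ttT}$, so $\Frob_{p^g}$ acts on the torus factor by $\omega_\pi(\underline p_F)^{\#\ttT}$, which, via~\eqref{E:Eichler-Shimura} and the normalization of $\rho_\pi$, equals $\bigl(S_\gothp(\pi)\bigr)^{\#\ttT}=\bigl(\alpha_\pi\beta_\pi/p^g\bigr)^{\#\ttT}=\bigl(\det\rho_{\pi,\gothp}(\Frob_{p^g})\cdot p^{-g}\bigr)^{\#\ttT}$; here the extra twist by $\Z_\ell(1)^{\otimes\#\ttT}$ is accounted for by the weight normalization of $\calL_{T,\ttT}^{w}$ (which is pure of weight $2(w-2)\#\ttT$, matching the discrepancy between the \'etale cohomology and Galois representation normalizations).

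The main subtlety will be bookkeeping the various twists: comparing the Hodge-theoretic normalization implicit in the definition of $\calL_{T,\ttT}^{w}$ and $\calL_{\ttS,\ttT}^{(\underline k,w)}$ with the cohomological normalization of $\rho_\pi$ used to define $\rho_{\pi,\gothp}(\Frob_{p^g})$, and checking that the finite group responsible for descending from the product Shimura variety to $\Sh_{K_p}(G_{\ttS,\ttT})$ acts trivially on the $\pi$-isotypic component. Once these are verified, both the statement on the underlying vector space and on the Frobenius action (up to semisimplification) follow immediately.
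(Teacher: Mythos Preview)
Your proposal is essentially the same argument as the paper's. Both reduce to the known case $\ttT=\emptyset$ via the product morphism $\pr_\bullet$ of \eqref{E:pr bullet}, use K\"unneth to split off the torus contribution, and then identify that one-dimensional factor with $(\det\rho_{\pi,\gothp}(1))^{\otimes\#\ttT}$ via the Shimura reciprocity map $\Rec_{T,\ttT,p}$ and the relation \eqref{E:Eichler-Shimura}.

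The only cosmetic difference is in how the descent along $\pr_\bullet$ is expressed: the paper writes the cohomology of $\Sh_K(G_{\ttS,\ttT})$ directly as the invariants under the \emph{anti-diagonal} action of $\AAA_F^{\infty,\times}$ on the tensor product, whereas you phrase it as finite \'etale descent at finite level. These are the same thing. One small correction to your wording: taking invariants is not literally ``trivial'' on the $\pi$-isotypic part; rather, it singles out the line in $H^0_\et(\Sh_{K_{T,p}}(T_{F,\ttT}),\calL_{T,\ttT}^w)$ on which $\AAA_F^{\infty,\times}$ acts through the central character $\omega_\pi$, which is exactly the one-dimensional torus factor you go on to analyze. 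You should also double-check the sign in $\omega_\pi(\underline p_F)^{\pm\#\ttT}$; the paper obtains $\omega_\pi(\underline p_F)^{-\#\ttT}$, the discrepancy coming from whether one reads the Hecke action or its inverse off the reciprocity map---but as you note, this is pure bookkeeping.
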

\begin{proof}
The proposition is known when $\ttT =\emptyset$ by  \cite[\S 3.2]{brylinski-labesse} (note that 
we have the tensor product instead of tensorial induction because $\rho_{\pi,\gothp}$ is unramified at $\gothp$.)
For general $\ttT$, the morphism \eqref{E:pr bullet} induces an isomorphism
\begin{align*}
H^\star_\et(\Sh_K(G_{\ttS, \ttT})_{\overline \FF_p}, \calL_{\ttS, \ttT}^{(\underline k, w)}) &\cong H^\star_\et \big( \Sh_K (G_{\ttS, \ttT})_{\overline \FF_p} \times \Sh_{K_{T,p}}(T_{F, \ttT})_{\overline \FF_p}, \pr_\bullet^*(\calL_{\ttS, \ttT}^{(\underline k, w)}) \big)^{\AAA_F^{\infty, \times}}
\\
&\stackrel{\eqref{E:equality of pullback sheaves in quaternion case}}{\cong}
 \Big(H^\star_\et \big(\Sh_K (G_{\ttS, \emptyset})_{\overline \FF_p}, \calL_{\ttS, \emptyset}^{(\underline k, w)} \big) \otimes  H^0_\et \big( \Sh_{K_{T,p}}(T_{F, \ttT})_{\overline \FF_p}, \calL_{T, \ttT}^{w} \big) \Big)^{\AAA_F^{\infty, \times}},
\end{align*}
where the superscript $\AAA_F^{\infty, \times}$ means to take the invariant part for the \emph{anti-diagonal action} of this group, i.e. $z \in \AAA_F^{\infty, \times}$ acts by $(z,z^{-1})$.
So, if $\omega_\pi$ denotes the central character of $\pi$, we have
\[
H^\star_\et(\Sh_K(G_{\ttS, \ttT})_{\overline \FF_p}, \calL_{\ttS, \ttT}^{(\underline k, w)})[\pi] \cong H^\star_\et(\Sh_K(G_{\ttS, \emptyset})_{\overline \FF_p}, \calL_{\ttS, \emptyset}^{(\underline k, w)})[\pi] \otimes 
H^0_\et \big( \Sh_{K_{T,p}}(T_{F, \ttT})_{\overline \FF_p}, \calL_{T, \ttT}^{w} \big)[\omega_\pi],
\]
where the last factor is the one-dimensional subspace where $\AAA_F^{\infty, \times}$ acts through $\omega_\pi$.
By the Shimura reciprocity map $\Rec_{T, \ttT,p}$  recalled in Subsection~\ref{S:shimura for F times} and the Eichler--Shimura relation \eqref{E:Eichler-Shimura}, the geometric Frobenius $\Frob_{p^g}$ acts on this one-dimensional space by multiplication by 
\[
\omega_\pi(\underline p_F)^{-\#\ttT}
= \big(\det(\rho_{\pi, \gothp}(\Frob_{p^g}))/p^g)\big)^{\#\ttT}.
\]
This concludes the proof of this Proposition.
\end{proof}

\end{document}